\newcommand{\ms}[1]{\mathscr{#1}}
\newcommand{\mc}[1]{\mathcal{#1}}
\newcommand{\ovl}[1]{\overline{#1}}
\newcommand{\varep}{ \varepsilon }
\newcommand{\sse} {\subseteq}
\newcommand{\Z}{\mathbb{Z}}
\newcommand{\R}{\mathbb{R}}
\newcommand{\C}{\mathbb{C}}
\newcommand{\E}{\mathbb{E}}
\newcommand{\N}{\mathbb{N}}
\newcommand{\T}{\mathbb{T}}
\newcommand{\icomplex}{\textup{i}}
\newcommand{\ra}{\rightarrow}
\newcommand{\toinf}{\ra \infty}
\newcommand{\ptl}{\partial}
\newcommand{\beq}{\begin{equation}}
\newcommand{\eeq}{\end{equation}}
\newcommand{\mbf}[1]{\mathbf{#1}}
\newcommand{\mrm}[1]{\mathrm{#1}}
\newcommand{\End}{\mrm{End}}
\newcommand{\frkg}{\mathfrak{g}}
\newcommand{\cycles}{\mrm{cycles}}
\newtheorem{theorem}{Theorem}
\newtheorem{prop}[theorem]{Proposition}
\newtheorem{lemma}[theorem]{Lemma}
\newtheorem{cor}[theorem]{Corollary}
\theoremstyle{definition}
\newtheorem{definition}[theorem]{Definition}
\newtheorem{notation}[theorem]{Notation}
\newtheorem{remark}[theorem]{Remark}
\newtheorem{problem}[theorem]{Problem}
\newtheorem{example}[theorem]{Example}
\newtheorem{assumptions}[theorem]{Assumptions}
\newcommand{\Var}[1]{\mathrm{Var}(#1)}
\newcommand{\Cov}{\mrm{Cov}}
\numberwithin{equation}{section}
\numberwithin{theorem}{section}
\newcommand{\ind}{\mathbbm{1}}
\newcommand{\Tr}{\mathrm{Tr}}
\newcommand{\torus}{\mathbb{T}}
\newcommand{\sym}{S_{\mrm{YM}}}
\newcommand{\scs}{S_{\mrm{CS}}}
\newcommand{\schwartz}{\mc{S}}
\newcommand{\manifold}{M}
\newcommand{\e}{\mathbf{e}}
\newcommand*{\bigdot}[1]{\accentset{\mbox{\large\bfseries .}}{#1}}
\newcommand{\liegroup}{G}
\newcommand{\groupid}{\textup{id}}
\newcommand{\unitary}{\mrm{U}}
\newcommand{\curl}{\mrm{curl}}
\newcommand{\codif}{d^*}
\newcommand{\gff}{\mrm{GFF}}
\newcommand{\fgf}{\mrm{FGF}}
\newcommand{\symgrp}{\mrm{S}}
\newcommand{\noise}{W}
\title{Fractional Gaussian forms and gauge theory: an overview}
\author{Sky Cao}
\address{Department of Mathematics, Massachusetts Institute of Technology, Cambridge, MA 02139}
\email{skycao@mit.edu}
\email{sheffield@math.mit.edu}
\author{Scott Sheffield}
\begin{document}

\begin{abstract}
Fractional Gaussian fields are scalar-valued random functions or generalized functions on an $n$-dimensional manifold $M$, indexed by a parameter $s$. They include white noise ($s = 0$), Brownian motion ($s=1, n=1$), the 2D Gaussian free field ($s = 1, n=2$) and the membrane model ($s = 2$). These simple objects are ubiquitous in math and science, and can be used as a starting point for constructing non-Gaussian theories.
They are sometimes parameterized by the {\em Hurst parameter} $H=s-n/2$. 

The {\em differential form} analogs of these objects are equally natural: for example, instead of considering an instance $h(x)$ of the GFF on $\mathbb R^2$, one might write $h_1(x)dx_1 + h_2(x) dx_2$ where $h_1$ and $h_2$ are independent GFF instances. This ``Gaussian-free-field-based $1$-form'' can be projected onto curl-free and divergence-free components, which in turn arise as gradients and dual-gradients of independent membrane models.

In general, given $k \in \{0,1,\ldots,n\}$, an instance of the {\em fractional Gaussian $k$-form} with parameter $s \in \mathbb R$ (abbreviated $\mathrm{FGF}_s^k(M)$) is given by $(-\Delta)^{-\frac{s}{2}} W_k,$
where $W_k$ is a $k$-form-valued white noise. We write
$$\textrm{FGF}_s^k(M)_{d=0}\,\,\,\,\, \textrm{and} \,\,\,\,\,\, \textrm{FGF}_s^k(M)_{d^*=0}$$
for the $L^2$ orthogonal projections of $\textrm{FGF}_s^k(M)$ onto the space of $k$-forms on which $d$ (resp.\ $d^*$) vanishes. We explain how $\mathrm{FGF}_s^k(M)$ and its projections transform under $d$ and $d^*$, as well as wedge/Hodge-star operators, subspace restrictions, and axial projections. We review ``massive'' variants, variants on lattices, and a variant involving the Chern-Simons action. We explain the role of the higher order cohomology groups of $M$.

The $1$-form $\textrm{FGF}_1^1(M)$ and its {\em gauge-fixed} projection $\textrm{FGF}_1^1(M)_{d^*=0}$ arise as low-temperature/small-scale limits of $\unitary(1)$ Yang-Mills gauge theory. When $\unitary(1)$ is replaced with another compact Lie group, the corresponding limit is a Lie-algebra-valued analog of $\textrm{FGF}_1^1(M)_{d^*=0}$, which can be interpreted as a random connection whose curvature is (a Lie-algebra valued analog of) $\textrm{FGF}_2^0(M)_{d=0}$.

Wilson loop observables of this connection are defined for sufficiently regular ``big loops'' (trajectories in the space of divergence-free $1$-forms obtainable as limits of finite-length loops) in a gauge invariant way, even in the non-abelian case. We define ``big surfaces'' (trajectories in the space of $2$-forms obtainable as limits of smooth surfaces with boundary) and note that Stokes' theorem converts big-loop integrals of $\textrm{FGF}_1^1(M)_{d^*=0}$ into big-surface integrals of $\textrm{FGF}_2^0(M)_{d=0}$ or $\textrm{FGF}_2^0(M)$. A type of exponential correlation decay and area law applies within the {\em slabs} $\mathbb R^2 \times [0,1]^m$ but not within $\mathbb R^n$ for $n > 2$. 

One may interpret $\textrm{FGF}_1^1(M)_{d^*=0}$ as a random divergence-free vector field, which is conjectured to be the fine-mesh scaling limit of the $n$-dimensional dimer model when $n>2$. (Kenyon proved this for $n=2$.) We formulate several conjectures and open problems about scaling limits, including possible off-critical/non-Gaussian limits, whose construction in the Yang-Mills setting is a famous open problem.
\end{abstract}

\maketitle

\tableofcontents
\section{Introduction}
\subsection{Fractional Gaussian fields} \label{sec:fgfoverview}
{\em Scalar-valued} fractional Gaussian fields are ubiquitous in physics, mathematics, finance and many other disciplines. Let us begin by briefly summarizing a few basic definitions and facts that are found e.g.\ in the overview~\cite{lodhia2016fractional}. Formally, the (scalar) fractional Gaussian free field FGF$_s(\mathbb R^n)$ on $\mathbb R^n$ with parameter $s$ has the form $$h = (-\Delta)^{-\frac{s}{2}} W$$ where $W$ is a complex or real white noise on $\mathbb R^n$. In the complex case, we note that the Fourier transform of complex white noise is complex white noise itself, and applying the Laplacian $(-\Delta)$ to a function has the effect of multiplying its Fourier transform by $|\cdot|^2$. Hence, the Fourier transform $\hat h$ of an FGF$_s(\mathbb R^n)$ instance $h$ can be understood as white noise times $|\cdot|^{-s}$. Replacing $W$ with a real valued white noise amounts to projecting $\hat h$ onto the subspace of $f$ satisfying $f(-\alpha) = \overline {f(\alpha)}$. We will deal mostly with the real-valued case in this paper.

Special cases include Brownian motion ($n=1, s=1$), fractional Brownian motion ($n=1,\,\, 1/2<s<3/2$), white noise ($s=0$), the Gaussian free field or GFF ($s=1$), the bi-Laplacian Gaussian field a.k.a.\ membrane model ($s=2$) and the log-correlated Gaussian field or LGF ($s=n/2$). These fields can also be parameterized by their {\em Hurst parameter} \begin{equation} H:=s-n/2.\end{equation} The Hurst parameter describes a scaling relation: namely, if $h$ is an instance of $\mathrm{FGF}_s(\mathbb R^n)$, then $h(ax)$ has the same law as $a^H h(x)$. For example, in the case of Brownian motion (where $s=n=1$) we have $H=1/2$, and this is the usual Brownian scaling. The Hurst parameter also encodes a two-point correlation function: if $H$ is a positive non-integer and $\phi_1$ and $\phi_2$ are appropriate test functions (see further discussion below) then
\begin{equation}\label{eq:cov} \mathrm{Cov}[(h,\phi_1), (h,\phi_2)] = C(s,n) \int_{\mathbb R^n} \int_{\mathbb R^n} |x-y|^{2H} \phi_1(x)\phi_2(y)dxdy,\end{equation} for some constant $C(s,n)$. Related statements apply when $H$ is negative or when $H$ is an integer, see \cite{lodhia2016fractional}. The following figure, which first appeared in \cite{lodhia2016fractional}, illustrates the $H$ values for a range of $s$ and $n$.
\vspace{.1in}
\begin{center}
       \includegraphics[width=.4\linewidth]{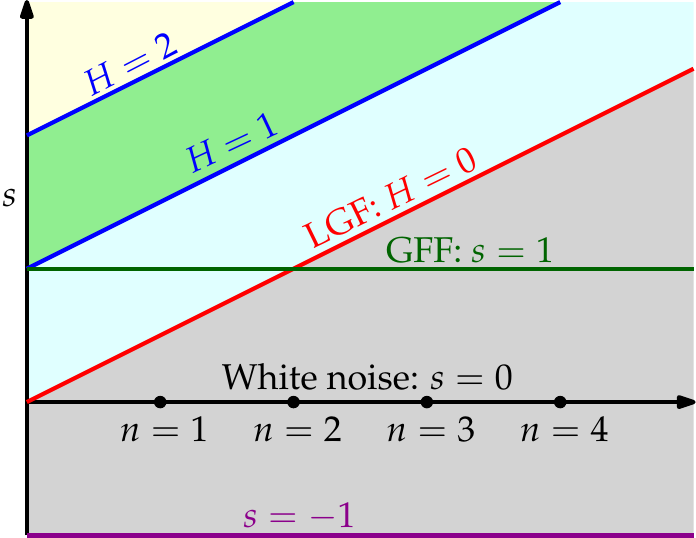}
\end{center} 
\vspace{.1in}
All of the $\mathrm{FGF}_s(\mathbb R^n)$ have translation invariant laws. The line $H=0$ (red) is special for several reasons:
\begin{enumerate}
\item When $H>0$, an instance of $\mathrm{FGF}_s(\mathbb R^n)$ is almost surely continuous (or ``has a continuous version'') but when $H \leq 0$ (grey region) it is a random {\em generalized function} and not defined pointwise.
\item When $H<0$, an instance of $\mathrm{FGF}_s(\mathbb R^n)$ is an a.s.\ well-defined generalized function, but when $H\geq 0$, an instance of $\mathrm{FGF}_s(\mathbb R^n)$ is only defined modulo the addition of a polynomial of degree $\lfloor H \rfloor$. This means that for our purposes, the quantities in \eqref{eq:cov} are only well-defined if \begin{equation} \int \phi_i(x)L(x)dx=0\end{equation} for any polynomial $L$ of degree $\lfloor H \rfloor$ in the coordinate variables $x_i$. For example, we treat a Brownian motion instance $B$ (which corresponds to $n=1, H=1/2$) as a random function defined only modulo additive constant.\footnote{One can of course ``fix'' an additive constant in the standard way by writing $\tilde B(t):=B(t) - B(0)$. One can then interpret $\tilde B$ as a Brownian motion with a ``fixed zero value'' given by $\tilde B(0) = 0$. But the law of $\tilde B$ is not translation invariant.} 
\item The value $H=0$ is therefore the ``most pathological'' in the sense that it is the only value for which an $\mathrm{FGF}_s(\mathbb R^n)$ is {\em both} only defined as a generalized function (no pointwise values) {\em and} only defined modulo additive constant. Put differently, if we try to formally define $h$ as the Fourier transform of \begin{equation} \label{eq:bigsmallfourier} |x|^{-s} W(x) = 1_{|x| < 1} |x|^{-s} W(x) + 1_{|x| \geq 1} |x|^{-s} W(x),\end{equation}
we find that $H=0$ is the unique value for which {\em neither} RHS term has a convergent Fourier transform that is defined pointwise in the usual sense: the Fourier transform of the first part (low frequencies) is defined only modulo additive constant while the Fourier transform of the second part (high frequencies) is only defined as a generalized function. When proving this, one starts by computing the variance of $h(0)$ (or $h(x)$ for some other $x$) when $\hat h$ is one of the terms on the RHS of \ref{eq:bigsmallfourier}: if $H=0$ (so $s=n/2$) then the variance computation gives $\int_U (|x|^{-s})^2dx = \int_U |x|^{-n}dx$ where $U$ is {\em either} the unit ball or its complement, and in both cases the integral diverges.
\item When $H=0$, the correlation formula \eqref{eq:cov} holds with $|x-y|^{2H}$ replaced by $-\log(|x-y|)$, hence the name {\em log-correlated Gaussian field} (LGF).
\item Using a suitable regularization, one can also construct the {\em exponential} of a multiple of the LGF; this object and its variants are well studied within the framework of {\em Gaussian multiplicative chaos}, {\em Liouville quantum gravity}, and {\em conformal field theory}  \cite{duplantier2017log}.
\end{enumerate}
Now let us discuss the ``test functions'' $\phi_1$ and $\phi_2$ in \eqref{eq:cov}. If $W$ is white noise and $\phi_1$ and $\phi_2$ are both in $L^2$ then by definition
\begin{equation}\label{eq:cov2} \mathrm{Cov}[(W,\phi_1), (W,\phi_2)] = \int_{\mathbb R^n} \int_{\mathbb R^n}  \phi_1(x)\phi_2(y)dxdy,\end{equation}
and hence the covariance in \eqref{eq:cov} can also be expressed (using integration by parts) as follows:
\begin{align} \label{eq:fgfcovariance} \mathrm{Cov}[(h,\phi_1), (h,\phi_2)] &=  \mathrm{Cov}[\bigl( (-\Delta)^{s/2} W,\phi_1\bigr), \bigl((-\Delta)^{-s/2}W,\phi_2\bigr)] \nonumber \\ 
&= \mathrm{Cov}[\bigl(  W,(-\Delta)^{-s/2}\phi_1\bigr), \bigl(W, (-\Delta)^{-s/2}\phi_2\bigr)] \nonumber \\ &= \Bigl((-\Delta)^{-s/2}\phi_1,(-\Delta)^{-s/2}\phi_2\Bigr) \nonumber\\
&= \Bigl((-\Delta)^{-s} \phi_1,\phi_2\Bigr).
\end{align}
The value $(h,\phi)$ is then defined as a random variable (with finite variance) if the following both hold: \begin{enumerate}
    \item \label{item:poly} either $H<0$ or $H \geq 0$ {\em and} $(\phi,L) = 0$ for any polynomial $L$ of degree $\lfloor H \rfloor$.
    \item \label{item:hilbert} $\|(-\Delta)^{-s/2}\phi\|^2 = \Bigl((-\Delta)^{-s/2}\phi,(-\Delta)^{-s/2}\phi\Bigr)  < \infty$. In other words, $\phi$ is in the $(-\Delta)^{s/2}$ image of $L^2$.
\end{enumerate}
We can treat \eqref{item:hilbert} as a norm and use it to define a Hilbert space $\dot{H}^{-s}$ (the Hilbert space closure w.r.t.\ this norm of the set of smooth functions satisfying \eqref{item:poly} and \eqref{item:hilbert}). The space $\dot H^{s}$ is called the {\em homogenous Sobolev space of order $s$}, and intuitively represents the space of functions whose $(s/2)$th Laplacian power (and $s$th derivatives) belong to $L^2$. See Definition~\ref{def:sobolev} and the surrounding discussion. To avoid confusion, we always formally interpret $(f,g)$ to mean $\int f(x) g(x)dx$ --- but sometimes this means the integral is defined in the usual sense, and sometimes it means $f$ is a (possibly random) generalized function and $g$ is {\em some} suitable test function. We do {\em not} write $(\cdot, \cdot)$ to denote the inner product corresponding to $\dot H^{-s}$ for $s \not = 0$, or to some other Hilbert space besides $L^2$.

Now if $h$ is an instance of FGF$_s(\mathbb R^n)$ then the set of random variables $(h, \phi)$ --- where $\phi \in \dot H^{-s}$ --- is a {\em Gaussian Hilbert space} (in the sense of \cite{janson1997gaussian}) with covariance inner product given by \eqref{eq:fgfcovariance}. See e.g.\ the discussion of Gaussian Hilbert spaces in the GFF context in \cite{sheffield2007gaussian}. Although $(h,\phi)$ is a.s.\ defined for {\em each given} $\phi \in \dot H^{-s}$, the map $\phi \to (h, \phi)$ is a.s.\ {\em not} a continuous function that makes sense for {\em all} such $\phi$ simultaneously. On the other hand, it is possible to specify a smaller (but dense in $\dot H^{-s}$) topological space of test functions $\phi$ on which the map $\phi \to (h, \phi)$ {\em is} a.s.\ everywhere-defined and continuous. Once this is done, $h$ can be understood as a {\em random distribution} or {\em generalized function} (again see \cite{sheffield2007gaussian} for a more detailed discussion in the GFF context).

There are many spaces of test functions one can choose for this purpose; the space $\Phi$ of {\em Lizorkin functions} discussed in Section~\ref{sec:preliminaries} is one example of a space that works for all $s$ simultaneously. Its dual $\Phi'$ is the space of tempered distributions modulo polynomials. Regardless of $s$, it turns out that one does not lose any {\em information} by simply defining $h$ as a random element of $\Phi'$. The reason is that for every $m$, the ``additive degree $m$ polynomial term'' is either undefined or a.s.\ {\em determined} by $h$ viewed as an element of $\Phi'$. For example, if $h$ is a 2D GFF then the additive constant is undefined, but if $h$ is 2D white noise, then the additive constant is determined (essentially because the mean value of $h$, on a centered radius-$R$ disk, should tend to zero $R \to \infty$, and there is a.s.\ exactly one choice of additive constant that makes this true). When $M$ is compact, we will see that one can alternatively define the map $\phi \to (h, \phi)$ as a continuous functional on (the $M$ analog of) $\dot H^{-s'}$ for any $s' < H$, which is equivalent to defining $h$ itself as a random element of $\dot H^{s'}$.

When $H \geq 0$ it is possible to make sense of the restriction of an instance of $\mathrm{FGF}_s(\mathbb R^n)$ to any lower-dimensional subspace, and the parameter $H$ is preserved under subspace restriction. For example, when $H=1/2$, the $\mathrm{FGF}_s(\mathbb R^n)$ is sometimes called {\em L\'evy's Brownian motion}: it is a random continuous function on $\mathbb R^n$ (defined modulo additive constant) whose restriction to any line in $\mathbb R^n$ has the law of an ordinary Brownian motion (defined modulo additive constant). More generally, for any $H \in (0,1)$ (the light blue band in the figure) the process $\mathrm{FGF}_s(\mathbb R^n)$ is {\em L\'evy's fractional Brownian motion}: a random continuous function on $\mathbb R^n$ (defined modulo additive constant) whose restriction to any line in $\mathbb R^n$ has the law of an ordinary fractional Brownian motion (defined modulo additive constant).

When $n=1$, differentiation is a ``square root of the Laplacian'' and integrating (resp.\ differentiating) an $\mathrm{FGF}_s(\mathbb R^n)$ instance has the effect of adding $1$ (resp.\ $-1$) to $s$, or equivalently to $H$. Hence any $\mathrm{FGF}_s(\mathbb R^n)$ can be obtained by starting with $H \in [0,1)$ (i.e.\ with a fractional Brownian motion or LGF instance) and then differentiating or integrating some number of times. In the context of the differential forms discussed in Section~\ref{sec:fracforms} below, the operators $d$ and $d^*$ (and their inverses) will somehow play the role of differentiation (and integration) in a manner that allows one to make similar statements for any $n>1$.

Variants of the fractional Gaussian free field can be defined on graphs, general manifolds, and domains with boundary conditions. Fractional Gaussian fields (and special cases such as Brownian motion, fractional Brownian motion, the GFF and the LGF) have been explored at length in many textbooks and survey articles, including some involving the second author of this paper \cite{mandelbrot1968fractional, sheffield2007gaussian, biagini2008stochastic,
morters2010brownian, nourdin2012selected,  freedman2012brownian, lodhia2016fractional, duplantier2017log, werner2021lecture, berestycki2024gaussian}. 

Throughout the remainder of the text, we will assume that $M$ is an oriented $n$-dimensional Riemannian manifold that is either compact (which implies that $-\Delta$ has a discrete spectrum) or equal to $\mathbb R^n$ (continuous spectrum). The theory can be easily extended to other non-compact manifolds (such as a cylinder). But we will avoid dealing with {\em fully} general {\em non-compact} manifolds because these would introduce some distracting complications, simply because the space of Laplacian eigenfunctions and eigenspaces can be quite complicated, and even determining whether the spectrum is discrete or continuous is non-trivial in general \cite{cianchi2011discreteness, cianchi2013bounds, colbois2017spectrum}.

\subsection{Fractional Gaussian forms} \label{sec:fracforms}

All the fractional Gaussian fields discussed in Section~\ref{sec:fgfoverview} have natural analogs in which random {\em scalar-valued (generalized) functions} are replaced by random {\em (generalized) differential forms}. These analogs arise naturally, for example, in many areas of mathematical physics, including gauge theory. Our aim in this work is to provide an accessible overview of their definitions and basic properties. In some respects, it is actually easier and {\em more} natural to work with fractional Gaussian forms than with their scalar analogs. When working with forms, we can use, for example, the fact that $(d+d^*)$ is a locally defined operator whose square is the Laplacian $(-\Delta)$, as we explain further below.

Fractional Gaussian $k$-forms on $\mathbb R^d$ are simple to define. First we recall that specifying a $k$-form on $\mathbb R^n$ is equivalent to specifying $\binom{n}{k}$ separate scalar functions, one for each of the $\binom{n}{k}$ independent wedge-products formed from the $n$ basis variables. (See the exterior calculus review in Section~\ref{section:exterior-calculus} for further details.) For example, if we parameterize $\mathbb R^2$ by $x=(x_1,x_2)$ then we can write a general $0$-form as a function $A(x)$, a general $1$-form as $B(x)dx_1 + C(x)dx_2$, and a general $2$-form as $D(x) dx_1 \wedge dx_2$. An instance of the {\em fractional Gaussian $k$-form} on $\mathbb R^n$ with parameter $s \in \mathbb R$ (abbreviated $\mathrm{FGF}_s^k(\mathbb R^n)$) is then described by $\binom{n}{k}$ independent random (generalized) functions, each of which {\em independently} has the law of a {\em scalar fractional Gaussian field} with parameter $s$.

On $\mathbb R^n$, applying the Laplacian to a $k$-form is equivalent to applying the Laplacian separately to each of the $\binom{n}{k}$ scalar functions. Hence, an instance of $\mathrm{FGF}_s^k(\mathbb R^n)$ can also be written as
\begin{equation}(-\Delta)^{-\frac{s}{2}} W_k,\end{equation}
where $W_k$ is a $k$-form-valued white noise. 

For a general manifold $M$, we write $\Omega^k(M)$ for the space of smooth and square-integrable $k$-forms on $M$ and we write $\Omega$ for the linear span of the $\Omega^k$. The smoothness ensures that $d$ and $d^*$ and $-\Delta$ are well defined in the traditional sense (as opposed to the distributional sense) and square integrability ensures that $\Omega$ has a natural Hilbert space closure $\overline \Omega$. However, we stress again that instances of $\mathrm{FGF}_s^k(\mathbb R^n)$ are a.s.\ neither smooth nor square integrable, and in some cases are defined only as generalized functions, not as functions.

An element of $\Omega(\mathbb R^n)$ is specified by $2^n = \sum_{k=0}^n {n \choose k}$ scalar functions in $L^2(\mathbb R^n)$. The boundary operator $d$ is formally defined on $\Omega$ via $d f = \sum_{i=1}^n \bigl( \frac{\partial}{\partial x_i} f \bigr) \wedge dx_i$ and its adjoint is written $d^*$ (see the more detailed explanation in Section~\ref{section:exterior-calculus}). These operators satisfy \begin{equation}\label{eq:ddzero} d d = d^*d^* = 0.\end{equation} One way to {\em define} the Laplacian $-\Delta$ on forms (equivalent to the definition above if $M=\mathbb R^n$) is by writing \begin{equation} \label{eq:squareddstar} -\Delta := (d+d^*)^2 = d d^* + d^*d.\end{equation} The space $\Omega$ has a natural inner product $(\cdot, \cdot)$ w.r.t.\ which the $\Omega^k$ are orthogonal subspaces, and by definition \begin{equation}\label{eq:adjoint} (du,v) = (u,d^*v),\end{equation} which implies
\begin{equation}\Bigl( (d+d^*) u, v \Bigr) = \Bigl( u, (d+d^*) v \Bigr),\end{equation}
and hence
\begin{equation}\Bigl( (d+d^*) u, (d+d^*) v \Bigr) = \Bigl(u, (d+d^*)^2 v \Bigr) = (u, -\Delta v),\end{equation} which is a more general form of the integration-by-parts formula $(\nabla u, \nabla v) = (u - \Delta v)$ for scalar $u$ and $v$ that is often used to study the Gaussian free field. It can be useful to think of $(d+d^*)$ as {\em one possible} square root of $(-\Delta)$, but when we write $(-\Delta)^{1/2}$ we mean the square root that acts on the scalar components separately, so in particular $(-\Delta)^{1/2}$ takes $k$-forms to $k$-forms, which is not true of $(d+d^*)$. We write
$$\textrm{FGF}_s^k(M)_{d=0}\,\,\,\,\, \textrm{and} \,\,\,\,\,\, \textrm{FGF}_s^k(M)_{d^*=0}.$$
for the $L^2$ orthogonal projections of $\textrm{FGF}_s^k(M)$ onto the space of $k$-forms on which $d$ (resp.\ $d^*$) vanishes. If $k=0$ then $\mathrm{FGF}_s^k(M)$ is a {\em scalar} fractional Gaussian field such as white noise ($s=0$), the Gaussian free field ($s=1$) and the membrane (or bi-Laplacian) model ($s=2$). 

Figure~\ref{fig:formchart} illustrates the so-called {\em Hodge decomposition} (see Section~\ref{sec:flatspace} for more details). Each space $\Omega^k$ of $k$-forms decomposes into orthogonal subspaces: the harmonic forms $\Omega^k_0$ on which both $d$ and $d^*$ vanish (grey boxes) and the images $\Omega^k_- := d(\Omega^{k-1})$ and $\Omega^k_+ := d^*(\Omega^{k+1})$ (red boxes) which are spaces on which only $d$ (resp.\ only $d^*$) vanishes. The map $d$ maps each red $\Omega^k_+$ box bijectively to the $\Omega^{k+1}_-$ box on its right (and sends all other boxes to zero). The map $d^*$ maps each red $\Omega^k_-$ box bijectively to the $\Omega^{k-1}_+$ on its left (and sends all other boxes to zero). The Laplacian $-\Delta = (dd^* + d^*d)$ maps each red box bijectively to itself and sends each grey box to zero. When $M$ is the $n$-dimensional torus, the forms in $\Omega^k_0$ (grey boxes) are precisely the constant forms. When $M=\mathbb R^n$ the grey boxes are empty, since there are no harmonic forms in $L^2$. For any compact manifold $M$, the $k$th grey box represents the $k$th cohomology group of $M$ (over $\mathbb R$ or $\mathbb C$). If $M$ is the $n$-dimensional sphere, for example, then $\Omega^k_0$ contains a one-dimensional space of constant forms if $k \in \{0,n\}$ and is empty otherwise. If $M$ is any compact manifold, then all of the ``grey box'' spaces (cohomology groups) must be finite dimensional \cite{Jost}.

The $k$-form white noise $W_k$ can be understood as a ``standard Gaussian'' on $\Omega^k$, equipped with the $L^2$ norm, and as such it can be written as a sum of standard Gaussians on each of $\Omega^k_-$, $\Omega^k_0$, and $\Omega^k_+$. When $s \not = 0$, however, the most straightforward way to define the FGF is to set the grey-box components to zero, since in general it does not make sense to ``apply a possibly-negative power of the Laplacian'' to a space on which the Laplacian is identically zero. In other words, for the purposes of defining general fractional Gaussian forms, one natural approach is to just forget about the grey boxes.

On the other hand, the grey boxes may be relevant for some applications, in particular when $s$ is a positive integer. For example, the scalar-valued Gaussian free field ($s=1, k=0$) can be understood as the $d$-preimage of the projection of $1$-form white noise $W_1$ onto $\Omega^1_-$. If $M$ is the two-dimensional torus, one could instead try to construct the $d$-preimage of white noise projected onto the {\em span} of $\Omega^1_-$ and $\Omega^1_0$ (this span is the $1$-form kernel of $d$). It is not hard to see that {\em this} preimage can be defined as a ``multi-valued'' generalized function on the torus. This ``multivalued height function'' is equivalent to the scaling limit of the height function of a random dimer model instance on the torus, provided that that one conditions on the ``height change'' around any non-trivial cycle being an integer \cite{kenyon2000conformal, kenyon2001dominos, dubedat2015asymptotics}. More generally, even though $d$ and $d^*$ map the grey boxes to zero, the $d$ and $d^*$ and $(-\Delta)$ {\em preimages} of elements in the grey boxes may become non-empty once the forms are lifted to the universal cover of $M$. This means that if $s$ is a positive integer and $W$ is the projection of a $k$-form-valued white noise onto one of the grey boxes, then $(-\Delta)^{-s/2}W = (d+d^*)^{-s}W$ can be defined as a random {\em multi-valued} form, which is defined only modulo the space of multi-valued forms in the kernel of $(d+d^*)^s$.
\begin{figure}
    \centering
    \includegraphics[width=.8\linewidth]{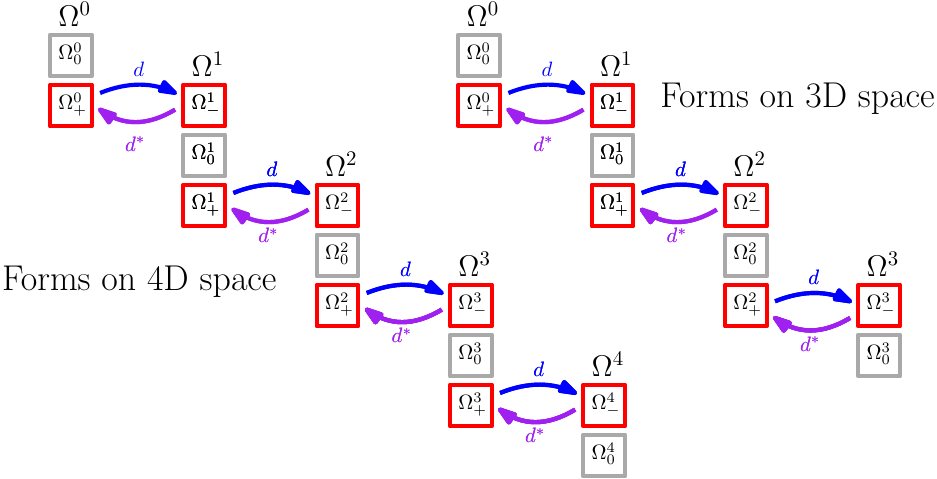}
    \caption{{\bf Hodge decomposition:} Each space $\Omega^k$ of $k$-forms decomposes into orthogonal subspaces: the harmonic forms $\Omega^k_0$ on which both $d$ and $d^*$ vanish (grey boxes), plus a space $\Omega^k_-$ on which only $d$ vanishes and a space $\Omega^k_+$ on which only $d^*$ vanishes (red boxes). By definition the elements of $\Omega^k$ are smooth, but similar orthogonal decompositions can be defined if we instead allow $\Omega^k$ to contain all $L^2$ forms, or to be a class of {\em generalized forms}.
    \label{fig:formchart}}
\end{figure}

\subsection{Fractional Gaussian forms on the torus} \label{sec:torusfgf}

Fractional Gaussian forms are perhaps most easily understood when the manifold $M$ is the $n$-dimensional torus $\mathbb T^n$ (understood as $\mathbb R^n$ modulo $2\pi \mathbb Z^n$).

To extend this construction to the space of forms, it is natural to consider for each $\alpha \in \mathbb Z^n$ the $2^n$ dimensional space ${}^\alpha\Omega$ of {\em $\alpha$-periodic forms} spanned by the forms written $e^{\icomplex \alpha \cdot x}$ times a wedge-product of basis elements. For example, if $n=3$ then ${}^\alpha\Omega$ is spanned by the following $8=2^3$ forms:
$$e^{\icomplex \alpha \cdot x}\cdot 1, \,\,\,\,  e^{\icomplex \alpha \cdot x} dx_1, \,\,\,\,e^{\icomplex \alpha \cdot x}dx_2, \,\,\,\,e^{\icomplex \alpha \cdot x} dx_3, \,\,\,\,e^{\icomplex \alpha \cdot x} dx_1 \wedge dx_2, \,\,\,\,e^{\icomplex \alpha \cdot x}dx_1 \wedge dx_3, \,\,\,\,e^{\icomplex \alpha \cdot x} dx_2 \wedge dx_3, \,\,\,\,e^{\icomplex \alpha \cdot x} dx_1 \wedge dx_2 \wedge dx_3.$$
In other words, an element of ${}^\alpha\Omega$ can be written as $e^{\icomplex \alpha \cdot x}$ times an element of the exterior algebra of $\mathbb R^3$, which is spanned by $\{1, dx_1, dx_2, dx_3, dx_1\wedge dx_2, dx_1 \wedge dx_3, dx_2 \wedge dx_3, dx_1 \wedge dx_2 \wedge dx_3 \}$.
One can check that for any $f$ in this span, the $d$ operator takes $f$ to $f \wedge \overline \alpha$ where $\overline \alpha = \sum_{i=1}^n \icomplex\alpha_idx_i$. In other words, applying $d$ corresponds to taking the wedge product with a particular vector $\overline \alpha$.

The {\bf Hodge star} operator (see Definition~\ref{def:hodge-star} for a formal definition) maps a wedge product of $k$ of the $\{dx_1, dx_2, \ldots, dx_n\}$ to the wedge product of the remaining $(n-k)$ elements. It extends linearly to a linear bijection between ${}^\alpha\Omega^k$ and ${}^\alpha\Omega^{n-k}$. Within $\mathbb T^3$, we can use this map to ``identify'' ${}^\alpha\Omega^0$ and ${}^\alpha\Omega^3$ (treating both as one-dimensional ``scalars'') and also identify ${}^\alpha\Omega^1$ and ${}^\alpha\Omega^2$, treating both as vectors in $\mathbb R^3$ if we focus on the real span (or $\mathbb C^3$ if we consider the complex span). The $d$ operator (on $\Omega^0$, $\Omega^1$, and  $\Omega^2$) encodes (respectively) the familiar gradient, curl, and divergence operators. When we restrict attention (to ${}^\alpha\Omega^0$, ${}^\alpha\Omega^1$, and ${}^\alpha\Omega^2$) the $d$ operator can be viewed as a linear function defined on these (respectively) one, three and three dimensional spaces. The operator naturally corresponds (respectively) to the familiar scalar, cross and dot products with the special vector $\overline{\alpha}$:
\begin{enumerate}
    \item {\bf Gradient} (apply $d$ to an element of ${}^\alpha\Omega^0$): multiply a scalar by the vector $\overline \alpha$.
    \item {\bf Curl} (apply $d$ to an element of ${}^\alpha\Omega^1$): take a cross product of a vector with $\overline \alpha$.
    \item {\bf Divergence} (apply $d$ to an element of ${}^\alpha\Omega^2$): take a dot product of a vector with $\overline \alpha$. (This refers to the dot product $x \cdot y = \sum x_j y_j$.)
\end{enumerate}
In each of these cases, applying $d^*$ to an element of ${}^\alpha\Omega^{3-k}$ can be understood (using the Hodge star duality mentioned above) the same way as applying $d$ to an element of ${}^\alpha\Omega^k$. It is clear that ${}^\alpha\Omega^1_-$ (resp.\ ${}^\alpha\Omega^2_{+}$) corresponds to the one-dimensional span of $\overline{\alpha}$ in $\mathbb R^3$, while  ${}^\alpha\Omega^1_+$ (resp.\  ${}^\alpha\Omega^2_-$) is the two-dimensional orthogonal complement.

The spaces ${}^\alpha\Omega$ are preserved by $d$ and $d^*$ and $(-\Delta)$, and they form an orthogonal decomposition of $\Omega$. So any fractional Gaussian form can be expanded as the sum of its (independent) projections onto ${}^\alpha \Omega$. We can also Hodge-decompose ${}^\alpha \Omega$ by writing ${}^\alpha\Omega^k_- := \Omega^k_- \cap {}^\alpha\Omega$, and similarly for $0$ and $+$ subscripts. When $\alpha \not = 0$, the space ${}^\alpha\Omega^k_0$ is empty, so only the ${}^\alpha\Omega^k_-$ and ${}^\alpha\Omega^k_+$ are relevant. These subspaces and their dimensions are illustrated in Figure~\ref{fig:alphaformchart}. Note that for $f \in {}^\alpha\Omega^k_+$ we have $(df, df) = (f, d^*df) = (f, - \Delta f) = |\alpha|^2(f,f)$. Applying a similar argument to $d^*$, it follows that two boxes in Figure~\ref{fig:alphaformchart} on the same horizontal level are isometric, with $d$ and $d^*$ both stretching the $L^2$ norm by a constant factor of $|\alpha|$. When $|\alpha| \not = 0$, one can also use induction on $k$ to show that the dimensions of ${}^\alpha\Omega^k_-$ and ${}^\alpha\Omega^k_+$ correspond to the ``Pascal's triangle'' relation: \begin{equation}{n \choose k} = \mathrm{dim}({}^\alpha\Omega^k) = \mathrm{dim}({}^\alpha\Omega^k_-)+ \mathrm{dim}({}^\alpha\Omega^k_+) \,\,\,\,\,\,\textrm{where}\,\,\,\,\,\, \mathrm{dim}({}^\alpha\Omega^k_-) = {n-1 \choose k-1}\,\,\,\,\,\, \textrm{and}\,\,\,\,\,\, \mathrm{dim}({}^\alpha\Omega^k_+) = {n-1 \choose k}. \end{equation} In particular, adding the red-box numbers in each column (for the 4D forms on the left) we find $(1,3,3,1,0) + (0,1,3,3,1)=(1,4,6,4,1)$, which relates the third and fourth rows of Pascal's triangle. Another way to compute the dimensions of ${}^\alpha\Omega^k_-$ and ${}^\alpha\Omega^k_+$ is to replace the basis $(dx_1,dx_2, \ldots, dx_n)$ with a new orthonormal basis $(dy_1, dy_2, \ldots, dy_n)$ spanning the same space such that $dy_n$ is a multiple of $\overline \alpha$. Then ${}^\alpha\Omega^k$ is spanned by terms obtained as a wedge product of $k$ elements from $\{dy_1, \ldots, dy_n\}$, while ${}^\alpha\Omega^k_-$ (resp.\ ${}^\alpha\Omega^k_+$) is spanned by those products that {\em include} $y_n$ plus $k-1$ of the remaining $n-1$ elements (resp.\ {\em do not include} $y_n$ but include $k$ of the remaining $n-1$ elements). Very informally, the $d$ operator ``appends $\wedge dy_n$'' and the $d^*$ operator ``removes $\wedge dy_n$'' (i.e., appends $\wedge dy_n$ to the dual form) and both $d$ and $d^*$ multiply magnitude by $|\alpha|$.

When $\alpha=0$, the only $\alpha$-periodic forms are the constant forms, which are all harmonic; in other words, the $\alpha=0$ version of the figure above would include only the grey boxes from Figure~\ref{fig:formchart} and none of the red boxes, with the corresponding dimension of ${}^\alpha\Omega^k={}^\alpha\Omega^k_0$ being ${n \choose k}$.

\begin{figure}
    \centering
    \includegraphics[width=.8\linewidth]{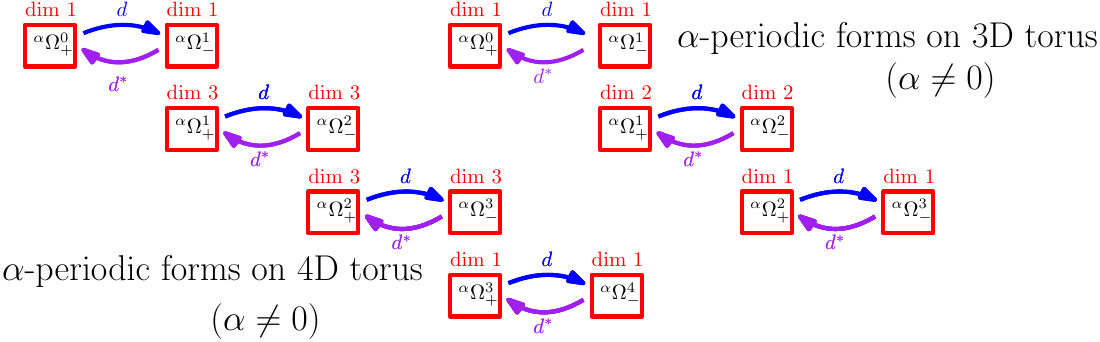}
    \caption{Decomposition of ${}^\alpha \Omega$ into ${}^\alpha \Omega^k_-$ and ${}^\alpha \Omega^k_+$ and corresponding dimensions when $\alpha \not = 0$.}
    \label{fig:alphaformchart}
\end{figure}

\subsection{Massive fields and Chern-Simons variants} \label{sec:introvariants}
Each of the red boxes in Figure~\ref{fig:formchart} or Figure~\ref{fig:alphaformchart}
can be assigned a natural ``standard Gaussian'' random variable $A$, where the density function for $A$ is formally proportional to \begin{equation}\exp\Bigl( -\frac{\|A\|^2}{2} \Bigr).\end{equation} This $A$ can be understood as a projection of a form-valued white noise onto that box. For example, the $A$ associated to the box $\Omega_-^1$ is the gradient of the Gaussian free field.

If we wished to instead describe the gradient of the {\em massive Gaussian free field} we would focus on the same box but use a different formal density function, in which we {\em combine} the $L^2$ norm on $\Omega_-^1$ with the $L^2$ of its preimage under $d$, so that the formal density is proportional to \begin{equation}\exp\Bigl( -\frac{\|A\|^2 + \lambda\|d^{-1} A\|^2}{2} \Bigr),\end{equation}
where $\lambda = m^2$ and $m>0$ is the so-called ``mass'' constant. On a fixed box of Figure~\ref{fig:alphaformchart}, this amounts to replacing $\exp\bigl(-\|A\|^2/2\bigr)$ with $\exp\Bigl(-\bigl(1+\frac{\lambda}{|\alpha|^2}\bigr)\frac{\|A\|^2}{2}\Bigr)$. There are many variants of this construction, as one may begin with any red box of Figure~\ref{fig:formchart} and replace $\bigl(1+\frac{\lambda}{|\alpha|^2}\bigr)$ with any linear combination of (possibly fractional) powers of $|\alpha|$, or some other function of $\alpha$. All of these variants give rise to different Gaussian probability measures on the space of forms.

One of the more interesting variants involves the {\em Chern-Simons} action, which involves the case $n=3$ and $k=1$. If $A_1$ and $A_2$ are two ``degenerate $1$-forms'' that transport a unit of current around simple non-intersecting oriented loops $\gamma_1$ and $\gamma_2$, respectively, then one can define a {\em linking number} of the pair $(A_1, A_2)$. This function on pairs $(A_1, A_2)$ can then be extended bilinearly to a quadratic form on $\Omega^1_+ \times \Omega^1_+$ (the set of pairs of divergence-free $1$-forms). It turns out that one way to express this quadratic form is as a constant times $(A_1, \curl^{-1} A_2)$.

Note that $\curl$ is {\em a} square root of $(-\Delta)$, but it is not {\em the} canonical square root denoted $(-\Delta)^{1/2}$ because $\curl$ is not positive-definite. It has a space of positive and a space of negative eigenvalues: the box ${}^\alpha \Omega^1_+$ (on the right side of Figure~\ref{fig:alphaformchart}) has complex dimension $2$, and can be decomposed into two orthogonal one-dimensional eigenspaces of $\curl$ with eigenvalues $|\alpha|$ and $-|\alpha|$.

The fact that the operator $(A_1, \curl^{-1} A_2)$ fails to be positive definite complicates its use in constructing a Gaussian probability measure. But one can consider a linear combination of  the operator $(A_1, \curl^{-1} A_2)$ (on the torus, say) with another operator, and such a combination may be positive definite.

Let us suggest one such construction which is particularly natural. We recall from \cite{chandgotia2023large} that the scaling limit of the flow induced by an instance of the 3D dimer model is conjectured to be $\textrm{FGF}_1^1(M)_{d^*=0}$, see Problem~\ref{prob:dimer}. If we consider this random flow {\em weighted} by (a constant times) the exponential of the {\em twist} parameter associated to the dimer instance, we obtain a random discretized $1$-form whose scaling limit we can conjecture (see Problem~\ref{prob:twist}) to be the standard Gaussian w.r.t.\ to the quadratic form on $\Omega^1_+$ given by $\|A\|^2+\lambda (A, \curl^{-1} A)$. Essentially, one is replacing the $(1 + \frac{\lambda}{|\alpha|^2})$ from before with the coefficient $(1 \pm \frac{\lambda}{|\alpha|})$, where the $\pm$ represents $+$ on one of the eigenspaces of ${}^\alpha \Omega^1_+$ and $-$ on the other eigenspace. These coefficients are always positive if $\lambda<1$ because $|\alpha|$ is always at least $1$ (unless $\alpha = 0$).

\subsection{Differential forms in gauge theory}\label{section:differential-forms-in-gauge-theory}
A more detailed introduction to Yang-Mills random connections and lattice Yang-Mills theory appears in Section~\ref{sec:yangmills}, but we will roughly outline some of the basic ideas here and explain the sense in which the Gaussian forms described in this paper arise as low-temperature-fine-mesh limits of those that appear in lattice Yang-Mills gauge theory.

We have thus far discussed $1$-forms defined on a manifold $M$ for which the ``target space'' is $\mathbb R$ or $\mathbb C$. We can also replace $\mathbb R$ or $\mathbb C$ with any other finite-dimensional vector space $V$. An $\mathbb R^m$-valued $k$-form, for example, is equivalent to an ordered list of $m$ distinct $\mathbb R$-valued $k$-forms, one for each component of $\mathbb R^m$. Using this approach, all of the random forms FGF$_k^s(M)$ have obvious analogs in which the target space is $V$ instead of $\mathbb R$ or $\mathbb C$. As an important example, a $1$-form that takes values in a {\em Lie algebra} (understood as a finite-dimensional vector space) is called a {\em connection}. Given a smooth path on $M$, one can ``integrate'' the $1$-form along that path to produce a trajectory in the corresponding Lie group (see 
Section~\ref{sec:wilsonloops} for more details).

In lattice Yang-Mills gauge theory, one considers ``discrete connections'' defined as follows. Given a graph $\Lambda$, a {\em discrete connection} assigns to each oriented edge $e$ an element $Q_e$ of some group $G$, under the constraint that if $e'$ is the orientation reversal of $e$ then $Q_{e'} = Q_e^{-1}$. For example, in the image below, each edge is labeled with an element from the subgroup of $O(2)$ generated by axis-reflections and $90$-degree rotations. The matrix associated to a directed edge (left) is the inverse of the matrix associated to the orientation-reversed edge (right).
\vspace{.1in}
\begin{center}
       \includegraphics[width=.3\linewidth]{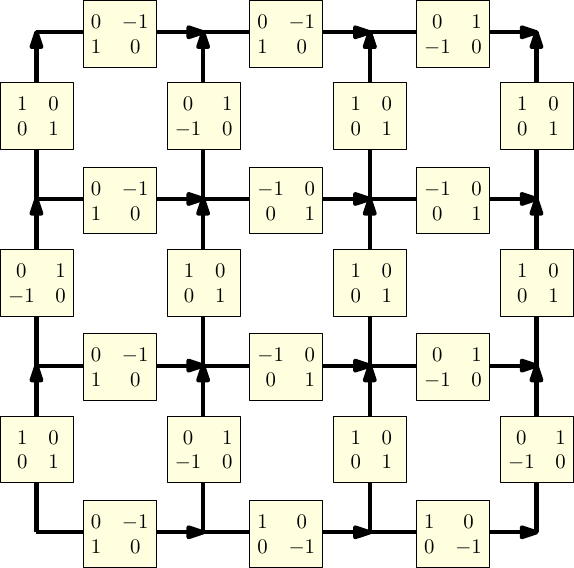} \hspace{1in}        \includegraphics[width=.3\linewidth]{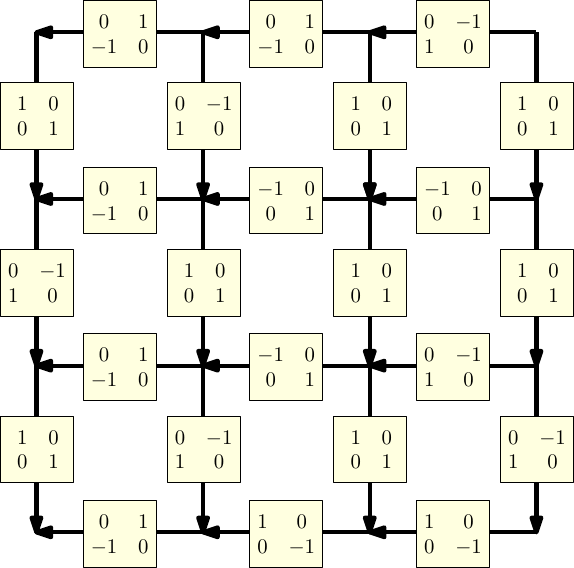}
\end{center} 
\vspace{.1in}
Given any directed path $p = (e_1, \ldots, e_n)$ of edges in $\Lambda$, we can define $Q_p := Q_{e_1} Q_{e_2} \cdot \ldots \cdot Q_{e_n}$ as a discrete analog of ``integrating a connection along a path $p$.'' In lattice Yang-Mills theory, one chooses a {\em random connection} $Q$ where the law of the $Q_e$ is given by i.i.d.\ Haar measure {\em weighted} in a way that is biased toward configurations for which $Q_p$ is close to the identity whenever $p$ is a small (length four) loop.

If $e_1, e_2, \ldots, e_{2d}$ are the oriented edges that terminate at a fixed vertex $v$, one can perform a ``gauge transformation'' by replacing each $Q_{e_j}$ with $Q_{e_j}g$ for some fixed group element $g$. Since $(Q_{e_i} g ) (Q_{e_j} g)^{-1} = Q_{e_i} Q_{e_j}^{-1}$, applying a gauge transformation does not change $Q_p$ for any closed cycle $p$ that starts and ends away from $v$. It turns out (see Section~\ref{sec:yangmills} for more details) that one can always ``gauge fix'' in a way that guarantees that each edge $e$ within a given spanning tree of $\Lambda$ will have $Q_e$ equal to the identity. Having done this, one can then argue that at sufficiently low temperatures, all of the remaining $Q_e$ are with high probability {\em close} to the identity.

In fact, it turns out that if we consider gauge-fixed Yang-Mills on $\Lambda$ at low enough temperature, then we can treat all of the matrices as tiny perturbations of the identity, and their products become approximately abelian, simply because $(I+a)(I+b) \approx (I + a + b)$ if $a$ and $b$ are small. If we consider the very low temperature limit (where everything becomes abelian) and take a subsequent fine-mesh, we obtain a continuum connection. In this particular limiting setting (where everything is abelian) the {\em curvature} of a $1$-form $A$ is simply the $2$-form $dA$. There is also another way to gauge fix, which allows one to assume that $d^*A=0$ (i.e.\ the divergence of the $1$-form is zero) instead of assuming that $1$-form vanishes in directions associated to the limit of the spanning tree.

Using this interpretation, the $\mathbb R$-valued $1$-form $\textrm{FGF}_1^1(M)$ and its {\em gauge-fixed} projection $\textrm{FGF}_1^1(M)_{d^*=0}$ arise as small-scale limits of $\unitary(1)$ Yang-Mills gauge theory, since the Lie algebra corresponding to $\unitary(1)$ is $\mathbb R$ (see Section~\ref{section:U(1)-theory}). When $\unitary(1)$ is replaced with another compact Lie group, the corresponding small-scale limit is a Lie-algebra-valued analog of $\textrm{FGF}_1^1(M)_{d^*=0}$. If $A$ is an instance of $\textrm{FGF}_1^1(M)_{d^*=0}$, then the curvature $dA$ has the law of (a Lie-algebra valued analog of) $\textrm{FGF}_2^0(M)_{d=0}$.

Although Wilson loop observables of this connection are undefined for smooth loops when $n > 2$, it is possible (see Section~\ref{sec:wilsonloops}) to make sense of these these integrals for sufficiently regular ``big loops'' (trajectories in the space of divergence-free $1$-forms obtainable as limits of finite-length loops). We will also define ``big surfaces'' (trajectories in the space of $2$-forms obtainable as limits of smooth surfaces with boundary) and show that Stokes' theorem converts big-loop integrals of $\textrm{FGF}_1^1(M)_{d^*=0}$ into big-surface integrals of $\textrm{FGF}_2^0(M)_{d=0}$ or $\textrm{FGF}_2^0(M)$. In this limiting setting, ``gauge invariance'' refers to the fact that one can add a gradient form to an instance of $\textrm{FGF}_1^1(M)_{d^*=0}$ without impacting the Wilson loop integrals.

Let us conclude this subsection with some Yang-Mills references. The subject is too large for us to properly survey here (an online search for ``Yang-Mills'' or ``gauge theory'' turns up hundreds of thousands of references) but we can suggest a few papers for a mathematician getting started in the subject. For an overview of lattice and continuum gauge theory, the reader may see e.g.\ the recent work of Chatterjee \cite{Chatterjee2024} on Gaussian connections, as well as recent overviews by Chatterjee and others on Yang-Mills gauge theory more broadly
\cite{chatterjee2019yang, chatterjee2016leading,chatterjee2021probabilistic}. See also the overview of the Clay Millenium Prize problem concerning the construction and analysis of non-Gaussian Yang-Mills theories in dimension four \cite{Jaffe2006a}.

If $M$ is a two-dimensional manifold, the theory is somewhat simpler, see e.g.\ recent work by L\'evy and others \cite{Levy2008, Levy2010, Levy2011a, Levy2019a, driver2017a, BG2018, Chevyrev2019, DN2020, DL2023, DL2022}. See \cite{Wilson1974,wilson2001mathematical,wilson2004origins} for the history of {\em Wilson loop expectations}. Their expression in terms of surface expansions goes back at least to early work of Gross, King and Sengupta, and Gross and Taylor \cite{Gross1989, Gross1993, Gross1993a}. Chatterjee and Jafarov developed a representation of Wilson loop expectations in terms of vanishing string trajectories \cite{Chatterjee2019a, chatterjee2016, jafarov2016}. A key step in deriving their representation was a recursion known as the master loop equation, see \cite{shen2022new, DM2022, AN2023, cao2023random} for new derivations of this equation. There are many other recent results on lattice gauge theories, see \cite{Cao2020, FLV2022, GS2023, For2021, FLV2022b, For2022, FLV2023, Adh2024, AC2022, ChaChe2024, CG2024}

For recent progress on the stochastic PDE approach to Yang--Mills, see \cite{shen2018, CCHS2020, CCHS2022, Che2022, shen2023stochastic, CS23, BC2023, BCG2023, BC24, SZZ2024}. A state space for 3D Yang--Mills was proposed in \cite{CC23, CC24, CCHS2022}. See recent work on Brownian loops and random surfaces \cite{ang2022brownian} and recent work of Magee and Puder \cite{Magee2015}. Finally, let us mention that the current authors were part of more recent work using sums over random surfaces to explain both 2D gauge theory (in the continuum) and higher-dimensional gauge theory (on a lattice) \cite{park2023wilson, cao2023random}.

\subsection{Overview of remaining sections}
We will introduce preliminary notation in Sections~\ref{sec:preliminaries} and review the exterior calculus in greater detail \ref{section:exterior-calculus}. This will include more formal definitions of the wedge and Hodge star operators on general compact manifolds, along with proofs of several fundamental identities. We introduce a notion of generalized function (dual of the Lizorkin functions) that is closed under the fractional Laplacian and its inverses.

We construct fraction Gaussian forms formally in Section~\ref{sec:fgfconstruction}. As is the case with scalar fractional Gaussian fields, there is both a \textit{Gaussian Hilbert space} interpretation and a \textit{random generalized function} interpretation. We review Gaussian $1$-forms in the Yang-Mills context in Section~\ref{sec:yangmills}.

In Section~\ref{sec:chernsimon} we discuss the Chern-Simons action and the corresponding random form constructions. We discuss gauge transformations in Section~\ref{sec:gaugetransformations} and subspace restrictions in Section~\ref{sec:subspacerestrictions}.  We discuss Wilson loop expectations and the so-called surface expansion in  Section~\ref{sec:wilsonloops}. We proceed with lattice Gaussian forms in Section~\ref{sec:latticeforms} and the confinement and mass gap problems in Section~\ref{sec:confinement}. Finally we present a list of open problems in Section~\ref{sec:openproblems}, including questions about possible Gaussian scaling limits of discrete models, as well possible off-Gaussian continuum theories constructed via scaling limits or otherwise.

\subsection{Acknowledgements}

We thank Bjoern Bringmann, Sourav Chatterjee, Hao Shen, and Xin Sun for helpful discussions. This work was started while both authors were at the Institute for Advanced Study; we thank IAS for its hospitality during our stay. S.C.\ was partially supported by a Minerva Research Foundation membership while at IAS, as well as by the NSF under Grant No.\ DMS 2303165. S.S.\ was partially supported by NSF Grant No.\ DMS 2153742.

\section{Preliminaries} \label{sec:preliminaries}

In this section, we introduce some notation and definitions that will be used throughout the remainder of the paper. We identify $\T^n \cong [-\pi, \pi)^n$ in this paper.

\begin{notation}
We denote by $\N(0, \sigma^2)$ the normal distribution with mean zero and variance $\sigma^2$. We denote by $\N_\C(0, \sigma^2)$ the law of a complex-valued random variable $Z = X + \icomplex Y$, where $X, Y \stackrel{i.i.d.}{\sim} \N(0, \sigma^2/2)$.
\end{notation}

\begin{notation}
We will implicitly sum over repeated indices, unless otherwise noted. Thus an expression like $x_k y_k$ really means
\[ \sum_k x_k y_k, \]
where $k$ ranges over some index set, which is to be understood from context.
\end{notation}

\begin{notation}[Symmetric group]
For $n \geq 1$, we let $\symgrp_n$ be the symmetric group on $n$ elements.
\end{notation}

\begin{notation}
Given $f, g \colon \manifold \ra \C$, we write
\begin{equs}\label{eq:L2-inner-product}
(f, g) := \int_M f(x) \ovl{g(x)} \mrm{vol}(x),
\end{equs}
where $\mrm{vol}$ is the volume form on $\manifold$. In the case $\manifold = \R^n, \T^n$, this definition just reduces to the usual expression $\int_{\manifold} f(x) \ovl{g(x)} dx$. More generally, given a finite-dimensional inner product space $(V, \langle \cdot, \cdot \rangle_V)$ and $f, g \colon \manifold \ra V$, we write
\begin{equs}
(f, g) = \int_M \langle f(x), g(x) \rangle_V \mrm{vol}(x).
\end{equs}
\end{notation}

\subsection{Distribution spaces}\label{section:distribution-spaces}

We first discuss distributions on $\R^n$. As in \cite[Section 2]{lodhia2016fractional}, let $\mc{S} = \mc{S}(\R^n)$ be the real Schwartz space (i.e. the space of real-valued Schwartz functions), and $\mc{S}'$ be its topological dual, the space of tempered distributions. We denote by $\schwartz_\C$ the complex Schwartz space, and $\schwartz_\C'$ its topological dual.

\begin{remark}[Real vs. complex-valued test functions]
This convention is somewhat nonstandard, as most references will take $\schwartz$ itself to be the space of complex-valued Schwartz functions. Because we will mostly discuss real-valued Gaussian fields in  this survey, we find it convenient to take our spaces of test functions to also be real-valued (as is done in the previous survey \cite{lodhia2016fractional}). Nevertheless, we still need to briefly mention and work with the complex Schwartz space $\schwartz_\C$, since the Fourier transform (as we discuss next) is only closed on $\schwartz_\C$, as the Fourier transform of a real-valued function is in general complex-valued.
\end{remark}

Let $\mc{P} \sse \mc{S}'$ be the space of polynomials. Given $\phi \in \mc{S}'$, $f \in \mc{S}$, we will write $(\phi, f) := \phi(f)$. Let $\mc{F} : \schwartz_\C \ra \schwartz_\C$ be the Fourier transform. Explicitly, for $f \in \schwartz_\C$, 
let
\begin{equs}\label{eq:fourier-transform} (\mc{F} f)(p) := \frac{1}{(2\pi)^{\frac{n}{2}}} \int_{\R^n} f(x) e^{-\icomplex p \cdot x} dx. 
\end{equs}
The inverse Fourier transform is given by
\[ (\mc{F}^{-1} f)(x) := \frac{1}{(2\pi)^{\frac{n}{2}}}\int_{\R^n} f(p) e^{\icomplex p \cdot x} dp. \]
For $f \in \schwartz_\C$, we have that $\mc{F} \mc{F}^{-1} f = f$. We also have the Plancherel identity
\begin{equs}\label{eq:plancherel}
(f, g) = (\mc{F} f, \mc{F} g), ~~ f, g \in \schwartz_\C.
\end{equs}
The Fourier transform extends by duality to an operator $\mc{F} : \schwartz'_\C \ra \schwartz'_\C$ on the space of tempered distributions.

\begin{notation}
We will often use the common notation $\widehat{f}(p) = \mc{F} f(p)$. However, occasionally we will find it advantageous to write $\mc{F} f$ instead of $\widehat{f}$. Also, we will often write $|p|^s \mc{F} f$ as a shorthand for the function $p \mapsto |p|^s (\mc{F} f)(p)$.
\end{notation}

\begin{remark}[Fourier transform of real-valued functions]
Note that for $f \in \schwartz$ (so that $f$ is real-valued), then its Fourier transform satisfies
\begin{equs}
\widehat{f}(-p) = \ovl{\widehat{f}(p)} \text{ for all $p \in \R^n$.}
\end{equs}
The converse is also true.
\end{remark}

\begin{definition}[Fractional Laplacian]
For $s \geq 0$, we define the fractional Laplacian $(-\Delta)^s : \mc{S} \ra \mc{S}'$ by
\begin{equs}
(-\Delta)^s f := \mc{F}^{-1}\big( |\cdot|^{2s} \mc{F} f \big).
\end{equs}
\end{definition}


As mentioned above, a {\em tempered distribution} (a.k.a.\ {\em generalized function}) is a continuous linear map on the Schwartz space. The idea behind the definition is that even if a $\phi \in \mc S'$ is ``too rough'' to be defined as a function at individual points, we can characterize $\phi$ completely by specifying $(\phi, f)$ for $f$ in the space $\mc S$ of ``extraordinarily well-behaved'' functions.

When working with fractional Gaussian fields, however, the Schwartz space $\mc{S}$ is not the most convenient space for this purpose, because $\mc{S}$ is not closed under taking negative powers of the Laplacian. An alternative is the Lizorkin space $\Phi \sse \mc{S}$, which we define next, and which has the key property that $(-\Delta)^s : \Phi \ra \Phi$ is an isomorphism for any $s \in \R$.

\begin{definition}[Lizorkin space]
A {\bf Lizorkin function} is an element of the set $\Phi := \bigcap_{m \geq 0} \Delta^m \mc{S}$. We equip $\Phi \sse \mc{S}$ with the subspace topology induced by the topology of $\mc{S}$. Let $\Psi := \mc{F}(\Phi)$ be the image of the Fourier transform on $\Phi$. Let $\Phi'$ be the topological dual of $\Phi$. Elements of $\Phi'$ are referred to as Lizorkin distributions.
\end{definition}

\begin{remark}
When working with fractional Gaussian fields, it is often necessary to work with a subset of Schwartz space. The most common example is the 2D GFF, which is only defined modulo additive constant, or equivalently, is only defined on mean zero test functions. In this survey, our default space of test functions will be the space $\Phi$ of Lizorkin functions, and our default notion of generalized function is an element of the dual space $\Phi'$. This is done mostly for technical convenience, to ensure that arbitrary powers of the Laplacian are well defined, and so that we have a simple space that works for all $s$ simultaneously (see Proposition \ref{prop:fractional-laplacian-isomorphism-on-lizorkin-space}). As mentioned in Section~\ref{sec:fgfoverview}, one does not lose any {\em information} by restricting to test functions in $\Phi$. The reason is that Lizorkin space is dense in the relevant function spaces (Lemma \ref{lemma:lizorkin-space-dense-fractional-sobolev-space})---in particular, if $\phi$ is in the Hilbert space of functions for which $(h, \phi)$ can be defined as a Gaussian random variable (see Section~\ref{sec:fgfoverview}) then $\phi$ is a Hilbert-space limit of elements of $\Phi$, and $(h,\phi)$ can be defined as an a.s.\ limit of a sequence of $(h,\psi)$ with $\psi \in \Phi$. As mentioned in Section~\ref{sec:fgfoverview}, there are many alternative spaces one can use, see e.g.\ the approaches in previous surveys in this area \cite{sheffield2007gaussian, lodhia2016fractional, duplantier2017log}.
\end{remark}

For the reader's convenience, we give some equivalent characterizations of the spaces $\Phi, \Psi$. The proofs may be found in \cite[Section 4]{troyanov2007hodge}. 
\begin{lemma}\label{lemma:phi-psi-characterizations}
Let $f \in \mc{S}$. We have that $f \in \Phi$ if and only if $(P, f) = 0$ for any polynomial $P \in \mc{P}$. Next, let $g \in \mc{S}$. The following are equivalent:
\begin{enumerate}
    \item $g \in \Psi$,
    \item $(\ptl^\alpha g)(0) = 0$ for all multi-indices $\alpha$,
    \item $(\ptl^\alpha g)(p) = o(|p|^t)$ as $|p| \ra 0$ for any multi-index $\alpha$ and any $t > 0$,
    \item $|p|^{-2m} g \in \mc{S}$ for any $m \geq 0$.
\end{enumerate}
\end{lemma}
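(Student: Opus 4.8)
The plan is to prove the two claims separately, since the characterization of $\Phi$ and the characterizations of $\Psi$ are linked only through the Fourier transform.

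For the statement about $\Phi$: one direction is immediate, since if $f \in \Phi = \bigcap_{m \geq 0} \Delta^m \mc{S}$ then for each $m$ we may write $f = \Delta^m g_m$ with $g_m \in \mc{S}$, and then for a polynomial $P$ of degree $< 2m$ we integrate by parts $2m$ times to get $(P, f) = (P, \Delta^m g_m) = (\Delta^m P, g_m) = 0$; letting $m \to \infty$ handles all polynomials. For the converse, suppose $(P, f) = 0$ for every polynomial $P$. Passing to the Fourier side, this is equivalent (by the usual duality between polynomials and derivatives, and the fact that $\mc{F}$ maps $x^\alpha$ to a constant times $\partial^\alpha \delta_0$) to the condition $(\partial^\alpha \widehat f)(0) = 0$ for all multi-indices $\alpha$ — that is, $\widehat f$ vanishes to infinite order at the origin. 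I would then show directly that such a $\widehat f \in \schwartz_\C$ can be divided by $|p|^{2m}$ to stay in $\schwartz_\C$ for every $m$ (this is essentially item (4) below), so that $\widehat f = |p|^{2m}\,\widehat{g_m}$ for some $\widehat{g_m} \in \schwartz_\C$, and taking inverse Fourier transforms gives $f = (-\Delta)^m g_m \in \Delta^m \mc{S}$; since $m$ was arbitrary, $f \in \Phi$.

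For the chain of equivalences involving $\Psi$: by definition $g \in \Psi$ iff $g = \widehat f$ for some $f \in \Phi$, which by the first part of the lemma holds iff $g = \widehat f$ with $(P,f) = 0$ for all polynomials $P$. So (1)$\iff$(2) reduces to the same Fourier-duality observation used above: $(P, f) = 0$ for all $P$ is equivalent to $(\partial^\alpha g)(0) = 0$ for all $\alpha$. The implication (2)$\implies$(3) is Taylor's theorem with remainder applied to $\partial^\alpha g$ at the origin: since all derivatives of $\partial^\alpha g$ vanish at $0$, the Taylor remainder shows $\partial^\alpha g(p) = o(|p|^t)$ for every $t$. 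The implication (3)$\implies$(2) is trivial (take $t$ large and $p \to 0$). The remaining point is (2)$\iff$(4): if $g \in \mc{S}$ vanishes to infinite order at the origin, then $|p|^{-2m} g$ is smooth across the origin — this is the one genuinely analytic step, requiring one to check that dividing by $|p|^{2m}$ does not destroy smoothness or Schwartz decay — while conversely if $|p|^{-2m} g \in \mc{S}$ for all $m$, then $g = |p|^{2m}(|p|^{-2m} g)$ visibly vanishes to order $\geq 2m$ at $0$, and letting $m \to \infty$ gives (2).

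The main obstacle is the step showing that a Schwartz function vanishing to infinite order at the origin can be divided by $|p|^{2m}$ and remain Schwartz (equivalently, the nontrivial half of (2)$\iff$(4)). Smoothness away from the origin and the decay at infinity are automatic; the issue is smoothness \emph{at} the origin, where one must control all derivatives of $|p|^{-2m}g$. This can be done by writing $g$ via its (infinite-order-vanishing) Taylor expansion with integral remainder and estimating, or more slickly by invoking a division lemma for $|p|^2$ (a smooth function vanishing to order $2$ at a simple zero of $|p|^2$ can be divided by $|p|^2$ smoothly). Since the excerpt states these characterizations are proved in \cite[Section 4]{troyanov2007hodge}, I would cite that reference for the full details of this analytic point rather than reproduce it, and present the Fourier-duality reductions above as the conceptual skeleton.
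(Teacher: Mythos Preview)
Your sketch is correct and in fact goes well beyond what the paper does: the paper gives no proof at all, merely stating that ``the proofs may be found in \cite[Section 4]{troyanov2007hodge}.'' Your outline --- the Fourier duality between moment conditions and derivative-vanishing at the origin, the Taylor-remainder argument for (2)$\Leftrightarrow$(3), and the identification of the division-by-$|p|^{2m}$ step as the only genuinely analytic point --- is accurate, and since you also defer to the same reference for that last step, your proposal is strictly more informative than the paper's own treatment.
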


\begin{remark}\label{remark:lizorkin-space-closed-under-differentiation}
From this lemma, we see that $\Phi$ is closed under taking derivatives. Indeed, for $f \in \Phi$, $P \in \mc{P}$, and a multi-index $\alpha$, we have that (by integration by parts)
\[ (P, \ptl_\alpha f) = (-1)^{|\alpha|} (\ptl_\alpha P, f) = 0.\]
\end{remark}

We also mention that $\Phi'$ may naturally be interpreted as the space of tempered distributions modulo polynomials:

\begin{lemma}[Proposition 4.8 of \cite{troyanov2007hodge}]
The space $\Phi'$ is canonically isomorphic to $\mc{S}' / \mc{P}$.
\end{lemma}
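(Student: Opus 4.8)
The statement follows from the Hahn--Banach theorem together with the characterization of $\Phi$ in Lemma~\ref{lemma:phi-psi-characterizations}. Write $\Phi^\perp := \{\phi \in \mc{S}' : (\phi, f) = 0 \text{ for all } f \in \Phi\}$ for the annihilator of $\Phi$ in $\mc{S}'$. Since $\Phi$ carries the subspace topology inherited from $\mc{S}$, every continuous linear functional on $\Phi$ extends, by Hahn--Banach, to a continuous linear functional on $\mc{S}$; thus the restriction map $\mc{S}' \ra \Phi'$, $\phi \mapsto \phi|_\Phi$, is surjective, and its kernel is by definition $\Phi^\perp$. Hence it suffices to prove that $\Phi^\perp = \mc{P}$, for then the restriction map descends to a canonical isomorphism $\mc{S}'/\mc{P} \cong \Phi'$.

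The inclusion $\mc{P} \sse \Phi^\perp$ is immediate from Lemma~\ref{lemma:phi-psi-characterizations}: if $P \in \mc{P}$ and $f \in \Phi$ then $(P, f) = 0$. For the reverse inclusion, suppose $\phi \in \Phi^\perp$ and set $\widehat\phi := \mc{F}\phi \in \schwartz_\C'$. By the definition of the Fourier transform on $\schwartz_\C'$ as the transpose of $\mc{F}$ on test functions, and since $\mc{F}$ restricts to a bijection of (the complexification of) $\Phi$ onto $\Psi = \mc{F}(\Phi)$, the distribution $\widehat\phi$ annihilates all of $\Psi$. In particular, $\widehat\phi$ annihilates every Schwartz function supported away from the origin, since such a function lies in $\Psi$ by part (2) of Lemma~\ref{lemma:phi-psi-characterizations}; therefore $\mrm{supp}(\widehat\phi) \sse \{0\}$. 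A tempered distribution supported at a single point is a finite linear combination of derivatives of the Dirac delta, so $\widehat\phi = \sum_{|\alpha| \le N} c_\alpha\, \ptl^\alpha \delta_0$ for some $N$ and constants $c_\alpha$. Applying $\mc{F}^{-1}$ and using that $\mc{F}^{-1}(\ptl^\alpha \delta_0)$ is a constant multiple of the monomial $x \mapsto x^\alpha$, we conclude that $\phi$ coincides with a polynomial, i.e.\ $\phi \in \mc{P}$. This establishes $\Phi^\perp = \mc{P}$ and hence the desired isomorphism at the level of vector spaces.

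To upgrade the algebraic isomorphism to a topological (hence genuinely canonical) one, equip $\mc{S}'/\mc{P}$ and $\Phi'$ with their weak-$*$ (or strong dual) topologies; continuity of the induced map and of its inverse is routine, using that $\Phi$ is a closed subspace of the Fréchet space $\mc{S}$ and that the restriction map $\mc{S}' \ra \Phi'$ is continuous and open onto its image.

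The step requiring the most care is the passage $\mrm{supp}(\widehat\phi) \sse \{0\} \implies \widehat\phi \in \mrm{span}\{\ptl^\alpha\delta_0\}$, which rests on the fact that a tempered distribution has finite order near each point; one should also keep track of the real-versus-complex bookkeeping, since $\mc{F}$ sends the real space $\Phi$ into the complex space $\Psi$, although this is harmless because a real-valued tempered distribution whose Fourier transform is supported at the origin is automatically a real polynomial. (Alternatively, one can avoid the finite-order input by invoking the bipolar theorem: Lemma~\ref{lemma:phi-psi-characterizations} gives $\Phi = \mc{P}^{\perp}$, so $\Phi^\perp$ is the weak-$*$ closure of $\mc{P}$, and the Fourier computation above shows precisely that $\mc{P}$ is already weak-$*$ closed.)
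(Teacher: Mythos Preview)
The paper does not supply its own proof of this lemma; it simply cites Proposition~4.8 of \cite{troyanov2007hodge}. So there is no in-paper argument to compare against. Your proof is a correct, self-contained derivation: the Hahn--Banach step gives surjectivity of restriction $\mc{S}'\to\Phi'$, and the identification of the kernel $\Phi^\perp$ with $\mc{P}$ via the Fourier-side argument (any compactly supported $g\in\mc{S}$ with $0\notin\mrm{supp}(g)$ lies in $\Psi$ by Lemma~\ref{lemma:phi-psi-characterizations}, hence $\widehat\phi$ is supported at $0$, hence a finite combination of $\ptl^\alpha\delta_0$) is the standard route and is exactly what is done in Troyanov's paper. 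The real/complex bookkeeping you flag is indeed harmless. Your topological upgrade is more than the paper actually uses---as the remark following the lemma makes clear, only the algebraic content (kernel of restriction equals $\mc{P}$) is invoked---so you could safely drop that paragraph, but it does no harm.
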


\begin{remark}
Concretely, this lemma says that given $\phi_1, \phi_2 \in \mc{S}'$ such that $\phi_1 = \phi_2$ on $\Phi$, there exists a polynomial $P \in \mc{P}$ such that $\phi_1 = \phi_2 + P$.
\end{remark}

Lemma \ref{lemma:phi-psi-characterizations} also allows us to define arbitrary powers of the fractional Laplacian on Lizorkin space.

\begin{definition}[Fractional Laplacian on Lizorkin space]
For $s \in \R$, we define the fractional Laplacian $(-\Delta)^s : \Phi \ra \Phi$ by
\begin{equs}
(-\Delta)^s f := \mc{F}^{-1}\big( |\cdot|^{2s} \mc{F} f\big).
\end{equs}
\end{definition}

The next result follows from the discussion in \cite[Section 5]{troyanov2007hodge}. As previously alluded to, this result is the main reason why we work with $\Phi, \Phi'$ instead of $\mc{S}, \mc{S}'$.

\begin{prop}\label{prop:fractional-laplacian-isomorphism-on-lizorkin-space}
For any $s \in \R$, $(-\Delta)^s : \Phi \ra \Phi$ is an isomorphism with inverse $(-\Delta)^{-s}$.
\end{prop}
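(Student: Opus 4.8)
The plan is to verify that $(-\Delta)^s$ is a well-defined map $\Phi \to \Phi$, that it is bijective, and that its inverse is $(-\Delta)^{-s}$, using the Fourier-side characterization of the Lizorkin spaces from Lemma~\ref{lemma:phi-psi-characterizations}. The key observation is that conjugating by $\mc{F}$ turns $(-\Delta)^s$ into multiplication by $|\cdot|^{2s}$, so everything reduces to showing that multiplication by $|\cdot|^{2s}$ is an isomorphism of $\Psi = \mc{F}(\Phi)$ onto itself with inverse given by multiplication by $|\cdot|^{-2s}$. Since $\mc{F} \colon \Phi \to \Psi$ is already a topological isomorphism (it is the restriction of the isomorphism $\mc{F} \colon \schwartz_\C \to \schwartz_\C$, and $\Psi$ is by definition its image), this will immediately give the result on $\Phi$.

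First I would check that multiplication by $|p|^{2s}$ maps $\Psi$ into $\Psi$. Take $g \in \Psi$. By part (4) of Lemma~\ref{lemma:phi-psi-characterizations}, $|p|^{-2m} g \in \mc{S}$ for every integer $m \geq 0$; I want to conclude $|p|^{2s} g \in \Psi$, i.e.\ (again by part (4)) that $|p|^{-2m}(|p|^{2s} g) = |p|^{2s - 2m} g \in \mc{S}$ for every $m \geq 0$. Writing $2s - 2m = 2(s - m)$ and choosing $m$ large, this is $|p|^{-2(m-s)} g$ with $m - s$ possibly non-integer and positive; the point is that near the origin $g$ vanishes faster than any power (part (3) of the lemma), which kills the singularity of $|p|^{2s-2m}$ there, while away from the origin $|p|^{2s-2m}$ is smooth and at most polynomially growing, so the product stays Schwartz. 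One has to be a little careful that $|p|^{t}$ for non-integer $t$ is not smooth at $0$, but since we are multiplying by something that is $o(|p|^N)$ for every $N$ together with all its derivatives, the product and all its derivatives extend continuously (indeed smoothly, vanishing to infinite order) across the origin. I would phrase this as: for any multi-index $\alpha$, $\ptl^\alpha\big(|p|^{2s-2m} g\big)$ is a finite sum of terms, each a (possibly negative, possibly fractional) power of $|p|$ times a derivative of $g$, and each such term is bounded by $C|p|^{-K}|p|^{N}$ near $0$ for $N$ as large as we like, hence tends to $0$; away from $0$ it is clearly Schwartz-decaying. So $|p|^{2s-2m} g \in \mc{S}$, giving $|p|^{2s}g \in \Psi$. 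Continuity of the multiplication map in the subspace topology follows from the same estimates (the relevant Schwartz seminorms of $|p|^{2s}g$ are controlled by Schwartz seminorms of $g$).

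Next, multiplication by $|p|^{2s}$ and multiplication by $|p|^{-2s}$ are two-sided inverses of each other on $\Psi$: their composition is multiplication by $|p|^{2s}|p|^{-2s} = 1$ (the identity), valid pointwise for $p \neq 0$, hence as an identity of functions in $\mc{S}$ since each side lies in $\mc{S}$ and they agree off a null set. By the previous paragraph applied with $s$ and with $-s$, both maps send $\Psi$ to $\Psi$ continuously, so each is a continuous bijection of $\Psi$ with continuous inverse, i.e.\ an isomorphism. Conjugating back by $\mc{F}$: $(-\Delta)^s = \mc{F}^{-1} \circ (\text{mult.\ by }|\cdot|^{2s}) \circ \mc{F}$ is then an isomorphism $\Phi \to \Phi$, and since $(-\Delta)^{-s} = \mc{F}^{-1}\circ(\text{mult.\ by }|\cdot|^{-2s})\circ\mc{F}$ conjugates the inverse multiplication map, it is the inverse of $(-\Delta)^s$.

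The main obstacle is the one flagged above: carefully justifying that multiplication by the non-smooth function $|p|^{2s}$ (for non-integer or negative $s$) preserves the Schwartz/Lizorkin regularity, both at the origin — where one exploits that elements of $\Psi$ vanish to infinite order with all derivatives (Lemma~\ref{lemma:phi-psi-characterizations}(3)) — and at infinity, where one just needs polynomial control. Everything else is formal conjugation by the Fourier transform, which the excerpt has already set up. I expect this argument is essentially what the cited discussion in \cite[Section 5]{troyanov2007hodge} carries out, so I would present it at roughly the level of detail above and refer to that source for the routine seminorm bookkeeping.
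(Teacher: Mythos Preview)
Your proposal is correct. The paper does not give its own proof but simply defers to \cite[Section 5]{troyanov2007hodge}; your Fourier-side argument---reducing to the claim that multiplication by $|\cdot|^{2s}$ is an automorphism of $\Psi$ via the infinite-order vanishing in Lemma~\ref{lemma:phi-psi-characterizations}(3)---is exactly the standard route and is presumably what that reference contains, so there is nothing to compare.
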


Next, we define the homogeneous Sobolev spaces, which are natural spaces of test functions for fractional Gaussian fields.

\begin{definition}[Sobolev spaces]\label{def:sobolev}
Let $s \in \R$. Define the pre-Hilbert space 
\[ \mathring{H}^s = \mathring{H}^s(\R^n) := \{f \in \mc{S} : (-\Delta)^{\frac{s}{2}} f \in L^2(\R^n)\}, \]
equipped with the inner product (recall the Plancherel identity \eqref{eq:plancherel})
\[ (f, g)_{\dot{H}^s(\R^n)} := ((-\Delta)^{\frac{s}{2}} f, (-\Delta)^{\frac{s}{2}} g) = (|p|^{s} \mc{F}f, |p|^{s} \mc{F}g). \]
Let $\dot{H}^s = \dot{H}^s(\R^n)$ be the Hilbert space completion of $(\mathring{H}^s(\R^n), (\cdot, \cdot)_{\dot{H}^s})$. The space $\dot{H}^s(\R^n)$ is the homogeneous Sobolev space of order $s$.

We define the inhomogeneous Sobolev space $H^s(\R^n)$ in the same manner, except we use the inner product
\begin{equs}
(f, g)_{H^s(\R^n)} := \big((-\Delta + 1)^{\frac{s}{2}} f, (-\Delta + 1)^{\frac{s}{2}} g\big) = \big ((|p|^2 + 1)^{\frac{s}{2}} \mc{F} f, (|p|^2 + 1)^{\frac{s}{2}} \mc{F} g\big ), 
\end{equs}
and for the pre-Hilbert space we use $\schwartz$ (note that $(-\Delta + 1)^{\frac{s}{2}}  f \in L^2(\R^n)$ for all $f \in \schwartz$).
\end{definition}

The following lemma shows that $\Phi$ is dense in every homogeneous Sobolev space. This implies that (at least from some ``information'' point of view) we lose nothing by working with Lizorkin space instead of Schwartz space.

\begin{lemma}\label{lemma:lizorkin-space-dense-fractional-sobolev-space}
For any $s \in \R$, we have that $\Phi$ is dense in $\dot{H}^s(\R^n)$.
\end{lemma}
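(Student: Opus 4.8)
The plan is to work on the Fourier side via the Plancherel identity, where the $\dot H^s$ inner product becomes the weighted $L^2$ inner product $(|p|^s \mc{F}f, |p|^s \mc{F}g)$, and to exhibit any $f \in \mathring H^s$ as a limit (in the $\dot H^s$ norm) of elements of $\Phi$. Since $\mathring H^s$ is by definition dense in $\dot H^s$, it suffices to show that every $f \in \mathring H^s$ can be approximated in $\dot H^s$ by Lizorkin functions. Set $u := |p|^s \mc{F}f \in L^2(\R^n)$; approximating $f$ in $\dot H^s$ is the same as approximating $u$ in $L^2$ by functions of the form $|p|^s \mc{F}\psi$ with $\psi \in \Phi$, i.e.\ (using Lemma~\ref{lemma:phi-psi-characterizations}, which characterizes $\Psi = \mc{F}(\Phi)$ as those Schwartz functions vanishing to infinite order at the origin) by functions $|p|^s g$ with $g \in \Psi$.

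First I would handle the two ``ends'' of frequency space separately with a cutoff. Fix $\varep > 0$. Choose $0 < \delta < R < \infty$ such that the $L^2$ mass of $u$ on $\{|p| \le \delta\} \cup \{|p| \ge R\}$ is less than $\varep$; this is possible since $u \in L^2$. Let $\chi$ be a smooth radial function that is $0$ on $\{|p| \le \delta/2\}$, equal to $1$ on $\{\delta \le |p| \le R\}$, and $0$ on $\{|p| \ge 2R\}$, so that $\chi u$ is supported in the annulus $\{\delta/2 \le |p| \le 2R\}$ and $\|u - \chi u\|_{L^2} < \varep$. On this annulus $|p|^s$ is smooth and bounded above and below, so $|p|^{-s}\chi u$ is an $L^2$ function supported in the annulus; approximate it in $L^2$ by some $g_0 \in C_c^\infty$ supported in a slightly larger annulus bounded away from the origin, so $\||p|^s g_0 - \chi u\|_{L^2} < \varep$ (using the uniform bound on $|p|^s$ over the relevant compact annulus). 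Because $g_0$ vanishes near the origin, it trivially satisfies condition (2) of Lemma~\ref{lemma:phi-psi-characterizations}, hence $g_0 \in \Psi$, i.e.\ $g_0 = \mc{F}\psi$ for some $\psi \in \Phi$. Then $\|f - \psi\|_{\dot H^s} = \|u - |p|^s g_0\|_{L^2} \le \|u - \chi u\|_{L^2} + \|\chi u - |p|^s g_0\|_{L^2} < 2\varep$, and letting $\varep \to 0$ gives the claim.

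The only genuinely delicate point is that the Lizorkin condition forces $g_0$ to vanish to infinite order at the origin, which is why the low-frequency truncation (discarding $\{|p|\le \delta\}$) is essential; this is harmless precisely because $u \in L^2$ near the origin, even though $\mc{F}f$ itself may be badly behaved there. One should double-check that the approximant $\psi$ indeed lies in $\Phi$ and not merely in $\mc{S}$: since $C_c^\infty$ functions supported away from the origin are Schwartz and vanish identically near $0$, they satisfy characterization (2) of Lemma~\ref{lemma:phi-psi-characterizations}, so $g_0 \in \Psi$ and $\mc{F}^{-1}g_0 \in \Phi$ by definition of $\Psi$. The remaining estimates are routine: boundedness of $|p|^{\pm s}$ on a compact annulus bounded away from $0$ and $\infty$, and $L^2$-density of $C_c^\infty$ in $L^2$ of an annulus. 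I expect the main (minor) obstacle to be bookkeeping the order of approximations so that the high-frequency tail, the low-frequency tail, and the $C_c^\infty$ approximation each contribute a controlled amount to the final $\dot H^s$ error.
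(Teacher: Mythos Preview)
Your proof is correct and proceeds along the same underlying idea as the paper's, but the packaging differs. The paper reduces immediately to the case $s=0$: it cites that $\Phi$ is dense in $L^2$ (from \cite{troyanov2007hodge}), and then for general $s$ sets $g:=(-\Delta)^{s/2}f\in L^2$, approximates $g$ by $g_n\in\Phi$, and pulls back via $f_n:=(-\Delta)^{-s/2}g_n\in\Phi$ using Proposition~\ref{prop:fractional-laplacian-isomorphism-on-lizorkin-space}. Your argument instead does everything by hand on the Fourier side for arbitrary $s$: you truncate $u=|p|^s\widehat f$ to an annulus and approximate by $C_c^\infty$ functions supported away from the origin, which automatically lie in $\Psi$. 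In effect you are simultaneously proving the $L^2$ density of $\Phi$ (which the paper outsources) and performing the $(-\Delta)^{-s/2}$ pullback, all in one Fourier-side step. The paper's route is shorter and more modular; yours is self-contained and makes explicit why the Lizorkin constraint (infinite-order vanishing at $p=0$) costs nothing in $\dot H^s$. One small point you glossed over: since $\Phi\subset\mc{S}$ consists of real-valued functions, your $g_0$ must satisfy $g_0(-p)=\overline{g_0(p)}$; this is easily arranged by symmetrizing, since $u$ already has this property.
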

\begin{proof}
The fact that $\Phi$ is dense in $\dot{H}^0 = L^2$ follows by \cite[Proposition 6.1]{troyanov2007hodge}. Now let $s \in \R$ be general. It suffices to show that $\Phi$ is dense in $\mathring{H}^s$ under $(\cdot, \cdot)_{\dot{H}^s}$.  Let $f \in \mathring{H}^s$. Let $g := (-\Delta)^{\frac{s}{2}} f$. Observe that $\widehat{g} \in L^2$, and thus also $g \in L^2$. Moreover $\|f\|_{\dot{H}^s} = \|g\|_{L^2}$. By the density of $\Phi$ in $L^2$, there exists a sequence $\{g_n\}_{n \geq 1} \sse \Phi$ such that $g_n \stackrel{L^2}{\ra} g$. Define $f_n := (-\Delta)^{-\frac{s}{2}} g_n$. By Proposition \ref{prop:fractional-laplacian-isomorphism-on-lizorkin-space}, we have that  $f_n \in \Phi$. We also have that
\[ \|f_n - f\|_{\dot{H}^s} = \|(-\Delta)^{\frac{s}{2}} f_n - (-\Delta)^{\frac{s}{2}} f\|_{L^2} = \|g_n - g\|_{L^2} \ra 0. \qedhere\]
\end{proof}

The fractional Laplacian relates homogeneous Sobolev spaces of different orders, as the next lemma shows.

\begin{lemma}\label{lemma:fractional-laplacian-isometry-fractional-sobolev-space}
Let $s, s' \in \R$. We have that $(-\Delta)^{s'} : \dot{H}^s(\R^n) \ra \dot{H}^{s - 2s'}(\R^n)$ is an isometry.
\end{lemma}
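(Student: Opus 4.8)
The plan is to reduce the statement to a one-line computation on the Fourier side over Lizorkin space, and then extend by density.

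First I would work on $\Phi$, where everything is unambiguous: by Proposition~\ref{prop:fractional-laplacian-isomorphism-on-lizorkin-space}, $(-\Delta)^{s'}$ maps $\Phi$ bijectively onto $\Phi$ with inverse $(-\Delta)^{-s'}$. For $f \in \Phi$ we have $\mc{F}\big((-\Delta)^{s'} f\big) = |\cdot|^{2s'}\,\mc{F} f$ by the definition of the fractional Laplacian on $\Phi$, so using the Fourier description of the homogeneous Sobolev norm from Definition~\ref{def:sobolev} and the Plancherel identity \eqref{eq:plancherel},
\[
\|(-\Delta)^{s'} f\|_{\dot{H}^{s-2s'}}^2 = \big\| \,|p|^{s-2s'}\,\mc{F}\big((-\Delta)^{s'} f\big)\big\|_{L^2}^2 = \big\| \,|p|^{s-2s'}|p|^{2s'}\,\mc{F} f\big\|_{L^2}^2 = \big\| \,|p|^{s}\,\mc{F} f\big\|_{L^2}^2 = \|f\|_{\dot{H}^s}^2 ,
\]
where the only thing to note is that $|p|^{s-2s'}|p|^{2s'} = |p|^{s}$ for a.e.\ $p$ (the set $\{p = 0\}$ is Lebesgue-null). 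Thus $(-\Delta)^{s'} \colon (\Phi, \|\cdot\|_{\dot{H}^s}) \ra (\Phi, \|\cdot\|_{\dot{H}^{s-2s'}})$ is linear and norm-preserving.

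Next I would extend by density. By Lemma~\ref{lemma:lizorkin-space-dense-fractional-sobolev-space}, $\Phi$ is dense in $\dot{H}^s$ and in $\dot{H}^{s-2s'}$, so the norm-preserving linear map above extends uniquely to a norm-preserving linear map $\dot{H}^s(\R^n) \ra \dot{H}^{s-2s'}(\R^n)$, which we again denote $(-\Delta)^{s'}$; it is automatically an isometric embedding. To see it is onto — i.e.\ an isometric isomorphism, which is what ``isometry'' should mean here and is consistent with Proposition~\ref{prop:fractional-laplacian-isomorphism-on-lizorkin-space} — apply the same construction to $(-\Delta)^{-s'}\colon \dot{H}^{s-2s'}(\R^n) \ra \dot{H}^{s}(\R^n)$. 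On $\Phi$ the two maps are mutually inverse by Proposition~\ref{prop:fractional-laplacian-isomorphism-on-lizorkin-space}; since $\Phi$ is dense and both maps are continuous, the identities $(-\Delta)^{-s'}(-\Delta)^{s'} = \mathrm{id}$ and $(-\Delta)^{s'}(-\Delta)^{-s'} = \mathrm{id}$ persist on the completions, so $(-\Delta)^{s'}$ is a surjective isometry.

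There is no serious obstacle; the only point requiring a little care is routing the argument through $\Phi$ rather than trying to define $(-\Delta)^{s'}$ directly on the pre-Hilbert space $\mathring{H}^s = \{f \in \mc{S} : (-\Delta)^{s/2} f \in L^2\}$, since for $s' < 0$ the operator $(-\Delta)^{s'}$ need not be defined on all of $\mc{S}$. Working on $\Phi$, where Proposition~\ref{prop:fractional-laplacian-isomorphism-on-lizorkin-space} makes everything well defined, and then invoking density, is the clean way to phrase it. Equivalently, one could identify $\dot{H}^s(\R^n)$ via the Fourier transform with the weighted space $L^2(\R^n, |p|^{2s}\,dp)$, under which $(-\Delta)^{s'}$ becomes multiplication by $|p|^{2s'}$, manifestly an isometric isomorphism onto $L^2(\R^n, |p|^{2(s-2s')}\,dp)$; this is the same computation in different dress.
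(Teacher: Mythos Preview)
Your proof is correct and follows essentially the same approach as the paper: verify the norm identity on $\Phi$ via the Fourier-side computation $|p|^{s-2s'}|p|^{2s'}=|p|^s$, then extend by density using Lemma~\ref{lemma:lizorkin-space-dense-fractional-sobolev-space}. You additionally spell out surjectivity via $(-\Delta)^{-s'}$, which the paper leaves implicit, but the core argument is the same.
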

\begin{proof}
It suffices to verify the isometry on the dense subspace $\Phi \sse \dot{H}^s$ (recall Lemma \ref{lemma:lizorkin-space-dense-fractional-sobolev-space}). For $f \in \Phi$, we have that
\[\begin{split}
\|(-\Delta)^{s'} f\|_{\dot{H}^{s - 2s'}}^2 &= (|p|^{s - 2s'} \mc{F}((-\Delta)^{s'} f), |p|^{s - 2s'} \mc{F}((-\Delta)^{s'} f)) \\
&= (|p|^{s - 2s'} |p|^{2s'} \mc{F}f, |p|^{s - 2s'} |p|^{2s'} \mc{F}f) \\
&= (|p|^s \mc{F} f , |p|^s \mc{F} f) = \|f\|_{\dot{H}^s}^2,  
\end{split}\]
as desired.
\end{proof}

Next, we define the massive fractional Laplacian, which is simpler to deal with. It maps Schwartz space to Schwartz space, so there is no need to deal with Lizorkin distributions.

\begin{definition}[Massive fractional Laplacian]
For $\lambda > 0$, $s \in \R$, define $(-\Delta + \lambda)^s : \mc{S} \ra \mc{S}$ by
\begin{equs}
(-\Delta + \lambda)^s f := \mc{F}^{-1}\big( (|\cdot|^2 + \lambda)^s \mc{F} f\big).
\end{equs}
\end{definition}

Next, we discuss distributions on the torus. Recall that we identify $\T^n$ with $[-\pi, \pi)^n$. Since $\T^n$ is compact, it suffices to work with the space $C^\infty(\T^n)$ of real-valued smooth functions. We equip this space with the topology of uniform convergence of all derivatives. Let $\mc{D}'(\T^n)$ be the topological dual of $C^\infty(\T^n)$. For $\alpha \in \Z^n$, let $\e_\alpha \in C^\infty(\T^n)$ be the function $\e_\alpha(x) := e^{\icomplex \alpha \cdot x}$. For $f \in L^1(\T^n)$, define the Fourier transform
\begin{equs}
(\mc{F} f)(\alpha) = \widehat{f}(\alpha) := \frac{1}{(2\pi)^{\frac{n}{2}}} \int_{\T^n} f(x) \ovl{\e_{\alpha}(x)} dx, ~~ \alpha \in \Z^n.
\end{equs}
The Fourier inversion formula on the torus is then given by (at least if $f \in C^\infty(\T^n)$)
\begin{equs}
f = \frac{1}{(2\pi)^{\frac{n}{2}}} \sum_{\alpha \in \Z^n} \widehat{f}(\alpha) \e_{\alpha}. 
\end{equs}
The Plancherel identity reads
\begin{equs}\label{eq:torus-plancherel}
(f, g) = \sum_{\alpha \in \Z^n} \widehat{f}(\alpha) \ovl{\widehat{g}(\alpha)}.
\end{equs}
For a positive power $s \geq 0$, we may define the fractional Laplacian $-\Delta : C^\infty(\T^n) \ra C^\infty(\T^n)$ in Fourier space by
\begin{equs}
\widehat{(-\Delta)^s f}(\alpha) := |\alpha|^{2s} \widehat{f}(\alpha), ~~ \alpha \in \Z^n.
\end{equs}
On the other hand, negative powers of the Laplacian are only defined on the subspace of $C^\infty(\T^n)$ consisting of mean-zero functions.

\begin{definition}
Let $\bigdot{C}^\infty(\T^n)$ be the subspace of $C^\infty(\T^n)$ consisting of mean-zero functions, i.e. those functions $f \in C^\infty(\T^n)$ such that $\int_{\T^n} f(x) dx = 0$. Let $\bigdot{\mc{D}}' = \bigdot{\mc{D}}'(\T^n)$ be the topological dual of $\bigdot{C}^\infty(\T^n)$.
\end{definition}

For $f \in \bigdot{C}^\infty(\T^n)$, the Fourier representation of $f$ is given by
\begin{equs}
f = \frac{1}{(2\pi)^{\frac{n}{2}}} \sum_{0 \neq \alpha \in \Z^n} \widehat{f}(\alpha) \e_\alpha.
\end{equs}
Thus for $s < 0$, we may define $(-\Delta)^s \colon \bigdot{C}^\infty(\T^n) \ra \bigdot{C}^\infty(\T^n)$
\begin{equs}
\widehat{(-\Delta)^s f}(\alpha) := |\alpha|^{2s} \widehat{f}(\alpha), ~~ 0 \neq \alpha \in \Z^n.
\end{equs}
Similar to the case of $\R^n$, powers of the massive fractional Laplacian may be defined directly on $C^\infty(\T^n)$. For $\lambda > 0$, $s \in \R$, we may define the massive fractional Laplacian $(-\Delta + \lambda)^s \colon C^\infty(\T^n) \ra C^\infty(\T^n)$ in Fourier space by
\begin{equs}
\widehat{(-\Delta + \lambda)^s f}(\alpha) := (|\alpha|^2 + \lambda)^s \widehat{f}(\alpha), ~~ \alpha \in \Z^n.
\end{equs}
Next, for $s \in \R$, we define the homogeneous Sobolev spaces $\dot{H}^s(\T^n)$ as the completion of $\bigdot{C}^\infty(\T^n)$ with respect to the inner product
\begin{equs}
(f, g)_{\dot{H}^s(\T^n)} := ((-\Delta)^{\frac{s}{2}} f, (-\Delta)^{\frac{s}{2}} g) = \sum_{0 \neq \alpha \in \Z^n} |\alpha|^{2s} \widehat{f}(\alpha) \ovl{\widehat{g}(\alpha)}.
\end{equs}
We state the following analog of Lemma \ref{lemma:fractional-laplacian-isometry-fractional-sobolev-space}. The proof is omitted.

\begin{lemma}
Let $s, s' \in \R$. We have that $(-\Delta)^{s'} : \dot{H}^s(\T^n) \ra \dot{H}^{s - 2s'}(\T^n)$ is an isometry.
\end{lemma}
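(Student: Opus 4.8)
The plan is to mirror the proof of Lemma~\ref{lemma:fractional-laplacian-isometry-fractional-sobolev-space} verbatim, replacing the Fourier integral over $\R^n$ with the Fourier series over $\Z^n \setminus \{0\}$ and the continuous Plancherel identity \eqref{eq:plancherel} with the torus Plancherel identity. Since all the relevant spaces $\dot H^s(\T^n)$ are defined as completions of $\bigdot C^\infty(\T^n)$, and since $(-\Delta)^{s'}$ acts diagonally in the Fourier basis $\{\e_\alpha : 0 \neq \alpha \in \Z^n\}$, it suffices to verify the isometry on the dense subspace $\bigdot C^\infty(\T^n)$ itself; by continuity it then extends to the completion.

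First I would take $f \in \bigdot C^\infty(\T^n)$ and note that $(-\Delta)^{s'} f$ again lies in $\bigdot C^\infty(\T^n)$ (or at least in the relevant Sobolev space), with Fourier coefficients $\widehat{(-\Delta)^{s'} f}(\alpha) = |\alpha|^{2s'}\widehat{f}(\alpha)$ for $\alpha \neq 0$. Then I would compute
\[
\|(-\Delta)^{s'} f\|_{\dot H^{s-2s'}(\T^n)}^2 = \sum_{0 \neq \alpha \in \Z^n} |\alpha|^{2(s-2s')} \bigl| |\alpha|^{2s'} \widehat{f}(\alpha)\bigr|^2 = \sum_{0 \neq \alpha \in \Z^n} |\alpha|^{2(s-2s') + 4s'} |\widehat{f}(\alpha)|^2 = \sum_{0 \neq \alpha \in \Z^n} |\alpha|^{2s} |\widehat{f}(\alpha)|^2 = \|f\|_{\dot H^s(\T^n)}^2,
\]
using the definition of the homogeneous Sobolev norm on the torus in terms of Fourier coefficients. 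The only subtlety worth a remark is that this computation is purely formal for $f \in \bigdot C^\infty(\T^n)$ only when the right-hand side is finite; for general $f$ in the completion $\dot H^s(\T^n)$ one approximates by smooth mean-zero functions and passes to the limit, which is legitimate precisely because $(-\Delta)^{s'}$ is (by the above identity) a bounded, in fact isometric, map on the dense subspace.

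The main obstacle—really the only point needing care—is bookkeeping the exclusion of the zero mode: because negative powers of $-\Delta$ on the torus are only defined on mean-zero functions, one must check that $(-\Delta)^{s'}$ genuinely maps $\dot H^s(\T^n)$ into $\dot H^{s-2s'}(\T^n)$ and that no constant term is created or destroyed, which is immediate since the sum runs over $0 \neq \alpha$ throughout. Everything else is the identical three-line computation as in the $\R^n$ case, so I would simply write ``The proof is identical to that of Lemma~\ref{lemma:fractional-laplacian-isometry-fractional-sobolev-space}, with the Fourier integral replaced by the Fourier series \eqref{eq:torus-plancherel} and sums taken over $0 \neq \alpha \in \Z^n$,'' which is presumably why the authors state that the proof is omitted.
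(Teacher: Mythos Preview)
Your proposal is correct and is exactly what the paper intends: the authors explicitly omit the proof, noting only that the lemma is the analog of Lemma~\ref{lemma:fractional-laplacian-isometry-fractional-sobolev-space}, and your argument is precisely that analog carried out with Fourier series over $\Z^n\setminus\{0\}$ in place of Fourier integrals. Your remark about the zero mode is the only torus-specific point, and you handle it correctly.
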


For $s \in \R$, we also define the inhomogeneous Sobolev spaces $H^s(\T^n)$ as the completion of $C^\infty(\T^n)$ with respect to the inner product
\begin{equs}
(f, g)_{H^s(\T^n)} := \big((-\Delta + 1)^{\frac{s}{2}} f, (-\Delta + 1)^{\frac{s}{2}} g\big) =  \sum_{\alpha \in \Z^n} (|\alpha|^2 + 1)^{s} \widehat{f}(\alpha) \ovl{\widehat{g}(\alpha)}.
\end{equs}

\begin{remark}[Values in a general vector space]\label{remark:values-in-a-general-vector-space}
The previous discussion directly extends to setting where all functions and distributions take values in a finite dimensional inner product space $(V, \langle \cdot, \cdot \rangle_V)$. One simply replaces products of real numbers by inner products of elements in $V$. When discussing Fourier transforms, in general the Fourier coefficients will take values in the complexified space $V^\C := \{v + \icomplex w : v, w \in V\}$, which we may endow with the inner product
\begin{equs}
\langle v_1 + \icomplex w_1, v_2 + \icomplex w_2 \rangle_{V^\C} := \langle v_1, v_2 \rangle_V +  \langle w_1, w_2 \rangle_V  + \icomplex \big(\langle w_1, v_2 \rangle_V - \langle v_1, w_2 \rangle_V\big).
\end{equs}
Thus, products of Fourier coefficients should be replaced by inner products defined using the above formula.
\end{remark}

\begin{notation}[Vector-valued function spaces]
We will denote $V$-valued function spaces by adding ``$(V)$" or ``$;V$", e.g. $\Phi(V)$ is the $V$-valued Lizorkin space, $\dot{H}^s(\manifold; V)$ is the $V$-valued homogeneous Sobolev space, etc.
\end{notation}

\section{Review of exterior calculus}\label{section:exterior-calculus}

In this section, we introduce/review the basic concepts associated to differential forms on a manifold $\manifold$. We start in the cases $\manifold = \R^n, \T^n$ in Section \ref{sec:flatspace}. In these cases, we will make our discussion as concrete as possible, in order to make this part of the survey accessible to as many people as possible. We then review the additional geometric concepts that come into play on more general manifolds in Section \ref{section:general-manifold}. Here, the discussion may be a bit less accessible to readers without a differential geometry background, but it will serve to set the notation that we will later use. The reader who is less familiar with differential topology may prefer to skip this section and focus on the cases $\manifold = \R^n, \T^n$ throughout this survey. This section contains a lot of routine calculation that some readers may prefer to skim or skip on a first read. In that case, one may move on to the construction of fractional Gaussian forms in Section~\ref{sec:fgfconstruction} and refer back to this section for reference.

\subsection{Flat space} \label{sec:flatspace}

Throughout this subsection, let $\manifold = \R^n$ or $\T^n$. 

\begin{definition}
Let $n \geq 1$. Let $\varep_{i_1 \cdots i_n} = \varep^{i_1 \cdots i_n}$ be the totally antisymmetric tensor. I.e., $\varep_{i_1 \cdots i_n} = \varep^{i_1 \cdots i_n} = (-1)^{\mrm{sgn}(\sigma)}$, where $\sigma$ is the permutation which takes $(i_1, \ldots, i_n)$ to $(1, \ldots, n)$ if the $i_1, \ldots, i_n$ are distinct, and $\varep_{i_1 \cdots i_n} = 0$ otherwise. Note that $\varep$ is characterized by the following two conditions:
\begin{enumerate}
    \item $\varep_{1 \cdots n} = 1$
    \item for any transposition $\tau \in \symgrp_n$, $\varep_{i_{\tau(1)} \cdots i_{\tau(n)}} = - \varep_{i_1 \cdots i_n}$.
\end{enumerate}
\end{definition}

\begin{definition}[Differential forms]\label{def:k-form}
A $0$-form $f$ on $\manifold$ is simply a function $f \colon M \ra \R$. Let $1 \leq k \leq n$. A $k$-form can be expressed as 
\beq\label{eq:k-form-basic-expression} f = \sum_{1 \leq i_1 < \cdots < i_k \leq n} f_{i_1 \cdots i_k} dx^{i_1} \wedge \cdots \wedge dx^{i_k}. \eeq
For general collections of indices $i_1, \ldots, i_k \in [n]$, the wedge symbols $dx^{i_1} \wedge \cdots \wedge dx^{i_k}$ satisfy the antisymmetry condition 
\begin{equs}
dx^{i_{\sigma(1)}} \wedge \cdots dx^{i_{\sigma(k)}} = \varep^{\sigma(1) \cdots \sigma(k)} dx^{i_1} \wedge \cdots dx^{i_k}. 
\end{equs}
In particular, $dx^{i_1} \wedge \cdots \wedge dx^{i_k} = 0$ if the $i_1, \ldots, i_k$ are not distinct. Using this convention, we may express $k$-forms as linear combinations of the wedge symbols where the $i_1, \ldots, i_k$ are not in increasing order. 

Given a property P (e.g., smooth, $L^2$, etc.), we will say that a $k$-form $f$ satisfies P if each $f_{i_1 \cdots i_k}$ satisfies P. Given a space $\mc{F}$ of functions $\manifold \ra \R$ (e.g. $C^\infty(\manifold)$), we let $\Omega^k \mc{F}$ be the space of $k$-forms $f$ such that each $f_{i_1 \cdots i_k} \in \mc{F}$. So $\Omega^k C^\infty(\manifold)$ is the space of smooth $k$-forms, $\Omega^k L^p(\manifold)$ the space of $L^p$ $k$-forms, etc. By default, $\Omega^k(\manifold) = \Omega^k C^\infty(\manifold)$, i.e. the space of smooth $k$-forms. When $\manifold = \T^n$, we will also use the notation $\bigdot{\Omega}^k(\T^n) = \Omega^k \bigdot{C}^\infty(\T^n)$, i.e. the space of smooth $k$-forms with zero mean.
\end{definition}


\begin{remark}\label{remark:k-form-alternating-multilinear-map}
One may be wondering at this point about the meaning of the wedge terms $dx^{i_1} \wedge \cdots \wedge dx^{i_k}$ in \eqref{eq:k-form-basic-expression}. For this survey, we essentially treat these as formal symbols which satisfy the antisymmetry condition from Definition \ref{def:k-form}. Thus we will often equivalently regard $k$-forms as collections of functions $(f_{i_1 \cdots i_k}, i_1, \ldots, i_k \in [n])$ satisfying the same antisymmetry conditions, that is
\begin{equs}
f_{i_{\sigma(1)} \cdots i_{\sigma(k)}} = \varep_{\sigma(1) \cdots \sigma(k)} f_{i_1 \cdots i_k}, ~~ i_1, \ldots, i_k \in [n], \sigma \in \symgrp_k.
\end{equs}
We just briefly remark here that a more correct way to view $k$-forms is as follows. The term $dx^{i_1} \wedge \cdots \wedge dx^{i_k}$ may be interpreted as the alternating multilinear map $(\R^n)^k \ra \R$ given by
\[ (v_1, \ldots, v_k) \mapsto \det(\langle v_j, e_{i_\ell}\rangle_{j, \ell = 1}^k),\]
where $e_1, \ldots, e_n$ is the standard basis of $\R^n$. Actually, this is not quite right, as really one should say that given $x \in \manifold$, $dx^{i_1} \wedge \cdots \wedge dx^{i_k}$ evaluated at $x$ is an alternating multilinear map $(T_x\manifold)^k \ra \R$ specified by the above display. However, because we are in the geometrically trivial setting of flat space, all tangent spaces $T_x \manifold$ may be canonically identified with $\R^n$ (and in particular, the basis $e_1, \ldots, e_n$ is an orthonormal basis of $T_x \manifold$ for each $x \in \manifold$), which is the reason why we can just interpret $dx^{i_1} \wedge \cdots \wedge dx^{i_k}$ itself as a map on $(\R^n)^k$. Note that the antisymmetry property of $dx^{i_1} \wedge \cdots \wedge dx^{i_k}$ arises from the usual antisymmetry property of the determinant of a matrix.

Given this discussion, the expression \eqref{eq:k-form-basic-expression} for a general $k$-form $f$ should be interpreted as a function on $\manifold$ which associates to each $x \in \manifold$ an alternating multilinear map $f(x) \colon (\R^n)^k \ra \R$ given by
\[\begin{split}
(v_1, \ldots, v_k) &\mapsto \sum_{1 \leq i_1 < \cdots \leq i_k \leq n} f_{i_1 \cdots i_k}(x) (dx^{i_1} \wedge \cdots dx^{i_k})(v_1, \ldots, v_k) \\
&= \sum_{1 \leq i_1 < \cdots \leq i_k \leq n} f_{i_1 \cdots i_k}(x) \det(\langle v_j, e_{i_\ell}\rangle_{j, \ell = 1}^k).
\end{split}\]
\end{remark}



\begin{definition}[Inner product on $k$-forms]
We define an inner product $(\cdot, \cdot)$ on $\Omega^k L^2(\manifold)$ by
\[ (f, g) := \sum_{1 \leq i_1 < \cdots \leq i_k \leq n} (f_{i_1 \cdots i_k}, g_{i_1 \cdots i_k}), \]
where $(f_{i_1 \cdots i_k}, g_{i_1 \cdots i_k})$ denotes the usual inner product for functions introduced in \eqref{eq:L2-inner-product}.
\end{definition}


\begin{definition}[Exterior derivative]\label{def:exterior-derivative}
Let $0 \leq k \leq n-1$.
The exterior derivative of a $k$-form $f$ is a $(k+1)$-form $df$ defined by
\[ df := \sum_{1 \leq i_1 < \cdots < i_k \leq n} \sum_{j=1}^n (\ptl_j f_{i_1 \cdots i_k}) dx^j \wedge dx^{i_1} \wedge \cdots \wedge dx^{i_k}.\]
Note that if $i_1, \ldots, i_{k+1} \in [n]$ are distinct indices, then in the interpretation of differential forms as a tuple of functions, we have that
\begin{equs}\label{eq:exterior-derivative-coordinatewise-formula}
(df)_{i_1, \ldots, i_{k+1}} = \sum_{\ell=1}^{k+1} (-1)^{\ell-1} \ptl_{i_\ell} f_{i_1 \cdots \hat{i}_\ell \cdots i_{k+1}},
\end{equs}
where $\hat{i}_\ell$ denotes omission of that index.
If $f$ is an $n$-form, then we define $df : = 0$.
\end{definition}


\begin{remark}
One could in principle add subscripts and denote $d_k : \Omega^k\ra \Omega^{k+1}$. But we will follow the usual convention of treating $d$ as a single operator acting on all of $\Omega$. When we focus on a specific domain, it should be clear from context.
\end{remark}

\begin{definition}[Codifferential]
Let $1 \leq k \leq n$. The codifferential of a $k$-form $f$ is a $(k-1)$-form $\codif f$ defined by 
\begin{equs}
\codif f = -\sum_{1 \leq i_1 < \cdots < i_{k-1} \leq n} \sum_{j=1}^n \ptl_j g_{j i_1 \cdots i_{k-1}} dx^{i_1} \wedge \cdots \wedge dx^{i_{k-1}}.
\end{equs}
In the interpretation of differential forms as a tuple of functions, we have that (note the implicit summation over the index $j$)
\beq\label{eq:codifferential-coordinatewise-expression} (\codif f)_{i_1 \cdots i_{k-1}} = -\ptl_j f_{j i_1 \cdots i_{k-1}} .\eeq
\end{definition}

An alternative formula for the codifferential is
\begin{equs}\label{eq:codif-alternative-formula}
\codif f = \sum_{1 \leq i_1 < \cdots < i_k \leq n} \sum_{\ell=1}^k (-1)^\ell \ptl_{i_\ell} f_{i_1 \cdots i_k} dx^{i_1} \wedge \cdots \wedge \widehat{dx}^{i_\ell} \wedge \cdots \wedge dx^{i_k},
\end{equs}
where $\widehat{dx}^{i_\ell}$ denotes omission of that term. The codifferential $\codif$ is defined precisely so that $d$ and $\codif$ are adjoint, as the next lemma shows.

\begin{lemma}[Exterior derivative and codiferential are adjoint]
Let $1 \leq k \leq n$. Let $f \in \Omega^{k-1}(\manifold) \cap \Omega^{k-1} L^2(\manifold)$, $g \in \Omega^k (\manifold) \cap \Omega^k L^2(\manifold)$. We have that
\[ (df, g) = (f, \codif g).\]
\end{lemma}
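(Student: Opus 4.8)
The plan is to prove the adjointness identity $(df, g) = (f, d^* g)$ by direct computation in coordinates, using the explicit formulas \eqref{eq:exterior-derivative-coordinatewise-formula} and \eqref{eq:codifferential-coordinatewise-expression} together with integration by parts. Since both $f$ and $g$ are smooth and $L^2$ (and in the $\T^n$ case there are no boundary terms, while on $\R^n$ one should also assume the usual decay so that integration by parts produces no boundary contribution — in practice one restricts to $f, g$ in a dense class such as Schwartz or compactly supported forms and extends by density), no boundary terms arise.

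First I would write out $(df, g)$ using the inner product on $k$-forms. It is cleanest to sum over \emph{all} ordered tuples of distinct indices rather than only increasing ones, dividing by $k!$ to compensate: $(df, g) = \frac{1}{(k)!}\sum_{i_1, \ldots, i_k} \int_M (df)_{i_1 \cdots i_k} \, \overline{g_{i_1 \cdots i_k}} \, \mathrm{vol}$. Substituting \eqref{eq:exterior-derivative-coordinatewise-formula}, this becomes $\frac{1}{k!}\sum_{i_1, \ldots, i_k}\sum_{\ell=1}^{k} (-1)^{\ell - 1} \int_M (\partial_{i_\ell} f_{i_1 \cdots \hat{i}_\ell \cdots i_k}) \overline{g_{i_1 \cdots i_k}}\,\mathrm{vol}$. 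Here I am abusing the $(k{-}1)$-form notation for $df$ versus $k$-form; I would keep the index bookkeeping careful: $df$ is a $(k)$-form if $f$ is a $(k{-}1)$-form, so let me rename, taking $f$ a $(k{-}1)$-form with components $f_{j_1 \cdots j_{k-1}}$ and $g$ a $k$-form with components $g_{i_1 \cdots i_k}$.

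Next I would integrate by parts in the variable $x_{i_\ell}$, moving $\partial_{i_\ell}$ off of $f$ and onto $\overline{g}$, picking up a minus sign: each term becomes $(-1)^{\ell} \int_M f_{i_1 \cdots \hat{i}_\ell \cdots i_k} \,\overline{\partial_{i_\ell} g_{i_1 \cdots i_k}}\,\mathrm{vol}$ (using that $\partial$ commutes with complex conjugation since the derivative operator is real). Now I would relabel: fix the "omitted slot" to be the first index by using the antisymmetry of $g$ to move $i_\ell$ to the front, which contributes a sign $(-1)^{\ell - 1}$, cancelling the $(-1)^\ell$ up to an overall $-1$. After this relabelling the sum over $\ell$ just produces $k$ identical copies, which combines with the combinatorial factors to yield exactly $\frac{1}{(k-1)!}\sum_{j_1, \ldots, j_{k-1}} \int_M f_{j_1 \cdots j_{k-1}} \overline{(-\partial_j g_{j j_1 \cdots j_{k-1}})}\,\mathrm{vol}$, which by \eqref{eq:codifferential-coordinatewise-expression} is precisely $(f, d^* g)$.

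The main obstacle is purely the combinatorial/sign bookkeeping: keeping track of the $1/k!$ symmetrization factors, the $(-1)^{\ell-1}$ in the exterior derivative formula, the sign from integration by parts, and the sign incurred when permuting indices of $g$ to line them up with the codifferential formula — making sure these all combine to give $+(f, d^* g)$ and not $-(f, d^* g)$. A cleaner alternative that avoids some of this is to verify the identity on basis elements: it suffices by bilinearity to take $f = \phi\, dx^{j_1} \wedge \cdots \wedge dx^{j_{k-1}}$ and $g = \psi\, dx^{i_1} \wedge \cdots \wedge dx^{i_k}$ with $\phi, \psi$ scalar functions and the index sets increasing; then $(df, g)$ is nonzero only when $\{j_1, \ldots, j_{k-1}\} \subset \{i_1, \ldots, i_k\}$, say $\{i_1,\ldots,i_k\} = \{j_1,\ldots,j_{k-1}\} \cup \{m\}$, in which case a single integration by parts in $x_m$ matches it against the single surviving term of $(f, d^* g)$, and one only has to check one sign. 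I would likely present the basis-element version as the cleanest route, noting that the general case follows by linearity and density.
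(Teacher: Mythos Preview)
Your proposal is correct and follows essentially the same route as the paper: expand $(df,g)$ in coordinates using \eqref{eq:exterior-derivative-coordinatewise-formula}, integrate by parts to move the derivative onto $g$, and relabel indices to recognize \eqref{eq:codifferential-coordinatewise-expression}. The only cosmetic difference is that you symmetrize over all ordered tuples with a $1/k!$ factor (and offer a basis-element variant), whereas the paper works directly with increasing index tuples; the sign and combinatorial bookkeeping you describe is exactly what the paper's relabelling step accomplishes.
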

\begin{proof}
We have that (using \eqref{eq:exterior-derivative-coordinatewise-formula} in the second identity, integration by parts and the assumption that $f, g$ are compactly supported in the third identity, and \eqref{eq:codifferential-coordinatewise-expression} in the second-to-last identity)
\[\begin{split}
(df, g) &= \sum_{1 \leq i_1 < \cdots < i_{k+1} \leq n} ((df)_{i_1 \cdots i_{k+1}}, g_{i_1 \cdots i_{k+1}}) \\
&=\sum_{1 \leq i_1 < \cdots < i_{k+1} \leq n}  \sum_{\ell=1}^{k+1} (-1)^{\ell-1} (\ptl_{i_\ell} f_{i_1 \cdots \hat{i}_\ell \cdots i_{k+1}}, g_{i_1 \cdots i_{k+1}}) \\
&=  \sum_{1 \leq i_1 < \cdots < i_{k+1} \leq n} \sum_{\ell=1}^{k+1} (-1)^\ell (f_{i_1 \cdots \hat{i}_\ell \cdots i_{k+1}}, \ptl_{i_\ell} g_{i_1 \cdots i_{k+1}}) \\
&= \sum_{1 \leq j_1 < \cdots \leq j_k \leq n} \sum_{j=1}^n (f_{j_1 \cdots j_k}, \ptl_j g_{j j_1 \cdots j_{k+1}}) \\
&= \sum_{1 \leq j_1 < \cdots \leq j_k \leq n} (f_{j_1 \cdots j_k}, (\codif g)_{j_1 \cdots j_{k}}) = (f, \codif g). \qedhere
\end{split}\]
\end{proof}

\begin{lemma}\label{lemma:d-squared-zero}
Let $0 \leq k \leq n$. Let $f$ be a $k$-form. We have that $ddf = 0$.
\end{lemma}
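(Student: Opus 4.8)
The plan is to verify $ddf = 0$ directly from the coordinate formula for the exterior derivative, exploiting the antisymmetry of the wedge symbols against the symmetry of mixed second partial derivatives. First I would write $df = \sum_{j} (\ptl_j f_{i_1 \cdots i_k}) \, dx^j \wedge dx^{i_1} \wedge \cdots \wedge dx^{i_k}$ (summing over $1 \leq i_1 < \cdots < i_k \leq n$ and $j \in [n]$), and then apply $d$ once more to get
\[
ddf = \sum_{1 \leq i_1 < \cdots < i_k \leq n} \sum_{j=1}^n \sum_{m=1}^n (\ptl_m \ptl_j f_{i_1 \cdots i_k}) \, dx^m \wedge dx^j \wedge dx^{i_1} \wedge \cdots \wedge dx^{i_k}.
\]
The key observation is that for each fixed multi-index $(i_1, \ldots, i_k)$, the coefficient $\ptl_m \ptl_j f_{i_1 \cdots i_k}$ is symmetric in the pair $(m,j)$ (equality of mixed partials for smooth functions, which holds since $f \in \Omega^k(\manifold) = \Omega^k C^\infty$), whereas the wedge factor $dx^m \wedge dx^j$ is antisymmetric in $(m,j)$. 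Pairing up the $(m,j)$ term with the $(j,m)$ term in the double sum therefore gives cancellation.

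To make this precise I would fix $(i_1, \ldots, i_k)$ and look at the inner double sum $S := \sum_{m,j} (\ptl_m \ptl_j f_{i_1 \cdots i_k}) \, dx^m \wedge dx^j \wedge dx^{i_1} \wedge \cdots \wedge dx^{i_k}$. Swapping the names of the summation indices $m \leftrightarrow j$ leaves $S$ unchanged, but using $dx^m \wedge dx^j = -\,dx^j \wedge dx^m$ (a consequence of the antisymmetry condition in Definition~\ref{def:k-form}) and $\ptl_m \ptl_j f = \ptl_j \ptl_m f$ shows the relabeled sum equals $-S$; hence $S = -S$, so $S = 0$. Summing over all increasing multi-indices $(i_1, \ldots, i_k)$ yields $ddf = 0$. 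Alternatively, and perhaps cleaner for the write-up, one can argue term by term: the diagonal terms $m = j$ vanish because $dx^m \wedge dx^m = 0$, and the off-diagonal terms group into pairs $\{(m,j), (j,m)\}$ with $m \neq j$, each pair contributing $(\ptl_m\ptl_j f_{i_1\cdots i_k})(dx^m \wedge dx^j + dx^j \wedge dx^m) \wedge dx^{i_1}\wedge\cdots\wedge dx^{i_k} = 0$.

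There is essentially no real obstacle here; the statement is a routine consequence of Clairaut's theorem and the antisymmetry of wedge products, and the only thing to be mildly careful about is bookkeeping — making sure the double sum over $m, j$ is genuinely over all of $[n]^2$ (not just increasing pairs) so that the index swap $m \leftrightarrow j$ is a legitimate symmetry, and noting that smoothness of the component functions is exactly what licenses the interchange of partial derivatives. One could also phrase the whole thing using the coordinatewise formula \eqref{eq:exterior-derivative-coordinatewise-formula}, computing $(ddf)_{i_1 \cdots i_{k+2}}$ as a double alternating sum of second derivatives and checking it vanishes, but I expect the wedge-symbol argument above to be the most transparent.
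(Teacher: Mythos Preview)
Your proposal is correct and follows essentially the same approach as the paper: both expand $ddf$ as the double sum $\sum \ptl_{j'}\ptl_j f_{i_1\cdots i_k}\, dx^{j'}\wedge dx^j \wedge dx^{i_1}\wedge\cdots\wedge dx^{i_k}$ and invoke the symmetry of mixed partials against the antisymmetry of $dx^{j'}\wedge dx^j$. Your write-up is in fact more explicit than the paper's (which simply states the two ingredients and concludes), so there is nothing to correct.
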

\begin{proof}
We have that 
\begin{equs}
d df &=  \sum_{1 \leq i_1 < \cdots < i_k \leq n} \sum_{j=1}^n d\Big( \ptl_j f_{i_1 \cdots i_k} dx^j \wedge dx^{i_1} \wedge \cdots \wedge dx^{i_k}\Big) \\
&= \sum_{1 \leq i_1 < \cdots < i_k \leq n} \sum_{j=1}^n \sum_{j' = 1}^n \ptl_{j'} \ptl_j f_{i_1 \cdots i_k} dx^{j'} \wedge dx^j \wedge dx^{i_1} \wedge \cdots \wedge dx^{i_k}.
\end{equs}
Using that $\ptl_{j'} \ptl_j = \ptl_j \ptl_{j'}$ and that $dx^{j'} \wedge dx^j = - dx^j \wedge dx^{j'}$, we see that the above is zero, as desired.
\end{proof}

\begin{definition}[Hodge star]\label{def:hodge-star}
Let $0 \leq k \leq n$. Given a $k$-form $f$, the Hodge star $\star f$ of $f$ is a $(n-k)$-form defined by
\[ \star f := \sum_{1 \leq i_1 < \cdots < i_k \leq n} \sum_{1 \leq j_1 < \cdots < j_{n-k} \leq n}  \varep_{i_1 \cdots i_k j_1 \cdots j_{n-k}} f_{i_1 \cdots i_k} dx^{j_1} \wedge \cdots \wedge dx^{j_{n-k}}.  \]
Note in particular that
\beq\label{eq:hodge-star-coordinatewise-formula} (\star f)_{j_1 \cdots j_{n-k}} =  \sum_{1 \leq i_1 < \cdots < i_k \leq n} \varep_{i_1 \cdots i_k j_1 \cdots j_{n-k}} f_{i_1 \cdots i_k}.\eeq
\end{definition}

\begin{remark}\label{remark:hodge-star-j}
Let $1 \leq j_1 < \cdots < j_{n-k} \leq n$. Let $1 \leq i_1 < \cdots < i_k \leq n$ be the unique indices such that $\{j_1, \ldots, j_{n-k}, i_1, \ldots, i_k\} = [n]$. Then
\begin{equs}
\star (dx^{j_1} \wedge \cdots dx^{j_{n-k}}) = \varep_{j_1 \cdots j_{n-k} i_1 \cdots i_k} dx^{i_1} \wedge \cdots \wedge dx^{i_k}.
\end{equs}
\end{remark}



\begin{lemma}\label{lemma:double-hodge-star}
Let $0 \leq k \leq n$. Let $f$ be a $k$-form. Then $\star \star f = (-1)^{k(n-k)} f$.
\end{lemma}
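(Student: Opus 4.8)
The plan is to compute $\star\star f$ directly from the coordinatewise formula \eqref{eq:hodge-star-coordinatewise-formula} by applying it twice, and to identify the resulting sign as $(-1)^{k(n-k)}$ via a permutation-parity bookkeeping. Since both $\star$ and the identity are linear, it suffices to check the identity on a single basis form $dx^{i_1}\wedge\cdots\wedge dx^{i_k}$ with $1\le i_1<\cdots<i_k\le n$; I would use the clean description in Remark~\ref{remark:hodge-star-j}. So let $J=\{j_1<\cdots<j_{n-k}\}$ be the complementary index set, so that $\star(dx^{i_1}\wedge\cdots\wedge dx^{i_k}) = \varep_{i_1\cdots i_k j_1\cdots j_{n-k}}\, dx^{j_1}\wedge\cdots\wedge dx^{j_{n-k}}$.

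First I would apply $\star$ a second time using Remark~\ref{remark:hodge-star-j} again: since the complement of $J$ is exactly $I=\{i_1<\cdots<i_k\}$, we get
\[
\star\star(dx^{i_1}\wedge\cdots\wedge dx^{i_k}) = \varep_{i_1\cdots i_k j_1\cdots j_{n-k}}\,\varep_{j_1\cdots j_{n-k} i_1\cdots i_k}\, dx^{i_1}\wedge\cdots\wedge dx^{i_k}.
\]
So the whole content of the lemma reduces to the elementary claim that for any partition of $[n]$ into an increasing $k$-tuple $(i_1,\ldots,i_k)$ and an increasing $(n-k)$-tuple $(j_1,\ldots,j_{n-k})$, one has $\varep_{i_1\cdots i_k j_1\cdots j_{n-k}}\,\varep_{j_1\cdots j_{n-k} i_1\cdots i_k} = (-1)^{k(n-k)}$. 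To see this, note each of these two $\varep$ symbols is $\pm 1$ (the indices are distinct and exhaust $[n]$), and the second tuple $(j_1,\ldots,j_{n-k},i_1,\ldots,i_k)$ is obtained from the first $(i_1,\ldots,i_k,j_1,\ldots,j_{n-k})$ by cyclically moving the block of $k$ entries past the block of $n-k$ entries. Such a block transposition is a permutation of sign $(-1)^{k(n-k)}$ (it is the product of $k(n-k)$ adjacent transpositions, one for each pair consisting of an $i$-entry and a $j$-entry that must cross). Using the characterization of $\varep$ by its two defining properties in Definition~\ref{def:hodge-star}'s preceding definition (value $1$ on the identity ordering, sign-reversal under transpositions), this gives $\varep_{j_1\cdots j_{n-k} i_1\cdots i_k} = (-1)^{k(n-k)}\varep_{i_1\cdots i_k j_1\cdots j_{n-k}}$, and since $\varep_{i_1\cdots i_k j_1\cdots j_{n-k}}^2 = 1$ the product equals $(-1)^{k(n-k)}$ as claimed.

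Finally I would assemble: by linearity in the coefficients $f_{i_1\cdots i_k}$ the computation on basis forms extends to an arbitrary $k$-form, yielding $\star\star f = (-1)^{k(n-k)} f$. I expect the only slightly delicate step to be the parity count for the block transposition, i.e.\ verifying carefully that swapping a length-$k$ block past a length-$(n-k)$ block contributes exactly $(-1)^{k(n-k)}$; everything else is bookkeeping with the complementary index sets. An alternative that avoids even that, if one prefers, is to argue directly from the double-sum definition in Definition~\ref{def:hodge-star}: $(\star\star f)_{i_1\cdots i_k} = \sum_{J}\varep_{j_1\cdots j_{n-k} i_1\cdots i_k}\varep_{\ell_1\cdots\ell_k j_1\cdots j_{n-k}} f_{\ell_1\cdots\ell_k}$, where the only surviving term in the inner sum forces $\{\ell_1,\ldots,\ell_k\}=\{i_1,\ldots,i_k\}$ and hence $J$ to be the complement, reducing again to the same parity identity.
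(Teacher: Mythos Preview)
Your proof is correct and follows essentially the same approach as the paper: both apply the Hodge star twice (the paper on a general $f$, you on a basis form, which is equivalent by linearity), reduce to the identity $\varep_{i_1\cdots i_k j_1\cdots j_{n-k}}\,\varep_{j_1\cdots j_{n-k} i_1\cdots i_k} = (-1)^{k(n-k)}$, and verify this by the block-transposition parity count. Your alternative at the end is exactly the paper's computation written out.
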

\begin{proof}
By definition and Remark \ref{remark:hodge-star-j}, we have that
\begin{equs}
\star (\star f) &=  \sum_{1 \leq i_1 < \cdots < i_k \leq n} \sum_{1 \leq j_1 < \cdots < j_{n-k} \leq n} \varep_{i_1 \cdots i_{k} j_1 \cdots j_{n-k}} f_{i_1 \cdots i_k} \star( dx^{j_1} \wedge \cdots \wedge dx^{j_k}) \\
&= \sum_{1 \leq i_1 < \cdots < i_k \leq n} \sum_{1 \leq j_1 < \cdots < j_{n-k} \leq n} \varep_{i_1 \cdots i_{k} j_1 \cdots j_{n-k}} \varep_{j_1 \cdots j_{n-k} i_1 \cdots i_k} f_{i_1 \cdots i_k} dx^{i_1} \wedge \cdots \wedge dx^{i_k}.
\end{equs}
Observe that for any fixed $1 \leq i_1 < \cdots < i_k \leq n$, there is a unique ordered collection $1 \leq j_1 < \cdots < j_{n-k}$ such that $\varep_{i_1 \cdots i_k j_1 \cdots j_{n-k}} \neq 0$, and for this collection, 
\begin{equs}
\varep_{i_1 \cdots i_{k} j_1 \cdots j_{n-k}} \varep_{j_1 \cdots j_{n-k} i_1 \cdots i_k} = (-1)^{k(n-k)} \varep_{i_1 \cdots i_{k} j_1 \cdots j_{n-k}}^2 = (-1)^{k(n-k)}.
\end{equs}
From this, it follows that
\[
\star (\star f) = (-1)^{k(n-k)} \sum_{1 \leq i_1 < \cdots < i_k \leq n}  f_{i_1 \cdots i_k} dx^{i_1} \wedge \cdots \wedge dx^{i_k}. \qedhere
\]
\end{proof}

\begin{lemma}
Let $0 \leq k \leq n$. Let $f \in \Omega^k L^2(\manifold)$, $g \in \Omega^{n-k} L^2(\manifold)$. Then
\[ (\star f, g) = (-1)^{k(n-k)} (f, \star g).\]
\end{lemma}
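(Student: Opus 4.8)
The plan is to expand both sides directly in coordinates, using the coordinatewise formula \eqref{eq:hodge-star-coordinatewise-formula} for the Hodge star together with the definition of the $L^2$ inner product on forms, and then match the two expressions by a single block transposition of indices. Note first that, in contrast to the adjointness of $d$ and $\codif$, no integration by parts is needed here: $\star$ is a pointwise algebraic operation on the coefficient functions, so it is well defined on $L^2$ forms and all the manipulations below are purely formal.

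First I would write, by the definition of the inner product on $(n-k)$-forms and then \eqref{eq:hodge-star-coordinatewise-formula} (pulling the real scalar $\varep$ out of the inner product),
\[
(\star f, g) = \sum_{1 \leq j_1 < \cdots < j_{n-k} \leq n} ((\star f)_{j_1 \cdots j_{n-k}}, g_{j_1 \cdots j_{n-k}}) = \sum_{\substack{1 \leq i_1 < \cdots < i_k \leq n \\ 1 \leq j_1 < \cdots < j_{n-k} \leq n}} \varep_{i_1 \cdots i_k j_1 \cdots j_{n-k}} (f_{i_1 \cdots i_k}, g_{j_1 \cdots j_{n-k}}).
\]
Then I would do the same for $(f, \star g)$: since $g$ is an $(n-k)$-form, \eqref{eq:hodge-star-coordinatewise-formula} with $k$ replaced by $n-k$ gives $(\star g)_{i_1 \cdots i_k} = \sum_{1 \leq j_1 < \cdots < j_{n-k} \leq n} \varep_{j_1 \cdots j_{n-k} i_1 \cdots i_k} g_{j_1 \cdots j_{n-k}}$, hence
\[
(f, \star g) = \sum_{1 \leq i_1 < \cdots < i_k \leq n} (f_{i_1 \cdots i_k}, (\star g)_{i_1 \cdots i_k}) = \sum_{\substack{1 \leq i_1 < \cdots < i_k \leq n \\ 1 \leq j_1 < \cdots < j_{n-k} \leq n}} \varep_{j_1 \cdots j_{n-k} i_1 \cdots i_k} (f_{i_1 \cdots i_k}, g_{j_1 \cdots j_{n-k}}).
\]

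Finally I would invoke the block-transposition identity $\varep_{i_1 \cdots i_k j_1 \cdots j_{n-k}} = (-1)^{k(n-k)} \varep_{j_1 \cdots j_{n-k} i_1 \cdots i_k}$ — exactly the sign computation already carried out in the proof of Lemma \ref{lemma:double-hodge-star}, arising from moving the block $(i_1, \ldots, i_k)$ past the block $(j_1, \ldots, j_{n-k})$, which costs $k(n-k)$ transpositions — and compare the two displays term by term to conclude $(\star f, g) = (-1)^{k(n-k)} (f, \star g)$.

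There is essentially no genuine obstacle here; the only points requiring care are that the inner product $(\cdot,\cdot)$ carries a complex conjugation, which is harmless since the $\varep$-coefficients are real and therefore unaffected by conjugation, and that one correctly identifies the parity of the block permutation. One could alternatively try to deduce the statement more abstractly from $\star\star = (-1)^{k(n-k)}$ (Lemma \ref{lemma:double-hodge-star}) together with the fact that $\star$ preserves the $L^2$ inner product, but the latter fact has not yet been established at this point, so the direct coordinate computation above is the cleanest route and matches the computational style of this section.
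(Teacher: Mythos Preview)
Your proof is correct and follows essentially the same approach as the paper: expand both $(\star f, g)$ and $(f, \star g)$ in coordinates via \eqref{eq:hodge-star-coordinatewise-formula}, then use the block-transposition identity $\varep_{i_1 \cdots i_k j_1 \cdots j_{n-k}} = (-1)^{k(n-k)} \varep_{j_1 \cdots j_{n-k} i_1 \cdots i_k}$ to match terms. The paper's proof is slightly more compact, doing this as a single chain of equalities rather than expanding both sides separately, but the content is identical.
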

\begin{proof}
We have that (using \eqref{eq:hodge-star-coordinatewise-formula} in the second and fourth identities)
\begin{align*}
(\star f, g) &= \sum_{1 \leq j_1 < \cdots < j_{n-k} \leq n} \sum_{1 \leq i_1 < \cdots < i_k \leq n} (\varep_{i_1 \cdots i_k j_1 \cdots j_{n-k}} f_{i_1 \cdots i_k}, g_{j_1 \cdots j_{n-k}}) \\
&= (-1)^{k(n-k)} \sum_{1 \leq i_1 < \cdots < i_k \leq n} \sum_{1 \leq j_1 < \cdots < j_{n-k} \leq n} (f_{i_1 \cdots i_k}, \varep_{j_1 \cdots j_{n-k} i_1 \cdots i_k} g_{j_1 \cdots j_{n-k})} \\
&= (-1)^{k(n-k)} (f, \star g). \qedhere
\end{align*}
\end{proof}

The following lemma relates $d$ and $\codif$ via $\star$.

\begin{lemma}\label{lemma:codif-dif-hodge-star}
We have that $\codif = (-1)^{n(k+1) + 1} \star d \star$.
\end{lemma}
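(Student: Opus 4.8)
The plan is to verify the identity $\codif = (-1)^{n(k+1)+1}\star d\star$ by a direct computation on a $k$-form $f$, using only the coordinate formulas already established: the definition of $\codif$ via \eqref{eq:codifferential-coordinatewise-expression}, the definition of $d$, the coordinatewise Hodge star formula \eqref{eq:hodge-star-coordinatewise-formula} (and Remark~\ref{remark:hodge-star-j}), and the double-Hodge-star relation $\star\star = (-1)^{k(n-k)}$ from Lemma~\ref{lemma:double-hodge-star}. There are two natural routes and I would take the first: start from the right-hand side $\star d \star f$, apply $\star$ to $f$ (obtaining an $(n-k)$-form), then $d$ (an $(n-k+1)$-form), then $\star$ (a $(k-1)$-form), and track the sign carefully through each step; then compare with $\codif f$. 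Alternatively one can use the adjointness of $d$ and $\codif$ together with the adjointness of $\star$ to transfer the known identity $d = (-1)^{?}\star \codif\star$ (or vice versa) — but since the sign bookkeeping is the whole content, the honest route is the explicit one.

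Concretely, I would fix distinct indices $i_1 < \cdots < i_{k-1}$ and compute $(\star d\star f)_{i_1\cdots i_{k-1}}$. Write $\omega := \star f$, an $(n-k)$-form. By \eqref{eq:hodge-star-coordinatewise-formula}, for increasing $j_1 < \cdots < j_{n-k}$ we have $\omega_{j_1\cdots j_{n-k}} = \varep_{p_1\cdots p_k j_1\cdots j_{n-k}} f_{p_1\cdots p_k}$ where $p_1 < \cdots < p_k$ is the complementary increasing tuple. Then $(d\omega)$ is an $(n-k+1)$-form whose components are given by \eqref{eq:exterior-derivative-coordinatewise-formula} applied to $\omega$, i.e.\ an alternating sum of $\partial$-derivatives of components of $\omega$. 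Applying $\star$ once more and using \eqref{eq:hodge-star-coordinatewise-formula} again contracts against another $\varep$ symbol. The key combinatorial step is: the product of the two $\varep$ symbols that appear (one from building $\star f$, one from the final $\star$) collapses — after reordering the complementary index blocks — to a single sign $(-1)^{k(n-k)}$ times a Kronecker-type selection, exactly as in the proof of Lemma~\ref{lemma:double-hodge-star}. What survives is precisely $-\partial_j f_{j i_1\cdots i_{k-1}}$ (with the implicit sum over $j$), up to the overall sign $(-1)^{n(k+1)+1}$; comparing with \eqref{eq:codifferential-coordinatewise-expression} finishes the proof.

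I expect the sole real obstacle to be the sign computation: carefully determining how the two Levi-Civita symbols multiply once the blocks of complementary indices are permuted into alignment, and checking that the exponent of $-1$ one accumulates from (i) the $(-1)^{\ell-1}$ in the exterior-derivative formula, (ii) moving the differentiated index into the first slot of the $\varep$ symbol, and (iii) the $(-1)^{k(n-k)}$ from swapping the two index blocks, all add up to $(-1)^{n(k+1)+1}$. A clean way to organize this is to first prove the formula for decomposable forms $f = \varphi\, dx^{i_1}\wedge\cdots\wedge dx^{i_k}$ with $\varphi$ a scalar function — where $\star f$ is a single wedge monomial by Remark~\ref{remark:hodge-star-j}, $d$ of it is a sum of at most $k+1$ monomials, and $\star$ of those is again monomial — so that the sign is a finite explicit check, and then extend by linearity. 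As a sanity check I would verify the low cases $k=1$ (recovering $\codif f = -\partial_j f_j$, the divergence, with $n$ arbitrary) and $n=3,\ k=1,2$ against the classical $\mathrm{curl}$/$\mathrm{div}$ dictionary mentioned in Section~\ref{sec:torusfgf}.
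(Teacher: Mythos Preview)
Your proposal is correct and matches the paper's approach: the paper also computes $\star d\star f$ directly in coordinates, expanding $\star f$ via \eqref{eq:hodge-star-coordinatewise-formula}, applying $d$, applying $\star$ again, and then collapsing the resulting product of two $\varep$ symbols (using exactly the block-swap observation you describe) to recover $-\partial_j f_{j i_1\cdots i_{k-1}}$ with the sign $(-1)^{n(k+1)+1}$. The paper works with a general $k$-form throughout rather than reducing first to monomials, but this is a cosmetic difference; your sign-tracking outline and the anticipated obstacle are precisely what the paper's computation carries out.
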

\begin{proof}
We have that
\begin{equs}
d (\star f) &= \sum_{1 \leq i_2 < \cdots < i_{n-k+1} \leq n} \sum_{i_1=1}^n \ptl_{i_1} (\star f) _{i_2 \cdots i_{n-k+1}} dx^{i_1} \wedge \cdots \wedge  dx^{i_{n-k+1}} \\
&= \frac{1}{(n-k)!} \sum_{i_2, \ldots, i_{n-k+1} \in [n]} \sum_{i_1 \neq i_2, \ldots, i_{n-k+1}} \ptl_{i_1} (\star f) _{i_2 \cdots i_{n-k+1}} dx^{i_1} \wedge \cdots \wedge dx^{i_{n-k+1}} .
\end{equs}
Next, by Remark \ref{remark:hodge-star-j} we have that
\begin{equs}
(\star f)_{i_2 \cdots i_{n-k+1}} &= \sum_{1 \leq \ell_1 < \cdots < \ell_k \leq n} \varep_{\ell_1 \cdots \ell_k i_ 2 \cdots i_{n-k+1}} f_{\ell_1 \cdots \ell_k} \\
&= \frac{1}{k!} \sum_{\ell_1, \ldots, \ell_k \in [n]} \varep_{\ell_1 \cdots \ell_k i_ 2 \cdots i_{n-k+1}} f_{\ell_1 \cdots \ell_k},
\end{equs}
and thus 
\begin{equs}
d (\star f) = \frac{1}{k! (n-k)!} \sum_{i_2, \ldots, i_{n-k+1} \in [n]} \sum_{i_1 \neq i_2, \ldots, i_{n-k+1}} \sum_{\ell_1, \ldots, \ell_k \in [n]} \varep_{\ell_1 \cdots \ell_k i_ 2 \cdots i_{n-k+1}} \ptl_{i_1} f_{\ell_1 \cdots \ell_k}  dx^{i_1} \wedge \cdots dx^{i_{n-k+1}}.
\end{equs}
Combining this with the fact that for any $i_1, \ldots, i_{n-k+1}$, we have that
\begin{equs}
\star (dx^{i_1} \wedge \cdots \wedge dx^{i_{n-k+1}}) &= \sum_{1 \leq j_1 <  \ldots <  j_{k-1} \in [n]} \varep_{i_1 \cdots i_{n-k+1} j_1 \cdots j_{k-1}} dx^{j_1} \wedge \cdots \wedge dx^{j_{k-1}} \\
&= \frac{1}{(k-1)!} \sum_{j_1, \ldots, j_{k-1} \in [n]} \varep_{i_1 \cdots i_{n-k+1} j_1 \cdots j_{k-1}} dx^{j_1} \wedge \cdots \wedge dx^{j_{k-1}},
\end{equs}
we thus obtain (there is a sum over all indices since all indices are repeated, but we stop explicitly writing the summation symbols)
\[\begin{split}
\star d \star f = \frac{1}{(k-1)!} \frac{1}{k! (n-k)!} \varep_{i_1 \cdots i_{n-k+1} j_1 \cdots j_{k-1}} &\varep_{\ell_1 \cdots \ell_k i_2 \cdots i_{n-k+1}} ~\times \\
&\ptl_{i_1} f_{\ell_1 \cdots \ell_k} dx^{j_1} \wedge \cdots \wedge dx^{j_{k-1}}.
\end{split}\]
To finish, it suffices to show (by \eqref{eq:codifferential-coordinatewise-expression}) that for fixed $j_1, \ldots, j_{k-1}$,
\[ \frac{1}{k! (n-k)!} \varep_{i_1 \cdots i_{n-k+1} j_1 \cdots j_{k-1}} \varep_{\ell_1 \cdots \ell_k i_2 \cdots i_{n-k+1}} \ptl_{i_1} f_{\ell_1 \cdots \ell_k} = (-1)^{n(k+1)} \ptl_i f_{i j_1 \cdots j_{k-1}}. \]
By antisymmetry, we have that
\[\begin{split}
\varep_{i_1 \cdots i_{n-k+1} j_1 \cdots j_{k-1}}& \varep_{\ell_1 \cdots \ell_k i_2 \cdots i_{n-k+1}} =  \\
&(-1)^{(k-1)(n-k)} \varep_{i_1 j_1 \cdots j_{k -1} i_2 \cdots i_{n-k+1}} \varep_{\ell_1 \cdots \ell_k i_2 \cdots i_{n-k+1}}. 
\end{split}\]
For fixed $i_1, \ldots, i_{n-k+1}$, $\ell_1, \ldots, \ell_k$, we have that
\[ \varep_{i_1 j_1 \cdots j_{k-1} i_2 \cdots i_{n-k+1}} \varep_{\ell_1 \cdots \ell_k i_2 \cdots i_{n-k+1}} \neq 0 \iff \{i_1, j_1, \ldots, j_{k-1}\} = \{\ell_1, \ldots, \ell_k\}. \]
Fixing $j_1, \ldots, j_{k-1}$, $i_1, \ldots, i_{n-k+1}$, observe that since $\varep$ and $f$ are both antisymmetric, we have that for $\ell_1, \ldots, \ell_k$ satisfying the condition on the right hand side above,
\[ \varep_{\ell_1 \cdots \ell_k i_2 \cdots i_{n-k+1}} \ptl_{i_1} f_{\ell_1 \cdots \ell_k} = \varep_{i_1 j_1 \cdots j_{k-1} i_2 \cdots i_{n-k+1}} \ptl_{i_1} f_{i_1 j_1 \cdots j_{k-1}}. \]
Combining the previous few observations, we have that
\[\begin{split}
\frac{1}{k! (n-k)!}&\varep_{i_1 \cdots i_{n-k+1} j_1 \cdots j_{k-1}} \varep_{\ell_1 \cdots \ell_k i_2 \cdots i_{n-k+1}} \ptl_{i_1} f_{\ell_1 \cdots \ell_k} \\
&= (-1)^{(k-1)(n-k)} \frac{k!}{k! (n-k)!} \varep_{i_1 j_1 \cdots j_{k-1} i_2 \cdots i_{n-k+1}}^2 \ptl_{i_1} f_{i_1 j_1 \cdots j_{k-1}} =  (-1)^{(k-1)(n-k)} \ptl_{i_1} f_{i_1 j_1 \cdots j_{k-1}}.
\end{split}\]
To finish, observe that $(-1)^{(k-1)(n-k)} = (-1)^{n(k+1)}$.
\end{proof}

\begin{cor}\label{cor:dif-codif-hodge-star-commute}
We have that $\star d \codif = \codif d \star$ and $\star \codif d = d \codif \star$.
\end{cor}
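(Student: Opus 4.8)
\emph{Proof proposal.} The plan is to derive both identities purely formally from two facts already established: the factorization $\codif = (-1)^{n(j+1)+1}\,\star d\,\star$ of Lemma~\ref{lemma:codif-dif-hodge-star}, in which $j$ denotes the degree of the form on which $\codif$ acts, together with the involution identity $\star\star f = (-1)^{j(n-j)} f$ for a $j$-form $f$ from Lemma~\ref{lemma:double-hodge-star}. Since $d$, $\codif$, and $\star$ are all defined degreewise by explicit formulas, it suffices to check each identity as an equality of operators $\Omega^k(\manifold) \to \Omega^{n-k}(\manifold)$ for each fixed $k$, so the whole argument is bookkeeping of degree-dependent signs with no analytic content.

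For the first identity, fix a $k$-form $f$. On the left, $\codif$ acts on the $k$-form $f$, so $\codif f = (-1)^{n(k+1)+1}\star d\star f$ and hence $\star d\codif f = (-1)^{n(k+1)+1}\,\star d\star d\star f$. On the right, $d\star f$ is an $(n-k+1)$-form, so $\codif$ acts there with exponent $n\big((n-k+1)+1\big)+1 = n(n-k+2)+1$, giving $\codif d\star f = (-1)^{n(n-k+2)+1}\,\star d\star d\star f$. Thus both sides are the same operator $\star d\star d\star$ up to the sign $(-1)^{\,n(k+1)+1-(n(n-k+2)+1)} = (-1)^{\,n(2k-n-1)}$, and $n(2k-n-1)$ has the same parity as $-n(n+1)$, which is even; so the signs agree.

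For the second identity, again fix a $k$-form $f$. On the left, $df$ is a $(k+1)$-form, so $\codif df = (-1)^{n(k+2)+1}\star d\star df$, and applying $\star$ and then $\star\star = (-1)^{k(n-k)}$ on the $(n-k)$-form $d\star df$ gives $\star\codif d f = (-1)^{\,n(k+2)+1+k(n-k)}\, d\star d f$. On the right, $\star f$ is an $(n-k)$-form, so $\codif \star f = (-1)^{n(n-k+1)+1}\star d\star\star f = (-1)^{\,n(n-k+1)+1+k(n-k)}\star d f$, whence $d\codif\star f = (-1)^{\,n(n-k+1)+1+k(n-k)}\, d\star d f$. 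The two sides agree up to the sign $(-1)^{\,n(k+2)-n(n-k+1)} = (-1)^{\,n(2k+1-n)}$, and $n(2k+1-n)$ has the same parity as $-n(n-1)$, which is even; so the identity follows.

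The only thing that could go wrong is mis-tracking one of these parity exponents — in particular, forgetting that the sign in Lemma~\ref{lemma:codif-dif-hodge-star} depends on the degree of the input form, which shifts after each application of $d$ or $\star$. I expect this careful sign accounting to be the only real step, since no estimate or approximation enters; one could alternatively observe that in both identities the two sides reduce to $\star d\star d\star$ (resp.\ $d\star d$) and that the sign discrepancy is always a power of $(-1)$ with exponent divisible by the even number $n(n\pm1)$.
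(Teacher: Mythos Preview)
Your proof is correct and follows essentially the same approach as the paper: both arguments reduce to repeated applications of Lemmas~\ref{lemma:double-hodge-star} and~\ref{lemma:codif-dif-hodge-star} together with parity bookkeeping, using that $n(n\pm 1)$ is always even. The only cosmetic difference is that the paper first records the intermediate identities $\star d = (-1)^{n(k+1)+1+k(n-k)}\,\codif\,\star$ and $\star\,\codif = (-1)^{n(k+1)+1+k(n-k)}\,d\,\star$ and then composes them, whereas you reduce both sides directly to the common expressions $\star d\star d\star$ and $d\star d$ and compare signs; the content is the same.
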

\begin{proof}
We show the first identity; the proof of the second is similar. By Lemmas \ref{lemma:double-hodge-star} and \ref{lemma:codif-dif-hodge-star}, we have that
\[ (-1)^{k(n-k)} \star d = \star d \star \star = (-1)^{n(k+1) + 1} \codif \star. \]
Thus
\[ \star d \codif = (-1)^{n(k+1) + 1 + k(n-k)} \codif \star \codif. \]
Again applying Lemmas \ref{lemma:double-hodge-star} and \ref{lemma:codif-dif-hodge-star}, we have that
\[ \star \codif = (-1)^{n(k+1) + 1} \star \star d \star = (-1)^{n(k+1) + 1 + k(n-k)} d \star. \]
Combining the previous few displays, we thus obtain
\[ \star d \codif = (-1)^{n(k+1) + 1 + k(n-k)} (-1)^{n(k+1) + 1 + k(n-k)} \codif d \star = \codif d \star, \]
as desired.
\end{proof}

\begin{definition}[Hodge Laplacian on differential forms]
Let $0 \leq k \leq n$. We define the $k$-form Laplacian $\Delta : \Omega^k \ra \Omega^k$ by $\Delta := -(d\codif + \codif d)$. This is usually referred to as the Hodge Laplacian.
\end{definition}

\begin{remark}
We note here that this definition is off by a sign compared to the usual textbook definition of the Laplacian on forms, e.g. in \cite{Jost}. In particular, we have that
\[ (f, \Delta f) = - (f, \codif d f + d\codif f) = -(df, df) - (\codif f, \codif f).\]
Thus $\Delta$ is negative definite, just like the usual Laplacian on functions. 
\end{remark}

\begin{remark}
Let $\Omega(\manifold) = \bigoplus_{k = 0}^n \Omega^k(\manifold)$. We may view $d, \codif, \Delta$ as operators on $\Omega(\manifold)$. Then by the facts that $d^2 = 0 = (\codif)^2$, we have that $\Delta = -(d + \codif)^2$.
\end{remark}

In fact, the Hodge Laplacian on differential forms when $\manifold = \R^n, \T^n$ is closely related to the Laplacian on functions, as the next lemma shows.

\begin{lemma}
We have that
\begin{equs}\label{eq:hodge-laplacian-coordinatewise-formula}
\Delta f = \sum_{1 \leq i_1 < \cdots < i_k \leq n} \Delta f_{i_1 \cdots i_k} dx^{i_1} \wedge \cdots \wedge dx^{i_k}, 
\end{equs}
where $\Delta f_{i_1 \cdots i_k}$ is the usual Laplacian on functions. 
\end{lemma}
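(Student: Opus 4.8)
\emph{Proof strategy.} The plan is to reduce everything to a scalar statement by working in coordinates, using the explicit formulas \eqref{eq:exterior-derivative-coordinatewise-formula} for $d$ and \eqref{eq:codifferential-coordinatewise-expression} for $\codif$. Both sides of \eqref{eq:hodge-laplacian-coordinatewise-formula} are antisymmetric tuples of functions that vanish as soon as two indices coincide, so it is enough to fix distinct indices $i_1 < \cdots < i_k$ and verify that $(\Delta f)_{i_1 \cdots i_k} = -(\codif d f + d \codif f)_{i_1 \cdots i_k}$ equals $\Delta f_{i_1 \cdots i_k}$, where on the right $\Delta = \sum_{j=1}^n \ptl_j^2$ is the usual scalar Laplacian (note this is the sign convention consistent with the Remark, under which the scalar Laplacian is negative definite).

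First I would compute $\codif d f$. Writing the $(k+1)$-tuple for $df$ in the order $(j, i_1, \ldots, i_k)$, formula \eqref{eq:exterior-derivative-coordinatewise-formula} gives
\[ (df)_{j i_1 \cdots i_k} = \ptl_j f_{i_1 \cdots i_k} + \sum_{\ell=1}^k (-1)^\ell \ptl_{i_\ell} f_{j i_1 \cdots \hat{i}_\ell \cdots i_k}, \]
and then \eqref{eq:codifferential-coordinatewise-expression} (with the implicit sum over $j$) yields
\[ (\codif d f)_{i_1 \cdots i_k} = -\ptl_j (df)_{j i_1 \cdots i_k} = -\ptl_j \ptl_j f_{i_1 \cdots i_k} - \sum_{\ell=1}^k (-1)^\ell \ptl_j \ptl_{i_\ell} f_{j i_1 \cdots \hat{i}_\ell \cdots i_k}. \]
Next I would compute $d \codif f$. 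Since $\codif f$ is a $(k-1)$-form with $(\codif f)_{i_1 \cdots \hat{i}_\ell \cdots i_k} = -\ptl_j f_{j i_1 \cdots \hat{i}_\ell \cdots i_k}$, applying \eqref{eq:exterior-derivative-coordinatewise-formula} once more gives
\[ (d \codif f)_{i_1 \cdots i_k} = \sum_{\ell=1}^k (-1)^{\ell-1} \ptl_{i_\ell} (\codif f)_{i_1 \cdots \hat{i}_\ell \cdots i_k} = \sum_{\ell=1}^k (-1)^\ell \ptl_{i_\ell} \ptl_j f_{j i_1 \cdots \hat{i}_\ell \cdots i_k}. \]

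Then I would add the two displays. The two sums over $\ell$ are negatives of one another once one uses $\ptl_j \ptl_{i_\ell} = \ptl_{i_\ell} \ptl_j$ (the $j$-summation and the omitted-index structure are identical in the two sums), so they cancel termwise, leaving $(\codif d f + d \codif f)_{i_1 \cdots i_k} = -\ptl_j \ptl_j f_{i_1 \cdots i_k}$. Hence $(\Delta f)_{i_1 \cdots i_k} = \ptl_j \ptl_j f_{i_1 \cdots i_k} = \Delta f_{i_1 \cdots i_k}$, which is precisely \eqref{eq:hodge-laplacian-coordinatewise-formula}. The only real friction I anticipate is the sign-and-index bookkeeping when substituting one coordinatewise formula into the other — keeping track of the alternating signs and of which slot the omitted index $\hat{i}_\ell$ occupies — together with the harmless observation that the terms in which the contracted index $j$ equals some $i_m$ vanish automatically by the antisymmetry of $f$. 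Beyond that, the argument is just commutativity of partial derivatives.
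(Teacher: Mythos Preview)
Your proof is correct and follows essentially the same approach as the paper: compute $(\codif d f)_{i_1\cdots i_k}$ and $(d\codif f)_{i_1\cdots i_k}$ in coordinates, then observe that the cross terms $\sum_\ell (-1)^\ell \ptl_j \ptl_{i_\ell} f_{j i_1 \cdots \hat{i}_\ell \cdots i_k}$ cancel by commutativity of partials, leaving only $-\ptl_j\ptl_j f_{i_1\cdots i_k}$. Your version is in fact a bit more streamlined than the paper's, which expands into unordered index sums with $\frac{1}{k!}$ prefactors and splits into four pieces $I_1,\ldots,I_4$ before identifying the same cancellation.
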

\begin{proof}
Given a $k$-form $f$, we compute (using \eqref{eq:codif-alternative-formula})
\begin{equs}
d \codif f &= \sum_{1 \leq j_1 < \cdots \leq j_{k-1} \leq n} \sum_{m=1}^n \ptl_m  (\codif f)_{j_1 \cdots j_{k-1}} dx^m \wedge dx^{j_1} \wedge \cdots \wedge dx^{j_{k-1}} \\
&= \frac{1}{(k-1)!} \sum_{j_1, \ldots, j_{k-1} \in [n]}\sum_{m=1}^n \ptl_m  (\codif f)_{j_1 \cdots j_{k-1}} dx^m \wedge dx^{j_1} \wedge \cdots \wedge dx^{j_{k-1}}  \\
&= - \frac{1}{(k-1)!} \sum_{j_1, \ldots, j_{k-1} \in [n]} \sum_{m\neq j_1, \ldots, j_{k-1}} \sum_{j \neq j_1,\ldots, j_{k-1}} \ptl_m \ptl_j f_{j j_1 \cdots j_{k-1}} dx^m \wedge dx^{j_1} \wedge \cdots \wedge dx^{j_{k-1}} .
\end{equs}
We may further split the above sum into the cases $m = j$, $m \neq j$, in which we obtain that the above is equal to $I_1 + I_2$, where
\begin{equs}
I_1 &:= -\frac{1}{(k-1)!} \sum_{j_1, \ldots, j_{k-1} \in [n]} \sum_{j \neq j_1, \ldots, j_{k-1}} \ptl_j \ptl_j f_{j j_1 \cdots j_{k-1}} dx^j \wedge dx^{j_1} \wedge \cdots \wedge dx^{j_{k-1}} \\
&= -\frac{1}{k!} \sum_{j_1, \ldots, j_k} \sum_{\ell=1}^k \ptl_{j_\ell j_\ell} f_{j_1 \cdots j_{k}}  dx^{j_1} \wedge \cdots \wedge dx^{j_k}, \\
I_2 &:= -\frac{1}{(k-1)!} \sum_{j_1, \ldots, j_{k-1} \in [n]} \sum_{j \neq j_1 \cdots j_{k-1}} \sum_{m \neq j, j_1, \ldots, j_{k-1}} \ptl_m \ptl_j f_{j j_1 \cdots j_{k-1}} dx^m \wedge dx^{j_1} \wedge \cdots \wedge dx^{j_{k-1}}.
\end{equs}
Note in the identity for $I_1$, we used the antisymmetry of both the wedge terms as well as the $f_{j_1 \cdots j_k}$, so that 
\begin{equs}
\ptl_j \ptl_j f_{j j_1 \cdots j_{k-1}} dx^j \wedge dx^{j_1} \wedge \cdots \wedge dx^{j_{k-1}} = \ptl_j \ptl_j f_{j_1 j j_2 \cdots j_{k-1}} dx^{j_1} \wedge dx^j \wedge dx^{j_2} \wedge \cdots \wedge dx^{j_{k-1}},
\end{equs}
and more generally the index $j$ can be placed in any of the $k$ possible spots. Using a similar argument based on antisymmetry, we may also express (the index $m$ becomes $j_\ell$ below, and the sum over $\ell$ arises because we can place $m$ into one of $k$ possible spots)
\begin{equs}\label{eq:intermedaite-I2-identity}
I_2 = \frac{1}{k!} \sum_{j_1, \ldots, j_k \in [n]} \sum_{\ell=1}^k (-1)^\ell \sum_{j \neq j_1, \ldots, j_k} \ptl_{j_\ell} \ptl_j f_{j j_1 \cdots \hat{j}_\ell \cdots j_k} dx^{j_1} \wedge \cdots \wedge dx^{j_k}.
\end{equs}
Next, we compute (using \eqref{eq:exterior-derivative-coordinatewise-formula})
\begin{equs}
\codif d f &= - \sum_{1 \leq i_1 < \cdots < i_k \leq n} \sum_{m =1}^n \ptl_m (df)_{m i_1 \cdots i_k} dx^{i_1} \wedge \cdots \wedge dx^{i_k} \\
&= -\frac{1}{k!} \sum_{i_1, \ldots, i_k \in [n]} \sum_{m \neq i_1, \ldots, i_k} \ptl_m (df)_{m i_1 \cdots i_k} dx^{i_1} \wedge \cdots \wedge dx^{i_k} \\
&= -\frac{1}{k!} \sum_{i_1, \ldots, i_k \in [n]} \sum_{m \neq i_1, \ldots, i_k} \bigg(\ptl_m \ptl_m f_{i_1 \cdots i_k} + \sum_{\ell=1}^k (-1)^\ell \ptl_m \ptl_{i_\ell} f_{m i_1 \cdots \hat{i}_\ell \cdots i_k} \bigg) dx^{i_1} \wedge \cdots \wedge dx^{i_k}.
\end{equs}
We may split this sum into $I_3 + I_4$, where
\begin{equs}
I_3 &:= -\frac{1}{k!} \sum_{i_1, \ldots, i_k \in [n]} \sum_{m \neq i_1, \ldots, i_k} \ptl_m \ptl_m f_{i_1 \cdots i_k} dx^{i_1} \wedge \cdots \wedge dx^{i_k} \\
I_4 &:= -\frac{1}{k!} \sum_{i_1, \ldots, i_k \in [n]} \sum_{m \neq i_1, \ldots, i_k} \sum_{\ell=1}^k (-1)^\ell \ptl_m \ptl_{i_\ell} f_{m i_1 \cdots \hat{i}_\ell \cdots i_k}  dx^{i_1} \wedge \cdots \wedge dx^{i_k}.
\end{equs}
By \eqref{eq:intermedaite-I2-identity}, we have that $I_2 + I_4 = 0$, and thus we have that
\begin{equs}
\Delta f &= -(d\codif + \codif d) f = -(I_1 + I_3) \\
&= \frac{1}{k!} \sum_{i_1, \ldots, i_k \in [n]} \Delta f_{i_1 \cdots i_k} dx^{i_1} \wedge \cdots \wedge dx^{i_k} \\
&= \sum_{1 \leq i_1 < \cdots < i_k \leq n} \Delta f_{i_1 \cdots i_k} dx^{i_1} \wedge \cdots \wedge dx^{i_k} ,
\end{equs}
as desired.
\end{proof}

It follows directly from Corollary \ref{cor:dif-codif-hodge-star-commute} that $\Delta$ and $\star$ commute:

\begin{cor}\label{cor:hodge-star-commutes-with-laplacian}
We have that $\star \Delta = \Delta \star$.
\end{cor}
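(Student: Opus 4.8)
The statement to prove is Corollary~\ref{cor:hodge-star-commutes-with-laplacian}: that $\star\Delta = \Delta\star$, claimed to follow directly from Corollary~\ref{cor:dif-codif-hodge-star-commute}.

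\medskip

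The plan is to unwind the definition $\Delta = -(d\codif + \codif d)$ and push $\star$ through each summand using the two commutation identities already established in Corollary~\ref{cor:dif-codif-hodge-star-commute}, namely $\star d\codif = \codif d\star$ and $\star\codif d = d\codif\star$. First I would write
\[
\star \Delta = \star\bigl(-(d\codif + \codif d)\bigr) = -\star d\codif - \star \codif d.
\]
Then I would substitute $\star d\codif = \codif d \star$ into the first term and $\star\codif d = d\codif\star$ into the second term, obtaining
\[
\star\Delta = -\codif d\star - d\codif\star = -\bigl(d\codif + \codif d\bigr)\star = \Delta\star,
\]
which is exactly the claim. So the corollary is genuinely immediate once Corollary~\ref{cor:dif-codif-hodge-star-commute} is in hand; there is essentially one line of algebra and no degree bookkeeping is needed because both required identities have already absorbed the signs.

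\medskip

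If one wanted a proof that does not invoke Corollary~\ref{cor:dif-codif-hodge-star-commute} directly, the alternative is to combine Lemma~\ref{lemma:codif-dif-hodge-star} ($\codif = (-1)^{n(k+1)+1}\star d\star$) with Lemma~\ref{lemma:double-hodge-star} ($\star\star = (-1)^{k(n-k)}\,\mathrm{id}$) to express $\Delta = -(d\codif+\codif d)$ purely in terms of $d$ and $\star$, then conjugate by $\star$; the sign factors cancel in pairs exactly as they do in the proof of Corollary~\ref{cor:dif-codif-hodge-star-commute}. The only mild subtlety in that route is keeping careful track of which $k$ appears in each exponent, since the forms $\star$ is applied to have different degrees at different stages; but since we are only asked to quote Corollary~\ref{cor:dif-codif-hodge-star-commute}, this bookkeeping is unnecessary.

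\medskip

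The main (and only) obstacle is therefore nonexistent beyond correctly reading off the two identities of Corollary~\ref{cor:dif-codif-hodge-star-commute} and assembling them; there is no analytic content, no convergence issue, and no geometric input, since everything reduces to the flat-space coordinate formulas already proved. One could additionally remark that this commutation is exactly what makes the Hodge star descend to a well-defined map on harmonic forms and on each red box of Figure~\ref{fig:formchart}, which is why it is worth isolating as a corollary, but that is a comment rather than part of the proof.
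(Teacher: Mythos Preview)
Your proof is correct and is exactly the approach the paper takes: the paper simply states that the corollary ``follows directly from Corollary~\ref{cor:dif-codif-hodge-star-commute}'' without writing out the one-line computation, which you have supplied.
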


Having made several abstract definitions, we now give a concrete example which recovers classical facts from vector calculus.

\begin{example}[Vector calculus identities]
Let $A$ be a vector field on $\R^3$. One may identify $A$ with the 1-form $A_1 dx^1 + A_2 dx^2 + A_3 dx^3$. Then, one may check that 
\begin{equs}
\curl A = (\ptl_2 A_3 - \ptl_3 A_2, \ptl_3 A_1 - \ptl_1 A_3, \ptl_1 A_2 - \ptl_2 A_1) = \star d A.
\end{equs}
By Lemma \ref{lemma:codif-dif-hodge-star}, we then have that
\begin{equs}
\curl^2 A = \star d \star d A =  d^* d A = - dd^* A -\Delta A .
\end{equs}
Since $d^* A = -\ptl_j A_j = -\nabla \cdot A$, we see that
\begin{equs}\label{eq:curl-squared}
\curl^2 A = d (\nabla \cdot A)  - \Delta A =  \nabla(\nabla \cdot A) - \Delta A,
\end{equs}
which is a classical vector calculus identity. 
From Lemmas \ref{lemma:d-squared-zero}, \ref{lemma:double-hodge-star}, and \ref{lemma:codif-dif-hodge-star}, we also see that the divergence of the curl is zero:
\begin{equs}
\nabla \cdot (\curl A) = -d^* \star d A = (-1)^{n(k+1)} d \star \star d \star A = (-1)^{n(k+1) + k(n-k)} d^2 \star A = 0.
\end{equs}
\end{example}

Next, we state the Hodge decomposition for forms on $\R^n$ and $\T^n$, which was previously mentioned in Section \ref{sec:fracforms}. First, we make the following definition.

\begin{definition}[Spaces of exact and coexact forms]
Let $E \Omega^k L^p(\R^n) := \Omega^k L^p(\R^n) \cap d \Omega^{k-1} \schwartz'(\R^n)$, and $E^* \Omega^k L^p(\R^n) := \Omega^k L^p(\R^n) \cap \codif \Omega^{k+1} \schwartz'(\R^n)$.
\end{definition}

The following theorem is contained in \cite[Theorem 1.2]{troyanov2007hodge}. We will only use the case $p = 2$.

\begin{theorem}[$L^p$ Hodge decomposition on $\R^n$]\label{thm:hodge-decomposition}
Let $1 < p < \infty$. For $\manifold = \R^n$, we have the following direct sum decomposition
\[ \Omega^k L^p(\R^n) = E \Omega^k L^p(\R^n) \oplus E^* \Omega^k L^p(\R^n). \]
Moreover, $E \Omega^k L^p$, $E^* \Omega^k L^p$ are closed subspaces of $L^p$.
\end{theorem}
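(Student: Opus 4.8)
The plan is to diagonalize $d$ and $\codif$ in Fourier space on $\R^n$ and realize the decomposition through a pair of complementary Fourier-multiplier projections. Write $\xi^\flat := \sum_j \xi_j\, dx^j$ and let $\iota_\xi$ denote contraction with $\xi$. For a Schwartz $k$-form $f$ one has, by \eqref{eq:exterior-derivative-coordinatewise-formula} and \eqref{eq:codifferential-coordinatewise-expression}, the Fourier-side formulas $\widehat{df}(\xi) = \icomplex\, \xi^\flat \wedge \widehat f(\xi)$ and $\widehat{\codif f}(\xi) = -\icomplex\, \iota_\xi \widehat f(\xi)$; combined with the algebraic identity $\xi^\flat \wedge \iota_\xi \omega + \iota_\xi(\xi^\flat \wedge \omega) = |\xi|^2 \omega$ on the exterior algebra $\Lambda^k$ this recovers $\widehat{-\Delta f}(\xi) = |\xi|^2 \widehat f(\xi)$. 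For each $\xi \neq 0$ the same identity shows that $\Lambda^k = (\xi^\flat \wedge \Lambda^{k-1}) \oplus (\iota_\xi \Lambda^{k+1})$ is an orthogonal direct sum, with self-adjoint idempotents
\[
\pi_-(\xi) := |\xi|^{-2}\, \xi^\flat \wedge \iota_\xi(\,\cdot\,), \qquad \pi_+(\xi) := |\xi|^{-2}\, \iota_\xi\big(\xi^\flat \wedge (\,\cdot\,)\big), \qquad \pi_- + \pi_+ = \mathrm{Id}.
\]

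Next, the matrix-valued maps $\xi \mapsto \pi_\pm(\xi)$ are real-analytic on $\R^n \setminus \{0\}$ and homogeneous of degree $0$, so each of their entries is a scalar Fourier symbol obeying the Mikhlin--H\"ormander bounds $|\ptl_\xi^\alpha m(\xi)| \lesssim |\xi|^{-|\alpha|}$. Hence the associated operators $P_\pm$, acting componentwise (as a matrix of scalar Fourier multipliers) on $\Omega^k L^p(\R^n)$, are bounded for every $1 < p < \infty$, and they are complementary idempotents: $P_\pm^2 = P_\pm$, $P_- P_+ = P_+ P_- = 0$ and $P_- + P_+ = \mathrm{Id}$. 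Consequently $\Omega^k L^p(\R^n) = \mathrm{ran}(P_-) \oplus \mathrm{ran}(P_+)$ is a topological direct sum, the two summands are closed (being ranges of bounded projections), and they intersect only in $\{0\}$. For $p = 2$, Plancherel \eqref{eq:plancherel} is unitary and the $\pi_\pm(\xi)$ are genuine orthogonal projections, so the decomposition is orthogonal --- which is the only case needed in the rest of this survey.

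It then remains to identify $\mathrm{ran}(P_-) = E\Omega^k L^p(\R^n)$ and $\mathrm{ran}(P_+) = E^*\Omega^k L^p(\R^n)$. The inclusions $E\Omega^k L^p \subseteq \mathrm{ran}(P_-)$ and $E^*\Omega^k L^p \subseteq \mathrm{ran}(P_+)$ are immediate: if $f = dg$ with $g \in \Omega^{k-1}\schwartz'(\R^n)$ and $f \in L^p$, then $df = 0$ by Lemma~\ref{lemma:d-squared-zero}, so $\xi^\flat \wedge \widehat f(\xi) = 0$, i.e.\ $\widehat f(\xi) \in \mathrm{ran}\,\pi_-(\xi)$ for (almost) every $\xi$, and therefore $P_- f = f$; symmetrically $\codif f = 0$ forces $P_+ f = f$ for coexact $f$. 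For the reverse inclusions one must produce a tempered-distribution primitive for each $f \in \mathrm{ran}(P_-)$: formally $P_- f = d\big(\codif(-\Delta)^{-1} f\big)$, since on the Fourier side $\icomplex\, \xi^\flat \wedge \big(-\icomplex\, |\xi|^{-2} \iota_\xi \widehat f(\xi)\big) = \pi_-(\xi) \widehat f(\xi)$, and likewise $P_+ f = \codif\big(d(-\Delta)^{-1} f\big)$.

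The step I expect to be the main obstacle is this last one: checking rigorously that $\codif(-\Delta)^{-1} f$ (a first derivative of a Riesz potential of an $L^p$ form) is an honest tempered distribution, because the symbol $|\xi|^{-2}\xi$ is singular at the origin and the low-frequency part of $(-\Delta)^{-1}$ on $L^p$ is only canonically defined modulo polynomials --- the same phenomenon that makes the $2$D Gaussian free field defined only up to an additive constant. The cleanest ways around this are either to invoke the mapping properties of Riesz potentials on homogeneous Sobolev spaces proved in \cite{troyanov2007hodge} (which handles all $1 < p < \infty$), or --- for the case $p = 2$ actually used below --- to avoid constructing primitives altogether: characterize $E\Omega^k L^2$ and $E^*\Omega^k L^2$ directly by the pointwise Fourier-support conditions $\widehat f(\xi) \in \mathrm{ran}\,\pi_-(\xi)$, respectively $\widehat f(\xi) \in \mathrm{ran}\,\pi_+(\xi)$, verify these agree with the stated definitions of exact and coexact $L^2$ forms, and then read off the orthogonal splitting and the closedness of the summands immediately from Plancherel and the pointwise identity $\pi_- + \pi_+ = \mathrm{Id}$.
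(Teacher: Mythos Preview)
The paper does not give its own proof of this theorem; it simply cites \cite[Theorem 1.2]{troyanov2007hodge} and remarks that only the $p=2$ case is actually used later. Your Fourier-multiplier approach is the standard one and is essentially what the cited reference does: realize the Hodge projections as degree-zero homogeneous Mikhlin symbols, get $L^p$-boundedness, and then identify the ranges with the spaces of (co)exact $L^p$ forms. You have correctly isolated the one nontrivial step --- producing a tempered-distribution primitive $\codif(-\Delta)^{-1}f$ for an arbitrary $f \in L^p$ --- and both of your suggested fixes (invoking the Riesz-potential/homogeneous-Sobolev mapping properties from \cite{troyanov2007hodge}, or for $p=2$ bypassing primitives entirely via pointwise Fourier-support conditions and Plancherel) are valid.

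One small gap in the forward inclusion: for $p \neq 2$ the Fourier transform $\widehat f$ is only a tempered distribution, not a pointwise-defined function, so the sentence ``$\widehat f(\xi) \in \mathrm{ran}\,\pi_-(\xi)$ for (almost) every $\xi$'' is not literally meaningful. A clean replacement is distributional: verify $dP_- = 0$ and $\codif P_+ = 0$ first on Schwartz forms (direct Fourier computation) and extend to $L^p$ by density; then for $f \in E\Omega^k L^p$ one has $d(P_+ f) = df - dP_- f = 0$ and $\codif(P_+ f) = 0$, so $P_+ f$ is a harmonic $L^p$ form on $\R^n$, hence zero.
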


Next, we state the Hodge decomposition for smooth $k$-forms on $\T^n$. An $L^2$ version would then directly follow by taking closures. In the following, let $\Omega^k_0(\T^n)$ be the set of harmonic $k$-forms, i.e. the set of $k$-forms $f$ such that $\Delta f = 0$.

\begin{prop}[Hodge decomposition]
The space $\Omega^k(\T^n)$ admits the orthogonal decomposition
\begin{equs}
\Omega^k(\T^n) = \Omega^k_0(\T^n) \oplus  d \Omega^{k-1}(\T^n)\oplus \codif \Omega^{k+1}(\T^n) .
\end{equs}
\end{prop}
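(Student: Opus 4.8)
The plan is to diagonalize everything in the Fourier basis $\{\e_\alpha\}_{\alpha \in \Z^n}$. For $\alpha \in \Z^n$ let ${}^\alpha\Omega^k$ denote the finite-dimensional space of $k$-forms whose component functions are all scalar multiples of $\e_\alpha$. By the Plancherel identity \eqref{eq:torus-plancherel} these spaces are mutually orthogonal, their algebraic span is dense in $\Omega^k(\T^n)$, and a $k$-form belongs to $\Omega^k(\T^n)$ precisely when the magnitudes of its component Fourier coefficients decay faster than any power of $|\alpha|$. From the coordinate formulas for $d$ and $\codif$, each of $d$, $\codif$, $\Delta$ maps ${}^\alpha\Omega^\bullet$ into ${}^\alpha\Omega^\bullet$ (differentiation multiplies the $\e_\alpha$-coefficient by a linear factor in $\alpha$). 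So I would first prove the decomposition on each mode and then reassemble.

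For $\alpha = 0$, ${}^0\Omega^k$ consists of the constant $k$-forms, on which $d$ and $\codif$ vanish; by \eqref{eq:hodge-laplacian-coordinatewise-formula} a harmonic form has harmonic, hence constant, components, so in fact $\Omega^k_0(\T^n)$ is exactly the $\binom{n}{k}$-dimensional space of constant forms and the $\alpha=0$ contribution is entirely harmonic. For $\alpha \neq 0$, \eqref{eq:hodge-laplacian-coordinatewise-formula} together with $\Delta \e_\alpha = -|\alpha|^2 \e_\alpha$ shows that $d\codif + \codif d = -\Delta = |\alpha|^2 \,\mathrm{Id}$ on ${}^\alpha\Omega^k$; in particular $\Delta$ is invertible there and ${}^\alpha\Omega^k_0 = 0$. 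Hence for $\omega_\alpha \in {}^\alpha\Omega^k$,
\[ \omega_\alpha = |\alpha|^{-2}\bigl( d\codif + \codif d\bigr)\omega_\alpha = d\bigl(|\alpha|^{-2}\codif \omega_\alpha\bigr) + \codif\bigl(|\alpha|^{-2} d\,\omega_\alpha\bigr), \]
which writes $\omega_\alpha$ as an exact plus a coexact form inside ${}^\alpha\Omega$.

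To reassemble, take general $\omega \in \Omega^k(\T^n)$ with $C^\infty$-convergent Fourier expansion $\omega = \sum_\alpha \omega_\alpha$, and set $h := \omega_0$, $u := \sum_{\alpha \neq 0} |\alpha|^{-2}\codif\omega_\alpha$, $v := \sum_{\alpha \neq 0} |\alpha|^{-2} d\,\omega_\alpha$. Since $d$ and $\codif$ are first order, $\|\codif \omega_\alpha\|$ and $\|d\omega_\alpha\|$ are $\lesssim |\alpha|\,\|\omega_\alpha\|$, so multiplying by $|\alpha|^{-2}$ preserves the rapid decay of the coefficients, whence $u \in \bigdot{\Omega}^{k-1}(\T^n)$ and $v \in \bigdot{\Omega}^{k+1}(\T^n)$ (reality and mean-zero are inherited because the construction is conjugation-symmetric in $\alpha \mapsto -\alpha$), and $\omega = h + du + \codif v$ by the mode-wise identity. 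Orthogonality and uniqueness are then the usual formal consequences of $dd = 0 = \codif\codif$ and adjointness $(df,g) = (f,\codif g)$: $(du,\codif v) = (d\,du,v) = 0$, $(du,h) = (u,\codif h) = 0$, $(\codif v,h) = (v, dh) = 0$, and if $h + du + \codif v = 0$ with $h$ harmonic then pairing successively with $h$ and with $du$ forces $h = du = \codif v = 0$. I do not expect a genuine obstacle here (the torus case is easy precisely because $-\Delta$ has discrete spectrum bounded away from $0$ off the harmonic modes); the only point needing a little care is the $C^\infty$-convergence bookkeeping in the reassembly step, and checking that the summed pieces really land in the asserted (real, mean-zero, smooth) spaces.
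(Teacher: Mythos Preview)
The paper does not actually supply a proof of this proposition; it is stated without argument (the $\R^n$ analogue is quoted from \cite{troyanov2007hodge}, and the general compact case is deferred to the eigendecomposition in Proposition~\ref{prop:hodge-laplacian-eigendecomposition}). Your Fourier-mode argument is correct and is precisely the approach the paper sketches informally in Section~\ref{sec:torusfgf}: decompose $\Omega^k$ into the $\alpha$-periodic subspaces ${}^\alpha\Omega^k$, identify the $\alpha=0$ piece with the harmonic forms, and on each nonzero mode invert $-\Delta = |\alpha|^2\,\mathrm{Id}$ to split into exact and coexact parts. The reassembly step and the orthogonality argument are both fine; the only point worth tightening is the ``reality is inherited'' remark, which follows because $\omega_{-\alpha} = \overline{\omega_\alpha}$ and the operations $|\alpha|^{-2}\codif$, $|\alpha|^{-2}d$ commute with this conjugation symmetry.
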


\begin{remark}
To relate to the notation in Section \ref{sec:fracforms}, recall that $\Omega^k_- = d \Omega^{k-1}$ and $\Omega^k_+ = \codif \Omega^{k+1}$.
\end{remark}

\begin{definition}[Wedge product of forms]
Let $0 \leq k, \ell \leq n$, and let $f$ be a $k$-form, $g$ be an $\ell$-form. The wedge product $f \wedge g$ is defined to be zero if $k + \ell > n$, and otherwise is defined to be
\begin{equs}\label{eq:wedge-product}
f \wedge g := \sum_{1 \leq r_1 < \cdots < r_{k+\ell} \leq n}  \bigg(\frac{1}{k!\ell!} \sum_{\sigma \in \symgrp_{k+\ell}} \varep_{\sigma(1) \cdots \sigma(k+\ell)} f_{r_{\sigma(1)} \cdots r_{\sigma(k)}} g_{r_{\sigma(k+1)} \cdots r_{\sigma(k+\ell)}} \bigg) dx^{r_1} \wedge \cdots \wedge dx^{r_{k+\ell}}.
\end{equs}
\end{definition}

The formula for $f \wedge g$ arises by defining the wedge product on ``elementary" forms by
\begin{equs}
\big( dx^{i_1} \wedge \cdots \wedge  dx^{i_k}\big) \wedge \big(dx^{j_1} \wedge \cdots \wedge dx^{j_\ell}\big) := dx^{i_1} \wedge \cdots \wedge dx^{i_k} \wedge dx^{j_1} \wedge \cdots \wedge dx^{j_\ell},
\end{equs}
and then extending linearly.

\begin{definition}[Vector space-valued forms]
Let $(V, \langle \cdot, \cdot \rangle_V)$ be a finite dimensional inner product space. For $0 \leq k \leq n$, A $V$-valued $k$-form on $\manifold$ is given by the same equation \eqref{eq:k-form-basic-expression}, except now the coordinate functions $f_{i_1 \cdots i_k} \colon \manifold \ra V$ are $V$-valued. We denote the space of smooth $V$-valued $k$-forms by $\Omega^k(\manifold, V)$.
\end{definition}

Given this definition, all of the previous discussion extends to $V$-valued differential forms, except for the wedge product of forms. 
This is because in general, vectors spaces do not come with a product. On the other hand, the main examples of vector spaces $V$ that we have in mind are Lie algebras $\frkg$ of a compact Lie group $\liegroup$ (see Section \ref{sec:yangmills}). Moreover, we will always take $\liegroup$ to be a matrix Lie group, and thus $\frkg$ is a vector space of matrices. In this setting, we may define the wedge product of $\frkg$-valued differential forms exactly as in \eqref{eq:wedge-product}. However, note that in general, the product of $\frkg$-valued elements is not in $\frkg$. Thus, in general $f \wedge g$ may not actually be a $\frkg$-valued $k$-form. Another way to define the wedge product of $\frkg$-valued differential forms is by using the Lie bracket, as we discuss next.

\begin{definition}[Wedge product of $\frkg$-valued differential forms]
Let $0 \leq k, \ell \leq n$, and let $f$ be a $\frkg$-valued $k$-form, $g$ be a $\frkg$-valued $\ell$-form. The wedge product $[f \wedge g]$ is defined to be zero if $k + \ell > n$, and otherwise is defined to be
\begin{equs}
~[f \wedge g] := \sum_{1 \leq r_1 < \cdots < r_{k+\ell} \leq n}  \bigg(\frac{1}{k!\ell!} \sum_{\sigma \in \symgrp_{k+\ell}} \varep_{\sigma(1) \cdots \sigma(k+\ell)} [f_{r_{\sigma(1)} \cdots r_{\sigma(k)}}, g_{r_{\sigma(k+1)} \cdots r_{\sigma(k+\ell)}}] \bigg) dx^{r_1} \wedge \cdots \wedge dx^{r_{k+\ell}}.
\end{equs}
\end{definition}

We will refer to both $f \wedge g$ and $[f \wedge g]$ as ``wedge-products". In the special case where $f, g$ are $\frkg$-valued $1$-forms, note that
\begin{equs}
~[f \wedge g] &= \sum_{1 \leq i < j \leq n} \big([f_i, g_j] - [f_j, g_i]\big) dx^i \wedge dx^j, \\
f \wedge g &= \sum_{1 \leq i < j \leq n} \big(f_i g_j - f_j g_i\big) dx^i \wedge dx^j.
\end{equs}
By taking $g = f$, we have that
\begin{equs}
~[f \wedge f] = 2 f \wedge f.
\end{equs}
In particular, note that due to the possible non-commutativity of the elements of $\frkg$, in general the wedge product of a $\frkg$-valued form with itself is not zero, unlike in the real-valued case.

\subsection{Exterior calculus in Fourier space}

In this subsection, we describe how the various operators $d, \codif, \Delta, E, E^*$ look in Fourier space. Let $f \in \Omega^k (\manifold) \cap \Omega^k L^1(\manifold)$ be a smooth and integrable $k$-form. We previously defined the Fourier transform on functions -- recall \eqref{eq:fourier-transform}. By taking the Fourier transform component-wise, we also obtain a definition of $\mc{F} f$, where now $(\mc{F} f)(p) = ((\mc{F} f_{i_1 \cdots i_k})(p), i_1, \ldots, i_k \in [n]) \in \C^{n^k}$ takes values in a tensor space. More precisely, $(\mc{F} f)(p) \in \Lambda^k(\C^n)$, i.e. the $k$th exterior power of $\C^n$.

Let $p \in \R^n$ (if $\manifold = \T^n$ then we would further restrict $p \in \Z^n$). From \eqref{eq:exterior-derivative-coordinatewise-formula}, \eqref{eq:codifferential-coordinatewise-expression}, and \eqref{eq:hodge-laplacian-coordinatewise-formula}, we see that upon applying $d, d^*$, or $\Delta$, the Fourier coefficients transform as follows:
\begin{equs}
\big(\mc{F} (df)_{i_1 \cdots i_{k+1}}\big)(p) &= \icomplex \sum_{\ell=1}^{k+1} (-1)^{\ell-1} p_{i_\ell} (\mc{F} f_{i_1 \cdots \hat{i}_\ell \cdots i_{k+1}})(p), \\
\big(\mc{F} (d^* f)_{i_1 \cdots i_{k-1}}\big)(p) &= -\icomplex p_j (\mc{F} f_{j i_1 \cdots i_{k-1}})(p), \\
\big( \mc{F} (\Delta f)_{i_1 \cdots i_k}\big)(p) &= - |p|^2 (\mc{F} f_{i_1 \cdots i_k})(p).
\end{equs}
To those familiar with exterior algebra (for an overview, see e.g. \cite[Chapter 1]{GH2019}), note that we may write
\begin{equs}
(\mc{F} (df))(p) &= \icomplex (k+1) p \wedge (\mc{F} f)(p), \\
(\mc{F} (d^* f))(p) &= -\icomplex \iota_p ((\mc{F} f)(p)),
\end{equs}
where $\iota_p$ denotes the interior product. 

Note that in the special case $k = 1$, we have that
\begin{equs}
\big(\mc{F} (\codif f)\big)(p) = -\icomplex (\mc{F} f)(p) \cdot p,
\end{equs}
so that the codifferential in Fourier space essentially just acts by inner product with the Fourier mode $p$. Additionally, when $k = 1$, the projections $E, E^*$ act as
\begin{equs}
\big(\mc{F} (Ef)\big)(p) &= \big(\mc{F} f (p) \cdot p \big) \frac{p}{|p|^2} \\
\big(\mc{F} (E^*f)\big)(p) &= \mc{F}f (p) - \big(\mc{F}f (p) \cdot p \big) \frac{p}{|p|^2}.
\end{equs}
In words, $\big(\mc{F}(Ef)\big)(p)$ is the projection of $\mc{F} f(p)$ onto the span of $p$, while $\big(\mc{F}(E^* f)\big)(p)$ is the projection of $\mc{F} f(p)$ onto the orthogonal complement of $p$.

\subsection{General manifold}\label{section:general-manifold}

In this subsection, we review exterior calculus on a general compact oriented Riemannian manifold $(\manifold, g)$. Our intention is not to give a detailed introduction to exterior calculus in this general geometric setting, so we will be quite brief. For more discussion and a textbook introduction, see e.g. \cite[Sections 1.8 and 2.1]{Jost}. The reader who is content with working on $\R^n$ or $\T^n$ may skip this section. The main outcome of this discussion is that the case of general $\manifold$ will be very similar to the case of $\T^n$, since we are assuming that $\manifold$ is compact.

As before, let $n$ be the dimension of $\manifold$. For $0 \leq k \leq n$, let $\Lambda^k(T^* M)$ be the vector bundle over $\manifold$ such that for each $x \in \manifold$, the fiber above $x$ is $\Lambda^k(T_x^* M) = T_x^*\manifold \wedge \cdots \wedge T_x^* \manifold$ (i.e. the $k$-th exterior power of $T_x^* M$). A smooth $k$-form $f$ is simply a smooth section of $\Lambda^k(T^* M)$. The space of smooth $k$-forms is denoted by $\Omega^k(\manifold)$. Recalling Remark \ref{remark:k-form-alternating-multilinear-map}, one can think of a $k$-form as an assignment of each point $x \in \manifold$ to an alternating multilinear map $(T_x \manifold)^k \ra \R$. Note that such maps are in canonical bijection with elements of $T_x^* \manifold \wedge \dots \wedge T_x^* \manifold$.

In local coordinates $(x^1, \ldots, x^n)$, a general $k$-form $f$ may be written exactly as in \eqref{eq:k-form-basic-expression}. The exterior derivative $d f$ is then defined locally exactly as in Definition \ref{def:exterior-derivative}. One then checks that this definition is independent of the choice of coordinates, which amounts to checking that the exterior derivative transforms appropriately under change of coordinates.

The definitions of differential forms and exterior derivative only require that $M$ is a differentiable manifold, i.e. these definitions did not involve the metric $g$, nor an orientation for $M$. By contrast, the Hodge star $\star$ and codifferential $\codif$ do require the metric and an orientation. The Hodge star $\star : \Omega^k(\manifold) \ra \Omega^{n-k}(\manifold)$ is defined by applying a fiberwise linear map which we will also denote $\star$, which maps $\star : \Lambda^k(T_x^* M) \ra \Lambda^{n-k}(T_x^* M)$. For $w \in \Lambda^k(T_x^* M)$, $\star w \in \Lambda^{n-k}(T_x^* M)$ is characterized by
\begin{equs}
v \wedge \star w = \langle v, w \rangle \mrm{vol}(x) \text{ for all $v \in \Lambda^k(T_x^* M)$,}
\end{equs}
where recall $\mrm{vol}$ is the volume form of $(\manifold, g)$.
Given local coordinates $(x^1, \ldots, x^n)$, this may be written (here $g = (g_{ij})_{i, j=1}^n$ is the metric tensor)
\begin{equs}
\mrm{vol}(x) = \sqrt{\mrm{det}(g_{ij}(x))_{i, j =1}^n} dx^1 \wedge \cdots \wedge dx^n.
\end{equs}
Given $k$-forms $f, g \in \Omega^k(\manifold)$, we may define their $L^2$ inner product by
\begin{equs}
(f, g) := \int_{\manifold} \langle f, g \rangle \mrm{vol}(x) = \int_{\manifold} f \wedge \star g.
\end{equs}
The codifferential $\codif$ is defined as the formal adjoint of $d$, i.e. it is characterized by the property
\begin{equs}
(\codif f, g) = (f, d g) \text{ for all $f \in \Omega^{k-1}(\manifold)$, $g \in \Omega^k(\manifold)$.}
\end{equs}
We denote by $\Omega^k L^p(\manifold)$ the space of $k$-forms in $L^p$.

Similar to before, we define the Hodge Laplacian on differential forms by
\begin{equs}
\Delta := -(d\codif + \codif d).
\end{equs}
With the minus sign, $\Delta$ is negative definite, and it coincides with the Laplacian on functions. This is somewhat non-standard, as many textbooks define $\Delta$ to be positive definite. Our preference is to define $\Delta$ to be negative definite, so that $\Delta$ reduces to the usual function Laplacian when viewing it as an operator on $0$-forms.

We summarize the properties that $d, \codif, \star, \Delta$ satisfy, which we have previously seen in the case of flat space.

\begin{enumerate}
    \item $d^2 = 0$ and $(\codif)^2 = 0$.
    \item $(df, g) = (f, \codif g)$, i.e. $\codif$ is the $L^2$ adjoint of $d$.
    \item $\codif = (-1)^{n(k+1) + 1} \star d \star$.
    \item $\star \star = (-1)^{k(n-k)}$.
    \item $(\star f, g) = (-1)^{k(n-k)} (f, \star g)$.
    \item $\Delta \star = \star \Delta$.
\end{enumerate}

A smooth $k$-form $f$ is an eigenform of $\Delta$ if $\Delta f = \lambda f$ for some $\lambda \in \R$. Note that
\begin{equs}
(f, \Delta f) = -(df, df) - (\codif f, \codif f),
\end{equs}
and so $\lambda \leq 0$. We say that $f$ is harmonic if $\Delta f = 0$. We state the following result about the eigenforms of $\Delta$, which can be found as \cite[Chapter 2, Exercise 7]{Jost}.

\begin{prop}\label{prop:hodge-laplacian-eigendecomposition}
Let $(\manifold, g)$ be a compact oriented Riemannian manifold of dimension $n$. Let $0 \leq k \leq n$. All eigenspaces of $\Delta$ as an operator on $\Omega^k(\manifold)$ are finite-dimensional, and the eigenforms form an $L^2$-orthonormal basis for $\Omega^k L^2(\manifold)$. Moreover, there are infinitely many eigenvalues $0 \geq \lambda_1 \geq \lambda_2 \geq \cdots$, the set of eigenvalues is locally finite, and eigenforms corresponding to different eigenvalues are orthogonal.
\end{prop}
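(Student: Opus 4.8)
The plan is to deduce this from the standard spectral theory of elliptic operators on a compact manifold. First I would observe that $-\Delta = d\codif + \codif d = (d+\codif)^2$ is a second-order differential operator on sections of $\Lambda^k(T^*M)$ whose principal symbol at a nonzero covector $\xi \in T_x^*M$ is $|\xi|^2\,\mathrm{Id}$, since $d + \codif$ has symbol $\icomplex(\xi \wedge \cdot - \iota_\xi)$ and $(\xi \wedge \cdot - \iota_\xi)^2 = -|\xi|^2\,\mathrm{Id}$ by the Cartan identity; hence $-\Delta$, and therefore $1 - \Delta$, is elliptic. Moreover, by property (2) above (the adjointness $(df,g) = (f,\codif g)$) together with $(\codif)^2 = d^2 = 0$, the operator $-\Delta$ is formally self-adjoint and non-negative, so $1 - \Delta$ is formally self-adjoint and bounded below by $1$.

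Next I would invoke the standard analytic consequences of ellipticity on the compact manifold $M$: the elliptic a priori estimate together with Fredholm theory shows that $1 - \Delta \colon H^2(\Lambda^k) \to L^2(\Lambda^k)$ is an isomorphism, so it has a bounded inverse $G \colon L^2 \to L^2$, and since $G$ factors through the Rellich embedding $H^2 \hookrightarrow L^2$ it is compact; it is also self-adjoint, injective, and positive because $((1-\Delta)u,u) \geq \|u\|_{L^2}^2$. The spectral theorem for compact self-adjoint operators then produces an $L^2$-orthonormal basis $\{\phi_j\}_{j \geq 1}$ of $\Omega^k L^2(M)$ with $G\phi_j = \mu_j \phi_j$, where $\mu_1 \geq \mu_2 \geq \cdots > 0$, $\mu_j \to 0$, and each $\mu$-eigenspace is finite-dimensional. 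Translating back via $\Delta\phi_j = \lambda_j \phi_j$ with $\lambda_j = 1 - \mu_j^{-1}$, the $\lambda_j$ form a non-increasing sequence $0 \geq \lambda_1 \geq \lambda_2 \geq \cdots$ with $\lambda_j \to -\infty$; in particular there are infinitely many of them, they can accumulate only at $-\infty$ (so the eigenvalue set is locally finite), and each $\Delta$-eigenspace is finite-dimensional.

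It then remains to upgrade the $\phi_j$ from $L^2$-forms to smooth forms (so that they indeed lie in $\Omega^k(M)$) and to record the orthogonality of distinct eigenspaces. Smoothness is elliptic regularity applied iteratively: from $\phi_j \in L^2$ and $(1-\Delta)\phi_j = (1 - \lambda_j)\phi_j \in L^2$ one gets $\phi_j \in H^2$, and bootstrapping gives $\phi_j \in H^{2m}$ for all $m$, hence $\phi_j \in C^\infty$ by Sobolev embedding. Orthogonality is immediate from self-adjointness of $\Delta$: if $\Delta f = \lambda f$ and $\Delta g = \mu g$ with $\lambda \neq \mu$, then $\lambda(f,g) = (\Delta f, g) = (f, \Delta g) = \mu(f,g)$, forcing $(f,g) = 0$; this uses only properties (1)--(2) above. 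The only genuinely hard input --- the one imported black box --- is the analytic package from elliptic theory (the elliptic estimate, Rellich compactness, and interior regularity), all of which holds on a compact manifold; everything else is bookkeeping. Alternatively one can simply cite \cite[Chapter 2]{Jost}, whose text develops precisely this machinery and poses the statement as an exercise.
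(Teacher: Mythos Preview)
Your sketch is correct, and in fact you provide more than the paper does: the paper does not prove this proposition at all but simply states it and cites \cite[Chapter 2, Exercise 7]{Jost}. Your argument---ellipticity of $-\Delta$ via the symbol computation, compactness of the resolvent $(1-\Delta)^{-1}$ through the elliptic estimate plus Rellich, the spectral theorem for compact self-adjoint operators, and the bootstrap to smoothness---is exactly the standard route that Jost's Chapter 2 develops, so your proof and the paper's citation point to the same underlying argument.
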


Based on this proposition, we make the following definition.

\begin{definition}
For $0 \leq k \leq n$, let $0 \geq \lambda_1^k \geq \lambda_2^k \cdots$ be the eigenvalues of the Hodge Laplacian $\Delta$ on $k$-forms (allowing multiplicity). Let $\{f^k_m\}_{m \geq 1}$ be the associated eigenvectors.
\end{definition}

Because we have an eigendecomposition of the Hodge Laplacian $\Delta$, the following discussion mirrors the case of $\T^n$ very closely. Given any smooth $k$-form $f \in \Omega^k(\manifold)$, we may decompose
\begin{equs}
f = \sum_{m=1}^\infty (f, f_m^k) f_m^k,
\end{equs}
with convergence taking place in $L^2$. For any integer power $r \geq 0$, we have that $(-\Delta)^r f$ is smooth, and thus also in $L^2$ (since $\manifold$ is compact). From this, it follows that 
\begin{equs}
\sum_{m=1}^\infty (-\lambda^k_m)^{2r} (f, f_m^k)^2 = ((-\Delta)^r f, (-\Delta)^r f) < \infty.
\end{equs}
Next, using the eigendecomposition of the Hodge Laplacian, we define the fractional Laplacian and Sobolev spaces, mirroring the case of $\manifold = \T^n$.

\begin{definition}
Let $\bigdot{\Omega}^k(\manifold)$ be the space of smooth $k$-forms such that $(f, f_m^k) = 0$ for all $m$ such that $\lambda_m^k = 0$. 
\end{definition}

In words, $\bigdot{\Omega}^k(\manifold)$ is the space of $k$-forms with no harmonic component.

\begin{definition}[Fractional Laplacian]
Let $s \in \R$. Define the fractional Laplacian $(-\Delta)^s : \bigdot{\Omega}^k(\manifold) \ra \bigdot{\Omega}^k(\manifold)$ by
\begin{equs}
(-\Delta)^s f := \sum_{\substack{m \geq 1 \\ \lambda_m^k \neq 0}} (-\lambda_m^k)^{s} (f, f_m^k) f_m^k.
\end{equs}
Given $\lambda > 0$, define also the massive fractional Laplacian $(-\Delta + \lambda)^s : \Omega^k(\manifold) \ra \Omega^k(\manifold)$ by
\begin{equs}
(-\Delta + \lambda)^s f := \sum_{m=1}^\infty (-\lambda_m^k + \lambda)^s (f, f_m^k) f_m^k.
\end{equs}
\end{definition}

\begin{definition}[Sobolev spaces]
Let $s \in \R$. On $\bigdot{\Omega}^k(\manifold)$, define the inner product
\begin{equs}
(f, g)_{\Omega^k\dot{H}^{s}(\manifold)} := \big((-\Delta)^{\frac{s}{2}} f, (-\Delta)^{\frac{s}{2}} g\big) = \sum_{\substack{m \geq 1 \\ \lambda_m^k \neq 0}} (-\lambda_m^k)^s (f, f_m^k) (g, f_m^k).
\end{equs}
Define the homogeneous Sobolev space $\Omega^k \dot{H}^s(\manifold)$ as the Hilbert space completion of $\bigdot{\Omega}^k(\manifold)$ with respect to this inner product.

Similarly, define the inhomogeneous Sobolev space $\Omega^kH^s (\manifold)$ as the completion of $\Omega^k(\manifold)$ with respect to the inner product
\begin{equs}
(f, g)_{\Omega^k H^{s} (\manifold)} := \big((-\Delta + 1)^{\frac{s}{2}} f, (-\Delta + 1)^{\frac{s}{2}} g\big) = \sum_{m=1}^\infty (-\lambda_m^k + 1)^s (f, f_m^k) (g, f_m^k).
\end{equs}
\end{definition}



Next, we define distributional forms, slightly anticipating the topic of Section \ref{section:distributional-forms}.

\begin{definition}[Distributional forms]
Let $\Omega^k \mc{D}'(\manifold)$ be the topological dual of $\Omega^k(\manifold)$, and let $\Omega^k \bigdot{\mc{D}}'(\manifold)$ be the topological dual of $\bigdot{\Omega}^k(\manifold)$.
\end{definition}

\subsection{Spaces of distributional forms}\label{section:distributional-forms}


In this subsection, we discuss spaces of distributional forms that appear when working with fractional Gaussian forms. In places where we consider a general compact oriented Riemannian manifold $\manifold$, the reader who skipped Section \ref{section:general-manifold} can simply take $\manifold = \T^n$.

We first discuss the case $\manifold = \R^n$. Having defined the spaces $\Phi, \Phi', \dot{H}^s(\R^n)$, we can consider the spaces $\Omega^k \Phi, \Omega^k \Phi'$, and $\Omega^k \dot{H}^s(\R^n)$. Observe that $\Omega^k \Phi'$ may also be thought of as $(\Omega^k \Phi)'$, i.e. the topological dual of the space of Lizorkin $k$-forms. The previous results from Section \ref{section:distribution-spaces} (Proposition \ref{prop:fractional-laplacian-isomorphism-on-lizorkin-space}, Lemmas \ref{lemma:lizorkin-space-dense-fractional-sobolev-space} and \ref{lemma:fractional-laplacian-isometry-fractional-sobolev-space}) regarding $\Phi, \dot{H}^s(\R^n), (-\Delta)^s$ extend to analogous results for differential forms with values in $\Phi$ or $\dot{H}^s(\R^n)$.

\begin{notation}
When $\manifold = \R^n, \T^n$, for $f, g \in \Omega^k \dot{H}^s(\manifold)$, we write
\begin{equs}
(f, g)_{\Omega^k \dot{H}^s(\manifold)} := \sum_{1 \leq i_1 < \cdots < i_k \leq n} (f_{i_1 \cdots i_k}, g_{i_1 \cdots i_k})_{\dot{H}^s(\manifold)},
\end{equs}
and analogously for the inhomogeneous Sobolev space $\Omega^k H^s(\manifold)$. Note that for $f, g \in \Omega^k \Phi \sse \Omega^k \dot{H}^s(\manifold)$, we have that
\begin{equs}
(f, g)_{\Omega^k \dot{H}^s(\manifold)} = (f, (-\Delta)^{-s} g) = ((-\Delta)^{-\frac{s}{2}} f, (-\Delta)^{-\frac{s}{2}} g).
\end{equs}
\end{notation}

Recall (by Remark \ref{remark:lizorkin-space-closed-under-differentiation}) that the space $\Phi$ is closed under taking derivatives. Recalling also the explicit coordinatewise formulas for $d$ \eqref{eq:exterior-derivative-coordinatewise-formula} and $\codif$ \eqref{eq:codifferential-coordinatewise-expression}, it thus follows that $d : \Omega^k \Phi \ra \Omega^{k+1} \Phi$ and $\codif : \Omega^k \Phi \ra \Omega^{k-1} \Phi$. We may thus define $d : \Omega^k \Phi' \ra \Omega^{k+1} \Phi'$ and $\codif : \Omega^k \Phi' \ra \Omega^{k-1} \Phi'$ by duality:
\begin{equs}
(d f, \phi) &:= (f, \codif \phi), ~~ \phi \in \Omega^{k+1} \Phi, \\
(\codif f, \eta) &:= (f, d \eta), ~~ \eta \in \Omega^{k-1} \Phi.
\end{equs}
We may also define the Hodge star $\star : \Omega^k \Phi' \ra \Omega^{n-k} \Phi'$ by
\[ (\star f, \phi) := (-1)^{k(n-k)} (f, \star \phi), ~~ \phi \in \Omega^{n-k} \Phi.\]
The previously proved identities relating $d, \codif, \star$ extend to elements of $\Omega^k \Phi'$. Note that this entire discussion could have also taken place with $\Phi, \Phi'$ replaced by $\schwartz, \schwartz'$, but later we will need to restrict to $\Phi, \Phi'$ in order to take negative powers of the Laplacian.

\begin{remark}
We could have also defined $df, \codif f, \star f$ by using the corresponding coordinatewise formulas \eqref{eq:exterior-derivative-coordinatewise-formula}, \eqref{eq:codifferential-coordinatewise-expression}, \eqref{eq:hodge-star-coordinatewise-formula}. Of course, these coordinatewise definitions coincide with the given definitions.
\end{remark}

The previous discussion also applies to a compact oriented Riemannian manifold $\manifold$, except we replace the spaces $\Phi$ and $\Phi'$ by $\bigdot{\Omega}^k(\manifold)$ and $\Omega^k\bigdot{\mc{D}}'(\manifold)$. 

\begin{definition}[Projections]\label{def:E-E-star-projections}
Let $0 \leq k \leq n$. For $\manifold = \R^n$, define $E : \Omega^k \Phi \ra  \Omega^k \Phi$ by $E := d\codif (-\Delta)^{-1}$. Similarly, define $E^* : \Omega^k \Phi \ra \Omega^k \Phi$ by $E^* :=  \codif d (-\Delta)^{-1}$. Note by duality we may also define operators $E : \Omega^k \Phi' \ra  \Omega^k \Phi'$ and $E^* : \Omega^k \Phi' \ra \Omega^k \Phi'$ using the same formulas.

For a compact oriented Riemannian manifold $\manifold$, define $E : \bigdot{\Omega}^k (\manifold) \ra  \bigdot{\Omega}^k(\manifold)$ and $E^* : \bigdot{\Omega}^k (\manifold) \ra  \bigdot{\Omega}^k(\manifold)$ by the same formulas. Again by duality, we also define
$E : \Omega^k \bigdot{\mc{D}}'(\manifold) \ra  \Omega^k \bigdot{\mc{D}}'(\manifold)$ and $E^* : \Omega^k \bigdot{\mc{D}}'(\manifold) \ra  \Omega^k \bigdot{\mc{D}}'(\manifold)$,
\end{definition}

We state the following result about the projections $E, E^*$. The case $\manifold = \R^n$ is part of \cite[Proposition 7.4 and Corollary 7.5]{troyanov2007hodge}, while the case where $\manifold$ is a compact oriented Riemannian manifold follows by direct verification. 


\begin{prop}\label{prop:hodge-decomp}
Let $\manifold = \R^n$. Let $0 \leq k \leq n$. The operators $E, E^* : \Omega^k \Phi' \ra \Omega^k \Phi'$, are self-adjoint and satisfy $E(\Omega^k \Phi), E^*(\Omega^k \Phi) \sse \Omega^k \Phi$, $E + E^* = I$, $E^2 = E$, $(E^*)^2 = E^*$, $E E^* = E^* E = 0$.
For $\manifold$ a compact oriented Riemannian manifold, the operators $E, E^* : \Omega^k \bigdot{\mc{D}}'(\manifold) \ra \Omega^k \bigdot{\mc{D}}'(\manifold)$ satisfy $E(\bigdot{\Omega}^k(\manifold)), E^*(\bigdot{\Omega}^k(\manifold)) \sse \bigdot{\Omega}^k(\manifold)$, and $E, E^*$ again satisfy $E + E^* = I$, $E^2 = E$, $(E^*)^2 = E^*$, and $EE^* = E^* E = 0$.
\end{prop}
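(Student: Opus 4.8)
The plan is to reduce every claimed relation to a short piece of operator algebra on the smooth (resp.\ Lizorkin) forms, where $E$ and $E^*$ are genuine endomorphisms, and then to transfer the relations to the distributional spaces by transposition. For $\manifold = \R^n$ one may alternatively just invoke \cite[Proposition 7.4 and Corollary 7.5]{troyanov2007hodge}, which identify $E$ and $E^*$ with the $L^2$ projections onto exact and coexact forms; but since the compact case must be checked directly, I would give one argument covering both at once. The only inputs needed are: $d^2 = 0$ and $(\codif)^2 = 0$ (Lemma~\ref{lemma:d-squared-zero} and its $\codif$-analogue), the identity $-\Delta = d\codif + \codif d$, and the fact that $d$, $\codif$, and $(-\Delta)^{-1}$ pairwise commute on the relevant space.

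First I would check that $E = d\codif(-\Delta)^{-1}$ and $E^* = \codif d(-\Delta)^{-1}$ are well defined and preserve the space of test forms. On $\R^n$ this is because $(-\Delta)^{-1}\colon \Omega^k\Phi \to \Omega^k\Phi$ is an isomorphism (Proposition~\ref{prop:fractional-laplacian-isomorphism-on-lizorkin-space} applied componentwise, using that the Hodge Laplacian acts componentwise as the scalar Laplacian, \eqref{eq:hodge-laplacian-coordinatewise-formula}), because $d$ and $\codif$ preserve $\Omega^\bullet\Phi$ (Remark~\ref{remark:lizorkin-space-closed-under-differentiation}), and because $d$ and $\codif$ are constant-coefficient first-order operators, hence commute with $-\Delta$ and with $(-\Delta)^{-1}$. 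On a compact $\manifold$ I would instead use the eigendecomposition of $\Delta$ (Proposition~\ref{prop:hodge-laplacian-eigendecomposition}): $(-\Delta)^{-1}$ is defined on $\bigdot{\Omega}^k(\manifold)$ by inverting the nonzero eigenvalues, and from $d^2 = (\codif)^2 = 0$ together with $-\Delta = d\codif + \codif d$ one deduces $d\Delta = \Delta d$ and $\codif\Delta = \Delta\codif$, so $d$ and $\codif$ carry an eigenform to an eigenform with the same eigenvalue and therefore commute with $(-\Delta)^{-1}$; the same computation, together with $(df, h) = (f, \codif h) = 0$ for harmonic $h$, shows that $d$ and $\codif$ map $\bigdot{\Omega}^\bullet(\manifold)$ into itself, so $E$ and $E^*$ preserve $\bigdot{\Omega}^k(\manifold)$.

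The relations then drop out quickly. From $-\Delta = d\codif + \codif d$ one gets $E + E^* = (d\codif + \codif d)(-\Delta)^{-1} = I$. Since $(-\Delta)^{-1}$ commutes with $\codif$ and $(\codif)^2 = 0$, one has $EE^* = d\codif(-\Delta)^{-1}\codif d(-\Delta)^{-1} = d(\codif)^2(-\Delta)^{-1}d(-\Delta)^{-1} = 0$, and symmetrically $E^*E = 0$ using $d^2 = 0$. Idempotency is then automatic: $E = E(E + E^*) = E^2 + EE^* = E^2$, and likewise $(E^*)^2 = E^*$. For self-adjointness, note that $d\codif$ is self-adjoint since $(d\codif f, g) = (\codif f, \codif g) = (f, d\codif g)$ (using $(du, v) = (u, \codif v)$ and its adjoint $(\codif u, v) = (u, dv)$), and it commutes with the self-adjoint operator $(-\Delta)^{-1}$; hence $E = (d\codif)(-\Delta)^{-1}$ is self-adjoint, and symmetrically $\codif d$ and $E^*$ are self-adjoint.

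Finally, for the distributional statements: by Definition~\ref{def:E-E-star-projections} the operators $E, E^*$ on $\Omega^k\Phi'$ (resp.\ $\Omega^k\bigdot{\mc{D}}'(\manifold)$) are precisely the transposes of the operators just analyzed on test forms. Transposition reverses compositions and fixes self-adjoint operators, so each of $E + E^* = I$, $E^2 = E$, $(E^*)^2 = E^*$, $EE^* = E^*E = 0$ is preserved, and ``self-adjointness on the dual'' is exactly the symmetry $(E\phi, f) = (\phi, Ef)$ for $\phi$ a distributional form and $f$ a test form, which is immediate from the symmetry of $E$ on test forms; the inclusions $E(\Omega^k\Phi) \sse \Omega^k\Phi$, etc., were obtained in the second step. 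I expect the only step that genuinely requires care --- and the one I would write out in full --- is the commutation of $(-\Delta)^{-1}$ with $d$ and $\codif$ on a general compact manifold, i.e.\ that $d$ and $\codif$ preserve $\bigdot{\Omega}^\bullet(\manifold)$ and act diagonally with respect to the eigenspace decomposition of $\Delta$; the rest is formal manipulation.
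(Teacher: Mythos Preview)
Your proposal is correct and essentially matches the paper's approach: the paper does not give a proof but simply cites \cite[Proposition 7.4 and Corollary 7.5]{troyanov2007hodge} for $\manifold = \R^n$ and asserts that the compact case ``follows by direct verification.'' You carry out precisely that direct verification (and unify it with the $\R^n$ case), so there is nothing to correct.
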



For $k$-forms $f$ living in the domain of $E$ and $E^*$, we say that $E f$ is the exact part of $f$ and $E^* f$ the co-exact part. The next lemma is a direct consequence of Corollaries \ref{cor:dif-codif-hodge-star-commute} and \ref{cor:hodge-star-commutes-with-laplacian}.

\begin{lemma}
We have that $\star E = E^* \star$ and $\star E^* = E \star$.
\end{lemma}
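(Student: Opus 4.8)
## Proof proposal

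The plan is to reduce the identities $\star E = E^* \star$ and $\star E^* = E \star$ to the operator-level facts that $E = dd^*(-\Delta)^{-1}$, $E^* = d^*d(-\Delta)^{-1}$ (Definition~\ref{def:E-E-star-projections}), together with the commutation relations already established: $\star d d^* = d^* d \star$ and $\star d^* d = d d^* \star$ from Corollary~\ref{cor:dif-codif-hodge-star-commute}, and $\star \Delta = \Delta \star$ from Corollary~\ref{cor:hodge-star-commutes-with-laplacian}. Since $\star$ commutes with $\Delta$, it also commutes with $(-\Delta)^{-1}$ (on $\bigdot\Omega^k$, or on $\Omega^k\Phi$ in the $\R^n$ case, where the inverse is defined); indeed, if $\star \Delta = \Delta \star$ as bijections of the relevant space, then applying $(-\Delta)^{-1}$ on both sides gives $(-\Delta)^{-1}\star = \star(-\Delta)^{-1}$ on that space.

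First I would write, for $f$ in the domain of $E$ (say $f \in \bigdot\Omega^k(\manifold)$, or $f \in \Omega^k\Phi$ when $\manifold = \R^n$),
\[
\star E f = \star\, d d^* (-\Delta)^{-1} f = \bigl(\star d d^*\bigr) (-\Delta)^{-1} f = \bigl(d^* d \star\bigr)(-\Delta)^{-1} f = d^* d\, \star (-\Delta)^{-1} f = d^* d\, (-\Delta)^{-1} \star f = E^* \star f,
\]
using Corollary~\ref{cor:dif-codif-hodge-star-commute} in the third equality and the commutation of $\star$ with $(-\Delta)^{-1}$ in the fifth. The identity $\star E^* = E\star$ is entirely symmetric: replace $\star d d^* = d^* d \star$ by the companion identity $\star d^* d = d d^* \star$ from the same corollary, giving
\[
\star E^* f = \star\, d^* d (-\Delta)^{-1} f = d d^* \star (-\Delta)^{-1} f = d d^* (-\Delta)^{-1} \star f = E \star f.
\]
That is the whole argument. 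Alternatively, since $E + E^* = I$ and $\star$ is a bijection commuting with $I$ (Proposition~\ref{prop:hodge-decomp}), the second identity follows from the first by writing $\star E^* = \star(I - E) = \star - \star E = \star - E^*\star = (I - E^*)\star = E\star$.

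There is no real obstacle here; the only point requiring a line of care is justifying that all these operator manipulations take place on a common space where every operator involved ($\star$, $d$, $d^*$, $(-\Delta)^{-1}$, $E$, $E^*$) is defined and where the cited corollaries apply. On a compact oriented Riemannian $\manifold$ this is the space $\bigdot\Omega^k(\manifold)$ of smooth $k$-forms with no harmonic component (extended to $\Omega^k\bigdot{\mc D}'(\manifold)$ by duality); on $\R^n$ it is $\Omega^k\Phi$ (extended to $\Omega^k\Phi'$). Both Corollary~\ref{cor:dif-codif-hodge-star-commute} and Corollary~\ref{cor:hodge-star-commutes-with-laplacian} were proved coordinatewise on $\Omega^k$ and carry over to these distributional/Lizorkin settings by the duality extensions discussed just before Definition~\ref{def:E-E-star-projections}, so the displayed chains of equalities are valid verbatim. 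Hence the lemma follows.
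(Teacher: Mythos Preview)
Your proof is correct and follows exactly the approach the paper indicates: the paper simply states that the lemma is a direct consequence of Corollaries~\ref{cor:dif-codif-hodge-star-commute} and~\ref{cor:hodge-star-commutes-with-laplacian}, and you have written out precisely that deduction in detail.
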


Later on, we will need the following lemma.

\begin{lemma}\label{lemmma:projections-bounded-sobolev-space}
For all $s \in \R$, $0 \leq k \leq n$, $E, E^*$ are bounded operators on $\Omega^k \dot{H}^{-s}(\manifold)$.
\end{lemma}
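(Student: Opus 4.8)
The plan is to verify boundedness of $E$ and $E^*$ on $\Omega^k \dot H^{-s}(\manifold)$ by working in the Fourier/eigenbasis representation, where both projections become multiplication operators whose symbols are manifestly bounded. First consider the case $\manifold = \R^n$. By Definition~\ref{def:E-E-star-projections}, $E = d\codif(-\Delta)^{-1}$ and $E^* = \codif d(-\Delta)^{-1}$ on $\Omega^k\Phi$, and since $\Phi$ is dense in $\dot H^{-s}$ (Lemma~\ref{lemma:lizorkin-space-dense-fractional-sobolev-space}), it suffices to show $\|Ef\|_{\Omega^k\dot H^{-s}} \le \|f\|_{\Omega^k\dot H^{-s}}$ for $f \in \Omega^k\Phi$, and likewise for $E^*$; the operators then extend uniquely by continuity. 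Using the Fourier-space formulas from Section~\ref{section:exterior-calculus} for $d$ and $\codif$ (namely $\mc{F}(df)(p) = \icomplex(k+1)\, p \wedge \mc{F}f(p)$ and $\mc{F}(\codif f)(p) = -\icomplex\, \iota_p(\mc{F}f(p))$), one sees that at each frequency $p \ne 0$, the operator $E$ acts on $\Lambda^k(\C^n)$ as the orthogonal projection onto $p \wedge \Lambda^{k-1}(\C^n)$ (the ``exact'' subspace), while $E^*$ acts as the orthogonal projection onto its orthogonal complement $\iota_p(\Lambda^{k+1}(\C^n))$; the factor $(-\Delta)^{-1}$ contributes $|p|^{-2}$ which cancels the $|p|^2$ produced by $d\codif$ or $\codif d$ acting on these subspaces. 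Concretely, $E(p)$ and $E^*(p)$ are complementary orthogonal projections on the fiber $\Lambda^k(\C^n)$, independent of $|p|$.

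Given this, the bound is immediate: for $f \in \Omega^k\Phi$,
\[
\|Ef\|_{\Omega^k\dot H^{-s}}^2 = \int_{\R^n} |p|^{-2s}\, \|E(p)\,\mc{F}f(p)\|_{\Lambda^k(\C^n)}^2\, dp \le \int_{\R^n} |p|^{-2s}\, \|\mc{F}f(p)\|_{\Lambda^k(\C^n)}^2\, dp = \|f\|_{\Omega^k\dot H^{-s}}^2,
\]
since $E(p)$ is a norm-one-or-zero projection at each $p$, and identically for $E^*$. Here I am using that the $\dot H^{-s}$ norm on $k$-forms is, by the Notation preceding this lemma and the Plancherel identity, the weighted $L^2$ norm $\int |p|^{-2s}\|\mc{F}f(p)\|^2\,dp$ of the vector-valued Fourier transform. (One small point to check: the formula $E = d\codif(-\Delta)^{-1}$ is originally stated for $s$ such that the expression makes sense on $\Phi$, but since $\Phi$ is invariant under $d$, $\codif$, and all powers of $(-\Delta)$ by Proposition~\ref{prop:fractional-laplacian-isomorphism-on-lizorkin-space} and Remark~\ref{remark:lizorkin-space-closed-under-differentiation}, the operator $E$ is well-defined on $\Omega^k\Phi$ regardless of $s$, and agrees with the fiberwise projection above.)

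For $\manifold$ a compact oriented Riemannian manifold (which subsumes $\manifold = \T^n$), the argument is the same with the Fourier integral replaced by a sum over the Hodge--Laplacian eigenbasis $\{f_m^k\}$ from Proposition~\ref{prop:hodge-laplacian-eigendecomposition}. The key structural input is the Hodge decomposition: the eigenspace for each nonzero eigenvalue $\lambda_m^k$ splits into an exact part ($d$ of a $(k-1)$-form) and a co-exact part ($\codif$ of a $(k+1)$-form), and $E$, $E^*$ are the orthogonal projections onto the spans of these parts respectively. Since $E$ and $E^*$ commute with $\Delta$ (they are built from $d$, $\codif$, and $(-\Delta)^{-1}$), they preserve each eigenspace, and on $\bigdot\Omega^k(\manifold)$ one gets $\|Ef\|_{\Omega^k\dot H^{-s}}^2 = \sum_{m:\lambda_m^k \ne 0} (-\lambda_m^k)^{-s}\, \|E(f,f_m^k\text{-block})\|^2 \le \sum_m (-\lambda_m^k)^{-s}\,(f,f_m^k)^2 = \|f\|_{\Omega^k\dot H^{-s}}^2$ by Proposition~\ref{prop:hodge-decomp} (which gives $E + E^* = I$, $E^2 = E$, $EE^* = 0$, i.e. orthogonal complementary projections on each eigenspace). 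Density of $\bigdot\Omega^k(\manifold)$ in $\Omega^k\dot H^{-s}(\manifold)$ (by definition of the latter as a completion) then extends the bound.

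The only genuine subtlety — and the step I'd expect to need the most care — is making the fiberwise/eigenspace projection identification precise: i.e., verifying that $d\codif(-\Delta)^{-1}$ really is the orthogonal projection onto the exact subspace at each frequency (equivalently, that $d\codif$ acts as $|p|^2$ times that projection). On $\R^n$ this is a short computation using $\iota_p(p \wedge \omega) + p \wedge \iota_p(\omega) = |p|^2 \omega$ (the interior-product/wedge Leibniz identity), restricted to the relevant subspaces; on a general manifold it is exactly the content of the Hodge decomposition together with the fact that $dd^* = -\Delta$ on coexact-free... rather, $d\codif$ is $-\Delta$ composed with the projection onto $d\Omega^{k-1}$. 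Everything else is routine: density, Plancherel, and monotonicity of the weighted norm under a norm-$\le 1$ multiplier. I would present the $\R^n$ case in full (a few lines) and then remark that the compact case is identical with sums replacing integrals, citing Proposition~\ref{prop:hodge-decomp}.
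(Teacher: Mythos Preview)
Your proposal is correct and follows essentially the same approach as the paper: both arguments work on the dense subspace $\Omega^k\Phi$ and verify boundedness in Fourier space by showing the symbol of $E$ is a bounded multiplier at each frequency $p$. The paper does this componentwise (observing that each component of $\mc{F}(E\phi)(p)$ is a combination of terms $\tfrac{p_k p_\ell}{|p|^2}\mc{F}\phi_{j_1\cdots j_k}(p)$ with $|p_k p_\ell|/|p|^2 \le 1$), whereas you identify $E(p)$ directly as an orthogonal projection on $\Lambda^k(\C^n)$, which is a cleaner way to get the same bound and in fact yields operator norm $\le 1$ rather than $\lesssim 1$; you also treat the compact-manifold case, which the paper omits.
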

\begin{proof}
We prove the result for $\manifold = \R^n$. Let $\phi \in \Omega^k \Phi$. One may check that for any $i_1, \ldots, i_k$, we have that $\big(\mc{F} (E \phi)_{i_1 \cdots i_k}\big)(p)$ is a linear combination of terms of the form
\begin{equs}
\frac{p_k p_\ell}{|p|^2} (\mc{F} \phi_{j_1 \cdots j_k})(p).
\end{equs}
Note that
\begin{equs}
\int_{\R^n} dp |p|^{2s} \Big| \frac{p_k p_\ell}{|p|^2} (\mc{F} \phi_{j_1 \cdots j_k})(p)\Big|^2 \lesssim \int_{\R^n} dp |p|^{2s} |(\mc{F} \phi_{j_1 \cdots j_k})(p)|^2 = \|\phi_{j_1 \cdots j_k}\|_{\dot{H}^{s}(\R^n)}^2,
\end{equs}
and the desired boundedness now follows from the density of $\Omega^k \Phi$ in $\Omega^k \dot{H}^{-s}(\R^n)$ (Lemma \ref{lemma:lizorkin-space-dense-fractional-sobolev-space}).
\end{proof}


\section{Construction of fractional Gaussian forms} \label{sec:fgfconstruction}



In this section, we construct fractional Gaussian forms and discuss their various basic properties. As usual, we assume that $\manifold$ is either $\R^n$ or a compact oriented Riemannian manifold. The reader who skipped the review of exterior calculus on a general manifold (Section \ref{section:general-manifold}) can take $\manifold = \T^n$ in the latter case throughout this section.

\begin{definition}[White noise]
We say that $\noise$ is a $k$-form white noise on $\R^n$ if it is a random variable taking values in $\Omega^k \Phi'$ such that $((\noise, f), f \in \Omega^k \Phi)$ is a centered Gaussian process with covariance
\begin{equs}\label{eq:white-noise-covariance}
\Cov((\noise, f), (\noise, g)) = (f, g). 
\end{equs}
Let $\manifold$ be a compact oriented Riemannian manifold $\manifold$. We say that $\noise$ is a $k$-form white noise on $\manifold$ if it is a random variable taking values in $\Omega^k \bigdot{\mc{D}}'(\manifold)$ such that $((\noise, f), f \in \bigdot{\Omega}^k (\manifold))$ is a centered Gaussian process with the above covariance.
\end{definition}

\begin{remark}\label{remark:larger-space}
The existence of such a random variable follows by the Bochner-Minlos theorem (see \cite[Section 3]{lodhia2016fractional}). Also, we could have constructed $k$-form white noise on $\R^n$ as a random variable taking values in $\Omega^k \schwartz'$, and similarly on compact $\manifold$ as a random variable taking values in $\Omega^k \mc{D}'(\manifold)$. However, in later discussion we will want to apply arbitrary powers of $(-\Delta)$ to white noise, and thus we find it convenient to work directly on $\Omega^k \Phi'$ and $\Omega^k \bigdot{\mc{D}}'(\manifold)$, where such powers are indeed always well-defined. In the case of $\R^n$, due to the density of $\Phi$ in the relevant spaces (Lemma \ref{lemma:lizorkin-space-dense-fractional-sobolev-space}), we essentially do not lose anything by working on these smaller spaces of distributions -- see Remark \ref{remark:Ts-space}. In the case of compact $\manifold$, this essentially amounts to setting all harmonic forms to be zero -- recall the discussion from Section \ref{sec:fracforms}.
\end{remark}

\begin{remark}\label{remark:white-noise-induces-gaussian-hilbert-space}
By the usual Hilbert space isometry arguments,
given a $k$-form white noise $\noise$ on $\R^n$, we may obtain a centered Gaussian process $((\noise, f), f \in \Omega^k L^2(\R^n))$ indexed by $L^2(\R^n)$ $k$-forms which has covariance given by \eqref{eq:white-noise-covariance} (here we use the fact that $\Omega^k \Phi$ is dense in $\Omega^k L^2(\R^n)$ -- recall Lemma \ref{lemma:lizorkin-space-dense-fractional-sobolev-space}). This process $((\noise, f), f \in \Omega^k L^2(\R^n))$ is a Gaussian Hilbert space (see \cite[Definition 2.5]{lodhia2016fractional}). Similar considerations hold for compact $\manifold$, where instead of $L^2$ we use $\dot{H}^0$ (we could still use $L^2$ if we define white noise as a random distribution on $\Omega^k \mc{D}'(\manifold)$, as mentioned in Remark \ref{remark:larger-space}).
\end{remark}

\begin{remark}
Concretely, when $\manifold = \R^n$ or $\T^n$, note that $\noise$ is a $k$-form white noise if and only if its components
\[ (\noise_{i_1 \cdots i_k}, 1 \leq i_1 < \cdots < i_k \leq n)\]
are i.i.d. ($0$-form) white noises.
\end{remark}

\begin{remark}[White noise on $\T^n$ as a random Fourier series]\label{remark:white-noise-torus-random-fourier-series}
On the torus, $0$-form white noise $\noise$ has the following explicit representation as a random Fourier series:
\begin{equs}
\noise = \frac{1}{(2\pi)^{\frac{n}{2}}}\sum_{0 \neq \alpha \in \Z^n} Z_\alpha \e_\alpha,
\end{equs}
where the collection $(Z_\alpha, 0 \neq \alpha \in \Z^d)$ are i.i.d. $\N_\C(0, 1)$, modulo the constraint that $Z_{-\alpha} = \ovl{Z_\alpha}$ (which ensures that the distribution defined by the Fourier series is $\R$-valued). To be precise, this series can be seen to a.s. converge in the space $\dot{H}^{-\frac{d}{2}-\varep}(\T^n)$, for any $\varep > 0$. 

To see why $\noise$ has this explicit representation, note that given $f \in \bigdot{C}^\infty(\T^n)$, we have that
\begin{equs}
(\noise, f) = \sum_{0 \neq \alpha \in \Z^n} Z_\alpha \ovl{\widehat{f}(\alpha)},
\end{equs}
and thus we see that (recalling the Plancherel identity \eqref{eq:torus-plancherel})
\begin{equs}
\Cov((\noise, f), (\noise, g)) = \sum_{0 \neq \alpha \in \Z^n} \widehat{f}(\alpha) \ovl{\widehat{g}(\alpha)} = (f, g).
\end{equs}
\end{remark}

\begin{definition}[$k$-form fractional Gaussian field]\label{def:fgf}
Let $\manifold = \R^n$. We say that $A$ is a $k$-form $\fgf_s(\R^n)$ if it is a $\Omega^k \Phi'$-valued random variable such that $((A, f), f \in \Omega^k \Phi)$ is a centered Gaussian process with covariance
\begin{equs}\label{eq:fgf-covariance}
\mrm{Cov}((A, f), (A, g)) = (f, g)_{\Omega^k \dot{H}^{-s}(\manifold)}. 
\end{equs}
Alternatively, let $\manifold$ be a compact oriented Riemannian manifold. We say that $A$ is a $k$-form $\fgf_s(\manifold)$ if it is a $\Omega^k \bigdot{\mc{D}}'(\manifold)$-valued random variable such that $((\noise, f), f \in \bigdot{\Omega}^k(\manifold))$ is a centered Gaussian process with the above covariance. In either case of $\manifold$, we refer to a $k$-form $\fgf_s(\manifold)$ as a $\fgf_s^k(\manifold)$. By default, we say that $A$ is a $\fgf_s(\manifold)$ if it is a $\fgf_s^0(\manifold)$.
\end{definition}

\begin{remark}
One should think of the $s$ parameter as tracking how many derivatives smoother $\fgf_s^k$ is compared to white noise. This is formalized in Lemma \ref{lemma:laplacian-of-fgf}, where $\fgf_s^k$ is obtained by applying $(-\Delta)^{-\frac{s}{2}}$ to $\fgf_0^k$. Heuristically, note that the inverse Laplacian $(-\Delta)^{-1}$ smooths by two derivatives, and thus $(-\Delta)^{-\frac{s}{2}}$ should smooth by $s$ derivatives.
\end{remark}

\begin{remark}\label{remark:gaussian-hilbert-space}
As in Remark \ref{remark:white-noise-induces-gaussian-hilbert-space}, given a $\fgf_s^k(\manifold)$, we may obtain a Gaussian Hilbert space $((A, f), f \in \Omega^k \dot{H}^{-s}(\manifold))$ with covariance given by \eqref{eq:fgf-covariance}.
\end{remark}

\begin{remark}\label{remark:Ts-space}
In \cite[Section 3.1]{lodhia2016fractional}, the $\fgf_s(\R^n)$ is associated to a Gaussian Hilbert space indexed by a class $T_s$ of test functions, defined to be the closure of a certain subspace $\mc{S}_H$ of $\mc{S}$ with respect to the norm $(\cdot, \cdot)_{\dot{H}^{-s}}$. In fact, this space $T_s$ is equal to $\dot{H}^{-s}(\R^n)$. This follows from the fact that $\Phi \sse \mc{S}_H$ combined with the density of $\Phi$ in $\dot{H}^{-s}(\R^n)$ (Lemma \ref{lemma:lizorkin-space-dense-fractional-sobolev-space}).
\end{remark}

\begin{remark}\label{remark:fgf-fourier-series}
Continuing Remark \ref{remark:white-noise-torus-random-fourier-series}, an $\fgf_s^k(\T^n)$ has the following explicit representation: 
\begin{equs}
A = \frac{1}{(2\pi)^{\frac{n}{2}}} \sum_{0 \neq \alpha \in \Z^n} |\alpha|^{-s} Z_\alpha \e_\alpha,
\end{equs}
where the $(Z_\alpha, 0 \neq \alpha \in \Z^d)$ are as in Remark \ref{remark:white-noise-torus-random-fourier-series}. Moreover, note that for any $\varep > 0$, we have that (recall here $H = s -n/2$ is the Hurst parameter)
\begin{equs}\label{eq:fgf-in-sobolev-space}
\E\Big[ \|A\|_{\dot{H}^{H -\varep}(\T^n)}^2\Big] \lesssim \sum_{0 \neq \alpha \in \Z^n} |\alpha|^{2H - 2s - 2\varep} = \sum_{0 \neq \alpha \in \Z^n} |\alpha|^{-n - 2\varep} < \infty.
\end{equs}
From this, it follows that a.s., $A \in \dot{H}^{H - \varep}(\T^n)$ for all $\varep > 0$.

More generally, if $\manifold$ is a compact oriented Riemannian manifold, we have the following explicit representation of an $\fgf_s^k(\manifold)$:
\begin{equs}
A = \sum_{\substack{m \geq 1 \\ \lambda_m^k \neq 0}} (-\lambda_m^k)^{-\frac{s}{2}} Z_m^k f_m^k,
\end{equs}
where recall $(\lambda_m^k)_{m \geq 1}, (f_m^k)_{m \geq 1}$ are respectively the eigenvalues and eigenforms of the Hodge Laplacian $\Delta$ as an operator on $k$-forms, and $(Z_m^k)_{m \geq 1}$ is a collection of i.i.d. $\N(0, 1)$ random variables. To obtain a result analogous to \eqref{eq:fgf-in-sobolev-space}, we need a version of Weyl's law for the Hodge Laplacian. For $\lambda \leq 0$, let $N^k(\lambda) := \#\{k : 0 \geq \lambda_m^k > \lambda\}$. By \cite[Corollary 2.43]{BGV2004}, we have that
\begin{equs}
\lim_{\lambda \ra -\infty} \frac{N^k(\lambda)}{\lambda^{\frac{n}{2}}} = C(k, \manifold),
\end{equs}
where $C(k, \manifold)$ is some explicit constant depending on $k$ and $\manifold$.
By applying this result, we obtain
\begin{equs}
\E\Big[ \|A\|_{\dot{H}^{H -\varep}(\manifold)}^2\Big] = \sum_{\substack{m \geq 1 \\ \lambda_m^k \neq 0}} (-\lambda_m^k)^{-\frac{n}{2} - \varep} &\lesssim C + \sum_{j=1}^\infty 2^{-j(\frac{n}{2}+\varep)} \#\{m : -2^{j-1} \geq \lambda_m^k > -2^j\} \\
&\lesssim C + \sum_{j=1}^\infty 2^{-j\varep} < \infty,
\end{equs}
and thus we have that a.s., $A \in \dot{H}^{H-\varep}(\manifold)$. In the above, $C$ absorbs the contribution coming from the eigenvalues $\lambda_m^k \in (-1, 0)$, of which there are only finitely many, by Proposition \ref{prop:hodge-laplacian-eigendecomposition}.
\end{remark}

\begin{remark}[Other distribution spaces]
The $\fgf_s^k(\manifold)$ can also be shown to lie in other distribution spaces, not just Sobolev spaces. For instance, $\fgf_s^k(\T^n)$ is a.s. in the Besov-H\"{o}lder space $\mc{B}^{H-\varep}_{\infty, \infty}(\T^n)$ for any $\varep > 0$ (see \cite[Chapter 2]{BCD2011} for a textbook introduction to the Besov spaces $\mc{B}^{\alpha}_{p, q}$). This is actually a stronger statement, since one has the embedding $\mc{B}^{H-\varep}_{\infty, \infty}(\T^n) \hookrightarrow \dot{H}^{H-2\varep}(\T^n)$ for any $\varep > 0$. Besov spaces are commonly used in stochastic PDE (in this setting one would now have a process of fractional Gaussian fields indexed by a time parameter) where the choice of distribution space is crucial in the analytic arguments -- see e.g. the surveys \cite{Hairer2014survey, GP2015, CW2017} for more.
\end{remark}

\begin{definition}[$k$-form Gaussian free field]
We say that $A$ is a $k$-form $\gff(\manifold)$, or a $\gff^k(\manifold)$, if it is a $\fgf_1^k(\manifold)$.
\end{definition}

\begin{remark}
In principle, when $n \geq 3$ is is possible to construct the $\gff^k(\R^n)$ as a random variable taking values in $\Omega^k \schwartz'$, and when $n = 2$ it is possible to construct $\gff^k(\R^n)$ as a random variable taking values on a space $\Omega^k \schwartz_0'$, where $\schwartz_0 \sse \schwartz$ is the subspace of mean zero Schwartz functions. See \cite[Section 3]{lodhia2016fractional}. Again, we choose to work on the space $\Omega^k \Phi'$ so that we may take arbitrary powers of the Laplacian.
\end{remark}

\begin{remark}\label{remark:projection-fgf-gaussian-hilbert-space}
Given an $\fgf_s^k(\manifold)$ instance $A$ and its associated Gaussian Hilbert space $((A, \phi), \phi \in \Omega^k \dot{H}^{-s}(\manifold))$, we may obtain Gaussian Hilbert spaces $((EA, \phi), \phi \in \Omega^k \dot{H}^{-s}(\R^n))$, $((E^* A, \phi), \phi \in \Omega^k \dot{H}^{-s}(\manifold))$, by defining
\begin{equs}
(EA, \phi) := (A, E \phi), ~~ (E^* A, \phi) := (A, E^* \phi).
\end{equs}
Recall by Lemma \ref{lemmma:projections-bounded-sobolev-space} that $E\phi, E^* \phi \in \Omega^k \dot{H}^{-s}(\manifold)$ if $\phi \in \Omega^k \dot{H}^{-s}(\manifold)$, so this definition makes sense. We have that
\begin{equs}\label{eq:covariance-projected-fgf}
\Cov((EA, \phi), (EA, \psi)) = (\phi, E \psi)_{\Omega^k \dot{H}^{-s}(\manifold)}, \quad \Cov((E^*A, \phi), (E^*A, \psi)) = (\phi, E^* \psi)_{\Omega^k \dot{H}^{-s}(\manifold)}.
\end{equs}
Observe also that these processes are independent, since
\begin{equs}
\Cov((EA, \phi), (E^* A, \psi)) = (E \phi, E^* \psi)_{\Omega^k \dot{H}^{-s}(\manifold)} = (E^* E \phi, \psi)_{\Omega^k \dot{H}^{-s}(\manifold)} = 0.
\end{equs}
\end{remark}

We note that $EA$, $E^* A$ are precisely the random distributional forms $\fgf^k_s(\manifold)_{d = 0}$, $\fgf^k_s(\manifold)_{d^* = 0}$ which were mentioned in Section \ref{sec:fracforms}. We thus make the following definition.

\begin{definition}[Projections of fractional Gaussian forms]\label{def:projection-fgf}
We say that $B$ is an $\fgf^k_s(\manifold)_{d = 0}$ if it has the same law as $EA$, where $A$ is an $\fgf^k_s(\manifold)$. Similarly, we say that $B$ is an $\fgf^k_s(\manifold)_{d^* = 0}$ if it has the same law as $E^* A$.
\end{definition}

The next lemma shows that the fractional Laplacian relates fractional Gaussian forms with different $s$ parameters.

\begin{lemma}\label{lemma:laplacian-of-fgf}
Let $\noise$ be a $k$-form white noise on $\manifold$. Then $(-\Delta)^{-\frac{s}{2}} \noise$ is a $\fgf_s^k(\manifold)$. Consequently, if $A$ is a $\fgf_s^k(\manifold)$, then $(-\Delta)^{s'} A$ is a $\fgf_{s-2s'}^k(\manifold)$.
\end{lemma}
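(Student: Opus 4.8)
The plan is to reduce everything to the defining covariance property in Definition~\ref{def:fgf} and then chase definitions. First I would treat the first assertion. Define $(-\Delta)^{-s/2}\noise$ by duality, i.e.\ $((-\Delta)^{-s/2}\noise, f) := (\noise, (-\Delta)^{-s/2}f)$ for test forms $f$. This is legitimate: when $\manifold = \R^n$, Proposition~\ref{prop:fractional-laplacian-isomorphism-on-lizorkin-space} (applied componentwise) shows $(-\Delta)^{-s/2}$ maps $\Omega^k\Phi$ into $\Omega^k\Phi$, and when $\manifold$ is compact the analogous statement holds directly from the Hodge-Laplacian eigendecomposition on $\bigdot{\Omega}^k(\manifold)$; hence $(-\Delta)^{-s/2}\noise$ is a well-defined $\Omega^k\Phi'$-valued (resp.\ $\Omega^k\bigdot{\mc{D}}'(\manifold)$-valued) random variable. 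The process $\big(((-\Delta)^{-s/2}\noise, f)\big)_f$ is centered Gaussian because it is the image under the linear reparametrization $f \mapsto (-\Delta)^{-s/2}f$ of the centered Gaussian process defining $\noise$.

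Next I would compute its covariance. Using the white-noise covariance \eqref{eq:white-noise-covariance},
\[
\Cov\big(((-\Delta)^{-s/2}\noise, f),\ ((-\Delta)^{-s/2}\noise, g)\big) = \Cov\big((\noise, (-\Delta)^{-s/2}f),\ (\noise, (-\Delta)^{-s/2}g)\big) = \big((-\Delta)^{-s/2}f,\ (-\Delta)^{-s/2}g\big).
\]
By the definition of the homogeneous Sobolev inner product on $k$-forms, $(f,g)_{\Omega^k\dot H^{-s}(\manifold)} = \big((-\Delta)^{-s/2}f,\ (-\Delta)^{-s/2}g\big)$, so the right-hand side is precisely the covariance \eqref{eq:fgf-covariance}. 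Hence $(-\Delta)^{-s/2}\noise$ is a $\fgf_s^k(\manifold)$, which is the first assertion.

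For the consequence, any $\fgf_s^k(\manifold)$ instance $A$ has, by Definition~\ref{def:fgf}, the same law as $(-\Delta)^{-s/2}\noise$ for a $k$-form white noise $\noise$ (two fractional Gaussian forms with the same parameter have, by definition, the same law). The operator $(-\Delta)^{s'}$ is a continuous linear map on $\Omega^k\Phi'$ (resp.\ $\Omega^k\bigdot{\mc{D}}'(\manifold)$), again defined by duality, so it maps equal-in-law random distributions to equal-in-law random distributions; thus $(-\Delta)^{s'}A$ has the same law as $(-\Delta)^{s'}(-\Delta)^{-s/2}\noise$. Checking on Fourier modes (resp.\ on the Laplacian eigenbasis) gives the composition identity $(-\Delta)^{s'}(-\Delta)^{-s/2} = (-\Delta)^{s'-s/2} = (-\Delta)^{-(s-2s')/2}$ on the test-form space, hence on the distribution space by duality, so $(-\Delta)^{s'}A$ has the law of $(-\Delta)^{-(s-2s')/2}\noise$, which by the first part is a $\fgf_{s-2s'}^k(\manifold)$.

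There is no serious obstacle: the argument is unwinding definitions. The only points needing a little care are (i) that $(-\Delta)^{\pm s/2}$ genuinely preserves the relevant test-form spaces, so that the dual operators are well defined --- already supplied by Proposition~\ref{prop:fractional-laplacian-isomorphism-on-lizorkin-space} and the eigendecomposition --- and (ii) that equality in law of random distributions is preserved under a fixed continuous linear map, together with the elementary composition law for powers of the Laplacian on test forms.
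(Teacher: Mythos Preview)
Your proposal is correct and follows essentially the same route as the paper: define $(-\Delta)^{-s/2}\noise$ by duality, compute the covariance via the white-noise covariance and recognize it as the $\Omega^k\dot H^{-s}$ inner product, then for the second claim pass to $(-\Delta)^{-s/2}\noise$ in law and use the composition identity for Laplacian powers. The paper's proof is simply a terser version of exactly this, so there is nothing to add.
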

\begin{proof}
We show the claim for $M = \R^n$, as the proof for other $\manifold$ is almost the same (one just considers the appropriate space of test functions, which is different on compact $\manifold$ than on $\R^n$). For $f, g \in \Omega^k \Phi$, we have that
\[\begin{split}
\Cov\big(((-\Delta)^{-\frac{s}{2}} \noise, f), ((-\Delta)^{-\frac{s}{2}} \noise, g)\big) &= \Cov((\noise, (-\Delta)^{-\frac{s}{2}} f), (\noise, (-\Delta)^{-\frac{s}{2}} g)) \\
&= ((-\Delta)^{-\frac{s}{2}} f, (-\Delta)^{-\frac{s}{2}} g) \\
&= (f, g)_{\Omega^k \dot{H}^{-s}(\manifold)}.
\end{split}\]
For the second claim, we note that by using the first claim,
\[ (-\Delta)^{s'} A \stackrel{d}{=} (-\Delta)^{s'} (-\Delta)^{-\frac{s}{2}} \noise = (-\Delta)^{-(s - 2s')/2} \noise \stackrel{d}{=} \fgf_{s - 2s'}^k(\R^n). \qedhere\]
\end{proof}

Thus far, the discussion about fractional Gaussian forms has not been so different from the scalar case. What separates the former from the latter is that the operators $d, d^*, \star$ etc. relate fractional Gaussian forms of different degrees $k$. We proceed to discuss various results along these lines.

\begin{lemma}
Let $\noise$ be a $k$-form white noise on $\manifold$. Then $\star \noise$ is an $(n-k)$-form white noise on $\manifold$.
\end{lemma}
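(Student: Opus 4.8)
The plan is to verify directly that $\star \noise$ satisfies the defining properties of an $(n-k)$-form white noise, using only the algebraic facts about the Hodge star recorded above. Recall that the distributional Hodge star $\star : \Omega^k \Phi' \ra \Omega^{n-k}\Phi'$ is defined in Section~\ref{section:distributional-forms} by $(\star \noise, \phi) := (-1)^{k(n-k)}(\noise, \star \phi)$ for $\phi \in \Omega^{n-k}\Phi$. So $\star\noise$ automatically takes values in the correct space $\Omega^{n-k}\Phi'$, and the task reduces to checking that $\big((\star\noise,\phi),\ \phi\in\Omega^{n-k}\Phi\big)$ is a centered Gaussian process with covariance $(\phi,\psi)$.

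First I would record that $\star$ maps the test space into itself, i.e. $\star\phi \in \Omega^k\Phi$ whenever $\phi \in \Omega^{n-k}\Phi$. For $\manifold = \R^n$ this is immediate from the coordinatewise formula \eqref{eq:hodge-star-coordinatewise-formula}: the components of $\star\phi$ are (signed) components of $\phi$, and $\Omega^k\Phi$ is closed under such operations. (On a compact $\manifold$ one instead uses that $\star$ commutes with $\Delta$, Corollary~\ref{cor:hodge-star-commutes-with-laplacian}, so $\star$ carries the harmonic-free smooth forms $\bigdot{\Omega}^{n-k}(\manifold)$ bijectively onto $\bigdot{\Omega}^k(\manifold)$.) Consequently $(\star\noise,\phi) = (-1)^{k(n-k)}(\noise,\star\phi)$ is a well-defined scalar multiple of the centered Gaussian process $\big((\noise,f),\ f\in\Omega^k\Phi\big)$, hence $\big((\star\noise,\phi),\ \phi\in\Omega^{n-k}\Phi\big)$ is again a centered Gaussian process.

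Then I would compute the covariance. Applying the definition of $\star$ on distributions twice together with the white-noise covariance \eqref{eq:white-noise-covariance},
\[
\Cov\big((\star\noise, \phi), (\star\noise, \psi)\big) = (-1)^{2k(n-k)}\,\Cov\big((\noise, \star\phi), (\noise, \star\psi)\big) = (\star\phi, \star\psi),
\]
and it remains to see that $\star$ is an $L^2$ isometry. Using the identity $(\star f, g) = (-1)^{k(n-k)}(f, \star g)$ with $f=\phi$ (an $(n-k)$-form) and $g=\star\psi$ (a $k$-form), and $\star\star=(-1)^{(n-k)k}$ on $(n-k)$-forms, gives $(\star\phi,\star\psi) = (-1)^{(n-k)k}(\phi,\star\star\psi) = (\phi,\psi)$. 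Hence $\Cov\big((\star\noise,\phi),(\star\noise,\psi)\big) = (\phi,\psi)$, which is exactly the white-noise covariance \eqref{eq:white-noise-covariance} for $(n-k)$-forms, so $\star\noise$ is an $(n-k)$-form white noise.

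There is no serious obstacle: the only step needing (minor) care is the first one — confirming that $\star$ preserves the test-form space, so that $(\noise,\star\phi)$ is a legitimate element of the Gaussian process associated to $\noise$ and inherits its law — after which the covariance identity is a two-line consequence of the adjointness and involutivity of $\star$. The argument is identical on a compact oriented Riemannian manifold, with $\Omega^k\Phi,\Omega^k\Phi'$ replaced by $\bigdot{\Omega}^k(\manifold),\Omega^k\bigdot{\mc{D}}'(\manifold)$ and Corollary~\ref{cor:hodge-star-commutes-with-laplacian} invoked to handle the harmonic-free condition.
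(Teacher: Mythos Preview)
Your proof is correct and follows essentially the same approach as the paper: use the duality definition $(\star\noise,\phi)=(-1)^{k(n-k)}(\noise,\star\phi)$ and then verify the covariance via the $L^2$-isometry $(\star\phi,\star\psi)=(\phi,\psi)$. The paper's version is terser (it only writes out the $\manifold=\R^n$ case and computes $\mrm{Var}((\star\noise,f))=(\star f,\star f)=(f,f)$ without spelling out the intermediate adjointness step), while you have been more explicit about why $\star$ preserves the test space and how the compact-manifold case goes.
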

\begin{proof}
We show the claim for $M = \R^n$. Recall that for $f \in \Omega^{n-k} \Phi$, we have that $(\star \noise, f) = (-1)^{k(n-k)} (\noise, \star f)$. We thus have that
\[ \mrm{Var}((\star \noise, f)) = \mrm{Var}((\noise, \star f)) = (\star f, \star f) = (f, f).\]
This shows that $\star \noise$ is an $(n-k)$-form white noise.
\end{proof}

The $s = 1$ case of the next lemma says that the exterior derivative of a $(k-1)$-form GFF is the exact part of a $k$-form white noise, and the codifferential of a $(k+1)$-form GFF is the co-exact part of a $k$-form white noise.

\begin{lemma}\label{lemma:projected-white-noise}
Let $0 \leq k \leq n$, $s \in \R$, and let $A$ be an $\fgf_s^k(\manifold)$. We have that $dA$ is an $(\fgf_{s-1}^{k+1}(\manifold))_{d = 0}$, and $\codif A$ is an $(\fgf_{s-1}^{k-1}(\manifold))_{\codif = 0}$. 
\end{lemma}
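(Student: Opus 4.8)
The plan is to verify the claimed covariance identities directly, using the fact that $A \stackrel{d}{=} (-\Delta)^{-s/2} W$ for a $k$-form white noise $W$ (Lemma \ref{lemma:laplacian-of-fgf}), together with the Fourier-space (or eigenform-space) descriptions of $d$, $\codif$, and the projections $E$, $E^*$. I will treat the case $\manifold = \R^n$ in detail; the compact case is identical after replacing $\Omega^k\Phi$ with $\bigdot\Omega^k(\manifold)$ and Fourier integrals with eigenform sums.

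First I would recall that, by Definition \ref{def:projection-fgf}, it suffices to exhibit a $k$-form white noise $W$ and a $(k+1)$-form white noise $W'$ such that $dA \stackrel{d}{=} E\bigl((-\Delta)^{-(s-1)/2} W'\bigr)$, and similarly $\codif A \stackrel{d}{=} E^*\bigl((-\Delta)^{-(s-1)/2} W''\bigr)$ for some $(k-1)$-form white noise $W''$. Since these are all centered Gaussian processes (indexed by Lizorkin test forms), it is enough to match covariances. So I would compute, for $\phi \in \Omega^{k+1}\Phi$,
\[
\Var\bigl((dA,\phi)\bigr) = \Var\bigl((A, \codif\phi)\bigr) = (\codif\phi, \codif\phi)_{\Omega^k\dot H^{-s}} = \bigl((-\Delta)^{-s/2}\codif\phi, (-\Delta)^{-s/2}\codif\phi\bigr),
\]
using the adjointness of $d$ and $\codif$ (from the excerpt) and the definition \eqref{eq:fgf-covariance} of the FGF covariance. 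Then, since $\codif$ commutes with $(-\Delta)^{-s/2}$ (both are diagonalized by the same Fourier modes, or by the eigenforms of the Hodge Laplacian), this equals $\bigl((-\Delta)^{-s/2}\phi, (-\Delta)^{-s/2} d\codif\phi\bigr)$, and using $d\codif = (-\Delta) E$ on the image space (Definition \ref{def:E-E-star-projections}: $E = d\codif(-\Delta)^{-1}$) together with self-adjointness of $E$ (Proposition \ref{prop:hodge-decomp}) and $E^2 = E$, I would rewrite this as
\[
\bigl((-\Delta)^{-(s-1)/2}\phi,\ E\,(-\Delta)^{-(s-1)/2}\phi\bigr) = (\phi, E\phi)_{\Omega^{k+1}\dot H^{-(s-1)}}.
\]
By \eqref{eq:covariance-projected-fgf} this is exactly the covariance of $\fgf_{s-1}^{k+1}(\manifold)_{d=0}$, so $dA$ has the right law. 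The argument for $\codif A$ is the mirror image: replace $\codif\phi$ by $d\eta$ for $\eta \in \Omega^{k-1}\Phi$, use $\codif d = (-\Delta) E^*$, and use self-adjointness and idempotence of $E^*$ to land on $(\eta, E^*\eta)_{\Omega^{k-1}\dot H^{-(s-1)}}$, matching $\fgf_{s-1}^{k-1}(\manifold)_{\codif = 0}$.

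One technical point I would address carefully: $dA$ a priori is only defined as an element of $\Omega^{k+1}\Phi'$ via duality $(dA,\phi) := (A,\codif\phi)$, and I should check $\codif\phi \in \Omega^k\Phi$ so that $(A,\codif\phi)$ makes sense — this is exactly Remark \ref{remark:lizorkin-space-closed-under-differentiation} (Lizorkin space is closed under differentiation). I also need the commutation $(-\Delta)^{-s/2}\codif = \codif(-\Delta)^{-s/2}$ on $\Omega^k\Phi$; on $\R^n$ this is immediate from the coordinatewise formula \eqref{eq:codifferential-coordinatewise-expression} and the Fourier-multiplier definition of $(-\Delta)^{-s/2}$, and on compact $\manifold$ it follows because $\codif$ maps each Hodge-Laplacian eigenspace to an eigenspace of the same eigenvalue. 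I do not expect a serious obstacle here; the main thing to get right is bookkeeping of the adjoint relations and the identities $d\codif(-\Delta)^{-1} = E$, $\codif d(-\Delta)^{-1} = E^*$ on the relevant (co)exact subspaces, plus the observation that these operators, being orthogonal projections, are self-adjoint and idempotent, which is precisely what lets the two factors of $(-\Delta)^{-s/2}$ be rebalanced into $(-\Delta)^{-(s-1)/2}$ on each side with one leftover $(-\Delta)$ absorbed into turning $d\codif$ into $E$.

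Finally, I would remark that the statement is consistent with the degree and smoothness bookkeeping: $d$ and $\codif$ each behave like "one derivative," so applying them to an $\fgf_s^k$ should produce an $\fgf_{s-1}$ of adjacent degree, which is exactly what the covariance computation confirms; and the projection labels $d=0$, $\codif=0$ are forced because $d(dA) = 0$ and $\codif(\codif A)=0$ by $d^2 = (\codif)^2 = 0$.
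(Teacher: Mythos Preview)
Your proposal is correct and follows essentially the same approach as the paper's proof: compute $\Var((dA,\phi)) = \|\codif\phi\|_{\Omega^k\dot H^{-s}}^2$ via duality, then use the relations $d\codif(-\Delta)^{-1} = E$ and commutativity of $\codif$ with powers of the Laplacian to rewrite this as $(\phi, E\phi)_{\Omega^{k+1}\dot H^{-(s-1)}}$, matching \eqref{eq:covariance-projected-fgf}. Your write-up is in fact slightly more detailed than the paper's in spelling out the commutativity step and the Lizorkin-closure issue, but the argument is the same.
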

\begin{remark}
Since $d$ and $\codif$ are differential operators, applying either of them to an $\fgf_s^k$ should decrease the $s$ parameter by $1$, while also increasing/decreasing the $k$ parameter by $1$. The projection onto the $d = 0$ or $\codif = 0$ subspace arises since $d^2 = 0 = (\codif)^2$.
\end{remark}
\begin{proof}
We show the claim for $M = \R^n$. For $\phi \in \Omega^{k+1} \Phi$, we have that
\[ (dA, \phi) = (A, \codif \phi) \sim \N\big(0, \|\codif \phi\|_{\Omega^k \dot{H}^{-s}(\R^n)}^2\big). \]
Observe that
\[ \|\codif \phi\|_{\Omega^k \dot{H}^{-s}(\R^n)}^2 = ((-\Delta)^{-s/2} \codif \phi, (-\Delta)^{-s/2} \codif \phi) = (\phi, d\codif (-\Delta)^{-s} \phi) = (\phi, (-\Delta)^{-(s-1)} E \phi). \]
To finish, note that $(\phi, (-\Delta)^{-(s-1)} E \phi) = (E\phi, (-\Delta)^{-(s-1)} E \phi) = \|E \phi\|^2_{\Omega^{k+1} \dot{H}^{s-1}(\R^n)}$. This shows the result for $dA$ (recall \eqref{eq:covariance-projected-fgf}). The result for $\codif A$ follows by similar calculations. 
\end{proof}

The next lemma says that the Hodge star of the exact part of a $k$-form white-noise is the co-exact part of an $(n-k)$-form white noise, and vice versa.

\begin{lemma}\label{lemma:hodge-star-fgf}
Let $1 \leq k \leq n$. Let $\noise, \eta$ be respectively $k$-form and $(n-k)$-form white noises on $\manifold$. Then $\star E \noise \stackrel{d}{=} E^* \eta$, and $\star E^* \eta \stackrel{d}{=} E \noise$.
\end{lemma}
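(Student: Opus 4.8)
The plan is to combine the two facts already established just above: (i) $\star$ maps a $k$-form white noise to an $(n-k)$-form white noise (and, applied again, back), and (ii) the intertwining relations $\star E = E^* \star$ and $\star E^* = E \star$ on distributional forms (the immediately preceding lemma, a consequence of Corollaries~\ref{cor:dif-codif-hodge-star-commute} and~\ref{cor:hodge-star-commutes-with-laplacian}).

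First I would apply the intertwining relation to $\noise$: since $\noise$ lives in $\Omega^k \Phi'(\R^n)$ (resp.\ $\Omega^k \bigdot{\mc{D}}'(\manifold)$ for compact $\manifold$), where $\star$, $E$, $E^*$ are all defined, we have $\star E \noise = E^*(\star \noise)$. By (i), $\star\noise$ is an $(n-k)$-form white noise, hence $\star\noise \stackrel{d}{=} \eta$ — here one uses that a white noise of a given degree is characterized in law by its covariance $(\cdot,\cdot)$, which is precisely the content of the definition of white noise (both are centered Gaussian processes with the same covariance, so the same law). Since $E^*$ is a fixed continuous linear operator on the relevant space of distributional forms, applying it to two random forms of the same law produces random forms of the same law, so $\star E \noise = E^*(\star\noise) \stackrel{d}{=} E^*\eta$. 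The second identity is handled symmetrically: applying $\star E^* = E\star$ to $\eta$ gives $\star E^* \eta = E(\star\eta)$, and by (i) again $\star\eta$ is a $k$-form white noise, so $\star\eta \stackrel{d}{=}\noise$ and therefore $\star E^*\eta \stackrel{d}{=} E\noise$. (Alternatively, the second identity follows from the first by applying $\star$ to both sides, using $\star\star = (-1)^{k(n-k)}$ and the invariance in law of a centered Gaussian under negation.)

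There is essentially no obstacle in this argument; the only points that deserve a word of care are the uniqueness in law of white noise of a fixed degree (immediate from its Gaussian-process definition through the covariance) and the validity of $\star E = E^* \star$ on the distribution spaces in play — but that is exactly the statement of the preceding lemma. The proof is therefore just the two displayed lines $\star E\noise = E^*\star\noise \stackrel{d}{=} E^*\eta$ and $\star E^*\eta = E\star\eta \stackrel{d}{=} E\noise$, together with these two remarks.
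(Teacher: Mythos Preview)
Your proof is correct and essentially identical to the paper's: both use $\star E = E^*\star$ together with the fact that $\star$ maps white noise to white noise to get $\star E\noise = E^*\star\noise \stackrel{d}{=} E^*\eta$. For the second identity the paper takes your ``alternative'' route (apply $\star$ to the first identity and use $\star\star = (-1)^{k(n-k)}$), but your primary symmetric argument via $\star E^* = E\star$ is equally valid and just as short.
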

\begin{proof}
For the first identity, we have that $\star E \noise = E^* \star \noise \stackrel{d}{=} E^* \eta$. For the second identity, note that by the first identity, we have $\star E^* \eta \stackrel{d}{=} \star \star E \noise = (-1)^{k(n-k)} E \noise \stackrel{d}{=} E \noise$.
\end{proof}

Next, we give a concrete application of the previous results which relates the curl of a divergence-free $1$-form GFF to another fractional Gaussian form.

\begin{lemma}
Let $n = 3$. Let $A$ be a $\gff^1(\R^3)_{d^* = 0}$ and $\noise$ be a $1$-form white noise on $\R^3$. Then $\curl A \stackrel{d}{=} E^* \noise \stackrel{d}{=} (-\Delta)^{1/2} A$.
\end{lemma}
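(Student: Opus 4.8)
The plan is to deduce the whole chain of identities from the structural lemmas already proved (Lemmas~\ref{lemma:laplacian-of-fgf}, \ref{lemma:projected-white-noise}, \ref{lemma:hodge-star-fgf}) together with two flat-space facts from the vector-calculus example: $\curl = \star d$ on $1$-forms in $\R^3$, and $\curl^2 = \codif d$ there. Everything takes place on $\manifold = \R^3$, where all the cited results hold.

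I would start with the middle identity $E^* W \stackrel{d}{=} (-\Delta)^{1/2} A$. By Definition~\ref{def:projection-fgf} we may take $A \stackrel{d}{=} E^* \tilde A$ with $\tilde A$ an $\fgf_1^1(\R^3)$, and by Lemma~\ref{lemma:laplacian-of-fgf} we may further write $\tilde A \stackrel{d}{=} (-\Delta)^{-1/2} W$ for a $1$-form white noise $W$. Since $E^* = \codif d (-\Delta)^{-1}$ is built from operators that all commute with $(-\Delta)^{-1/2}$ on $\R^3$, this gives $A \stackrel{d}{=} (-\Delta)^{-1/2} E^* W$; applying $(-\Delta)^{1/2}$ (a well-defined continuous map on $\Omega^1 \Phi'$ by duality, with $(-\Delta)^{1/2}(-\Delta)^{-1/2}$ the identity, cf.\ Proposition~\ref{prop:fractional-laplacian-isomorphism-on-lizorkin-space}) yields $(-\Delta)^{1/2} A \stackrel{d}{=} E^* W$, as desired.

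Next I would treat $\curl A \stackrel{d}{=} E^* W$. Using $E + E^* = I$ (Proposition~\ref{prop:hodge-decomp}) and $dE = d(d\codif (-\Delta)^{-1}) = 0$, we have $d E^* = d$, hence $dA \stackrel{d}{=} d E^* \tilde A = d \tilde A$. By Lemma~\ref{lemma:projected-white-noise} with $s = 1$, $k = 1$, the $2$-form $d\tilde A$ is an $(\fgf_0^2(\R^3))_{d=0}$, i.e.\ $d\tilde A \stackrel{d}{=} E \eta$ for a $2$-form white noise $\eta$ (here $\fgf_0^2$ is just $2$-form white noise). Applying $\star$ and invoking Lemma~\ref{lemma:hodge-star-fgf} with $k = 2$, $n - k = 1$ gives
\[
\curl A = \star d A \stackrel{d}{=} \star E \eta \stackrel{d}{=} E^* W
\]
for a $1$-form white noise $W$. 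Combining this with the previous paragraph produces the full chain $\curl A \stackrel{d}{=} E^* W \stackrel{d}{=} (-\Delta)^{1/2} A$.

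There is no serious obstacle here; the only care needed is the bookkeeping that $E^*$ and fractional powers of $-\Delta$ commute and that every manipulation is legitimate for $\Omega^k \Phi'$-valued random variables rather than only for honest functions. I would also note, but not pursue, the purely computational alternative: checking directly in Fourier space that $\curl A$, $E^* W$ and $(-\Delta)^{1/2} A$ all have covariance $(\phi,\psi) \mapsto (\phi, E^*\psi)$ against $\Omega^1 L^2$ test forms, where for $\curl A$ one uses self-adjointness of $\curl = \star d$ on $\Omega^1(\R^3)$ together with $\curl^2 = \codif d = -\Delta - d\codif$ to get $(-\Delta)^{-1}\curl^2 = I - E = E^*$. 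This works but is messier than the lemma-based route.
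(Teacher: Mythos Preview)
Your proposal is correct and follows essentially the same route as the paper: both arguments combine Lemmas~\ref{lemma:laplacian-of-fgf}, \ref{lemma:projected-white-noise}, and \ref{lemma:hodge-star-fgf} in the same way, with the paper writing $E^*\noise = (-\Delta)^{1/2}E^*(-\Delta)^{-1/2}\noise$ and recognizing $E^*(-\Delta)^{-1/2}\noise \stackrel{d}{=} A$, which is exactly your commutation argument read in the other direction. Your treatment is slightly more explicit in one spot---you justify $dA \stackrel{d}{=} d\tilde A$ via $dE^*=d$, whereas the paper applies Lemma~\ref{lemma:projected-white-noise} directly to $A$ without spelling this out---but the substance is identical.
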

\begin{proof}
By combining Lemmas \ref{lemma:projected-white-noise} and \ref{lemma:hodge-star-fgf}, we obtain
\begin{equs}
\curl A = \star d A \stackrel{d}{=} E^* \noise.
\end{equs}
We may further write
\begin{equs}
E^* \noise 
= (-\Delta)^{\frac{1}{2}} E^* (-\Delta)^{-\frac{1}{2}} \noise.
\end{equs}
By Lemma \ref{lemma:laplacian-of-fgf}, we have that $(-\Delta)^{-\frac{1}{2}} \noise$ is a $\gff^1(\R^3)$, and thus $E^* (-\Delta)^{-\frac{1}{2}} \noise$ has the same distribution as $A$, i.e. it is a $\gff^1(\R^3)_{d^* = 0}$. The desired result now follows.
\end{proof}

Next, we define massive variants of fractional Gaussian forms.

\begin{definition}[Massive $k$-form fractional Gaussian field]
Let $\lambda > 0$. We say that $A$ is a $k$-form $\fgf_s(\R^n; \lambda)$ if it is a $\Omega^k \mc{S}'$-valued random variable such that $((A, f), f \in \Omega^k \mc{S})$ is a centered Gaussian process with covariance
\begin{equs}\label{eq:massive-fgf-covariance}
\Cov((A, f), (A, g)) = (f, (-\Delta + \lambda)^{-s} g).
\end{equs}
Let $M$ be a compact oriented Riemannian manifold. We say that $A$ is a $k$-form $\fgf_s(\manifold; \lambda)$ if it is a $\Omega^k \mc{D}'(\manifold)$-valued random variable such that $((A, f), f \in \Omega^k(\manifold))$ is a centered Gaussian process with the above covariance. We will denote a $k$-form $\fgf_s(\manifold; \lambda)$ by $\fgf_s^k(\manifold; \lambda)$. In the case $s = 1$, we will say that $A$ is a $k$-form $\gff(\manifold)$ with mass $\lambda$, or alternatively a $\gff^k(\manifold; \lambda)$.
\end{definition}

\begin{remark}
By the usual isometry arguments, given $A$ which is a $\fgf_s^k(\manifold; \lambda)$, we may obtain a Gaussian process $((A, f), f \in \Omega^k H^{-s}(\manifold))$ indexed by elements of the inhomogeneous Sobolev space $\Omega^k H^{-s}(\manifold)$, with covariance given by \eqref{eq:massive-fgf-covariance}.
\end{remark}

\begin{definition}[Vector space valued fractional Gaussian forms]
Let $(V, \langle \cdot, \cdot \rangle_V)$ be a finite-dimensional inner product space. We say that $A$ is a $V$-valued $k$-form $\fgf_s^k(\R^n)$ if it is an $\Omega^k \Phi'(V)$-valued random variable such that $((A, \phi), \phi \in \Phi(V))$ is a mean zero Gaussian process with covariance
\begin{equs}
\Cov((A, \phi), (A, \psi)) = (\phi, \psi)_{\Omega^k \dot{H}^{-s}(\R^n; V)}.
\end{equs}
Let $\manifold$ be a compact oriented Riemannian manifold. We say that $A$ is a $V$-valued $k$-form $\fgf^k_s(\manifold)$ if it is an $\Omega^k \bigdot{\mc{D}}'(\manifold; V)$-valued random variable such that $((A, \phi), \phi \in \bigdot{\Omega}^k (\manifold; V))$ is a mean zero Gaussian process with the above covariance (where $\R^n$ is replaced by $\manifold$ in the inner product).
\end{definition}

Similar to Remark \ref{remark:values-in-a-general-vector-space}, all the previous discussion extends to this more general case of $V$-valued fractional Gaussian forms. Inherently, this has to be the case, because we can always expand a given $V$-valued fractional Gaussian form into an orthonormal basis and obtain a collection of i.i.d. $\R$-valued fractional Gaussian forms.

\subsection{Covariance kernel of FGF}

In this section, we review the explicit form of the covariance kernel of $\fgf_s^0(\R^n)$, as in \cite[Section 3.2]{lodhia2016fractional}. Let $h$ be a $0$-form $\fgf_s$. We want a kernel $G^s(x, y)$ such that for all $\phi_1, \phi_2 \in \Phi$, 
\[ \mrm{Cov}((h, \phi_1), (h, \phi_2)) = (\phi_1, \phi_2)_{\dot{H}^{-s}(\R^n)}= (\phi_1, (-\Delta)^{-s} \phi_2) = \int_{\R^n} \int_{\R^n} \phi_1(x) G^s(x, y) \phi_2(y) dx dy.\]
This formula can be extended to $\phi_1, \phi_2$ lying in a larger class of test functions (see Remark \ref{remark:loosen-phi-restriction}).
When $0 < 2s < n$, we have that $G^s$ is given by
\[ G^s(x, y) = k_{2s}(x-y) = \frac{1}{\gamma_n(2s)} |x-y|^{2s - n}, \]
where
\begin{equs}\label{eq:gamma-n-def} \gamma_n(\alpha) := 2^\alpha \pi^{n/2} \frac{\Gamma(\alpha / 2)}{\Gamma((n-\alpha)/2)}, ~~ \alpha \in (0, n).\end{equs}
This follows because for $\alpha \in (0, n)$, we have that $(-\Delta)^{-\frac{\alpha}{2}} \phi_2$ is explicitly given by the convolution
\[ (-\Delta)^{-\frac{\alpha}{2}} \phi_2 = k_{\alpha} * \phi_2, \]
where $k_\alpha$ is the Riesz potential
\[ k_\alpha(x) := \frac{1}{\gamma_n(\alpha)} |x|^{\alpha - n}.\]
We now seek to extend this to the case of general $\alpha \in \R$. The following proposition encapsulates all the possible cases. First, let
\beq\label{eq:H-j-def} H_0 := 1, ~~ H_j := \frac{1}{2^j j! \prod_{i=0}^{j-1} (n + 2i)}, ~~ j \geq 1. \eeq

\begin{remark}
There is a typo in \cite[Theorem 3.3]{lodhia2016fractional} in the definition of $H_j$ -- there should be no $\Omega_d$ (which in that paper is the surface area of the $d$-dimensional hypersphere) in the numerator. This is a result of a corresponding typo on \cite[Page 47]{landkof1972foundations} in the definition of $H_k$, where there should be no $\omega_p$ in the numerator. This typo is corrected in the above definition.
\end{remark}

\begin{prop}\label{prop:fractional-laplacian-formula}
Let $\phi \in \Phi$. We have that $((-\Delta)^{-\frac{\alpha}{2}} \phi)(x)$ is equal to:
\begin{enumerate}
    \item\label{item:fractional-laplacian-positive-non-integer} $\alpha > 0$ and $\alpha \notin n + 2\N$
    \[ \frac{1}{\gamma_n(\alpha)}\int_{\R^n} \phi(y) |x-y|^{\alpha - n} dy, \]
    \item \label{item:fractional-laplacian-positive-integer-log-correction}$\alpha = n + 2m$, $m \geq 0$
    \[ 2\frac{(-1)^{m+1} 2^{-(n+2m)}\pi^{-n/2}}{ m! \Gamma((n+2m)/2)} \int_{\R^n} \phi(y) |x-y|^{2m} \log|x-y| dy, \]
    \item $\alpha < 0$, $\alpha \in (-2m, -2(m-1))$ for some $m \geq 1$
    \[ \int_{\R^n} \bigg(\phi(y) - \sum_{j=0}^{m-1} H_j (\Delta^j \phi)(x) |x-y|^{2j}\bigg) |x-y|^{\alpha - n} dy, \] 
    \item $\alpha = -2m$, some $m \geq 0$
    \[ \int_{\R^n} \phi(y) ((-\Delta)^{m} \delta_x)(y) dy .\]
\end{enumerate}
\end{prop}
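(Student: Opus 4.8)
The plan is to prove all four cases at once by analytic continuation in the parameter $\alpha$, exploiting the fact that the Lizorkin condition on $\phi$ makes every integral below depend holomorphically on $\alpha$. Fix $\phi\in\Phi$ and $x\in\R^n$ and set $F(\alpha):=\big((-\Delta)^{-\frac{\alpha}{2}}\phi\big)(x)=(2\pi)^{-n/2}\int_{\R^n}|p|^{-\alpha}\,\widehat\phi(p)\,e^{\icomplex p\cdot x}\,dp$. Since $\widehat\phi\in\Psi$ vanishes to infinite order at the origin while remaining Schwartz (Lemma \ref{lemma:phi-psi-characterizations}), the integrand $|p|^{-\alpha}\widehat\phi(p)$ lies in $L^1(\R^n)$ for \emph{every} $\alpha\in\C$, with $L^1$-norm locally bounded in $\alpha$; hence $F$ is an entire function of $\alpha$. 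It then suffices, for each regime, to exhibit a candidate expression that is known to equal $F$ on some nonempty open $\alpha$-set — so that the identity theorem forces agreement wherever both are defined — and that is continuous up to the exceptional points, so that one may read off the value there.

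I would start from the base range $0<\alpha<n$, where $F(\alpha)=\gamma_n(\alpha)^{-1}\int_{\R^n}\phi(y)|x-y|^{\alpha-n}\,dy$ is exactly the classical Riesz-potential identity recalled just before the proposition (equivalently, the Fourier transform of the homogeneous distribution $|x|^{\alpha-n}$ is $\gamma_n(\alpha)$ times a constant multiple of $|p|^{-\alpha}$). To extend this to all $\alpha>0$ off the set $\{n,n+2,n+4,\dots\}$, note that $\alpha\mapsto\int_{\R^n}\phi(y)|x-y|^{\alpha-n}\,dy$ is holomorphic on $\{\operatorname{Re}\alpha>0\}$ (integrability near $y=x$ needs $\operatorname{Re}\alpha>0$; decay at infinity is automatic since $\phi$ is Schwartz), that $\gamma_n(\alpha)^{-1}=\Gamma\!\big(\tfrac{n-\alpha}{2}\big)\big/\big(2^{\alpha}\pi^{n/2}\Gamma(\tfrac{\alpha}{2})\big)$ has simple poles precisely at $\alpha\in\{n,n+2,\dots\}$, and that the integral vanishes at those points because $\int_{\R^n}\phi(y)|x-y|^{2m}\,dy=0$ for $\phi\in\Phi$. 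Hence the product extends holomorphically past them, agrees with $F$ on $(0,n)$, and thus equals $F$ on all of $\{\operatorname{Re}\alpha>0\}$, which covers the positive non-integer case.

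The logarithmic case $\alpha=n+2m$ is then the value of this same holomorphic extension at that point. Using $|x-y|^{\alpha-n}=|x-y|^{2m}\exp\!\big((\alpha-n-2m)\log|x-y|\big)$ together with the vanishing of $\int\phi(y)|x-y|^{2j}\,dy$ for $j\le m$ gives $\int\phi(y)|x-y|^{\alpha-n}\,dy=(\alpha-n-2m)\int\phi(y)|x-y|^{2m}\log|x-y|\,dy+O((\alpha-n-2m)^2)$; multiplying by the residue of $\gamma_n(\alpha)^{-1}$ at $\alpha=n+2m$ — which comes from the residue $(-1)^m/m!$ of $\Gamma$ at $-m$ together with the Jacobian $-\tfrac12$ of $\alpha\mapsto\tfrac{n-\alpha}{2}$ — reproduces the stated constant $2(-1)^{m+1}2^{-(n+2m)}\pi^{-n/2}/\big(m!\,\Gamma(\tfrac{n+2m}{2})\big)$. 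The negative even-integer case $\alpha=-2m$ needs no potential theory: $(-\Delta)^{-\alpha/2}=(-\Delta)^m$ is a differential operator and $\big((-\Delta)^m\phi\big)(x)=\int_{\R^n}\phi(y)\big((-\Delta)^m\delta_x\big)(y)\,dy$ by definition of the distributional derivative.

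The remaining case $\alpha\in(-2m,-2(m-1))$ is the one I expect to be the main obstacle, because $|x-y|^{\alpha-n}$ is no longer locally integrable against $\phi$ near $y=x$ and must be replaced by its Hadamard finite part, i.e.\ by the Gelfand--Shilov analytic continuation in $\alpha$ of the homogeneous distribution $|x|^{\alpha-n}$. The clean fact to invoke is that this continued distribution still has Fourier transform the analytic continuation of $\gamma_n(\alpha)^{-1}$ times $|p|^{-\alpha}$, with an extra $\delta$-type correction only at $\alpha\in\{0,-2,-4,\dots\}$ (which is exactly why those points are handled separately), so that $F(\alpha)$ equals $\gamma_n(\alpha)^{-1}$ times the action of that continued distribution on $\phi$; and on this strip that action is computed by subtracting the radial Taylor-type jet $\sum_{j=0}^{m-1}H_j(\Delta^j\phi)(x)|x-y|^{2j}$ before integrating, precisely so the integrand becomes $O(|x-y|^{2m+\alpha-n})$ near $y=x$ (integrable since $\alpha>-2m$) while the subtracted terms still decay fast enough at infinity (since $\alpha<-2(m-1)$). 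An equivalent hands-on route is to factor $(-\Delta)^{-\alpha/2}=(-\Delta)^m(-\Delta)^{-(\alpha+2m)/2}$ with $\alpha+2m\in(0,2)$ and integrate the $m$ Laplacians by parts onto the Riesz kernel, tracking the finite-part boundary terms; the constants $H_j$ then drop out of iterating $-\Delta$ on powers of $|x-y|$. The genuinely tedious points here are (a) fixing the Gelfand--Shilov normalization so that the constants come out exactly as displayed (and consistently with the corrected $H_j$ noted above, cf.\ \cite{lodhia2016fractional}), and (b) a small modification when $n=1$, where $(0,2)\not\subseteq(0,n)$ and one must peel off an extra power of $-\Delta$ or argue directly with the one-dimensional Fourier transform.
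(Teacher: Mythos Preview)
Your overall strategy of analytic continuation in $\alpha$ matches the paper's, and your handling of Cases (1), (2), (4) is essentially identical. For Case (3), however, there is a concrete gap: your claim that the subtracted integrand is $O(|x-y|^{2m+\alpha-n})$ pointwise near $y=x$ is false. The subtraction $\sum_{j=0}^{m-1}H_j(\Delta^j\phi)(x)|x-y|^{2j}$ is purely radial, so it cannot cancel the non-radial Taylor terms of $\phi$ at $x$; already the linear term $\nabla\phi(x)\cdot(y-x)$ survives, giving only $O(|x-y|)$. Consequently, for every $\alpha\le -1$ (hence for the entire range $m\ge2$ and half of the $m=1$ strip) the integral in Case (3) is not absolutely convergent, and its very meaning must be justified before one can speak of analyticity in $\alpha$.

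The paper fills exactly this gap by passing to polar coordinates and invoking the Pizzetti formula $\bar\phi(r)=\sum_{j=0}^{m-1}H_j(\Delta^j\phi)(x)\,r^{2j}+O(r^{2m})$ for the spherical average: this is where the constants $H_j$ originate and why a radial jet is the correct subtraction. Concretely, the paper splits $\int_{\R^n}=\int_{|y-x|<1}+\int_{|y-x|>1}$, subtracts only in the inner region, uses Pizzetti to show each piece extends analytically to $\{\mrm{Re}\,\alpha>-2m\}$ modulo the explicit pole terms $\omega_n H_j(\Delta^j\phi)(x)/(\alpha+2j)$, and then on the target strip rewrites each such term as $-\int_{|y-x|>1}|x-y|^{\alpha+2j-n}\,dy$ to recombine into the displayed formula. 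Your Gelfand--Shilov route would naturally land on a subtraction of the full Taylor jet rather than the radial one, and reducing one to the other (with the right constants) is again Pizzetti; your Route~B is viable in principle, but the constant bookkeeping is the entire content here, and Pizzetti remains the cleanest way to identify the $H_j$.
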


Because of the typo in the definition of $H_j$ in \cite{landkof1972foundations} (and consequently also in \cite{lodhia2016fractional}), we prove Proposition \ref{prop:fractional-laplacian-formula} in Appendix \ref{section:covariance-kernel-proof} with the corrected definition of $H_j$.

\begin{remark}\label{remark:loosen-phi-restriction}
As will be clear from the proof, we may loosen the restriction that $\phi \in \Phi$ to varying degrees, depending on the case. Cases (1) and (2) continue to hold as long as $\phi \in \mc{S}$ is such that $\widehat{\phi}(p) = o(|p|^{\alpha - n + \varep})$ for some $\varep > 0$ as $p \ra 0$, while cases (3) and (4) actually hold for any $\phi \in \mc{S}$.
\end{remark}

Given this proposition, we may directly obtain the formula for $G^s(x, y)$ in all the possible cases. 

\begin{cor}\label{cor:fgf-covariance-kernel}
Let $s \in \R$. We have that $G^s(x, y)$ is equal to:
\begin{enumerate}
    \item $2s > 0$ and $2s \notin n + 2\N$
    \[ \frac{1}{\gamma_n(2s)} |x-y|^{2s - n},  \]
    \item $2s = n + 2m$ for some $m \geq 0$
    \[ 2\frac{(-1)^{m+1} 2^{-(n+2m)}\pi^{-n/2}}{ m! \Gamma((n+2m)/2)}  |x-y|^{2m} \log|x-y|,\]
    \item $2s \in (-2m, -2(m-1))$ for some $m \geq 1$
    \[ \bigg(1 - \sum_{j=0}^{m-1} H_j (\Delta^j \delta_x)(y) |x-y|^{2j}\bigg)|x-y|^{\alpha - n} .\]
    \item $2s = -2m$ for some $m \geq 0$
    \[ ((-\Delta)^m \delta_x)(y). \]
\end{enumerate}
\end{cor}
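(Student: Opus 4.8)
The plan is to read the kernel directly off Proposition \ref{prop:fractional-laplacian-formula} with $\alpha = 2s$, using the defining covariance identity for $\fgf_s^0(\R^n)$. Recall that for $\phi_1, \phi_2 \in \Phi$ one has
\[ \mrm{Cov}((h,\phi_1),(h,\phi_2)) = (\phi_1, (-\Delta)^{-s}\phi_2) = \int_{\R^n}\phi_1(x)\,\big((-\Delta)^{-s}\phi_2\big)(x)\,dx, \]
so it suffices to exhibit, in each of the four regimes for $s$, a (possibly generalized) kernel $G^s(x,y)$ with $\big((-\Delta)^{-s}\phi_2\big)(x) = \int_{\R^n} G^s(x,y)\phi_2(y)\,dy$. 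Setting $\alpha = 2s$ in Proposition \ref{prop:fractional-laplacian-formula} and matching its four cases against the four cases in the statement does exactly this; the case boundaries line up because the exceptional set $n+2\N$ in Proposition \ref{prop:fractional-laplacian-formula}(1)--(2) corresponds, after $\alpha = 2s$, to the exceptional set in parts (1)--(2) here, and the negative regimes coincide identically.

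For parts (1) and (2) --- $2s>0$ with $2s\notin n+2\N$, and $2s = n+2m$ --- Proposition \ref{prop:fractional-laplacian-formula} already presents $\big((-\Delta)^{-s}\phi_2\big)(x)$ as an honest integral $\int_{\R^n}\phi_2(y)K(x,y)\,dy$, with $K(x,y) = \frac{1}{\gamma_n(2s)}|x-y|^{2s-n}$ in the first case and $K(x,y) = 2\frac{(-1)^{m+1}2^{-(n+2m)}\pi^{-n/2}}{m!\,\Gamma((n+2m)/2)}|x-y|^{2m}\log|x-y|$ in the second, so we simply set $G^s = K$.

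For parts (3) and (4) --- $2s\in(-2m,-2(m-1))$ and $2s=-2m$ --- the formula in Proposition \ref{prop:fractional-laplacian-formula} involves the terms $(\Delta^j\phi_2)(x)$ and $\big((-\Delta)^m\phi_2\big)(x)$. To present these as a pairing against a kernel one writes $(\Delta^j\phi_2)(x) = \int_{\R^n}(\Delta^j\delta_x)(y)\,\phi_2(y)\,dy$, i.e. the distributional pairing $(\Delta^j\delta_x,\phi_2)$; substituting into the part (3) formula groups the integrand as $\big(1 - \sum_{j=0}^{m-1} H_j(\Delta^j\delta_x)(y)|x-y|^{2j}\big)|x-y|^{2s-n}$ times $\phi_2(y)$, which is the claimed $G^s$, now understood for each fixed $x$ as an element of $\Phi'$ in the variable $y$. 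In part (4), $(-\Delta)^{-s} = (-\Delta)^m$ is a genuine differential operator and $\big((-\Delta)^m\phi_2\big)(x) = ((-\Delta)^m\delta_x,\phi_2)$ by the definition of distributional derivative, giving $G^s(x,y) = ((-\Delta)^m\delta_x)(y)$.

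The only point where a reader should pause --- and the main ``obstacle,'' though it is a conceptual rather than a computational one --- is that in parts (3) and (4) the identity $\mrm{Cov}((h,\phi_1),(h,\phi_2)) = \iint \phi_1(x)G^s(x,y)\phi_2(y)\,dx\,dy$ is a formal/distributional statement, not a double Lebesgue integral, so one must observe that all the pairings above are well defined for $\phi_1,\phi_2\in\Phi$. This is immediate: Proposition \ref{prop:fractional-laplacian-formula} (with Remark \ref{remark:loosen-phi-restriction}) guarantees the right-hand sides make sense, and Proposition \ref{prop:fractional-laplacian-isomorphism-on-lizorkin-space} guarantees $(-\Delta)^{-s}:\Phi\to\Phi$, so $(\phi_1,(-\Delta)^{-s}\phi_2)$ is an ordinary $L^2$ pairing of Lizorkin functions. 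Thus no analytic work beyond the bookkeeping above is needed, and the corollary follows.
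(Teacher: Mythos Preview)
Your proposal is correct and takes essentially the same approach as the paper: the paper offers no explicit proof beyond the sentence ``Given this proposition, we may directly obtain the formula for $G^s(x, y)$ in all the possible cases,'' and your argument is precisely the case-by-case reading of Proposition~\ref{prop:fractional-laplacian-formula} with $\alpha = 2s$ that this sentence invites. Your added remarks on interpreting the distributional kernels in cases (3) and (4) are a welcome elaboration but do not depart from the intended route.
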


\subsection{Covariance kernel of the $k$-form FGF}


In this section, we discuss covariance kernels of various cases of $\fgf_s^k(\R^n)$. First, we say that $K(x, y) = (K_{\mbf{i} \mbf{j}}(x, y))$, where $\mbf{i}, \mbf{j}$ range over increasing sequences of length $k$ with values in $[n]$, is the covariance kernel of a random distributional $k$-form $A$ if for all $\phi, \psi \in \Omega^k \Phi$, we have that
\begin{equs}
\Cov((A, \phi), (A, \psi)) = \sum_{1 \leq i_1 < \cdots < i_k \leq n} \sum_{1 \leq j_1 < \cdots < j_k \leq n} \int_{\R^n} \int_{\R^n} dx dy \phi_{\mbf{i}}(x) K_{\mbf{i} \mbf{j}}(x, y) \psi_{\mbf{j}}(y),
\end{equs}
where $\mbf{i} = (i_1, \ldots, i_k), \mbf{j} = (j_1, \ldots, j_k)$.

Later, we will see in the case of a projected white noise that this definition is slightly ill-defined, and as a result we will need to slightly relax the defining condition for $K$. 

\begin{example}
It follows directly from its definition that the covariance kernel of $\fgf^k_s(\R^n)$ is given by 
\begin{equs}
K_{\mbf{i}\mbf{j}}(x, y) = \delta_{\mbf{i} \mbf{j}} G^s(x, y), 
\end{equs}
where $\delta_{\mbf{i} \mbf{j}} = \prod_{\ell =1}^k \delta_{i_\ell j_\ell}$,
and where $G^s$ is as in Corollary \ref{cor:fgf-covariance-kernel}. 
\end{example}

More interesting examples of covariance kernels arise if we apply the projections $E$ or $E^*$ to a $\fgf_s^k(\R^n)$, i.e. if we consider covariance kernels of $\fgf_s^k(\R^n)_{d = 0}$ or $\fgf_s^k(\R^n)_{\codif = 0}$. In the following proposition, we derive the covariance kernel in the most interesting (at least, from the point of view of Yang--Mills) case $s = 1, k = 1$ (i.e. $1$-form GFF), as well as other cases $s > 0, k = 1$. In the following, let $\omega_n$ be the surface area of the unit sphere in $n$ dimensions $\{x \in \R^n : |x| = 1\}$, which is explicitly given by:
\begin{equs}\label{eq:omega-n}
\omega_n := \frac{2\pi^{n/2}}{\Gamma(n/2)}
\end{equs}

\begin{prop}\label{prop:1-form-gff-projection-correlation-functions}
The covariance kernel of $\fgf_1^1(\R^n)_{d = 0}$ is
\begin{equs}
K^{d = 0}_{ij}(x, y) := \frac{\delta_{ij}}{2} G^1(x, y) - \frac{1}{2\omega_n} \frac{(x_i - y_i)(x_j - y_j)}{|x-y|^n}.
\end{equs}
The covariance kernel of $\fgf_1^1(\R^n)_{\codif = 0}$ is
\begin{equs}
K^{\codif = 0}_{ij}(x, y) := \frac{\delta_{ij}}{2} G^1(x, y) + \frac{1}{2\omega_n} \frac{(x_i - y_i)(x_j - y_j)}{|x-y|^n}.
\end{equs}
Put more succinctly, in matrix notation, we have that
\begin{equs}
K^{d = 0}(x, y) &= \frac{1}{2} G^1(x, y) I - \frac{1}{2\omega_n} \frac{(x-y) (x-y)^T}{|x-y|^n}, \\
K^{\codif = 0}(x, y) &= \frac{1}{2} G^1(x, y) I + \frac{1}{2\omega_n} \frac{(x-y) (x-y)^T}{|x-y|^n}.
\end{equs}
For other values of $s > 0$, we have that the respective covariance kernels $K^{d=0}_s, K^{\codif = 0}_s$ of $\fgf_s^1(\R^n)_{d = 0}, \fgf_s^1(\R^n)_{\codif = 0}$ are given by:
\begin{enumerate}
    \item $0 < s < 1$
    \begin{equs}
    K^{d=0}_s(x, y) &= \frac{1}{2s} G^s(x, y) I - \frac{\Gamma((n-2(s-1))/2)}{2^{2s} \pi^{n/2} \Gamma(s+1)} |x-y|^{2(s-1)-n} (x-y)(x-y)^T , \\
    K^{\codif = 0}_s(x, y) &= \bigg(1 - \frac{1}{2s}\bigg) G^s(x, y) I + \frac{\Gamma((n-2(s-1))/2)}{2^{2s} \pi^{n/2} \Gamma(s+1)} |x-y|^{2(s-1)-n} (x-y)(x-y)^T .
    \end{equs}
    \item $s > 1$
    \begin{equs}
    K^{d=0}_s(x, y) &= \frac{1}{2s} G^s(x, y) I - \frac{\Gamma(s-1)}{4 \Gamma(s+1)}G^{s-1}(x, y) (x - y)(x-y)^T, \\
    K^{\codif = 0}_s(x, y) &= \bigg(1 - \frac{1}{2s}\bigg) G^s(x, y) I +  \frac{\Gamma(s-1)}{4 \Gamma(s+1)} G^{s-1}(x, y) (x - y)(x-y)^T.
\end{equs}
\end{enumerate}

\end{prop}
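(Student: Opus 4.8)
The plan is to avoid working with the Hodge projection $E$ directly and to reduce both kernels to a single scalar computation. By Lemma~\ref{lemma:projected-white-noise} applied with $k=0$ to an $\fgf_{s+1}^0(\R^n)$, together with Definition~\ref{def:projection-fgf}, an $\fgf_s^1(\R^n)_{d=0}$ has the same law as $dh$ where $h$ is an $\fgf_{s+1}^0(\R^n)$. Since $(dh)_i=\ptl_i h$, the covariance kernel of $dh$ is
\[ K^{d=0}_{s,ij}(x,y) = \ptl_{x_i}\ptl_{y_j}\,G^{s+1}(x-y) = -(\ptl_i\ptl_j G^{s+1})(x-y), \]
where $G^{s+1}$ is the scalar FGF kernel of Corollary~\ref{cor:fgf-covariance-kernel}. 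The co-exact kernel then needs no further work: since $E+E^*=I$ (Proposition~\ref{prop:hodge-decomp}) we have $K^{\codif=0}_s = \delta_{ij}G^s - K^{d=0}_s$, which produces both the sign flip on the rank-one term and the replacement $\tfrac1{2s}\mapsto 1-\tfrac1{2s}$ on the diagonal.

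It then remains to differentiate $G^{s+1}$ and simplify. In the generic regime $G^{s+1}(z)=\gamma_n(2s+2)^{-1}|z|^{2s+2-n}$, and a direct computation gives $-\ptl_i\ptl_j|z|^\beta=-\beta(\beta-2)|z|^{\beta-4}z_iz_j-\beta|z|^{\beta-2}\delta_{ij}$; with $\beta=2s+2-n$ one has $\beta-2=2s-n$ and $\beta-4=2(s-1)-n$, so
\[ K^{d=0}_{s,ij}(x,y)=-\frac{(2s+2-n)(2s-n)}{\gamma_n(2s+2)}\,|x-y|^{2(s-1)-n}(x_i-y_i)(x_j-y_j)\;-\;\frac{2s+2-n}{\gamma_n(2s+2)}\,|x-y|^{2s-n}\delta_{ij}. \]
The remaining algebra is Gamma-function bookkeeping via $\Gamma(z+1)=z\Gamma(z)$: the ratio identity $\gamma_n(2s+2)/\gamma_n(2s)=2s(n-2s-2)$ turns the $\delta_{ij}$ coefficient into $(2s\,\gamma_n(2s))^{-1}$, i.e.\ $\tfrac1{2s}G^s(x,y)$; and $(n-2s-2)\,\Gamma(\tfrac{n-2s-2}2)=2\,\Gamma(\tfrac{n-2s}2)$, applied together with one more instance of the recursion, rewrites the rank-one coefficient as $\Gamma(\tfrac{n-2(s-1)}2)/(2^{2s}\pi^{n/2}\Gamma(s+1))$, the form claimed for $0<s\le1$. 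When $s>1$ a further ratio identity $\gamma_n(2s+2)/\gamma_n(2s-2)=4s(s-1)(n-2s)(n-2s-2)$ lets one recognize this coefficient as $\tfrac{\Gamma(s-1)}{4\Gamma(s+1)}G^{s-1}(x,y)$. Specializing $s=1$ and substituting $\omega_n=2\pi^{n/2}/\Gamma(n/2)$ collapses the rank-one coefficient to $\tfrac1{2\omega_n}$, giving the two displayed formulas for $\fgf_1^1(\R^n)$.

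I expect the real difficulty to be the case analysis hidden inside ``$G^{s+1}$''. When $2s+2\in n+2\N$ --- and likewise when $2s$ or $2s-2$ falls in $n+2\N$ or in $-2\N$ --- the relevant Riesz kernels acquire logarithmic factors or become Dirac-type distributions, and differentiating $|z|^{2s+2-n}\log|z|$ yields, besides the expected homogeneous pieces, distributional terms supported on the diagonal $x=y$. One must check that in every boundary case the differentiated kernel still reorganizes into $\tfrac1{2s}G^s\,I$ plus the stated rank-one term (with $G^s,G^{s-1}$ read off from Corollary~\ref{cor:fgf-covariance-kernel}), and that no stray diagonal term survives; this is exactly why the statement uses $G^{s-1}$ only for $s>1$ and an explicit power-law kernel for $0<s\le1$. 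A milder secondary point is that for $s$ near $0$ the rank-one kernel $|x-y|^{2(s-1)-n}(x-y)(x-y)^T$ is not locally integrable, so the pairing defining the covariance kernel must be read in the regularized sense mentioned just before the proposition; for the range $s>0$ considered here this is handled as in the proof of Proposition~\ref{prop:fractional-laplacian-formula}.
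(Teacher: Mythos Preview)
Your approach is correct and essentially identical to the paper's. The paper reduces to Lemma~\ref{lemma:projection-laplace-inverse-formula}, whose proof (in the appendix) writes $E(-\Delta)^{-s}=dd^*(-\Delta)^{-(s+1)}$, so that $(E(-\Delta)^{-s}\phi)_i=-\ptl_i\ptl_j(-\Delta)^{-(s+1)}\phi_j$, and then computes $\ptl_i\ptl_j(-\Delta)^{-(1+s)}$ via Lemma~\ref{lemma:two-derivative-ineverse-laplacian-kernel} by differentiating the Riesz kernel $\gamma_n(2s+2)^{-1}|x-y|^{2s+2-n}$ under the integral sign against $\phi$ and performing exactly your Gamma-function bookkeeping; it obtains $E^*$ from $E^*=I-E$ just as you do. Your only genuine difference is the framing: you invoke Lemma~\ref{lemma:projected-white-noise} to identify $\fgf_s^1(\R^n)_{d=0}\stackrel{d}{=}d\bigl(\fgf_{s+1}^0(\R^n)\bigr)$ and read the kernel as $-\ptl_i\ptl_j G^{s+1}$ directly, bypassing the operator $E$; the paper arrives at the same second derivative via the operator identity. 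The case analysis you flag at the end (special values $2s+2\in n+2\N$, etc.) is precisely what the paper handles by working at the test-function level and using analytic continuation in $\alpha$ in Lemma~\ref{lemma:two-derivative-ineverse-laplacian-kernel}, rather than differentiating the kernel pointwise.
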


This proposition follows directly from the next lemma, whose proof is deferred to Appendix \ref{section:covariance-kernel-proof}.

\begin{lemma}\label{lemma:projection-laplace-inverse-formula}
Let $\phi \in \Omega^1 \Phi$. For $i \in [n]$, $x \in \R^n$, $s > 0$, we have that
\begin{enumerate}
    \item $0 < s < 1$
    \begin{equs}
    (E (-\Delta)^{-s} \phi)_i(x) &= \frac{\delta_{ij}}{2s} \int_{\R^n} dy G^{s}(x, y) \phi_j(y) \\
    &\quad - \frac{\Gamma((n-2(s-1))/2)}{2^{2s} \pi^{n/2} \Gamma(s+1)} \int_{\R^n} dy |x-y|^{2(s-1) -n} (x_i - y_i)(x_j - y_j) \phi_j(y), \\
    (E^* (-\Delta)^{-s} \phi)_i(x) &= \delta_{ij}\bigg(1 - \frac{1}{2s}\bigg) \int_{\R^n} dy G^{s}(x, y) \phi_j(y) \\
    &\quad +  \frac{\Gamma((n-2(s-1))/2)}{2^{2s} \pi^{n/2} \Gamma(s+1)} \int_{\R^n} dy |x-y|^{2(s-1) -n} (x_i - y_i)(x_j - y_j) \phi_j(y).
    \end{equs}
    \item $s = 1$
    \begin{equs}
    (E (-\Delta)^{-1} \phi)_i(x) &= \frac{1}{2} \int_{\R^n} dy \Big( \delta_{ij} G^1(x, y) - \frac{1}{\omega_n} \frac{(x_i - y_i) (x_j - y_j)}{|x-y|^n}\Big)\phi_j(y) , \\
    (E^* (-\Delta)^{-1} \phi)_i(x) &= \frac{1}{2} \int_{\R^n} dy  \Big(\delta_{ij} G^1(x, y) + \frac{1}{\omega_n} \frac{(x_i - y_i) (x_j - y_j)}{|x-y|^n} \Big) \phi_j(y).
\end{equs}
    \item $s > 1$ 
    \begin{equs}
    (E (-\Delta)^{-s} \phi)_i(x) &= \frac{\delta_{ij}}{2s} \int_{\R^n} dy G^{s}(x, y) \phi_j(y) ~\\
    &\quad - \frac{\Gamma(s-1)}{4 \Gamma(s+1)} \int_{\R^n} dy G^{s-1}(x, y) (x_i - y_i)(x_i - y_j) \phi_j(y), \\
    (E^* (-\Delta)^{-s} \phi)_i(x) &= \delta_{ij}\bigg(1 - \frac{1}{2s}\bigg) \int_{\R^n} dy G^{s}(x, y) \phi_j(y) \\
    &\quad +\frac{\Gamma(s-1)}{4 \Gamma(s+1)} \int_{\R^n} dy G^{s-1}(x, y) (x_i - y_i)(x_i - y_j) \phi_j(y).
    \end{equs}
\end{enumerate}
\end{lemma}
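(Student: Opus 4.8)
\emph{Strategy.} Everything reduces to the explicit convolution formula for powers of the Laplacian, Proposition~\ref{prop:fractional-laplacian-formula}. Recall from Definition~\ref{def:E-E-star-projections} that $E = d\codif(-\Delta)^{-1}$, so $E(-\Delta)^{-s} = d\codif(-\Delta)^{-(s+1)}$. For a $1$-form $\phi$ the codifferential $\codif\phi = -\partial_j\phi_j$ is a $0$-form (equation~\eqref{eq:codifferential-coordinatewise-expression}), and for a $0$-form $\psi$ one has $(d\psi)_i = \partial_i\psi$; since $\partial_j$ commutes with $(-\Delta)^{-(s+1)}$, this gives the componentwise identity
\begin{equs}
(E(-\Delta)^{-s}\phi)_i(x) = -\partial_{x_i}\Big((-\Delta)^{-(s+1)}(\partial_j\phi_j)\Big)(x).
\end{equs}
Because $\phi\in\Omega^1\Phi$, the scalar $\partial_j\phi_j$ lies in $\Phi$, so Proposition~\ref{prop:fractional-laplacian-formula} with exponent $\alpha=2(s+1)$ applies: generically $(-\Delta)^{-(s+1)}\psi = \gamma_n(2(s+1))^{-1}\int_{\R^n}|x-y|^{2(s+1)-n}\psi(y)\,dy$, and the borderline exponents $2(s+1)\in n+2\N$ give instead a $|x-y|^{2m}\log|x-y|$ kernel, to be handled the same way.

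\emph{The computation.} I would then perform two elementary manipulations. First, integrate by parts in $y$ to move $\partial_j$ from $\phi_j$ onto the kernel, using $\partial_{y_j}|x-y|^\beta = -\beta|x-y|^{\beta-2}(x_j-y_j)$; the factor $(x_j-y_j)$ it produces vanishes at the diagonal and $\phi$ decays at infinity, which kills the boundary term and keeps the integrand locally integrable. Second, differentiate in $x_i$, using $\partial_{x_i}\big[|x-y|^{2s-n}(x_j-y_j)\big] = \delta_{ij}|x-y|^{2s-n} + (2s-n)|x-y|^{2s-2-n}(x_i-y_i)(x_j-y_j)$. The outcome is
\begin{equs}
(E(-\Delta)^{-s}\phi)_i(x) = -\frac{2s+2-n}{\gamma_n(2s+2)}\int_{\R^n}\Big(\delta_{ij}|x-y|^{2s-n} + (2s-n)|x-y|^{2s-2-n}(x_i-y_i)(x_j-y_j)\Big)\phi_j(y)\,dy.
\end{equs}
In the logarithmic sub-cases the two differentiations annihilate the $\log$ factor and reproduce the same two power-law kernels, with a different explicit constant in front.

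\emph{Identifying the constants.} Using $\gamma_n(\alpha)=2^\alpha\pi^{n/2}\Gamma(\alpha/2)/\Gamma((n-\alpha)/2)$ and the functional equation $\Gamma(z+1)=z\Gamma(z)$ repeatedly, one checks that $-\tfrac{2s+2-n}{\gamma_n(2s+2)} = \tfrac{1}{2s\,\gamma_n(2s)}$, so the $\delta_{ij}$ part equals $\tfrac{1}{2s}G^s(x,y)$; and that $-\tfrac{(2s+2-n)(2s-n)}{\gamma_n(2s+2)} = -\tfrac{\Gamma((n-2(s-1))/2)}{2^{2s}\pi^{n/2}\,\Gamma(s+1)}$, which for $s>1$ rewrites as $-\tfrac{\Gamma(s-1)}{4\Gamma(s+1)}\,\gamma_n(2(s-1))^{-1}$ (the coefficient of $G^{s-1}$ claimed there), and for $s=1$ evaluates to $-\tfrac{1}{2\omega_n}$ with $\omega_n=2\pi^{n/2}/\Gamma(n/2)$. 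Thus the split into $0<s<1$, $s=1$, $s>1$ is purely a matter of how the rank-one kernel is written: $G^{s-1}$ is a genuine function only for $s>1$, for $0<s<1$ the kernel $|x-y|^{2(s-1)-n}$ is kept in raw form (it is integrable only against the doubly-vanishing weight $(x_i-y_i)(x_j-y_j)$), and $s=1$ is the pole of $\Gamma(s-1)$ where the constant $\tfrac{1}{2\omega_n}$ appears. Finally, the $E^*$ formulas follow at once from $E^*=I-E$ together with $\big((-\Delta)^{-s}\phi\big)_i(x)=\int_{\R^n}G^s(x,y)\phi_i(y)\,dy$ (the Hodge Laplacian acts coordinatewise, so this is just the scalar case): subtracting changes the diagonal coefficient from $\tfrac{1}{2s}$ to $1-\tfrac{1}{2s}$ and reverses the sign of the rank-one term.

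\emph{Main obstacle.} There is little conceptual difficulty; the work lies in (i) the constant-chasing through the several $\Gamma$-identities across the three regimes and the separate logarithmic exponents, and (ii) rigorously justifying the integration by parts and the differentiation under the singular integral sign --- near the diagonal one leans on the second-order vanishing of $(x_i-y_i)(x_j-y_j)$, at infinity on the Schwartz decay of $\phi$, and at the exponents $2(s+1)\in n+2\N$ one must pass momentarily through the $\log$-kernel of Proposition~\ref{prop:fractional-laplacian-formula}(2) and check that the logarithm cancels after two derivatives.
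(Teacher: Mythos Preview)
Your approach is essentially the paper's: both reduce to the identity $(E(-\Delta)^{-s}\phi)_i = -\partial_i\partial_j(-\Delta)^{-(s+1)}\phi_j$, apply the convolution formula of Proposition~\ref{prop:fractional-laplacian-formula}, differentiate twice, and identify constants via $\Gamma$-function identities. The paper simply packages the two-derivative computation into a separate scalar lemma (Lemma~\ref{lemma:two-derivative-ineverse-laplacian-kernel}) and then invokes it, and handles the special exponents $2(s+1)\in n+2\N$ by analytic continuation (perturb to $\alpha+\delta$ and let $\delta\downarrow 0$) rather than direct differentiation of the log kernel.

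One imprecision worth flagging: your claim that ``the two differentiations annihilate the $\log$ factor'' at the borderline exponents is not correct. Differentiating $|x-y|^{2m}\log|x-y|$ produces both a power term and a term still carrying $\log|x-y|$; the logarithm does not disappear. What actually happens is that the surviving log-terms are precisely what the kernels $G^s$ and $G^{s-1}$ become at those exponents (recall from Corollary~\ref{cor:fgf-covariance-kernel} that $G^{s}$ itself is a $\log$-kernel when $2s\in n+2\N$). The paper's limiting argument makes this transparent by exploiting $\phi\in\Phi$ to subtract polynomials before sending $\delta\to 0$; if you differentiate directly you must track the log through both steps and then recognise the result as the log-case of $G^s$, $G^{s-1}$.
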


Next, we discuss the covariance kernel of projected white noise, which is the other interesting case from the point of view of Yang--Mills, since $d \fgf_1^1(\R^n) \stackrel{d}{=} \fgf_0^1(\R^n)_{d = 0}$
(recall Lemma \ref{lemma:projected-white-noise}). This can be thought of as the $s = 0$ analog of Proposition \ref{prop:1-form-gff-projection-correlation-functions}. Recall the projection $E 
= dd^* (-\Delta)^{-1}$. In coordinates, given a $2$-form $\phi \in \Omega^2 \Phi$, we have that 
\begin{equs}
(E \phi)_{ij} &= \ptl_i (d^* (-\Delta)^{-1} \phi)_j - \ptl_j (d^* (-\Delta)^{-1} \phi)_i \\
&= - \ptl_i \ptl_k (-\Delta)^{-1} \phi_{k j} + \ptl_j \ptl_k (-\Delta)^{-1} \phi_{ki}.
\end{equs}
Define the Riesz transforms by $\mc{R}_i$, $i \in [n]$ by $\mc{R}_i := -\ptl_j (-\Delta)^{-1/2}$. Note that $\mc{R}_i : \Phi \ra \Phi$ maps the Lizorkin space to itself. Note also that $\mc{R}_j \phi = \mc{F}^{-1}(\icomplex p_j |p|^{-1} \mc{F} \phi)$. Define the iterated Riesz transform $\mc{R}_{ij} := \mc{R}_i \mc{R}_j$. We have that
\begin{equs}
(E\phi)_{ij} = -\mc{R}_{ik} \phi_{kj} + \mc{R}_{jk} \phi_{ki}.
\end{equs}
Thus if $\noise$ is a $\fgf_0^2(\R^n)$, then the projection $E \noise$ satisfies (the $\frac{1}{2}$ factor arises from a summation over all $i, j$, and not just $i < j$)
\begin{equation}\label{eq:projected-white-noise-covariance}
\begin{aligned}
\Cov((E\noise, \phi), (E\noise, \psi)) &= (\phi, E\psi) \\
&= \frac{1}{2} (\phi_{ij}, (E\psi)_{ij}) \\
&= \frac{1}{2} \big(\phi_{ij}, -\mc{R}_{ik} \psi_{kj} + \mc{R}_{jk} \psi_{ki}\big).
\end{aligned} 
\end{equation}
Thus to obtain the covariance kernel for $E \noise$, we want to find a kernel $G_{ik}$ such that
\begin{equs}
(\phi_{ij}, \mc{R}_{ik} \psi_{kj}) = \int dx \int dy \phi_{ij}(x) G_{ik}(x, y) \psi_{kj}(y) .
\end{equs}
This is a classical topic in analysis (in particular the theory of singular integral operators), and it is well known that unfortunately such a kernel $G_{ik}$ does not exist. See \cite{Stein1970} for more on the theory of singular integral operators. Instead, what is true is that there exists a kernel $G_{ik}$ such that
\begin{equs}
(\phi_{ij}, \mc{R}_{ik} \psi_{kj}) = \lim_{\varep \downarrow 0} \int \int_{|x-y| > \varep} \phi_{ij}(x) G_{ik}(x, y) \psi_{kj}(y) .
\end{equs}
This is the content of the next lemma. 

\begin{lemma}\label{lemma:iterated-riesz-kernel}
For $\phi_1, \phi_2 \in \Phi$, $i, j \in [n]$, we have that
\[ \begin{split}
(\phi_1, \mc{R}_{ij} \phi_2) = ~~~~&\frac{n}{\omega_n} \lim_{\varep \downarrow 0} \int \int_{|x-y| > \varep} \phi_1(x) \phi_2(y) \frac{(x_i - y_i)(x_j - y_j)}{|x-y|^{n+2}} dx dy \\
&- \frac{\delta_{ij}}{\omega_n}\lim_{\varep \downarrow 0} \int \int_{|x-y| > \varep} \phi_1(x) \phi_2(y) |x-y|^{-n} dx dy  \\
&- \frac{\delta_{ij}}{n} (\phi_1, \phi_2).
\end{split}\]
\end{lemma}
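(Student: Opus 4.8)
The plan is to pass to Fourier space, peel off a purely local term, and then recognize the remaining Fourier multiplier as the Fourier transform of an explicit principal-value kernel coming from the second derivatives of the Newtonian potential. First I would record the Fourier side: since $\mc{R}_j \phi = \mc{F}^{-1}(\icomplex p_j |\cdot|^{-1} \mc{F}\phi)$ we have $\mc{R}_{ij}\phi = \mc{R}_i \mc{R}_j \phi = \mc{F}^{-1}(-p_i p_j |\cdot|^{-2} \mc{F}\phi)$, and since each $\mc{R}_i$ preserves $\Phi$ so does $\mc{R}_{ij}$, so $(\phi_1,\mc{R}_{ij}\phi_2)$ makes sense for $\phi_1,\phi_2\in\Phi$. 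By the Plancherel identity \eqref{eq:plancherel} and the reality of $p_ip_j/|p|^2$, $(\phi_1,\mc{R}_{ij}\phi_2) = -\int_{\R^n}\tfrac{p_ip_j}{|p|^2}\widehat{\phi_1}(p)\ovl{\widehat{\phi_2}(p)}\,dp$. Writing $-\tfrac{p_ip_j}{|p|^2} = \bigl(\tfrac{\delta_{ij}}{n}-\tfrac{p_ip_j}{|p|^2}\bigr) - \tfrac{\delta_{ij}}{n}$ and applying Plancherel once more to the $-\tfrac{\delta_{ij}}{n}$ piece produces exactly the term $-\tfrac{\delta_{ij}}{n}(\phi_1,\phi_2)$ of the statement, so it remains to show that $\int_{\R^n}\bigl(\tfrac{\delta_{ij}}{n}-\tfrac{p_ip_j}{|p|^2}\bigr)\widehat{\phi_1}(p)\ovl{\widehat{\phi_2}(p)}\,dp$ equals the (combined) principal-value double integral.

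Next I would identify the kernel. Let $\mc{N}(x) = \tfrac{1}{(n-2)\omega_n}|x|^{2-n}$ be the Newtonian potential on $\R^n$ ($n\geq 3$), i.e.\ the fundamental solution $-\Delta\mc{N}=\delta_0$ (equivalently $\mc{N}=\gamma_n(2)^{-1}|x|^{2-n}$, cf.\ Proposition~\ref{prop:fractional-laplacian-formula}), so that $\widehat{\mc{N}}(p) = (2\pi)^{-n/2}|p|^{-2}$. A direct differentiation shows that away from $0$ one has $\ptl_i\ptl_j\mc{N}(x) = \tfrac{1}{\omega_n}\bigl(\tfrac{n x_i x_j}{|x|^{n+2}} - \tfrac{\delta_{ij}}{|x|^n}\bigr)$; the bracketed expression has vanishing average over every sphere centered at $0$, so its principal value defines a tempered distribution $T_{ij} := \tfrac{1}{\omega_n}\,\mrm{p.v.}\bigl(\tfrac{n x_i x_j}{|x|^{n+2}} - \tfrac{\delta_{ij}}{|x|^n}\bigr)$, and necessarily $\ptl_i\ptl_j\mc{N} = T_{ij} + a\,\delta_{ij}\delta_0$ in $\mc{S}'$ for some scalar $a$. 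Summing over $i=j$, using $\Delta\mc{N} = -\delta_0$ and the pointwise vanishing of the trace of the bracketed kernel, forces $a = -\tfrac{1}{n}$. Taking Fourier transforms of $T_{ij} = \ptl_i\ptl_j\mc{N} + \tfrac{\delta_{ij}}{n}\delta_0$ then yields $\widehat{T_{ij}}(p) = -p_ip_j\widehat{\mc{N}}(p) + \tfrac{\delta_{ij}}{n}(2\pi)^{-n/2} = (2\pi)^{-n/2}\bigl(\tfrac{\delta_{ij}}{n} - \tfrac{p_ip_j}{|p|^2}\bigr)$.

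Finally I would assemble the pieces. By the convolution theorem (in the paper's normalization $\widehat{f*g} = (2\pi)^{n/2}\widehat{f}\,\widehat{g}$) and Plancherel, $\int_{\R^n}\bigl(\tfrac{\delta_{ij}}{n}-\tfrac{p_ip_j}{|p|^2}\bigr)\widehat{\phi_1}\ovl{\widehat{\phi_2}} = (2\pi)^{n/2}\int\widehat{T_{ij}}\widehat{\phi_1}\ovl{\widehat{\phi_2}} = (\phi_1, T_{ij}*\phi_2)$; unwinding the definition of the principal value of $T_{ij}$ (with the uniform-in-$\varep$ estimate that lets one pull the $\varep\downarrow 0$ limit outside the $x$-integral) this equals $\lim_{\varep\downarrow 0}\iint_{|x-y|>\varep}\phi_1(x)\phi_2(y)\bigl[\tfrac{n}{\omega_n}\tfrac{(x_i-y_i)(x_j-y_j)}{|x-y|^{n+2}} - \tfrac{\delta_{ij}}{\omega_n}|x-y|^{-n}\bigr]\,dx\,dy$. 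In the statement this single convergent limit is split, purely for readability, into a difference of two limits; only the combination converges for general $\phi_1,\phi_2\in\Phi$. Combining this with the first step proves the identity for $n\geq 3$; the case $n=2$ is identical with $\mc{N}(x) = -\tfrac{1}{2\pi}\log|x|$ (Proposition~\ref{prop:fractional-laplacian-formula} again), and $n=1$ is trivial since there $\mc{R}_{11} = -I$ and the bracketed kernel vanishes identically.

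I expect the main obstacle to be the second step: rigorously justifying that $\ptl_i\ptl_j\mc{N}$ equals the principal-value kernel plus a Dirac correction, and pinning that correction to exactly $-\tfrac{\delta_{ij}}{n}\delta_0$ — the standard but delicate distributional computation at the heart of Calder\'on--Zygmund theory — together with careful bookkeeping of the $(2\pi)^{\pm n/2}$ factors coming from the paper's conventions for $\mc{F}$ and for convolution.
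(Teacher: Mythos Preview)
Your proposal is correct and takes a genuinely different route from the paper's proof. The paper regularizes the operator itself: it writes $(\mc{R}_{ij}\phi)(x) = \lim_{\delta\downarrow 0}(\ptl_i\ptl_j(-\Delta)^{-(1+\delta/2)}\phi)(x)$, differentiates the Riesz-potential kernel $|x-y|^{2+\delta-n}/\gamma_n(2+\delta)$ explicitly, splits the resulting integral into $|x-y|<\varep$ and $|x-y|>\varep$, and then computes the exact value of the spherical integral $\int_{|y-x|<\varep}\frac{(x_i-y_i)(x_j-y_j)}{|x-y|^{n+2-\delta}}\,dy$ to extract the $-\frac{\delta_{ij}}{n}\phi(x)$ term before sending $\delta\downarrow 0$. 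This is a direct analytic-continuation computation in the spirit of the proof of Proposition~\ref{prop:fractional-laplacian-formula}.

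You instead work entirely on the Fourier side, peel off $-\frac{\delta_{ij}}{n}$ from the multiplier $-p_ip_j/|p|^2$, and then identify the traceless remainder $\frac{\delta_{ij}}{n}-\frac{p_ip_j}{|p|^2}$ as $(2\pi)^{n/2}\widehat{T_{ij}}$ for the principal-value distribution $T_{ij}$ coming from $\ptl_i\ptl_j\mc{N}$; the $-\frac{1}{n}$ emerges from the trace argument $\Delta\mc{N}=-\delta_0$ rather than from an explicit integral. Your approach is more structural and connects the result directly to the classical distributional identity for second derivatives of the Newtonian potential, while the paper's approach is more hands-on and stays within the $\alpha$-analytic-continuation framework already built for the covariance kernels. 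Both routes must establish the same uniform-in-$\varep$ bound to interchange the limit with the $x$-integral; the paper does this explicitly via $|I_{<\varep}(0)|\lesssim \varep\|\phi\|_{C^1}$, which is the content of the step you flag as needing justification. Your remark that only the \emph{combined} principal-value limit converges, and that the statement's split into two limits is cosmetic, is correct and worth noting.
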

\begin{remark}
As a sanity check, note that the formula gives
\begin{equs}
(\phi_1, \mc{R}_{ij} \phi_2) \delta^{ij} = - (\phi_1, \phi_2),
\end{equs}
which is consistent with the fact that $\mc{R}_{ij} \delta^{ij} = \ptl_i \ptl_i (-\Delta)^{-1} = \Delta (-\Delta)^{-1} = -I$.
\end{remark}

The proof of Lemma \ref{lemma:iterated-riesz-kernel} is deferred to Appendix \ref{section:covariance-kernel-proof}. Using this lemma and \eqref{eq:projected-white-noise-covariance}, we immediately obtain the ``covariance kernel" for $E\noise$.

\begin{prop}\label{prop:projected-white-noise-correlation-functions}
Let $\noise$ be a $\Omega_0^2(\R^n)$. For $\phi, \psi \in \Omega^2 \Phi$, we have that
\begin{equs}
\Cov((E\noise, \phi), (E\noise, \psi)) = &~\frac{1}{n}(\phi_{ij}, \psi_{ij}) + \lim_{\varep \downarrow 0} \frac{1}{\omega_n} \int \int_{|x-y| > \varep} dx dy \phi_{ij}(x) \psi_{ij}(y) |x-y|^{-n} \\
&- \frac{n}{2\omega_n} \lim_{\varep \downarrow 0} \int \int_{|x-y| > \varep} dx dy \phi_{ij}(x) \frac{x_k - y_k}{|x-y|^{n+2}} \Big( \psi_{kj}(y) (x_i - y_i) - \psi_{ki}(y) (x_j - y_j) \Big).
\end{equs}
\end{prop}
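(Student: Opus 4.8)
The plan is to substitute the singular-integral representation of the iterated Riesz transform from Lemma~\ref{lemma:iterated-riesz-kernel} into the identity \eqref{eq:projected-white-noise-covariance}, and then carry out the index bookkeeping, using throughout the implicit summation over $i,j,k$ and the antisymmetry of $\phi$ and $\psi$ as $2$-forms. First I would simplify \eqref{eq:projected-white-noise-covariance}, which reads
\[ \Cov((E\noise,\phi),(E\noise,\psi)) = \tfrac12\big(\phi_{ij},-\mc{R}_{ik}\psi_{kj}+\mc{R}_{jk}\psi_{ki}\big). \]
In the second summand, relabeling $i\leftrightarrow j$ and using $\phi_{ji}=-\phi_{ij}$ shows $\big(\phi_{ij},\mc{R}_{jk}\psi_{ki}\big)=-\big(\phi_{ij},\mc{R}_{ik}\psi_{kj}\big)$, so the whole expression collapses to $-\big(\phi_{ij},\mc{R}_{ik}\psi_{kj}\big)$.

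Next, since each component $\phi_{ij},\psi_{kj}$ is a scalar Lizorkin function, Lemma~\ref{lemma:iterated-riesz-kernel} applies to the pair $(\phi_{ij},\psi_{kj})$ (for each fixed $j$) with Riesz indices $(i,k)$, and expands $-\big(\phi_{ij},\mc{R}_{ik}\psi_{kj}\big)$ into three pieces. The piece coming from the term $-\tfrac{\delta_{ij}}{n}(\phi_1,\phi_2)$ of the lemma becomes $+\tfrac1n\,\delta_{ik}(\phi_{ij},\psi_{kj})=\tfrac1n(\phi_{ij},\psi_{ij})$, which is the first term of the claim. The piece coming from the $|x-y|^{-n}$ kernel becomes $+\tfrac1{\omega_n}\,\delta_{ik}\lim_{\varep\downarrow0}\int\!\int_{|x-y|>\varep}\phi_{ij}(x)\psi_{kj}(y)|x-y|^{-n}\,dx\,dy$, which after contracting $\delta_{ik}$ is the second term of the claim. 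The piece coming from the $(x_i-y_i)(x_j-y_j)/|x-y|^{n+2}$ kernel becomes
\[ -\frac{n}{\omega_n}\lim_{\varep\downarrow0}\int\!\int_{|x-y|>\varep}\phi_{ij}(x)\,\psi_{kj}(y)\,\frac{(x_i-y_i)(x_k-y_k)}{|x-y|^{n+2}}\,dx\,dy. \]

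Finally I would check that this last piece equals the third term of the statement: splitting the bracket in that term and relabeling $i\leftrightarrow j$ in the summand containing $\psi_{ki}$ (again using $\phi_{ij}=-\phi_{ji}$) shows that $-\tfrac{n}{2\omega_n}\lim_{\varep\downarrow0}\int\!\int_{|x-y|>\varep}\phi_{ij}(x)\,\frac{x_k-y_k}{|x-y|^{n+2}}\big(\psi_{kj}(y)(x_i-y_i)-\psi_{ki}(y)(x_j-y_j)\big)\,dx\,dy$ coincides with the displayed piece above, completing the identification. I do not expect a genuine obstacle here: the argument is pure bookkeeping with the summation convention and the antisymmetry of $2$-forms, while all of the analytic content---the existence of the principal-value kernel, the precise constants $n/\omega_n$ and $1/\omega_n$, and the non-kernel local term $1/n$---is packaged into Lemma~\ref{lemma:iterated-riesz-kernel}, whose proof is deferred to the appendix. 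The only point needing care is that the relabelings and the $\varep\downarrow0$ limits are carried through consistently.
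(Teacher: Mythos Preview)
Your proposal is correct and follows essentially the same route as the paper: both start from \eqref{eq:projected-white-noise-covariance} and feed the two Riesz-transform terms into Lemma~\ref{lemma:iterated-riesz-kernel}, then do the index bookkeeping with antisymmetry. The only cosmetic difference is that you collapse $\tfrac12(\phi_{ij},-\mc{R}_{ik}\psi_{kj}+\mc{R}_{jk}\psi_{ki})$ to $-(\phi_{ij},\mc{R}_{ik}\psi_{kj})$ \emph{before} invoking the lemma, whereas the paper applies the lemma to each summand separately and combines afterward using $\psi_{ki}=-\psi_{ik}$; the computations are otherwise identical.
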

\begin{proof}
For $i, j \in [n]$, we have by Lemma \ref{lemma:iterated-riesz-kernel} that
\begin{equs}
(\phi_{ij}, -\mc{R}_{ik} \psi_{kj} &+ \mc{R}_{jk} \psi_{ki}) = \frac{\delta_{ik}}{n} (\phi_{ij}, \psi_{kj}) - \frac{\delta_{jk}}{n} (\phi_{ij}, \psi_{ki}) \\
&+ \frac{1}{\omega_n} \lim_{\varep \downarrow 0} \int \int_{|x-y| > \varep} dx dy \phi_{ij}(x) (\delta_{ik} \psi_{kj}(y) - \delta_{jk} \psi_{ki}(y)) |x-y|^{-n}\\
&- \frac{n}{\omega_n} \lim_{\varep \downarrow 0} \int \int_{|x-y| > \varep} dx dy \phi_{ij}(x) \frac{(x_k - y_k)}{|x-y|^{n+2}} \Big(\psi_{kj}(y) (x_i - y_i) - \psi_{ki}(y)(x_j - y_j)\Big).
\end{equs}
Upon performing the summation in $k$ and using that $\psi_{ki} = -\psi_{ik}$, we further obtain 
\begin{equs}
(\phi_{ij}, -\mc{R}_{ik} \psi_{kj} &+ \mc{R}_{jk} \psi_{ki}) = \frac{2}{n} (\phi_{ij}, \psi_{ij}) + \lim_{\varep \downarrow 0} \frac{2}{\omega_n} \int \int_{|x-y| > \varep} dx dy \phi_{ij}(x) \psi_{ij}(y) |x-y|^{-n} \\
&- \frac{n}{\omega_n} \lim_{\varep \downarrow 0} \int \int_{|x-y| > \varep} dx dy \phi_{ij}(x) \frac{(x_k - y_k)}{|x-y|^{n+2}} \Big(\psi_{kj}(y) (x_i - y_i) - \psi_{ki}(y)(x_j - y_j)\Big).
\end{equs}
The desired result now follows by combining this with \eqref{eq:projected-white-noise-covariance}.
\end{proof}



\section{Gaussian 1-forms and Yang-Mills} \label{sec:yangmills}


Various versions of fractional Gaussian forms are related to Yang--Mills. Indeed, this was one of the main motivations for writing this survey. In this section, we discuss Yang--Mills and its relation to fractional Gaussian forms in various cases. 

\subsection{The \texorpdfstring{$\unitary(1)$}{U(1)} theory}\label{section:U(1)-theory}

The $\unitary(1)$ pure Yang--Mills measure on $\R^n$ is the formal probability measure on the space of $1$-forms with density given by
\begin{equs}\label{eq:u(1)-measure}
Z^{-1} \exp\Big(- \frac{1}{2}\|dA\|_2^2\Big) dA, 
\end{equs}
where $dA$ is formal Lebesgue measure on the space of $1$-forms. To be clear, here $\|dA\|_2^2 = (dA, dA)$. The observables of interest are the ``gauge-invariant" observables, which in this setting are functions of $A$ only through $dA$, e.g. $(dA, \phi)$ for a test $2$-form $\phi$. Thus, one may define the $\unitary(1)$ pure Yang--Mills measure as the law of a random distributional two form $F$ such that $((F, \phi), \phi \in \Omega^2 \mc{S})$ is a Gaussian process with
\begin{equs}\label{eq:law-of-curvature}
(F, \phi) \sim \N\big(0, (\codif \phi, (-\Delta)^{-1} \codif \phi)\big).
\end{equs}
This is how the measure is defined in \cite{Gross1983, Driver1987} (these papers refer to the field $F$ as the ``free electromagnetic field"). We note that 
\begin{equs}
(\codif \phi, (-\Delta)^{-1} \codif \phi) = (\phi, d(-\Delta)^{-1} d^* \phi) = (\phi, E\phi).
\end{equs}
This immediately leads to the following characterization of the $\unitary(1)$ pure Yang--Mills measure as 2-form white noise, projected onto the space of closed 2-forms.

\begin{lemma}
The law of $\fgf_0^2(\R^n)_{d = 0}$ is the $\unitary(1)$ pure Yang--Mills measure.
That is, instances $F$ of $\fgf_0^2(\R^n)_{d = 0}$ satisfy \eqref{eq:law-of-curvature}. 
\end{lemma}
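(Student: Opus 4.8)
The plan is to unwind the two definitions and check they produce the same covariance structure. An instance $F$ of $\fgf_0^2(\R^n)_{d=0}$ is, by Definition~\ref{def:projection-fgf}, equal in law to $EA$ where $A$ is an $\fgf_0^2(\R^n)$, i.e.\ a $2$-form white noise $\noise$ (since $s=0$). So I must show that $E\noise$ satisfies \eqref{eq:law-of-curvature}, namely that for every test $2$-form $\phi$,
\[
(E\noise,\phi)\sim\N\big(0,(\codif\phi,(-\Delta)^{-1}\codif\phi)\big).
\]

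First I would record the identity, already noted in the excerpt just before the lemma, that $(\codif\phi,(-\Delta)^{-1}\codif\phi)=(\phi,E\phi)$; this is immediate from $E=d\codif(-\Delta)^{-1}$ together with the adjointness $(\codif u,v)=(u,dv)$ from Section~\ref{section:exterior-calculus} and the self-adjointness of $(-\Delta)^{-1}$. Next, using Remark~\ref{remark:projection-fgf-gaussian-hilbert-space} (the $s=0$, $k=2$ case), the projected field satisfies $(E\noise,\phi)=(\noise,E\phi)$, which is centered Gaussian with variance $(E\phi,E\phi)=(\phi,E^2\phi)=(\phi,E\phi)$, where I use that $E$ is a self-adjoint projection ($E^*=E$, $E^2=E$) from Proposition~\ref{prop:hodge-decomp}. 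Combining the two displays gives exactly \eqref{eq:law-of-curvature}. One should also note, to match the phrasing in \cite{Gross1983,Driver1987}, that this is the same as the law obtained by first taking $A$ an $\fgf_1^1(\R^n)$ (a $1$-form GFF) and setting $F=dA$: indeed Lemma~\ref{lemma:projected-white-noise} gives $dA\stackrel{d}{=}\fgf_0^1$... no — $dA$ is an $(\fgf_{-1}^{2})$... rather, the cleaner route is simply the displayed computation above, and one can remark separately that $(dA,\phi)=(A,\codif\phi)$ has variance $(\codif\phi,(-\Delta)^{-1}\codif\phi)$ directly from the $\fgf_1^1$ covariance \eqref{eq:fgf-covariance}, which is the form in which the physics literature states it.

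I do not expect any genuine obstacle: every ingredient — the identity $(\codif\phi,(-\Delta)^{-1}\codif\phi)=(\phi,E\phi)$, the formula $(E\noise,\phi)=(\noise,E\phi)$, and the projection properties $E^*=E=E^2$ — is already established in the excerpt. The only point requiring a sentence of care is the domain of test functions: the Yang--Mills measure in \eqref{eq:law-of-curvature} is indexed by $\phi\in\Omega^2\mc{S}$, whereas $\fgf_0^2(\R^n)$ is defined on $\Omega^2\Phi$; so strictly one proves the covariance agreement on the dense subspace $\Omega^2\Phi\subseteq\Omega^2\dot H^0$ and then invokes the standard Gaussian Hilbert space extension (Remark~\ref{remark:gaussian-hilbert-space}) together with the density of $\Phi$ in $L^2$ (Lemma~\ref{lemma:lizorkin-space-dense-fractional-sobolev-space}) to identify the two laws. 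Everything else is a two-line calculation.
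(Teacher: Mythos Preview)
Your proposal is correct and matches the paper's approach exactly: the paper proves this lemma by the one-line identity $(\codif\phi,(-\Delta)^{-1}\codif\phi)=(\phi,d(-\Delta)^{-1}\codif\phi)=(\phi,E\phi)$ stated just before the lemma, combined with the covariance formula \eqref{eq:covariance-projected-fgf} for $E\noise$; your extra care about the test-function domain ($\Phi$ versus $\mc{S}$) is precisely the content of the paper's Remark~\ref{remark:random-distribution-vs-stochastic-process} immediately following the lemma.
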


\begin{remark}\label{remark:random-distribution-vs-stochastic-process}
To be clear, recall that from our definition of fractional Gaussian forms (Definition \ref{def:fgf}), instances $F$ of $\fgf_0^2(\R^n)_{d = 0}$ are random variables taking values in $\Omega^2 \Phi'$, so that $F$ induces a stochastic process $((F, \phi), \phi \in \Omega^2 \Phi)$. Then by Hilbert space isometry (Remark \ref{remark:gaussian-hilbert-space}), we extend the index set of this stochastic process to $((F, \phi), \phi \in \Omega^2 \dot{H}^{-1}(\R^n))$. When $n \geq 3$, we have that $\schwartz \sse \dot{H}^{-1}(\R^n)$ and thus also $\Omega^2 \schwartz \sse \Omega^2 \dot{H}^{-1}(\R^n)$. Therefore $F$ can indeed be viewed as a stochastic process indexed by Schwartz $2$-forms, which is how e.g. \cite{Gross1983} works with the $\unitary(1)$ Yang--Mills measure. This is slightly weaker than defining $F$ as a random tempered distribution, but for most practical purposes this difference does not matter.

When $n = 2$, the $\unitary(1)$ Yang--Mills measure is simply $2$-form white noise, since the projection $E$ acts as the identity in this dimension. Put another way, we have that $\fgf_0^2(\R^2)_{d = 0} \stackrel{d}{=} \fgf_0^2(\R^2)$.
\end{remark}

If instead one wants to construct the measure \eqref{eq:u(1)-measure} on a space of distributional 1-forms, then an immediate obstacle is that the action $\|dA\|_2^2$ is zero on the subspace of closed $1$-forms $\{A : dA = 0\}$. We may circumvent this in one of two ways: either ``gauge-fix" and restrict to the space of divergence-free $1$-forms $\{A : \codif A  = 0\}$, or modify the action from $\|dA\|_2^2$ to $\|dA\|_2^2 + \|\codif A \|_2^2 = (A, (-\Delta) A)$. The latter approach gives a $\fgf_1^1(\R^n)$, while the former approach gives a $\fgf_1^1(\R^n)_{\codif = 0}$. Both approaches are equivalent in that their exterior derivatives have the same law, which is characterized by \eqref{eq:law-of-curvature}. We state this as the following lemma. The proof is omitted as it follows directly from the results of Section \ref{sec:fgfconstruction}.

\begin{lemma}
We have that $d \fgf_1^1(\R^n) \stackrel{d}{=} d \fgf_1^1(\R^n)_{\codif = 0} \stackrel{d}{=} \fgf_0^1(\R^n)_{d = 0}$.
\end{lemma}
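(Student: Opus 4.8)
Before planning the proof, I note that the final term as printed is dimensionally inconsistent with the left-hand sides: the exterior derivative $d$ raises form-degree by one, so $d\,\fgf_1^1(\R^n)$ is a random $2$-form, whereas $\fgf_0^1(\R^n)_{d=0}$ is a $1$-form. The intended common law is that of the $\unitary(1)$ curvature field, i.e.\ the closed $2$-form white noise $\fgf_0^2(\R^n)_{d=0}$ identified in the immediately preceding lemma and characterized by \eqref{eq:law-of-curvature}. I therefore read the superscript on the last term as a typo for $2$ and take the statement to assert
\[ d\,\fgf_1^1(\R^n) \stackrel{d}{=} d\,\fgf_1^1(\R^n)_{\codif=0} \stackrel{d}{=} \fgf_0^2(\R^n)_{d=0}. \]

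The plan is to dispatch the two equalities separately, both by direct appeal to the results of Section~\ref{sec:fgfconstruction}. Fix an instance $A$ of $\fgf_1^1(\R^n)$. For the rightmost equality I would simply invoke Lemma~\ref{lemma:projected-white-noise} with $k=1$ and $s=1$, which says precisely that $dA$ is an $\fgf_0^2(\R^n)_{d=0}$; by the preceding lemma this law is the $\unitary(1)$ pure Yang--Mills measure, closing the loop with \eqref{eq:law-of-curvature}.

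For the leftmost equality the point is that $d$ annihilates the exact part of $A$. Using the Hodge splitting $A = EA + E^*A$ (Proposition~\ref{prop:hodge-decomp}, so that $E + E^* = I$) together with $EA = d\,\codif\,(-\Delta)^{-1} A$, one has $d(EA) = d\,d\,\codif\,(-\Delta)^{-1} A = 0$ by $d^2 = 0$ (Lemma~\ref{lemma:d-squared-zero}). Hence $dA = d(E^*A)$ holds identically at the level of the random distributional forms, not merely in law. Since $E^*A$ has, by Definition~\ref{def:projection-fgf}, the law of an $\fgf_1^1(\R^n)_{\codif=0}$, this yields $d\,\fgf_1^1(\R^n) \stackrel{d}{=} dA = d(E^*A) \stackrel{d}{=} d\,\fgf_1^1(\R^n)_{\codif=0}$.

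The only care required---and the reason the argument is genuinely short---is that every operation is read on $\Omega^k \Phi'$ through the duality definitions of $d$, $\codif$, $E$, $E^*$, so that the identities $d^2 = 0$ and $E + E^* = I$ (proved for smooth/test forms) transfer verbatim; this is exactly what the distributional-forms framework and Proposition~\ref{prop:hodge-decomp} supply. There is no substantive analytic obstacle: both equalities collapse to $d^2 = 0$ together with the Hodge decomposition, which is why the paper can omit the proof. The main ``obstacle'' is really the bookkeeping one of recognizing the form-degree typo and routing the target to the $2$-form curvature field rather than to a $1$-form.
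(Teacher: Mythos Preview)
Your proposal is correct and is exactly what the paper has in mind: the paper omits the proof entirely, stating only that ``it follows directly from the results of Section~\ref{sec:fgfconstruction},'' and your argument---Lemma~\ref{lemma:projected-white-noise} for the right equality, and $dA = d(E^*A)$ via $d^2=0$ and Proposition~\ref{prop:hodge-decomp} for the left---is precisely that route. Your identification of the form-degree typo (the target should be $\fgf_0^2(\R^n)_{d=0}$, the $2$-form curvature field) is also correct and consistent with the surrounding text (the preceding lemma, \eqref{eq:law-of-curvature}, and Proposition~\ref{prop:projected-white-noise-correlation-functions}).
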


Another way to construct the $\unitary(1)$ pure Yang--Mills measure as a random distributional 1-form is to start with 2-form white noise, apply the inverse Laplacian, and then take the codifferential. This gives an instance of $\fgf_1^1(\R^n)_{\codif = 0}$.
We state this as the following lemma. Again, the proof is omitted.

\begin{lemma}
We have that $\codif (-\Delta)^{-1} \fgf_0^1(\R^n) \stackrel{d}{=} \fgf_1^1(\R^n)_{\codif = 0}$.
\end{lemma}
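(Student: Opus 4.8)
The plan is to read the white noise in the statement as a $2$-form white noise $\noise$ --- this is what the sentence preceding the lemma says, and it is forced by degree considerations, since $\codif$ lowers the form degree by one, so that $\codif(-\Delta)^{-1}$ applied to a $2$-form yields a $1$-form --- and then to deduce the claim immediately from two lemmas already proved. First, by Lemma~\ref{lemma:laplacian-of-fgf} applied with $s=2$, the form $(-\Delta)^{-1}\noise=(-\Delta)^{-s/2}\noise$ is an $\fgf_2^2(\manifold)$. Next, by Lemma~\ref{lemma:projected-white-noise} applied with $k=2$ and $s=2$, the codifferential of an $\fgf_2^2(\manifold)$ is an $(\fgf_1^1(\manifold))_{\codif=0}$. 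Chaining these two facts gives $\codif(-\Delta)^{-1}\noise\stackrel{d}{=}\fgf_1^1(\manifold)_{\codif=0}$, which is the assertion. As with Lemma~\ref{lemma:projected-white-noise}, I would write the argument for $\manifold=\R^n$ and remark that the compact case is identical after replacing $\Omega^k\Phi$ by $\bigdot{\Omega}^k(\manifold)$.

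For the reader who wants to see the single covariance computation underlying the two cited lemmas, I would also include the following direct verification. Both $\codif(-\Delta)^{-1}\noise$ and $\fgf_1^1(\R^n)_{\codif=0}$ are centered Gaussian processes indexed by $\Omega^1\Phi$, being linear images of a Gaussian white noise, so by polarization it suffices to match variances. For $\phi\in\Omega^1\Phi$, using that $d$ and $\codif$ are adjoint and $(-\Delta)^{-1}$ is self-adjoint,
\[ \mrm{Var}\bigl((\codif(-\Delta)^{-1}\noise,\phi)\bigr)=\mrm{Var}\bigl((\noise,(-\Delta)^{-1}d\phi)\bigr)=\bigl\|(-\Delta)^{-1}d\phi\bigr\|^2=\bigl(\phi,\codif(-\Delta)^{-2}d\phi\bigr). \]
Since $\codif\codif=0$ and $dd=0$ give $(\codif d)(d\codif)=(d\codif)(\codif d)=0$, the operator $\codif d$ commutes with $-\Delta=d\codif+\codif d$, hence with $(-\Delta)^{-1}$; combined with $\codif d=(-\Delta)E^*$ (immediate from Definition~\ref{def:E-E-star-projections}), this rewrites the last expression as $(\phi,(-\Delta)^{-1}E^*\phi)=\|(-\Delta)^{-1/2}E^*\phi\|^2$. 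By \eqref{eq:covariance-projected-fgf} (with $s=1$, $k=1$) together with Definition~\ref{def:projection-fgf}, this is exactly $\mrm{Var}\bigl((\fgf_1^1(\R^n)_{\codif=0},\phi)\bigr)$, so the two processes agree in law.

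I do not anticipate a genuine obstacle; the proof is essentially bookkeeping. The two points that warrant a word of care are: (i) that negative powers of the Laplacian are actually well defined on the spaces in play --- which is precisely why the construction uses $\Omega^k\Phi'$ and $\Omega^k\bigdot{\mc{D}}'(\manifold)$ rather than $\Omega^k\schwartz'$ and $\Omega^k\mc{D}'(\manifold)$ --- and (ii) the identity $\codif d=(-\Delta)E^*$ and the commutation of $E^*$ with $-\Delta$, which I would record as a one-line consequence of Definition~\ref{def:E-E-star-projections} and $-\Delta=d\codif+\codif d$. On $\R^n$ or $\T^n$ one can alternatively make the cancellation completely explicit by checking the variance identity Fourier mode by Fourier mode, using the formulas of Section~\ref{section:exterior-calculus} for $\widehat{d\phi}$, $\widehat{\codif\phi}$, and the action of $E^*$.
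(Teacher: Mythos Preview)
Your proposal is correct and matches the paper's intended approach: the paper explicitly omits the proof, stating that it follows directly from the results of Section~\ref{sec:fgfconstruction}, and you have correctly identified and chained together exactly the two relevant lemmas (Lemma~\ref{lemma:laplacian-of-fgf} and Lemma~\ref{lemma:projected-white-noise}). Your reading of the statement as referring to $2$-form white noise is also correct and forced by the degree considerations you describe, as confirmed by the sentence immediately preceding the lemma; the superscript in the displayed statement is evidently a typo.
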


Note that $(-\Delta)^{-1} \fgf_0(\R^n)$ is usually referred to as the membrane model. Thus, it is natural to refer to $(-\Delta)^{-1} \fgf_0^1(\R^n)$ as the $1$-form membrane model.

We also note that $\fgf_1^1(\R^n)_{\codif = 0}$ is also often referred to as the massless Euclidean Proca field; see \cite[Section 2.2]{Chatterjee2024} and the references therein. That paper also discusses the massive Euclidean Proca field, which we now discuss. Let $\lambda > 0$. In \cite[equation (2.3)]{Chatterjee2024}, the operator $Q_\lambda$ is defined as
\begin{equs}
Q_\lambda := -\Delta + \lambda - d d^* = d^* d + \lambda.
\end{equs}
The operator $R_\lambda$, defined in \cite[equation (2.2)]{Chatterjee2024} is shown in \cite[Lemma 2.2]{Chatterjee2024} to be the inverse of $Q_\lambda$. By using the projections $E, E^*$, we have the following alternative formula for the inverse of $Q_\lambda$.

\begin{lemma}\label{lemma:Q-lambda-inverse}
On $\Omega^1 \Phi$, we have that
\begin{equs}
Q_\lambda^{-1} = \lambda^{-1} E + (-\Delta + \lambda)^{-1} E^*.
\end{equs}
\end{lemma}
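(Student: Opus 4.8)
The plan is to verify directly that the operator $R := \lambda^{-1} E + (-\Delta + \lambda)^{-1} E^*$ is a two-sided inverse of $Q_\lambda$ as an operator on $\Omega^1 \Phi$; the conceptual content is the Hodge decomposition $\Omega^1 \Phi = E(\Omega^1 \Phi) \oplus E^*(\Omega^1 \Phi)$, on whose summands $Q_\lambda$ acts as multiplication by $\lambda$ and as $-\Delta + \lambda$, respectively. First I would collect the algebraic facts I need, all valid on $\Omega^1 \Phi$: by Proposition~\ref{prop:hodge-decomp}, $E = d\codif(-\Delta)^{-1}$ and $E^* = \codif d (-\Delta)^{-1}$ preserve $\Omega^1 \Phi$ and satisfy $E + E^* = I$; by Proposition~\ref{prop:fractional-laplacian-isomorphism-on-lizorkin-space} and the (elementary, since $(|p|^2+\lambda)^s$ is smooth and non-vanishing at the origin) massive analog, $-\Delta$ and $-\Delta + \lambda$ are isomorphisms of $\Omega^1 \Phi$ with inverses $(-\Delta)^{-1}$ and $(-\Delta + \lambda)^{-1}$, and $(-\Delta)^{-1}$ commutes with $d$ and $\codif$ (since $-\Delta$ does). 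From $dd = 0$ and $\codif\codif = 0$ one then reads off $\codif d\, E = \codif(dd)\codif(-\Delta)^{-1} = 0$, $d\codif\, E^* = d(\codif\codif)d(-\Delta)^{-1} = 0$, $E\,\codif d = d(\codif\codif)d(-\Delta)^{-1} = 0$, and $E^*\, d\codif = \codif(dd)\codif(-\Delta)^{-1} = 0$. Finally, since $-\Delta = d\codif + \codif d$ commutes with each of $d\codif$ and $\codif d$, it commutes with $E$ and $E^*$, hence so does $(-\Delta + \lambda)^{-1}$.

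With these in hand the computation is short. Writing $Q_\lambda = \codif d + \lambda$ on the first term and $Q_\lambda = -\Delta + \lambda - d\codif$ on the second,
\begin{equs}
Q_\lambda R = \lambda^{-1}(\codif d + \lambda)E + (-\Delta + \lambda - d\codif)(-\Delta + \lambda)^{-1} E^* = \lambda^{-1}\codif d\, E + E + E^* - (-\Delta+\lambda)^{-1} d\codif\, E^* = E + E^* = I,
\end{equs}
using $\codif d\, E = 0$ and $d\codif\, E^* = 0$ together with the commutation of $(-\Delta+\lambda)^{-1}$ with $E^*$. Symmetrically,
\begin{equs}
R Q_\lambda = \lambda^{-1} E(\codif d + \lambda) + (-\Delta+\lambda)^{-1} E^*(-\Delta + \lambda - d\codif) = \lambda^{-1} E\,\codif d + E + E^* - (-\Delta+\lambda)^{-1} E^*\, d\codif = I,
\end{equs}
using $E\,\codif d = 0$ and $E^*\, d\codif = 0$. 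Since $R$ and $Q_\lambda$ both map $\Omega^1 \Phi$ into itself and are mutually inverse there, $Q_\lambda$ is an isomorphism of $\Omega^1 \Phi$ with $Q_\lambda^{-1} = \lambda^{-1} E + (-\Delta + \lambda)^{-1} E^*$.

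There is no serious obstacle here: the argument is purely formal manipulation of $d$, $\codif$, and powers of $-\Delta$. The only points requiring a word of care—and which I would state explicitly rather than belabor—are that every operator in sight genuinely preserves $\Omega^1 \Phi$ (so that compositions and the inverses $(-\Delta)^{-1}$, $(-\Delta+\lambda)^{-1}$ are legitimate on this space), which is exactly what the cited propositions from Sections~\ref{section:distribution-spaces} and~\ref{section:distributional-forms} provide, and that $(-\Delta+\lambda)^{-1}$ commutes with the Hodge projections, which follows from its commuting with $-\Delta$.
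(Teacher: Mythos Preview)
Your proof is correct and takes essentially the same approach as the paper: both exploit the Hodge decomposition $I = E + E^*$ together with $\codif d\, E = 0$ and $d\codif\, E^* = 0$ to see that $Q_\lambda$ acts as $\lambda$ on the range of $E$ and as $-\Delta + \lambda$ on the range of $E^*$. The paper's version is slightly more compressed---it first rewrites $Q_\lambda = (-\Delta + \lambda)E^* + \lambda E$ and then reads off the inverse---whereas you verify $Q_\lambda R = R Q_\lambda = I$ directly, but the content is the same.
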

\begin{proof}
Since $d^* d E^* = (-\Delta) E^*$, we may write $Q_\lambda = (-\Delta) E^* + \lambda$ = $(-\Delta + \lambda) E^* + \lambda E$. The formula for the inverse now directly follows.
\end{proof}

By definition, the massive Euclidean Proca field is the random distributional 1-form $X$ such that $((X, \phi), \phi \in \Omega^1 \mc{S})$ is a Gaussian process with
\begin{equs}\label{eq:proca-field}
(X, \phi) \sim \N\big(0, (\phi, Q_\lambda^{-1} \phi)\big).
\end{equs}
Thus, the law of $X$ has the formal representation
\begin{equs}
Z^{-1} \exp\Big(-\frac{1}{2}(X, Q_\lambda X)\Big) dX = Z^{-1} \exp\Big(-\frac{1}{2}\|d X\|_{2}^2 - \frac{\lambda}{2}\|X\|_2^2\Big) dX.
\end{equs}
Note that the action behaves differently on the subspace of closed 1-forms compared to the subspace of divergence-free 1-forms. On the former subspace, the action is that of a massive GFF, while on the latter subspace, the action is that of a white noise. In particular, this decomposition points the way towards constructing the massive Euclidean Proca field, which is the content of the next lemma.

\begin{lemma}\label{lemma:proca-field-construction}
Let $\noise$ be a 1-form white noise. Then
\begin{equs}
X := \frac{1}{\sqrt{\lambda}} E \noise + (-\Delta + \lambda )^{-\frac{1}{2}} E^* \noise
\end{equs}
is the massive Euclidean Proca field.
\end{lemma}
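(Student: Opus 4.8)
The plan is to verify directly that the proposed $X$ has the covariance $(\phi, Q_\lambda^{-1}\phi)$ for $\phi \in \Omega^1 \mc{S}$, using the formula for $Q_\lambda^{-1}$ from Lemma~\ref{lemma:Q-lambda-inverse} together with the orthogonality and idempotency properties of $E, E^*$ recorded in Proposition~\ref{prop:hodge-decomp}. Since $X$ is a linear image of Gaussian white noise, it is automatically a centered Gaussian process, so the only thing to check is the variance (equivalently, by polarization, the covariance).

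First I would write, for $\phi \in \Omega^1 \mc{S}$,
\begin{equs}
(X, \phi) = \frac{1}{\sqrt{\lambda}} (E\noise, \phi) + ((-\Delta+\lambda)^{-\frac{1}{2}} E^* \noise, \phi) = \frac{1}{\sqrt{\lambda}} (\noise, E\phi) + (\noise, (-\Delta+\lambda)^{-\frac{1}{2}} E^* \phi),
\end{equs}
using the self-adjointness of $E$ and of the massive fractional Laplacian together with the fact that these operators preserve the relevant test-function space (so the pairings make sense). Then, since $\noise$ is white noise with $\Cov((\noise,f),(\noise,g)) = (f,g)$, the variance of $(X,\phi)$ expands into three terms:
\begin{equs}
\mathrm{Var}((X,\phi)) = \frac{1}{\lambda}(E\phi, E\phi) + \frac{2}{\sqrt{\lambda}}(E\phi, (-\Delta+\lambda)^{-\frac{1}{2}} E^* \phi) + ((-\Delta+\lambda)^{-\frac{1}{2}} E^*\phi, (-\Delta+\lambda)^{-\frac{1}{2}} E^* \phi).
\end{equs}
The cross term vanishes: moving $(-\Delta+\lambda)^{-\frac12}$ onto the first factor and using $E E^* = 0$ (Proposition~\ref{prop:hodge-decomp}), together with the fact that $(-\Delta+\lambda)^{-1/2}$ commutes with $E$ (both act diagonally in the Fourier/eigenfunction basis, hence $(-\Delta+\lambda)^{-1/2}E^* = E^*(-\Delta+\lambda)^{-1/2}$), we get $(E\phi, (-\Delta+\lambda)^{-1/2}E^*\phi) = ((-\Delta+\lambda)^{-1/2}E\phi, E^*\phi) = 0$. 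The first term is $\frac{1}{\lambda}(\phi, E\phi)$ using $E^2 = E$ and self-adjointness, and the last term is $(\phi, (-\Delta+\lambda)^{-1} E^* \phi)$ using $(E^*)^2 = E^*$, self-adjointness, and the commutation just noted. Adding these gives $(\phi, (\lambda^{-1} E + (-\Delta+\lambda)^{-1}E^*)\phi) = (\phi, Q_\lambda^{-1}\phi)$ by Lemma~\ref{lemma:Q-lambda-inverse}, which is exactly \eqref{eq:proca-field}, and polarizing yields the covariance identity. The same computation works verbatim on a compact oriented Riemannian manifold with the appropriate test spaces.

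I do not expect a serious obstacle here; the computation is routine once one has Lemma~\ref{lemma:Q-lambda-inverse} and the projection identities. The only point requiring mild care is the bookkeeping around which space $X$ lives in and on which space the pairings are valid: $E$ and $E^*$ and $(-\Delta+\lambda)^{-1/2}$ all map $\Omega^1\mc{S}$ to itself (the massive operator maps Schwartz to Schwartz, and $E, E^*$ are bounded in the relevant norms by Lemma~\ref{lemmma:projections-bounded-sobolev-space}), so the adjoint manipulations are justified; one should also note that $X$ is a well-defined random element of $\Omega^1\mc{S}'$ because $E\noise$ and $E^*\noise$ are, and $(-\Delta+\lambda)^{-1/2}$ preserves $\Omega^1\mc{S}'$ by duality. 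I would state these facts briefly and then present the variance computation as above.
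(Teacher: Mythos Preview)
Your proof is correct and follows essentially the same approach as the paper's. The only difference is that the paper dispatches the cross term by directly invoking the independence of $E\noise$ and $E^*\noise$ (established in Remark~\ref{remark:projection-fgf-gaussian-hilbert-space}), whereas you verify its vanishing by hand via $EE^*=0$; both reduce the variance to $\lambda^{-1}(\phi,E\phi)+(\phi,(-\Delta+\lambda)^{-1}E^*\phi)$ and then apply Lemma~\ref{lemma:Q-lambda-inverse}. One small bookkeeping point: $E,E^*$ are defined via $(-\Delta)^{-1}$ and hence act on Lizorkin forms $\Omega^1\Phi$ rather than all of $\Omega^1\mc{S}$, so (as the paper does) you should take $\phi\in\Omega^1\Phi$ and then extend to $\Omega^1\mc{S}$ by the usual isometry argument.
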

\begin{remark}
\cite{Chatterjee2024} constructs the massive Euclidean Proca field as the scaling limit of analogous lattice fields, see \cite[Section (4.3)]{Chatterjee2024}. Lemma \ref{lemma:proca-field-construction} is an alternative construction directly in the continuum.
\end{remark}
\begin{proof}
Let $\phi \in \Omega^1 \Phi$. We may compute the variance (recall from Remark \ref{remark:projection-fgf-gaussian-hilbert-space} that $E \noise, E^* \noise$ are independent)
\begin{equs}
\mrm{Var}((X, \phi)) = \frac{1}{\lambda}(\phi, E\phi) + (\phi, (-\Delta + \lambda)^{-1} E^* \phi) &= \Big(\phi, \big(\lambda^{-1} E + (-\Delta + \lambda)^{-1} E^* \big) \phi\Big) \\
&= (\phi, Q_\lambda^{-1} \phi),
\end{equs}
where we applied Lemma \ref{lemma:Q-lambda-inverse} in the last identity. The desired result now follows.
\end{proof}

\begin{remark}
Similar to Remark \ref{remark:random-distribution-vs-stochastic-process}, to be precise, the $X$ in Lemma \ref{lemma:proca-field-construction} is a random variable taking values in $\Omega^1 \Phi'$ which induces a mean zero Gaussian process $((X, \phi), \phi \in \Omega^1 \Phi)$ characterized by \eqref{eq:proca-field}. By the usual Hilbert space isometry arguments, the index set of this stochastic process can be extended to a larger space which contains $\Omega^1 \schwartz$. This then matches with the definition of the massive Euclidean Proca field in \cite{Chatterjee2024} as a stochastic process indexed by $\Omega^1 \schwartz$.
\end{remark}

As a corollary of Lemma \ref{lemma:proca-field-construction}, we see that the exterior derivative of the massive Euclidean Proca field has the same law as the exterior derivative of a massive 1-form GFF.

\begin{cor}
Let $\lambda > 0$, $X$ be the massive Euclidean Proca field with parameter $\lambda$, and $A$ be a $1$-form $\gff$ with mass $\lambda$. Then $dX \stackrel{d}{=} dA$.
\end{cor}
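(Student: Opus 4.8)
The plan is to check that $dX$ and $dA$ are centered Gaussian processes (indexed by smooth test $2$-forms) with the same covariance; since a centered Gaussian process is determined by its covariance, this gives $dX \stackrel{d}{=} dA$. Both $X$ and $A$ are centered Gaussian, so $dX$ and $dA$ are too, and for a test $2$-form $\phi$ we have $(dX,\phi) = (X, \codif\phi)$ and $(dA,\phi) = (A,\codif\phi)$ by the definition of $d$ on distributional forms. Hence, using the defining covariance \eqref{eq:proca-field} of the Proca field and the covariance \eqref{eq:massive-fgf-covariance} of the massive $1$-form GFF, $\mrm{Var}((dX,\phi)) = (\codif\phi, Q_\lambda^{-1}\codif\phi)$ while $\mrm{Var}((dA,\phi)) = (\codif\phi, (-\Delta+\lambda)^{-1}\codif\phi)$.

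The key observation is that $\codif\phi$ is co-exact: it lies in the image of $\codif$, so $E^*(\codif\phi) = \codif\phi$ and $E(\codif\phi) = 0$ (recall $E^* = \codif d(-\Delta)^{-1}$ acts as the identity on $\Omega^1_+ = \codif\Omega^2$, equivalently $E$ annihilates it). Then Lemma \ref{lemma:Q-lambda-inverse} gives $Q_\lambda^{-1} = \lambda^{-1}E + (-\Delta+\lambda)^{-1}E^*$, and combining this with the self-adjointness and idempotency of $E$, $E^*$ from Proposition \ref{prop:hodge-decomp} one finds $(\codif\phi, Q_\lambda^{-1}\codif\phi) = \lambda^{-1}(\codif\phi, E\codif\phi) + (\codif\phi, (-\Delta+\lambda)^{-1}E^*\codif\phi) = (\codif\phi, (-\Delta+\lambda)^{-1}\codif\phi)$. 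This matches $\mrm{Var}((dA,\phi))$, completing the argument.

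An alternative and even cleaner route is via the explicit white-noise constructions. Write $X = \lambda^{-1/2}E\noise + (-\Delta+\lambda)^{-1/2}E^*\noise$ as in Lemma \ref{lemma:proca-field-construction}, and realize the massive $1$-form GFF as $A = (-\Delta+\lambda)^{-1/2}\noise$ (which has the right covariance, by the isometry argument as in Lemma \ref{lemma:laplacian-of-fgf}). Since $E\noise$ is exact, $dE\noise = 0$ by $dd = 0$, and $d$ commutes with $(-\Delta+\lambda)^{-1/2}$ on Lizorkin forms; hence $dX = d(-\Delta+\lambda)^{-1/2}E^*\noise$ and likewise $dA = d(-\Delta+\lambda)^{-1/2}(E\noise + E^*\noise) = d(-\Delta+\lambda)^{-1/2}E^*\noise$. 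So in fact $dX = dA$ for this coupling, which is strictly stronger than equality in law.

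I do not expect a genuine obstacle here. The only points needing a line of care are: that $\codif\phi$ is killed by $E$ (immediate from the Hodge decomposition in Proposition \ref{prop:hodge-decomp} and the definition of $E$), and---in the white-noise route---the legitimacy of commuting $d$ past the fractional massive Laplacian on the Lizorkin space $\Omega^k\Phi$, which follows from the coordinatewise formulas of Section \ref{section:exterior-calculus} together with the fact that $\Phi$ is closed under differentiation.
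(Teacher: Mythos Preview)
Your proposal is correct. The paper does not give an explicit proof but simply states the result ``as a corollary of Lemma~\ref{lemma:proca-field-construction}'', which is precisely your second route: using the white-noise construction $X = \lambda^{-1/2}E\noise + (-\Delta+\lambda)^{-1/2}E^*\noise$ and $A = (-\Delta+\lambda)^{-1/2}\noise$, then observing $dE\noise = 0$ and that $d$ commutes with $(-\Delta+\lambda)^{-1/2}$, so that $dX = dA$ for this coupling.

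Your first route (direct covariance comparison via Lemma~\ref{lemma:Q-lambda-inverse}) is a valid alternative. One small caveat: Lemma~\ref{lemma:Q-lambda-inverse} is stated on $\Omega^1\Phi$, whereas the Proca field and massive GFF are indexed by $\Omega^1\schwartz$; so to run the covariance argument literally you should either restrict to test forms $\phi\in\Omega^2\Phi$ and then pass to $\Omega^2\schwartz$ by density, or bypass the lemma entirely by noting directly that $Q_\lambda = \codif d + \lambda$ agrees with $-\Delta+\lambda$ on co-exact $1$-forms (since $d\codif(\codif\phi)=0$), hence $Q_\lambda^{-1}\codif\phi = (-\Delta+\lambda)^{-1}\codif\phi$ holds on all of $\Omega^1\schwartz$ without invoking $E,E^*$.
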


\subsection{The non-Abelian theory}\label{section:non-abelian-theory}

In this subsection, we discuss the basics of the general non-Abelian Yang--Mills theory. Let $G$ be a compact Lie group with Lie algebra $\frkg$. The Lie algebra $\frkg$ is a vector space, and we assume that it is equipped with an $\mrm{Ad}$-invariant inner product $\langle \cdot, \cdot \rangle_\frkg$. I.e., for all $X, Y \in \frkg$ and $g \in \liegroup$, we have that $\langle \mrm{Ad}_g X, \mrm{Ad}_g Y \rangle_{\frkg} = \langle X, Y \rangle_{\frkg}$ (in other words, $\mrm{Ad}_g : \frkg \ra \frkg$ is an isometry). In the following, we will always assume that $\liegroup$ is a matrix Lie group, in which case $\frkg$ is a vector space of matrices, and $\mrm{Ad}_g X = g X g^{-1}$.

Let $A$ be a connection on the principal $G$-bundle $P = \R^n \times G$, which can be identified with a $\frkg$-valued 1-form $A : \R^n \ra \frkg^n$. The curvature of $A$ is the $\frkg$-valued 2-form defined by 
\begin{equs}
F_A := dA + \frac{1}{2}[A \wedge A], \quad \text{or in coordinates,}\quad (F_A)_{ij} := \ptl_i A_j - \ptl_j A_i + [A_i, A_j], ~~ i, j \in [n].
\end{equs}
The Yang--Mills action is defined as (here $|F_A(x)|^2 = \frac{1}{2} \langle (F_A)_{ij}(x), (F_A)_{ij}(x) \rangle_{\frkg}$)
\begin{equs}\label{eq:sym}
\sym(A) := \frac{1}{2} \int_{\R^n} |F_A(x)|^2 dx ,
\end{equs}
and the Yang--Mills measure is the formal probability measure
\begin{equs}\label{eq:muym}
d\mu_{\mrm{YM}}(A) = Z^{-1} \exp(-\sym(A)) dA.
\end{equs}
The Yang--Mills action is a natural geometric quantity, and as such, is invariant under changes of coordinates. In the setting of gauge theory, changes of coordinates are known as {\bf gauge transformations}. In our current setup, a gauge transformation is a function $g : \R^n \ra \liegroup$. Supposing that $g$ has enough regularity, the action of $g$ on a given connection $A$ is given by
\begin{equs}\label{eq:gauge-transformation}
A^g := \mrm{Ad}_g A - (dg) g^{-1} \text{ or, in coordinates, } A^g_i = g A_i g^{-1} - (\ptl_i g) g^{-1}.
\end{equs}
One can check that under this action,
\begin{equs}
F_{A^g} = \mrm{Ad}_g F_A, \quad \text{and thus} \quad
\sym(A^g) = \sym(A),
\end{equs}
i.e. $\sym$ is gauge invariant. Thus the formal measure $\mu_{\mrm{YM}}$ exhibits gauge symmetry, in the sense that its action is gauge-invariant. This is one reason why constructing the Yang--Mills measure is difficult. For more discussion on the role of gauge symmetry, see \cite{CC24, CCHS2022}.

We expect that the Yang--Mills measure is supposed to be related to a $\frkg$-valued $1$-form GFF. In particular, it is expected that the short-distance behavior of the Yang--Mills measure is governed by the free field. In physics, this is known as {\bf asymptotic freedom}, which was shown by physicists to hold for 4D Yang--Mills in the Nobel prize-winning work of Gross and Wilczek \cite{GW1973} and Politzer \cite{Pol1973}.

In mathematics, a precise way to state this phenomenon is on the lattice, which we will discuss in Section \ref{section:lattice-gauge-theory}. Another way which works directly in the continuum is the following. One approach towards constructing measures of the form \eqref{eq:muym} is via {\bf stochastic quantization}. First introduced by Parisi and Wu \cite{PW1981}, stochastic quantization aims to construct such measures by using a Gaussian source of randomness. For more background, see the course by Gubinelli \cite{GubSQcourse}. One common approach towards stochastic quantization is via the {\bf Langevin dynamics}, which is the stochastic gradient flow associated to the formal measure that one is trying to construct. The hope is to understand the dynamics well enough to be able to prove that it possesses a unique invariant measure, which is then taken to be the definition of the formal probability measure. The analysis of the Langevin dynamics is a stochastic PDE (SPDE) problem, and the literature on SPDEs is by now too vast so we will not attempt to give a complete survey here. See the recent SPDE survey \cite{CS2020}. We only mention that for Yang--Mills, local existence (and gauge-covariance in law) of the Langevin dynamics was shown in two and three dimensions \cite{CCHS2020, CCHS2022} (the 2D case was also revisited by \cite{BC2023} using a different approach). These references all worked on the torus, and as a result the following discussion will also be on the torus.

The more recent work \cite{CS23} was able to show global existence and invariance (as well as universality) for the Langevin dynamics in dimension two. As a corollary, the authors showed that (see \cite[Corollary 2.21]{CS23}) the random $1$-form $A$ having law \eqref{eq:muym} has a decomposition 
\begin{equs}
A = A_{\mrm{gff}} + X,
\end{equs}
where $A_{\mrm{gff}}$ is a $\frkg$-valued $1$-form GFF, and $X$ is a random function which is $\alpha$-H\"{o}lder for any $\alpha < 1$. We remark that $A_{\mrm{gff}}$ and $X$ are dependent, with complicated dependence structure. This is a precise mathematical formulation of the statement that the free field governs the short distance behavior of the Yang--Mills measure.

It is an open problem to carry out the stochastic quantization approach to Yang--Mills in dimension three (as well as dimension four, where even local existence of the Langevin dynamics is a major open problem). Similar to the 2D case, if the stochastic quantization approach can be carried out in 3D, then this would  result in a decomposition
\begin{equs}
A = A_{\mrm{gff}} + Y,
\end{equs}
where $A_{\mrm{gff}}$ is a $\frkg$-valued 1-form $\gff$, and $Y$ is now a random distribution with regularity just below $0$. In 4D, it is less clear what the decomposition should look like, and this is related to the fact that four is the critical dimension for Yang--Mills, and thus none of the existing local solutions theories for stochastic PDEs apply -- see the survey \cite{CS2020} for more discussion.

In 3D, what remains is to show global existence for the Langevin dynamics (to be precise, one needs to show a more quantitative form involving uniform-in-time bounds). Currently, the only known nontrivial\footnote{By nontrivial, we mean that the SPDE is nonlinear, because the linear SPDEs in this setting are completely understood.} results for global existence of the Langevin dynamics in the setting of Yang--Mills are in 2D: the aforementioned work \cite{CS23}, and the recent work \cite{BC24} which deals with Abelian Higgs.


\subsection{Lattice gauge theory}\label{section:lattice-gauge-theory}

We now discuss a lattice discretization of Yang--Mills known as lattice gauge theory. Let $\Lambda \sse \Z^n$ be a finite box (or more generally, a finite cell subcomplex of $\Z^n$). In anticipation of notation that will be reintroduced later (see Section \ref{section:lattice-gaussian-forms}), let $\overrightarrow{C}^1(\Lambda), C^1(\Lambda)$ be respectively be the sets of oriented and unoriented edges of $\Lambda$. Similarly, let $\overrightarrow{C}^2(\Lambda), C^2(\Lambda)$ be respectively the sets of oriented and unoriented plaquettes (i.e. unit squares) of $\Lambda$. Given an oriented edge $e$ or plaquette $p$, we will denote by $-e$, $-p$ the edge and plaquette of opposite orientations.

As in Section \ref{section:non-abelian-theory}, let $\liegroup$ be a compact matrix Lie group. We further assume that $\liegroup \sse \unitary(N)$ for some $N \geq 1$.

\begin{definition}[Lattice gauge fields]
A lattice gauge field is a function $U \colon \overrightarrow{C}^1(\Lambda) \ra \liegroup$ such that $U(-e) = U(e)^{-1}$ for all $e \in \overrightarrow{C}^1(\Lambda)$. We identify the set of lattice gauge fields with the product space $\liegroup^{C^1(\Lambda)}$.
\end{definition}

\begin{remark}
One should think of the lattice gauge field $U$ as a discretization of a continuum connection $A$. In particular, for each lattice edge $e$, $U(e)$ should be thought of as the parallel transport of $A$ along $e$. Due to properties of parallel transport, this imposes that $U(-e) = U(e)^{-1}$.
\end{remark}

\begin{definition}[Plaquette variables]
Given a lattice gauge field $U \colon \overrightarrow{C}^1(\Lambda) \ra \liegroup$ and a plaquette $p \in \overrightarrow{C}^2(\Lambda)$, which we represent by its four boundary edges $p = e_1 e_2 e_3 e_4$, define the plaquette variable (in a slight abuse of notation)
\begin{equs}
U_p := U_{e_1} U_{e_2} U_{e_3} U_{e_4}.
\end{equs}
\end{definition}

\begin{remark}
Technically, plaquette variables are only defined up to a conjugate, because given $p = e_1 e_2 e_3 e_4$, we could have also written $p = e_2 e_3 e_4 e_1$ (say), which would result in a slightly different plaquette variable in general. However, we note that these possibly different plaquette variables are always conjugates of each other. We will soon see that only conjugacy classes of plaquette variables matter to us, and thus we will ignore this indeterminacy in the definition of plaquette variables.
\end{remark}

\begin{definition}[Lattice gauge theory]
Given $\beta \geq 0$, define the probability measure $\mu_{\Lambda, \beta}$ on $\liegroup^{C^1(\Lambda)}$ by
\begin{equs}\label{eq:lgt}
d\mu_{\Lambda, \beta}(U) := Z_{\Lambda, \beta}^{-1} \prod_{p \in C^2(\Lambda)} \exp\Big( \beta \mrm{Re} \Tr(U_p)\Big) \prod_{e \in C^1(\Lambda)} dU_e.
\end{equs}
Here, $\prod_{e \in C^1(\Lambda)} dU_e$ is the product Haar measure on $\liegroup^{C^1(\Lambda)}$, and $Z_{\Lambda, \beta}$ is the normalizing constant so that $\mu_{\Lambda, \beta}$ is a probability measure. We refer to $\beta$ as the inverse temperature, and we refer to $\mu_{\Lambda, \beta}$ as a lattice gauge theory.
\end{definition}

\begin{remark}
On the space of $N \times N$ complex matrices $M$, we may define the Frobenius norm
\begin{equs}
\|M\| := \Tr(M^* M)^{\frac{1}{2}}.
\end{equs}
Note since $\liegroup \sse \unitary(N)$, we have that
\begin{equs}
\|\groupid - U\|^2 = 2 (N - \mrm{Re} \Tr(U)) \text{ for all $U \in \liegroup$.}
\end{equs}
In particular, up to an inconsequential change of the normalizing constant $Z_{\Lambda, \beta}$, an equivalent definition of lattice gauge theory is
\begin{equs}\label{eq:lgt-alternative}
d\mu_{\Lambda, \beta}(U) := Z_{\Lambda, \beta}^{-1} \prod_{p \in C^2(\Lambda)} \exp\Big(-\frac{\beta}{2}\|\groupid - U_p\|^2\Big) \prod_{e \in C^1(\Lambda)} dU_e.
\end{equs}
\end{remark}

See \cite[Section 3]{chatterjee2019yang} for how lattice gauge theory is formally derived from continuum Yang--Mills theory. See other parts of the same survey for more background and many references. We remark that gauge symmetry on the lattice takes the following form. A lattice gauge transformation is a function $g : \Lambda \ra \liegroup$, and its action on a given lattice gauge field $U$ is defined by
\begin{equs}
U^g(x, y) := g(x) U(x, y) g(y)^{-1}.
\end{equs}
We say that two lattice gauge fields $U, \tilde{U}$ are gauge-equivalent if there exists a gauge transformation $g$ such that $\tilde{U} = U^g$. One can show that this is indeed an equivalence relation. We refer to the gauge equivalence classes as gauge orbits.

It is clear that for any plaquette $p$, the plaquette variable $U^g_p$ is a conjugate of the original plaquette variable $U_p$, and thus we always have
\begin{equs}
\Tr(U^g_p) = \Tr(U_p),\quad \text{and also} \quad \|\groupid - U^g_p\|^2 = \|\groupid - U^g_p\|^2.
\end{equs}
This implies that for any gauge transformation $g : \Lambda \ra \liegroup$, given a random variable $U \sim \mu_{\Lambda, \beta}$, we also have that $U^g \sim \mu_{\Lambda, \beta}$. In other words, the measure $\mu_{\Lambda, \beta}$ is gauge invariant. This large amount of symmetry is what sets lattice gauge theories apart from other statistical mechanical models such as the Ising model or spin $O(n)$ model. This gauge symmetry is an indication that the gauge field $U$ is in fact not the fundamental quantity in gauge theory. Rather, the gauge orbit of $U$ is really what is fundamental. Thus, one ultimately seeks to understand not $\mu_{\mrm{YM}}$ itself, but rather the law it induces on the space of gauge orbits. For more discussion, see \cite{CC23, CCHS2022}.

In gauge theory, one often wants to gauge-fix, which amounts to selecting a unique representative of each gauge orbit. On the lattice, this may be done as follows. As mentioned in Section \ref{section:differential-forms-in-gauge-theory}, given a spanning tree $T$ of $\Lambda$, for any lattice gauge field $U$, there exists a gauge transformation $g$ such that $U^g_e = \groupid$ for all edges $e \in T$. Indeed, $g$ can be constructed by ``integrating the configuration $U$ along $T$" as follows. Fix a root $x_0$ of $T$, and for all $x \in T$, let $x_0, \ldots, x_n = x$ be the unique path in $T$ from $x_0$ to $x$. The condition
\begin{equs}
g(x_{n-1}) U(x_{n-1}, x_n) g(x_n)^{-1} = U^g_{(x_{n-1}, x_n)} = \groupid 
\end{equs}
then imposes that
\begin{equs}
g(x_n) = g(x_{n-1}) U(x_{n-1}, x_n),
\end{equs}
which implies that
\begin{equs}
g(x) = U(x_0, x_1) \cdots U(x_{n-1}, x_n).
\end{equs}
Indeed, one may verify that with this choice of $g$, $U^g_e = \groupid$ for all $e \in T$. In this way, one may associate to each gauge field $U$ a gauge-equivalent field $\tilde{U}$ which is identity on $T$. The choice of $\tilde{U}$ is unique once one fixes the root $x_0$ of $T$.

A natural question that arises is whether control of plaquette variables leads to control of edge variables. I.e., if we know that $U_p$ is close to $\groupid$ for all $p$, does this imply the same for the $U_e$? In general, this is manifestly not true -- start with the all-identity configuration $U_0 \equiv \groupid$, and then take an arbitrary gauge transformation $U_0^g$. All plaquette variables of $U_0^g$ are still the identity, however, the edge variables may be anything. One benefit of gauge-fixing is that it does ensure that one can go from control of plaquette variables to control of edge variables. See e.g. \cite[Lemma 10.2]{chatterjee2016leading} for an example result which shows that if one further assumes that $U_e = \groupid$ for all edges $e$ in a certain spanning tree $T$, then the action $\sum_{p \in C^2(\Lambda)} \|\groupid - U_p\|^2$ does control $\|\groupid - U_e\|$ for all $e$. Another result of this flavor is \cite[Theorem 4.5]{Chevyrev2019}. Combining this discussion with the previous one on gauge-fixing, we have that if all plaquette variables $U_p$ are close to $\groupid$, then there exists a gauge transformation $g$ such that all edge variables $U^g_e$ are close to $\groupid$.

\begin{remark}
As mentioned just before \cite[Lemma 10.2]{chatterjee2016leading}, one should think of a result of this type as a lattice version of a nonlinear Sobolev inequality. Indeed, the continuum analog of such a result would be the claim that for a $1$-form $A$, the Yang--Mills action $\sym(A)$ \eqref{eq:sym} controls $L^p$ norms of $A$. This result goes back to the classic work of Uhlenbeck \cite{Uhl1982}, see also the book \cite{Wehr2004}. One should think of $\sym(A)$ as a nonlinear version of the $H^1$ norm of $A$, and thus such a result can be thought of as a nonlinear Sobolev inequality. Of course, in the continuum it is also essential to gauge-fix, otherwise Uhlenbeck's result would not be true.
\end{remark}

With this discussion in hand, next we heuristically discuss how Gaussian theories appear from lattice gauge theories. One natural approach towards constructing continuum Yang--Mills theories is by taking a scaling limit of lattice gauge theories. Given a mesh size $\varep > 0$, we would now consider lattice gauge theories defined on boxes $\Lambda_\varep \sse \varep \Z^n$ of mesh size $\varep$. As discussed in \cite[Section 3]{chatterjee2019yang} it is generally expected that one must also vary the inverse temperature $\beta$ according to the mesh size like
\begin{equs}
\beta_\varep \sim \begin{cases} \varep^{n-4} & n = 2, 3 \\
\log (1/\varep) & n = 4 \end{cases}.
\end{equs}
In particular, $\beta_\varep \toinf$ as $\varep \downarrow 0$. From the equivalent definition \eqref{eq:lgt-alternative}, it is clear that the density of $\mu_{\Lambda, \beta}$ is maximized when $U_p = I$ (the identity matrix) for all plaquettes $p \in C^2(\Lambda)$. Moreover, as $\beta \toinf$, we would perhaps expect that the typical plaquette variable $U_p$ satisfies $\|I - U_p\| \lesssim \beta^{-\frac{1}{2}}$. If this is the case, then one might further hope that after gauge-fixing, each edge variable $U_e$ is close to the identity. Once we know this, we can hope to realize $U_e = \exp(A_e)$, where $A_e \in \frkg$ and $\exp \colon \frkg \ra \liegroup$ is the exponential map from the Lie algebra to the Lie group. Given this, one has that
\begin{equs}\label{eq:plaquette-first-order-expansion}
U_p \approx I + A_{e_1} + A_{e_2} + A_{e_3} + A_{e_4} + \text{l.o.t.},
\end{equs}
and thus to leading order, the action $\exp(-\frac{\beta}{2}\|I - U_p\|^2)$ would look quadratic in $(A_e, e \in C^1(\Lambda))$.

The outcome of this heuristic discussion is that as $\beta \toinf$, we would expect to be able to approximate lattice gauge theory by a lattice $\frkg$-valued Gaussian $1$-form model. We will discuss such models in detail in Section \ref{sec:latticeforms}. Naturally, this approximation should only hold on boxes which are small enough relative to $\beta^{-1}$ (though the precise quantitative dependence of the size of the box on $\beta^{-1}$ is less clear), since $\beta^{-1}$ governs the size of the approximation error coming from the lower order terms in \eqref{eq:plaquette-first-order-expansion}. Since continuum Yang--Mills theories should arise as scaling limits of lattice gauge theories (moreover with the inverse temperature diverging in the mesh size), this means that we would expect continuum Yang--Mills theories to be approximated by some version of a $\frkg$-valued $1$-form $\gff$ at small scales. 

The approximation of large-$\beta$ lattice gauge theories by Gaussian models has been made precise in several settings. Chatterjee \cite{chatterjee2016leading} has shown that a Gaussian model gives the leading term of the free energy $\frac{1}{|\Lambda|}\log Z_{\Lambda, \beta}$, essentially by carrying out the approximation procedure that we sketched. In more recent work \cite{Chatterjee2024}, he has been able to show that a certain scaling limit of $\mrm{SU}(2)$ Yang--Mills-Higgs theory is Gaussian. The benefit of the additional Higgs field is that it automatically forces each edge variable $U_e$ to be close to the identity, and so there is no need to gauge fix. 


\section{Chern-Simons action} \label{sec:chernsimon}


In this section, we specialize to three dimensions, i.e. $n = 3$. We discuss how at a formal level, the Abelian Chern-Simons theory gives rise to a link invariant. The Abelian Chern-Simons action is given by
\[ \scs(A) := \int_{\R^3} A \wedge dA = (A, \curl A),\]
where here $A$ is an $\R$-valued $1$-form on $\R^3$. Let $\beta \in \R$, and consider the formal measure
\begin{equs}\label{eq:CS}
Z^{-1} \exp\bigg(\frac{\icomplex \beta}{4\pi} \scs(A)\bigg) dA
\end{equs}
Let $J$ be a divergence-free vector field on $\R^3$. For notational convenience, let $\Sigma$ be such that
\[ \frac{\icomplex \beta}{4\pi} \scs(A) = \frac{1}{2}(A, \Sigma^{-1} A), \quad \text{i.e.}\quad \Sigma = -\icomplex \frac{2\pi}{\beta} \curl^{-1}. \] 
(Here, we are considering $\Sigma$ restricted to the space of divergence free vector fields, so that $\curl$ is invertible.) Then a formal Gaussian computation shows that
\[\begin{split}
Z^{-1} \int \exp\bigg(-\frac{1}{2} (A, \Sigma^{-1} A)\bigg) \exp(\icomplex(A, J)) dA &=  \exp\bigg(-\frac{1}{2}(J, \Sigma J)\bigg) \\
&= \exp\bigg(\frac{\icomplex \pi}{\beta} (J, \curl^{-1} J)\bigg).
\end{split}\]
The quadratic form $(J, \curl^{-1} J)$ is intimately related to the linking number from topology. To see this, first we give an explicit integral formula for $(J, \curl^{-1} J)$. Let $B$ be the solution to
\[ \curl B = J, ~~ \codif B = 0. \]
Note then $(J, \curl^{-1} J) = (J, B)$. To solve for $B$, we use that $\curl^2 B = -\Delta B$, so that it suffices to solve for
\[ -\Delta B = \curl J. \]
On $\R^3$, we have the explicit solution
\[ B = (-\Delta)^{-1} \curl J = \frac{1}{4\pi} \int_{\R^3} dy \frac{1}{|\cdot - y|} (\curl J)(y).\]
Coordinate-wise, this reads
\[ B_i(x) = \frac{1}{4\pi} \int_{\R^3} \frac{1}{|x - y|} \varep_{ijk} \ptl_j J_k(y) = \frac{1}{4\pi} \int_{\R^3} \varep_{ijk} \frac{x_j - y_j}{|x-y|^3} J_k(y), \]
where we applied integration by parts in the second equality. We thus obtain
\[(J, \curl^{-1} J) = (J, B) = \frac{1}{4\pi} \int_{\R^3} dx \int_{\R^3} dy \varep_{ijk}  \frac{x_j - y_j}{|x-y|^3} J_i(x) J_k (y).  \]
More generally, given divergence-free vector fields $J, \tilde{J}$, we have that
\begin{equs}
(J, \curl^{-1} \tilde{J}) = \frac{1}{4\pi} \int_{\R^3} dx \int_{\R^3} dy \varep_{ijk}  \frac{x_j - y_j}{|x-y|^3} J_i(x) \tilde{J}_k (y),
\end{equs}
and this is the familiar Gauss linking integral for $J, \tilde{J}$. 

In particular, if we take $J, \tilde{J}$ to be loops $\gamma, \tilde{\gamma}$ (which may be thought of as distributional divergence-free vector fields), then we obtain integral 
\begin{equs}
\frac{1}{4\pi} \int_0^1 dt \int_0^1 ds \varep_{ijk} \frac{\gamma_j(t) - \tilde{\gamma}_j(s)}{|\gamma(t) - \tilde{\gamma}(s)|^3} \gamma_i'(t) \tilde{\gamma}_k'(s).
\end{equs}
This may be written in the more familiar form
\begin{equs}
-\frac{1}{4\pi} \int_0^1 dt \int_0^1 ds \frac{\mrm{det}(\gamma'(t), \tilde{\gamma}'(s), \gamma(t) - \tilde{\gamma}(s))}{|\gamma(t) - \tilde{\gamma}(s)|^3},
\end{equs}
which is (up to a sign) the Gauss linking integral of $\gamma, \tilde{\gamma}$.

The take home message is that, at least at a formal level, topological invariants may be obtained by evaluating expectations of certain observables in Chern--Simons theories. The reason why one might expect this is because the Chern--Simons action is independent of the metric $g$ of the base manifold (as it only involves the wedge product and not the Hodge star), and thus the resulting Chern--Simons theories should encode only topological (rather than geometric) information of the base manifold. This idea was introduced in \cite{Witten1989}, who related non-Abelian Chern-Simons theories to the Jones polynomial, which is a knot invariant. This sparked an enormous amount of activity in the wider mathematics literature, which we will not attempt to survey here. Rather, we cite here various papers which have tried to make rigorous probabilistic sense of Chern--Simons theories:~\cite{FK1989, AS1997, Hahn2004, Hahn2005, Hahn2008, Weits2024}. For a physics introduction to Chern--Simons, see the lectures notes by Moore \cite{MooreCSNotes}. We also remark that it may be possible to make rigoroous probabilistic sense of Abelian Chern--Simons using a complex Langevin equation -- see \cite{WZ1993}.

The immediate barrier to viewing Abelian Chern--Simons \eqref{eq:CS} as the law of a fractional Gaussian form is the fact that $\curl$ is not positive semidefinite -- in fact, its spectrum is symmetric about $0$, see \cite{Bar2019}. Thus, even if we change $\beta$ to be imaginary, so that $\icomplex \beta$ is real, \eqref{eq:CS} does not give the law of a fractional Gaussian form. One possible thing to do is to restrict (say) to the span of positive eigenspaces of $\curl$ and take $\beta = \icomplex \alpha$ with $\alpha > 0$, in which case $\icomplex \beta \curl$ is negative definite on this subspace, and thus defines the law of some fractional Gaussian form.

\section{Gauge transformations}\label{sec:gaugetransformations}


In this section, we discuss gauge transformations of fractional Gaussian $1$-forms. By ``gauge transformation", we mean something slightly different than the true definition \eqref{eq:gauge-transformation}. Rather, for most of this paper, a gauge transformation is a function $\lambda \colon \manifold \ra \frkg$ (recall from Section \ref{section:non-abelian-theory} that $\frkg$ is the Lie algebra of some Lie group $\liegroup$) which acts on $\frkg$-valued $1$-forms $A$ by
\begin{equs}
A \mapsto A + d\lambda.
\end{equs}
Thus, two $1$-forms $A, A'$ are gauge-equivalent if they differ by a gradient. We think of this modified version of gauge symmetry as the one that arises in the small-scale limit of Yang--Mills where essentially everything is Abelian and Gaussian. Recall from Section \ref{section:lattice-gauge-theory} the discussion of how Gaussian theories arise from lattice gauge theories by taking $\beta \toinf$, and in particular, recall the expansion \eqref{eq:plaquette-first-order-expansion}. The lower order terms which are hidden in this expansion reflect the non-commutativity of $\liegroup$ and $\frkg$. By throwing these terms away, we essentially put ourselves into an Abelian setting. Because of this, for the rest of this section, we will take $A$ to just be an $\R$-valued $1$-form.

Gauge-fixing is a procedure that associates to any given $1$-form $A$ a gauge-equivalent $1$-form satisfying some pre-specified condition. We call such conditions ``gauge conditions". Next, we define our two main examples of gauge conditions.

\begin{definition}[Coulomb gauge]
Let $A$ be a $1$-form. We say that $A$ is in the Coulomb gauge if $d^* A = 0$, or equivalently, if $\ptl_j A_j = 0$. In other words, $A$ is in the Coulomb gauge if it is divergence-free.
\end{definition}

\begin{definition}[Axial gauge]
Let $A$ be a $1$-form and let $j \in [n]$. We say that $A$ is in the axial gauge in direction $j$ if $A_j = 0$.
\end{definition}


For each of these conditions, there exists $\lambda$ such that $A + d\lambda$ satisfies the condition. For instance, for the Coulomb gauge, we may set $\lambda = - d^* (-\Delta)^{-1} A$, so that $A + d\lambda = A - dd^* (-\Delta)^{-1} A = A - EA$. Here, $E$ is as in Definition \ref{def:E-E-star-projections}. By Proposition \ref{prop:hodge-decomp}, we have that $A - EA = E^* A$, where $E^*$ is precisely the projection of $A$ onto the space of divergence-free 1-forms. This shows that indeed $d^* (A + d\lambda) = 0$. This discussion all works if $A$ is an $\fgf_s^1(\R^n)$ since $d^* (-\Delta)^{-1} A$ is well-defined. In particular, we see that if we put an $\fgf_s^1(\R^n)$ in the Coulomb gauge, the resulting fractional Gaussian form is precisely an $\fgf_s^1(\R^n)_{\codif = 0}$. The covariance kernel of $\fgf_1^1(\R^n)_{\codif = 0}$ was previously worked out in Proposition \ref{prop:1-form-gff-projection-correlation-functions}, and thus this also gives the covariance kernel for a $\fgf_1^1(\R^n)$ in the Coulomb gauge.

Next, let us consider the axial gauge condition. Without loss of generality, fix the direction $n$. We seek to find a function $\lambda : \R^n \ra \R$ such that
\begin{equs}
A_n + \ptl_n \lambda = (A + d\lambda)_n = 0.
\end{equs}
This is a simple ODE for $\lambda$, which may be integrated to obtain
\begin{equs}
\lambda(x) - \lambda(x', 0) = -\int_0^{x_n} A_n(x', u) du.
\end{equs}
Here and in the following, given $x \in \R^n$ we shall write $x' = (x_1, \ldots, x_{n-1}) \in \R^{n-1}$. This shows that
\begin{equs}
\lambda(x) = \lambda(x', 0) - \int_0^{x_n} A_n(x', u) du.
\end{equs}
In particular, $\lambda$ is not uniquely specified, as it may take arbitrary values on the $x_n = 0$ hyperplane. This implies that the axial gauge is not a complete gauge condition -- by changing the value of $\lambda$ on the $x_n = 0$ hyperplane, we may obtain different gauge-equivalent 1-forms which are in the axial gauge.

More importantly, the line integral in the equation for $\lambda$ becomes problematic if now $A$ is a $\fgf_s^1(\R^n)$. Depending on the value of $s$ (which affects the regularity of $A$), line integrals of $A$ may not be well-defined. For instance, when $s = 1$ so that $A$ is a $\gff^1(\R^n)$, line integrals of $A$ are well-defined when $n = 2$, but not when $n \geq 3$. On the other hand, since line integrals of $A$ are linear in $A$, we can hope to define them as distributions. We discuss this next.

We seek to define
\begin{equs}
\lambda(x) = -\int_0^{x_n} A_n(x', u) du
\end{equs}
as a distribution. Let $\phi \in \schwartz$. Formally, by exchanging integration, we would expect that
\begin{equs}
(\lambda, \phi) &= \int_{\R^n} dx \lambda(x) \phi(x) \\
&= - \int_{\R^n} dx \phi(x)  \int_0^{x_n} du A_n(x', u) \\
&= - \int_{\R^{n-1}} dx' \int_\R du A_n(x', u) \bigg(\ind(u \geq 0) \int_u^\infty dx_n \phi(x', x_n) - \ind(u < 0) \int_{-\infty}^u dx_n \phi(x', x_n)\bigg).
\end{equs}
Thus, we can simply try to make the definition
\begin{equs}
(\lambda, \phi) := (A_n, \psi_\phi),
\end{equs}
where
\begin{equs}\label{eq:psi-phi}
\psi_\phi(x', u) := - \bigg(\ind(u \geq 0) \int_u^\infty dx_n \phi(x', x_n) - \ind(u < 0) \int_{-\infty}^u dx_n \phi(x', x_n)\bigg).
\end{equs}
Observe that if $\phi$ satisfies
\begin{equs}\label{eq:phi-vertical-integral-zero}
\int_\R dx_n \phi(x', x_n) = 0 \text{ for all $x' \in \R^{n-1}$},
\end{equs}
then in fact $\psi_\phi \in \schwartz$. The only worry is the possible discontinuity of $\psi_\phi$ at $u = 0$, but the above condition ensures that $\psi_\phi$ is indeed continuous. Now when $n \geq 3$, one may check that $\schwartz \sse \dot{H}^{-1}(\R^n)$. Recall that $A_n$ is a $\gff(\R^n)$, which in particular gives a stochastic process $((A_n, \phi), \phi \in \dot{H}^{-1}(\R^n))$ by Remark \ref{remark:gaussian-hilbert-space}.

\begin{example}
Examples of $\phi \in \schwartz$ satisfying \eqref{eq:phi-vertical-integral-zero} include $\phi(x', x_n) = \psi(x') \eta(x_n)$, where $\psi \in \schwartz(\R^{n-1})$ and $\eta \in \schwartz(\R)$ is mean zero.
\end{example}

Thus when $n \geq 3$, we may define $\lambda$ as a stochastic process $(\lambda, \phi)$ indexed by $\phi \in \schwartz$ satisfying \eqref{eq:phi-vertical-integral-zero}. Having defined $\lambda$, we can define $\ptl_j \lambda$ as the stochastic process 
\begin{equs}
(\ptl_j \lambda, \phi) := -(\lambda, \ptl_j \phi),
\end{equs}
where for $1 \leq j \leq n-1$, the index set is again the set of $\phi \in \schwartz$ satisfying \eqref{eq:phi-vertical-integral-zero} (note if $\phi$ satisfies \eqref{eq:phi-vertical-integral-zero}, then so does $\ptl_j \phi$). When $j = n$, $\ptl_n \phi$ automatically satisfies \eqref{eq:phi-vertical-integral-zero} because $\phi$ is Schwartz and so it decays at infinity. Thus when $j = n$, the stochastic process $(\ptl_n \lambda, \phi)$ may be indexed by $\phi \in \schwartz$. With this discussion at hand, we may now define the $\gff^1(\R^n)$ in the axial gauge.

\begin{definition}[$\gff^1(\R^n)$ in the axial gauge]\label{def:axial-gauge}
Let $n \geq 3$. Let $A$ be a $\gff^1(\R^n)$. We define the stochastic process (note the implicit summation over $j$):
\begin{equs}
(\tilde{A}, \phi) := (A, \phi) + (\ptl_j \lambda, \phi_j),
\end{equs}
indexed by $\phi \in \Omega^1 \schwartz$ such that $\phi_1, \ldots, \phi_{n-1}$ satisfies \eqref{eq:phi-vertical-integral-zero}. We say that $\tilde{A}$ is an instance of $\gff^1(\R^n)$ in the axial gauge.
\end{definition}

Observe that for $\phi \in \Omega^1 \schwartz$ of the form $\phi = (0, \ldots, 0, \phi_n)$, we have that
\begin{equs}
(\tilde{A}, \phi) = (A_n, \phi_n) + (\ptl_n \lambda, \phi_n) = (A_n, \phi_n) - (\lambda, \ptl_n \phi_n) = (A_n, \phi_n) - (A_n, \phi_n) = 0.
\end{equs}
Thus $\tilde{A}$ is indeed in the axial gauge in the direction $n$.

\begin{remark}
In Fourier space, adding a gradient to $A$ amounts to changing each Fourier mode $\widehat{A}(p)$ by a multiple of $p$. In order to set an axial gauge, we thus want that
\begin{equs}
(\widehat{A}(p) + z_p p)_n = 0 \text{ for all $p \in \R^n$.}
\end{equs}
For $p$ such that $p_n \neq 0$, we may solve for $z_p$:
\begin{equs}
z_p = - \frac{\widehat{A}_n(p)}{p_n}.
\end{equs}
Thus, applying Fourier inversion, we informally have that $A$ in the axial gauge is given by
\begin{equs}
\tilde{A}(x) = \int_{\R^n} dp \bigg(\widehat{A}(p) - \frac{\widehat{A}_n(p)}{p_n} p \bigg) e^{\icomplex p \cdot x},
\end{equs}
and the function $\lambda$ is given by:
\begin{equs}
\lambda(x) = -\frac{1}{\icomplex} \int_{\R^n} dp \frac{\widehat{A}_n(p)}{p_n} e^{\icomplex p \cdot x}.
\end{equs}
However, this integral for $\lambda$ is problematic because of the $p_n^{-1}$ factor which is singular on the entire $p_n = 0$ hyperplane. This can be made sense of if we test against $\phi \in \schwartz$ satisfying the following condition on its Fourier transform:
\begin{equs}
\widehat{\phi}(p) = 0 \text{ for all $p \in \R^n$ such that $p_n = 0$.}
\end{equs}
Note that this condition is precisely \eqref{eq:phi-vertical-integral-zero} written in Fourier space.
\end{remark}

\begin{remark}
Recall from \eqref{remark:fgf-fourier-series} the explicit Fourier series representation of a $\gff^1(\T^n)$:
\begin{equs}
A = \sum_{0 \neq \alpha \in \Z^n} \frac{Z(\alpha)}{|\alpha|} \e_\alpha,
\end{equs}
To try to put $A$ in the Coulomb gauge, we can add a multiple of $\alpha$ to each Fourier mode whenever $\alpha_n \neq 0$:
\begin{equs}
\tilde{A} &= \sum_{\substack{0 \neq \alpha \in \Z^n \\ \alpha_n \neq 0}} \frac{1}{|\alpha|} \bigg(Z(\alpha) - \frac{Z_n(\alpha)}{\alpha_n} \alpha\bigg)\e_\alpha + \sum_{\substack{0 \neq \alpha \in \Z^n \\ \alpha_n = 0}} \frac{1}{|\alpha|} Z_n(\alpha) \e_\alpha .
\end{equs}
The first term is indeed in the axial gauge, i.e. its $n$th coordinate is zero. However, the second term in general is not in the axial gauge. On the other hand, the second term is constant in the $x_n$ direction. 

The failure to put a general $1$-form on the torus in the axial gauge is a reflection of the following basic obstruction. If we change $A$ by adding a gradient, there is no way to change the line integral of $A$ along a straight line parallel to one of the coordinate axes. 
\end{remark}

Next, we discuss the covariance kernel of the $\gff^1(\R^n)$ in the axial gauge. It turns out that the covariance kernel can only be interpreted in a limiting sense, similar to the case of projected white noise (Proposition \ref{prop:projected-white-noise-correlation-functions}). For $\varep > 0$, $1 \leq i, j \leq n-1$, define the kernel (here $x = (x', x_n)$, $y = (y', y_n)$)
\begin{equation}\label{eq:axial-gauge-covariance-kernel}
\begin{aligned}
G^{\mrm{ax}, \varep}_{ij}(x, y) :=~ &\delta_{ij} G^1(x, y) + \frac{\delta_{ij}}{n}  \delta(x' - y') \big(\ind(x_n, y_n > 0) + \ind(x_n, y_n < 0)\big) \min(|x_n|, |y_n|) \\
&+ \frac{\delta_{ij}}{\omega_n} \int_\R du \int_\R dv \ind(|(x', u) - (y', v)| > \varep) |(x', u) - (y', v)|^{-n}  ~\times \\
&\quad \quad \quad \big(\ind(0 < u < x_n) - \ind(x_n < u < 0)\big) \big(\ind(0 < v < y_n) - \ind(y_n < v < 0)\big) \\
&- \frac{n}{\omega_n} \int_\R du \int_\R dv \ind(|(x', u) - (y', v)| > \varep) \frac{(x'_i - y'_i)(x'_j -y'_j)}{|(x', u) - (y', v)|^{n+2}}  ~\times \\
&\quad \quad \quad \big(\ind(0 < u < x_n) - \ind(x_n < u < 0)\big)\big(\ind(0 < v < y_n) - \ind(y_n < v < 0)\big)
\end{aligned}
\end{equation}

\begin{prop}\label{prop:axial-gauge-correlation-function}
Let $\tilde{A}$ be a $\gff^1(\R^n)$ in the axial gauge. Let $\phi = (\phi_1, \ldots, \phi_n)$, $\eta = (\eta_1, \ldots, \eta_n)$ be such that $\phi_j, \eta_j \in \schwartz(\R^n)$ satisfy \eqref{eq:phi-vertical-integral-zero} for $1 \leq j \leq n-1$, and $\phi_n, \eta_n \in \schwartz(\R^n)$. We have that
\begin{equs}
\Cov((\tilde{A}, \phi), (\tilde{A}, \eta)) = \lim_{\varep \downarrow 0} \sum_{i, j=1}^{n-1} \int_{\R^n} dx \int_{\R^n} dy \phi_i(x) G^{\mrm{ax}, \varep}_{ij}(x, y) \eta_j(y).
\end{equs}
\end{prop}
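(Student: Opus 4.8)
The plan is to unwind the definition of the axial-gauge field $\tilde A$ from Definition~\ref{def:axial-gauge} and reduce the covariance computation to combining three ingredients already available: the formula $(\tilde A,\phi) = (A,\phi) + (\partial_j\lambda,\phi_j)$, the relation $(\partial_j\lambda,\phi_j) = -(\lambda,\partial_j\phi_j) = -(A_n, \psi_{\partial_j\phi_j})$ from the construction of $\lambda$ (with $\psi$ as in \eqref{eq:psi-phi}), and the covariance structure of the underlying $\gff^1(\R^n)$ together with the kernel lemma for iterated Riesz transforms (Lemma~\ref{lemma:iterated-riesz-kernel}). First I would expand the bilinear form $\Cov((\tilde A,\phi),(\tilde A,\eta))$ into four terms: the ``pure'' term $\Cov((A,\phi),(A,\eta))$, the two cross terms $\Cov((A,\phi),(\partial_j\lambda,\eta_j))$ and its mirror, and the ``gradient-gradient'' term $\Cov((\partial_i\lambda,\phi_i),(\partial_j\lambda,\eta_j))$. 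Since $A$ is a $\gff^1(\R^n)$, the components $A_1,\dots,A_n$ are i.i.d.\ scalar GFFs, so every covariance on the right reduces to scalar GFF covariances $\Cov((A_k,f),(A_\ell,g)) = \delta_{k\ell}(f,(-\Delta)^{-1}g) = \delta_{k\ell}\int\int f(x)G^1(x,y)g(y)$ against the explicit kernel $G^1$ from Corollary~\ref{cor:fgf-covariance-kernel}.

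The pure term contributes $\sum_{i,j=1}^{n-1}\int\int \phi_i(x)\,\delta_{ij}G^1(x,y)\,\eta_j(y)$, which matches the first summand $\delta_{ij}G^1(x,y)$ in \eqref{eq:axial-gauge-covariance-kernel}. For the cross terms, I would write $\Cov((A_i,\phi_i),(\partial_n\lambda,\eta_j)) = -\Cov((A_i,\phi_i),(A_n,\psi_{\partial_j\eta_j}))$; because the GFF components are independent these vanish unless $i=n$, but the index ranges $1\le i,j\le n-1$ in the statement mean we are only testing against $\phi$ with indices below $n$ except for the $\phi_n$ slot, so a careful bookkeeping of which cross terms survive is needed — in fact the surviving cross contribution comes from pairing $\partial_i\lambda$ (which involves $A_n$) against $\partial_j\lambda$ (also $A_n$), i.e.\ it lands entirely in the gradient-gradient term. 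So the real content is the gradient-gradient term: $\Cov((\partial_i\lambda,\phi_i),(\partial_j\lambda,\eta_j)) = \Cov((A_n,\psi_{\partial_i\phi_i}),(A_n,\psi_{\partial_j\eta_j})) = (\psi_{\partial_i\phi_i},(-\Delta)^{-1}\psi_{\partial_j\eta_j})$. Here I would substitute the explicit formula \eqref{eq:psi-phi} for $\psi$, integrate by parts in the $x_n,y_n$ variables to move the derivatives $\partial_i,\partial_j$ (for $i,j<n$) off $\phi,\eta$, and recognize that the resulting double integral against $G^1$ produces exactly the remaining three terms in \eqref{eq:axial-gauge-covariance-kernel}: the $\delta(x'-y')$ boundary term arises from the Green's-function identity $(-\Delta)^{-1}$ applied to functions supported on vertical segments collapsing to the diagonal in the transverse variable, and the two $\varep\to 0$ limiting integrals arise because $G^1(x,y)\propto |x-y|^{2-n}$ differentiated twice in the transverse directions yields a kernel of the form $|x-y|^{-n}$ and $(x_i-y_i)(x_j-y_j)|x-y|^{-n-2}$, which are only conditionally convergent — precisely the singular-integral phenomenon handled in Lemma~\ref{lemma:iterated-riesz-kernel}.

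The key steps in order: (1) expand the covariance into pure/cross/gradient-gradient pieces using bilinearity; (2) use independence and the scalar GFF covariance to kill the genuinely cross pieces and reduce everything to $(-\Delta)^{-1}$-pairings of the functions $\psi_\phi$; (3) plug in \eqref{eq:psi-phi}, integrate by parts to relate $\psi_{\partial_j\phi_j}$ back to truncated vertical integrals of $\phi$, and carry out the $x_n,y_n$ integrations against the Green's function; (4) identify the transverse-delta boundary term by an explicit one-dimensional Green's-function computation (the kernel of $(-\partial_u^2)^{-1}$ on $\R$ restricted to half-lines gives $\min(|x_n|,|y_n|)$); (5) handle the two remaining terms, which are differentiated Riesz kernels, by invoking Lemma~\ref{lemma:iterated-riesz-kernel} to justify the principal-value $\lim_{\varep\downarrow 0}$ and extract the constants $1/\omega_n$ and $n/\omega_n$; (6) assemble the four pieces and match them term-by-term with \eqref{eq:axial-gauge-covariance-kernel}, with the $\varep\to 0$ limit applied to the full expression.

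The main obstacle, as in Proposition~\ref{prop:projected-white-noise-correlation-functions}, is step (5): the naive covariance kernel does not exist as an absolutely convergent integral because differentiating $G^1$ twice in transverse directions produces a Calderón–Zygmund-type singularity, so one must carefully track the regularization $\ind(|(x',u)-(y',v)|>\varep)$ and show that the limit exists and equals the claimed expression — this is where Lemma~\ref{lemma:iterated-riesz-kernel} does the heavy lifting, but applying it requires first verifying that the truncated vertical integrals $\psi_{\partial_j\phi_j}$ are in $\Phi$ (or at least in a class where the lemma applies), which uses the hypothesis \eqref{eq:phi-vertical-integral-zero} to guarantee continuity of $\psi_\phi$ and Schwartz decay. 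A secondary bookkeeping nuisance is keeping the index ranges straight: the statement's kernel is indexed by $1\le i,j\le n-1$, so one must confirm that the $\phi_n,\eta_n$ components (which are allowed to be arbitrary Schwartz functions, not satisfying \eqref{eq:phi-vertical-integral-zero}) contribute nothing to the final covariance after the gauge-fixing cancellation $(\tilde A,\phi)=0$ for $\phi=(0,\dots,0,\phi_n)$, a fact already verified in the text following Definition~\ref{def:axial-gauge}.
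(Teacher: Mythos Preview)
Your overall strategy is the same as the paper's and would work, but two places are misattributed and would cause trouble if you carried them out as written.

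First, the cross-term bookkeeping. Your claim that the cross terms ``vanish by independence'' and that everything ``lands entirely in the gradient-gradient term'' is not quite right: the pairing of $(A_n,\phi_n)$ from $(A,\phi)$ with $(A_n,\psi_{\partial_j\eta_j})$ from the gradient part is nonzero. The paper avoids this mess by simplifying $(\tilde A,\phi)$ \emph{before} computing covariances. The key identities are $\psi_{\partial_j\phi_j}=\partial_j\psi_{\phi_j}$ for $j\le n-1$ and $\psi_{\partial_n\phi_n}=\phi_n$, both immediate from \eqref{eq:psi-phi}. Plugging these into the definition gives
\[
(\tilde A,\phi)=\sum_{j=1}^{n-1}\bigl((A_j,\phi_j)-(A_n,\partial_j\psi_{\phi_j})\bigr),
\]
so the $\phi_n$ and $\eta_n$ components drop out exactly, and independence of $A_n$ from $A_1,\dots,A_{n-1}$ then splits the covariance cleanly into $\sum_{j=1}^{n-1}(\phi_j,\eta_j)_{\dot H^{-1}}$ plus $\sum_{i,j=1}^{n-1}(\partial_i\psi_{\phi_i},\partial_j\psi_{\eta_j})_{\dot H^{-1}}$, with no cross terms at all.

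Second, the origin of the $\delta(x'-y')\min(|x_n|,|y_n|)$ term. It does not come from a one-dimensional Green's-function computation as you suggest. The paper writes $(\partial_i\psi_{\phi_i},\partial_j\psi_{\eta_j})_{\dot H^{-1}}=-(\psi_{\phi_i},\partial_i\partial_j(-\Delta)^{-1}\psi_{\eta_j})$ and applies Lemma~\ref{lemma:iterated-riesz-kernel} directly to the pair $\psi_{\phi_i},\psi_{\eta_j}$. That lemma produces three pieces: the two principal-value integrals you identified, plus a local term $-\tfrac{\delta_{ij}}{n}(\psi_{\phi_i},\psi_{\eta_j})$. It is this $L^2$ pairing, computed explicitly in Lemma~\ref{lemma:psi-phi-l2-norm} by unwinding the definition of $\psi$ and applying Fubini, that yields the $\delta(x'-y')\min(|x_n|,|y_n|)$ kernel. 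The remaining two principal-value pieces are converted back to integrals against $\phi_i,\eta_j$ (rather than $\psi_{\phi_i},\psi_{\eta_j}$) via Lemma~\ref{lemma:psi-phi-kernel}, which is again just Fubini, not integration by parts in $x_n,y_n$.
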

\begin{proof}
Let $\tilde{A}$ be constructed from $A$, a $\gff^1(\R^n)$, as in Definition \ref{def:axial-gauge}. We have that
\begin{equs}
(\tilde{A}, \phi) &= (A, \phi) - (\lambda, \psi_{\ptl_j \phi_j}) \\
&= \sum_{j=1}^{n-1} \big( (A_j, \phi_j) - (A_n, \ptl_j \psi_{\phi_j})\big) + (A_n, \phi_n) - (A_n, \phi_n) \\
&= \sum_{j=1}^{n-1} \big( (A_j, \phi_j) - (A_n, \ptl_j \psi_{\phi_j})\big).
\end{equs}
In the second identity, we used that 
\begin{equs}
\psi_{\sum_{j=1}^n \ptl_j \phi_j} = \sum_{j=1}^{n-1} \ptl_j \psi_{\phi_j} + \psi_{\ptl_n \phi_n} \quad \text{and} \quad \psi_{\ptl_n \phi_n} = \phi_n,
\end{equs}
which both follow directly from the definition \eqref{eq:psi-phi}. Since $A_n$ is independent of $(A_1, \ldots, A_{n-1})$, we have that 
\begin{equs}
\Cov((\tilde{A}, \phi), (\tilde{A}, \eta)) &= \sum_{j=1}^{n-1} \Cov((A_j, \phi_j), (A_j, \eta_j)) + \sum_{i, j=1}^{n-1} \Cov((A_n, \ptl_i \psi_{\phi_i}), (A_n, \ptl_j \psi_{\eta_j})) \\
&= \sum_{j=1}^{n-1} (\phi_j, \eta_j)_{\dot{H}^{-1}(\R^n)} + \sum_{i, j=1}^{n-1} (\ptl_i \psi_{\phi_i}, \ptl_j \psi_{\eta_j})_{\dot{H}^{-1}(\R^n)}.
\end{equs}
The first term gives the $\delta_{ij}G^1(x, y)$ term in the definition \eqref{eq:axial-gauge-covariance-kernel} of $G^{\mrm{ax}, \varep}_{ij}$. 
We thus focus on the second term. For fixed $1 \leq i, j \leq n-1$, we have that
\begin{equs}
(\ptl_i \psi_{\phi_i}, \ptl_j \psi_{\eta_j})_{\dot{H}^{-1}(\R^n)} &= -(\psi_{\phi_i}, \ptl_i \ptl_j \psi_{\eta_j})_{\dot{H}^{-1}(\R^n)} \\
&= - (\psi_{\phi_i}, \ptl_i \ptl_j (-\Delta)^{-1} \psi_{\eta_j}).
\end{equs}
By Lemma \ref{lemma:iterated-riesz-kernel}, we have that 
\begin{equs}
- (\psi_{\phi_i}, \ptl_i \ptl_j (-\Delta)^{-1} \psi_{\eta_j}) = \frac{\delta_{ij}}{n}(\psi_{\phi_i}, \psi_{\eta_j}) &+ \frac{\delta_{ij}}{\omega_n} \lim_{\varep \downarrow 0} \int \int_{|x-y| > \varep} \psi_{\phi_i}(x) \psi_{\eta_j}(y)|x-y|^{-n} dx dy ~\\
&- \frac{n}{\omega_n} \lim_{\varep \downarrow 0} \int \int_{|x-y| > \varep} \psi_{\phi_i}(x) \psi_{\eta_j}(y) \frac{(x_i - y_i)(x_j - y_j)}{|x-y|^{n+2}} dx dy. 
\end{equs}
The desired result now follows by applying Lemma \ref{lemma:psi-phi-l2-norm} to the first term and Lemma \ref{lemma:psi-phi-kernel} to the last two terms, in order to turn each of the three terms into integrals directly involving $\phi_i, \eta_j$.
\end{proof}

\section{Subspace restrictions}\label{sec:subspacerestrictions}

In this section, we discuss restrictions of various fractional Gaussian forms on $\R^n$ to subspaces (which for us will always be taken to be $(n-1)$-dimensional hyperplanes). First, we review the scalar case, and then we go on to discuss the more general case of $k$-forms.

\subsection{Review of the scalar case}

In this subsection, we discuss restrictions of (scalar) fractional Gaussian fields to subspaces. We first review the discussion in \cite[Section 7]{lodhia2016fractional}, and then we will present an alternative point of view, based on the fractional stochastic heat equation. Such a viewpoint has previously been used in the probabilistic approach to Liouville Conformal Field Theory, where the GFF on the unit disk in 2D is seen as the solution to a fractional stochastic heat equation on the 1D torus $\torus$. See \cite[Section 5.1]{GKR2024}.

Fix $x_0 \in \R$. Given $\phi \in \schwartz(\R^{n-1})$, we may define $\phi^{\uparrow, x_0} \in \schwartz'(\R^n)$ by
\begin{equs}
(\phi^{\uparrow, x_0}, \psi) := \int_{\R^{n-1}} \phi(x') \psi(x', x_0) dx' \text{ for all $\psi \in \schwartz(\R^n)$.}
\end{equs}
We first show the following relation between negative Sobolev norms of $\phi$ and $\phi^{\uparrow, x_0}$.

\begin{lemma}\label{lemma:restriction-sobolev-bound}
Let $x_0 \in \R$, $s > \frac{1}{2}$. For all $\phi \in \schwartz(\R^{n-1}) \cap \dot{H}^{\frac{1}{2}-s}(\R^{n-1})$, we have that $\phi^{\uparrow, x_0} \in \dot{H}^{-s}(\R^n)$, and moreover
\begin{equs}
\big\|\phi^{\uparrow, x_0}\big\|_{\dot{H}^{-s}(\R^n)}^2 = \frac{1}{2\pi} \int_\R du (1+u^2)^{-s} \big\| \phi\big\|_{\dot{H}^{\frac{1}{2}-s}(\R^{n-1})}^2.
\end{equs}
\end{lemma}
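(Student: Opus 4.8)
The plan is to compute $\|\phi^{\uparrow, x_0}\|_{\dot{H}^{-s}(\R^n)}^2$ directly via the Fourier-space definition of the homogeneous Sobolev norm. First I would identify the Fourier transform of $\phi^{\uparrow, x_0}$: writing $p = (p', p_n) \in \R^{n-1} \times \R$, the distribution $\phi^{\uparrow, x_0}$ is (up to normalization) $\phi(x') \otimes \delta_{x_0}(x_n)$, so its Fourier transform is
\begin{equs}
(\mc{F} \phi^{\uparrow, x_0})(p', p_n) = \frac{1}{(2\pi)^{1/2}} e^{-\icomplex p_n x_0} (\mc{F}_{\R^{n-1}} \phi)(p'),
\end{equs}
where $\mc{F}_{\R^{n-1}}$ is the $(n-1)$-dimensional Fourier transform. (I would double check the exact $2\pi$ power against the convention in \eqref{eq:fourier-transform}, since the $n$-dimensional transform factorizes as a product of a $1$-dimensional and an $(n-1)$-dimensional transform.) The key point is that $|e^{-\icomplex p_n x_0}| = 1$, so the dependence on $x_0$ drops out of the modulus, which is consistent with the factor $\int_\R du\,(1+u^2)^{-s}$ on the right-hand side being $x_0$-independent.

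Next I would write out the norm:
\begin{equs}
\big\|\phi^{\uparrow, x_0}\big\|_{\dot{H}^{-s}(\R^n)}^2 = \int_{\R^n} |p|^{-2s} |(\mc{F}\phi^{\uparrow,x_0})(p)|^2 \, dp = \frac{1}{2\pi} \int_{\R^{n-1}} |(\mc{F}_{\R^{n-1}}\phi)(p')|^2 \bigg( \int_\R (|p'|^2 + p_n^2)^{-s} \, dp_n \bigg) dp'.
\end{equs}
The inner integral is evaluated by the substitution $p_n = |p'| u$ (valid for $p' \neq 0$, and the set $p' = 0$ has measure zero), giving $\int_\R (|p'|^2 + p_n^2)^{-s} dp_n = |p'|^{1-2s} \int_\R (1 + u^2)^{-s} \, du$. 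This substitution is exactly where the hypothesis $s > \frac{1}{2}$ enters: the integral $\int_\R (1+u^2)^{-s} du$ converges iff $2s > 1$. Plugging back in and recognizing $\int_{\R^{n-1}} |p'|^{1-2s} |(\mc{F}_{\R^{n-1}}\phi)(p')|^2 dp' = \|\phi\|_{\dot{H}^{\frac{1}{2}-s}(\R^{n-1})}^2$ yields the claimed identity.

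The main technical obstacle is not any single computation but making the manipulations rigorous for $\phi$ in the stated class rather than merely $\phi \in \Phi(\R^{n-1})$: one must justify that $\phi^{\uparrow, x_0}$ genuinely lies in $\dot{H}^{-s}(\R^n)$ (i.e.\ that the Fourier integral above is finite, which the computation itself establishes once Fubini is licensed) and that testing $\phi^{\uparrow, x_0}$ against Schwartz functions is consistent with the Fourier representation. I would handle this by first proving the identity for $\phi \in \Phi(\R^{n-1})$ (or Schwartz functions vanishing to infinite order at the origin in Fourier space), where all integrals are absolutely convergent and Fubini applies without fuss, and then extending to general $\phi \in \schwartz(\R^{n-1}) \cap \dot{H}^{\frac{1}{2}-s}(\R^{n-1})$ by a density argument using Lemma \ref{lemma:lizorkin-space-dense-fractional-sobolev-space} (density of $\Phi$ in $\dot{H}^{\frac{1}{2}-s}$) together with the isometry just derived, which shows the map $\phi \mapsto \phi^{\uparrow, x_0}$ is bounded from $\dot{H}^{\frac{1}{2}-s}(\R^{n-1})$ into $\dot{H}^{-s}(\R^n)$ and hence extends continuously. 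A minor point to watch is that $\phi^{\uparrow,x_0}$ should be checked to be a well-defined element of $\Phi'(\R^n)$ (or $\schwartz'$) so that its Sobolev norm is even meaningful; this is immediate from the defining formula since pairing against $\psi(\cdot, x_0)$ restricted to a hyperplane is continuous on $\schwartz(\R^n)$.
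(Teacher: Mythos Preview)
Your Fourier computation is exactly the one the paper carries out: identify $\widehat{\phi^{\uparrow,x_0}}(p',p_n)=(2\pi)^{-1/2}e^{-\icomplex p_n x_0}\widehat{\phi}(p')$, then do the change of variables $p_n=|p'|u$ in the $p_n$-integral to produce the factor $|p'|^{1-2s}\int_\R(1+u^2)^{-s}du$. So the analytic heart is the same.

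The rigor strategies differ, and yours has a small gap. You propose to first prove the identity for $\phi\in\Phi(\R^{n-1})$ and then extend by density in $\dot H^{\frac12-s}(\R^{n-1})$. But restricting to $\phi\in\Phi(\R^{n-1})$ does not make $\phi^{\uparrow,x_0}$ any nicer on the $\R^n$ side: it is still a measure supported on a hyperplane, not an element of the pre-Hilbert space $\mathring H^{-s}(\R^n)\subset\schwartz(\R^n)$ from which $\dot H^{-s}(\R^n)$ is built as an abstract completion. So the very statement ``$\phi^{\uparrow,x_0}\in\dot H^{-s}(\R^n)$'' still needs justification even in your base case, and noting that $\phi^{\uparrow,x_0}\in\schwartz'(\R^n)$ is not enough by itself. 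The paper closes this gap by mollifying on $\R^n$: it sets $\psi_k:=\rho_k*\phi^{\uparrow,x_0}\in\schwartz(\R^n)$, computes $\widehat{\psi}_k$ explicitly, and uses dominated convergence (against the bound coming from your same change-of-variables calculation) to show $\{\psi_k\}$ is Cauchy in $\dot H^{-s}(\R^n)$ with $\psi_k\to\phi^{\uparrow,x_0}$ in $\schwartz'$. This simultaneously proves membership in the completion and the norm identity, for all $\phi$ in the stated class at once---no separate density step in $\R^{n-1}$ is needed. Your approach can be repaired by inserting exactly this mollification argument for the base case; after that your density extension becomes redundant.
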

\begin{proof}
To show that $\phi^{\uparrow, x_0} \in \dot{H}^{-s}(\R^n)$, it suffices to produce a Cauchy sequence $\{\psi_k\}_{k \geq 1} \sse \dot{H}^{-s}(\R^n)$ such that $\psi_k \ra \phi^{\uparrow, x_0}$ in $\mc{S}'(\R^n)$. We first record the following preliminary calculation that will be used later.
\begin{equs}\label{eq:restriction-sobolev-bound-intermediate}
\hspace{-5mm}\int_{\R^{n-1}} dp' |\widehat{\phi}(p')|^2 \int_{\R} dp_n (|p'|^2 + p_n^2)^{-s} = \int_\R du (1+u^2)^{-s} \int_{\R^{n-1}} dp' |\widehat{\phi}(p')|^2 |p'|^{1 - 2s} \lesssim_s \big\|\phi\big\|_{\dot{H}^{\frac{1}{2}-s}(\R^{n-1})}.
\end{equs}
The identity follows from the change of variables formula:
\begin{equs}
\int_\R dp_1 (a + p_1^2)^{-s} = a^{\frac{1}{2}-s} \int_\R du (1 + u^2)^{-s}, ~~ \text{ for all $a \in \R$,}
\end{equs}
and the inequality follows from the fact that $\int_\R du (1+u^2)^{-s} < \infty$ if $s > \frac{1}{2}$.

Now, to produce the Cauchy sequence, let $\rho \in C^\infty_c(\R^n)$ be a non-negative compactly supported function which is 1 on a ball around the origin, and such that $\int_{\R^n} dx \rho(x) = 1$, i.e. $\rho$ is $L^1$-normalized. For $k \geq 1$, define the re-scaled version $\rho_k(x) := (2^k)^n \rho(2^k x)$, which is still $L^1$-normalized. Define $\psi_k := \rho_k * \phi^{\uparrow, x_0}$. We have that $\psi_k \in \schwartz(\R^n)$ and $\psi_k \ra \phi^{\uparrow, x_0}$ in $\schwartz'(\R^n)$. Next, we show that $\{\psi_k\}_{k \geq 1}$ is Cauchy in $\dot{H}^{-s}(\R^n)$. Towards this end, by direct calculation we have that
\begin{equs}
\widehat{\psi}_k(p) = e^{-\icomplex p_n x_0} (2\pi)^{\frac{n-1}{2}} \widehat{\phi}(p') \widehat{\rho}_k(p), ~~ p = (p', p_n) \in \R^n.
\end{equs}
We have that $\|\widehat{\rho}_k\|_{L^\infty} \lesssim \|\rho\|_{L^1} = 1$. Moreover, since $\rho_k \ra \delta_0$ as $k \toinf$, one may verify that $\widehat{\rho}_k \ra (2\pi)^{-\frac{n}{2}}$ pointwise as $k \toinf$. From this, the bound \eqref{eq:restriction-sobolev-bound-intermediate}, and the dominated convergence theorem, it follows that
\begin{equs}
\lim_{k \toinf} \int_{\R^n} dp |p|^{-2s} \big|\widehat{\psi}_k(p) - e^{-\icomplex p_n x_0} (2\pi)^{-\frac{1}{2}} \widehat{\phi}(p')\big|^2 = 0,
\end{equs}
which implies that the sequence $\{\psi_k\}_{k \geq 1}$ is Cauchy in $\dot{H}^{-s}(\R^n)$. Moreover, we have from the above display that
\begin{equs}
\big\|\phi^{\uparrow, x_0}\big\|_{\dot{H}^{-s(\R^n)}}^2 = \lim_{k \toinf} \big\|\psi_k\big\|_{\dot{H}^{-s}(\R^n)}^2 = \frac{1}{2\pi}  \int_{\R^n} dp |\widehat{\phi}(p')|^2 |p|^{-2s} = \frac{1}{2\pi}  \int_\R du (1 + u^2)^{-s} \big\|\phi\big\|_{\dot{H}^{\frac{1}{2}-s}(\R^{n-1})}^2,
\end{equs}
where in the last identity, we applied \eqref{eq:restriction-sobolev-bound-intermediate}.
\end{proof}

Using Lemma \ref{lemma:restriction-sobolev-bound} and Remark \ref{remark:gaussian-hilbert-space}, we may define the restriction of an $\fgf_s(\R^n)$ when $s > \frac{1}{2}$ as follows.

\begin{definition}[Restriction of fractional Gaussian fields]\label{def:restriction-fgf}
Let $A$ be a $\fgf_s(\R^n)$. Then the restriction of $A$ to the hyperplane $\{(x', x_0) : x' \in \R^{n-1}\} \sse \R^n$, which we denote by $A^{\downarrow, x_0}$, is the stochastic process $(A^{\downarrow, x_0}, \phi) : \phi \in \schwartz(\R^{n-1}) \cap \dot{H}^{\frac{1}{2}-s}(\R^{n-1})\}$ defined by 
\begin{equs}
(A^{\downarrow, x_0}, \phi) := (A, \phi^{\uparrow, x_0}).
\end{equs}
This is well-defined by Remark \ref{remark:gaussian-hilbert-space} and Lemma \ref{lemma:restriction-sobolev-bound}. Moreover, by the same lemma, we have that
\begin{equs}
\Var{(A^{\downarrow, x_0}, \phi)} = \big\|\phi^{\uparrow, x_0}\big\|_{\dot{H}^{-s}(\R^n)}^2 = \frac{1}{2\pi} \int_\R du (1+u^2)^{-s} \big\|\phi\big\|_{\dot{H}^{\frac{1}{2}-s}(\R^{n-1})}^2. 
\end{equs}
By the usual $L^2$ isometry arguments, we may then extend this to a stochastic process $((A^{\downarrow, x_0}, \phi) : \phi \in \dot{H}^{\frac{1}{2} - s}(\R^{n-1}))$, and we also see that $A^{\downarrow, x_0}$ is a (multiple of) $\fgf_{s - \frac{1}{2}}(\R^{n-1})$.
\end{definition}

\begin{remark}
The reason why the $s$ parameter decreases by $\frac{1}{2}$ upon restriction can be seen as follows. Given $A \sim \fgf_s(\R^n)$, the Hurst parameter $H = 2s - n$ governs the regularity of $A$. Now, if we restrict $A$ to an $(n-1)$-dimensional hyperplane, the regularity should not change. Denoting by $s^{\downarrow}$ the new $s$ parameter corresponding to the restriction of $A$, we see that
\begin{equs}
2s - n = 2 s^{\downarrow} - (n-1),
\end{equs}
which readily gives that $s^{\downarrow} = s - \frac{1}{2}$.

Another heuristic way to see how $s$ should transform is to note that at least in the simple range $0 < 2s < n$, the covariance kernel is
\begin{equs}
G^s(x, y) \sim |x-y|^{2s - n}, ~~ x, y \in \R^n.
\end{equs}
Now if we restrict $x, y$ to lie in the hyperplane $\{(x',x_0) : x' \in \R^{n-1}\}$, then the covariance kernel specializes to
\begin{equs}
G^{s}((x', x_0), (y', x_0)) = |x' - y'|^{2s-n} = |x'-y'|^{2s^\downarrow - (n-1)}.
\end{equs}
This leads to the same equation $2s - n = 2s^\downarrow - (n-1)$ for $s^{\downarrow}$ as before.
\end{remark}

We begin with some preliminary discussion on fractional stochastic heat equations. Our intention is not to give a comprehensive introduction to stochastic heat equations (for that, see e.g. the lecture notes \cite{HairerSPDEcourse}), and thus the reader unfamiliar with this topic may feel free to skip the following discussion.

We work on $\R^n$, and we view the first coordinate as the time direction, and the last $n-1$ coordinates as the spatial directions. Given a function $\phi : \R^n \ra \R$, we will write $\ptl_t \phi$ to denote the time-derivative, and $\Delta_x \phi$ to denote the Laplacian in only the spatial directions. We will write $\Delta = \ptl_{tt} + \Delta_x$ to denote the Laplacian on $\R^n$.

Let $\xi$ be a white noise\footnote{In the following discussion, we will use $\xi$ rather than $\noise$ because this is what is typically used in the stochastic PDE literature.} on $\R^n$. In stochastic PDE literature, $\xi$ is usually referred to as a space-time white noise, since we are viewing one of the directions of $\R^n$ as the time direction. A fractional stochastic heat equation is a stochastic PDE of the form
\begin{equs}\tag{FSHE}\label{eq:FSHE}
\ptl_t A = -(-\Delta_x)^{\frac{s}{2}} A + \xi.
\end{equs}
We will be primarily interested with stationary solutions $(A(t), t \in \R)$ to \eqref{eq:FSHE}, which are formally written 
\begin{equs}
A(t) = \int_{-\infty}^t e^{-(t-s)(-\Delta_x)^{\frac{s}{2}}} \xi(s) ds.
\end{equs}
More precisely, the above can be interpreted as a stochastic process $\big((A(t), \phi), \phi \in \schwartz(\R^{n-1})\big)$, defined as
\begin{equs}
(A(t), \phi) := \int_{-\infty}^t ds \bigg(\xi(s), e^{-(t-s)(-\Delta_x)^{\frac{s}{2}}} \phi\bigg) = \Big(\xi, F_t(\phi)\Big),
\end{equs}
where the middle term is still only formal, and the precise interpretation is given by the right-most term. Here, $F_t(\phi) \in L^2(\R^n)$ is defined as $F_t(u, x) := \big(e^{-(t-u)(-\Delta_x)^{\frac{s}{2}}} \phi\big)(x) \ind(u \leq t)$, for $u \in \R$, $x \in \R^{n-1}$. One may check in Fourier space that $F_t(\phi)$ is indeed in $L^2(\R^n)$, and thus $(\xi, F_t(\phi))$ is well-defined.

The following proposition shows that in the case $s = 1$, a stationary solution to \eqref{eq:FSHE} is in fact a GFF.

\begin{prop}\label{prop:fractional-she}
Let $A$ be a stationary solution to the $(n-1)$-dimensional fractional stochastic heat equation
\[ \ptl_t A = -(-\Delta_x)^{1/2} A + \xi. \]
When viewed as a distribution on $\R^n$, $A$ is a $\gff$.
\end{prop}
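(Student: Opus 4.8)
The plan is to verify that the stationary solution $A$ to the fractional stochastic heat equation with $s=1$ has the covariance structure of a $\gff(\R^n) = \fgf_1^0(\R^n)$, i.e.\ $\Cov((A,\phi),(A,\psi)) = (\phi,(-\Delta)^{-1}\psi)$ for suitable test functions $\phi,\psi$ on $\R^n$. The key point is that the space-time white noise $\xi$ on $\R^n$, when integrated against the heat kernel $e^{-(t-s)(-\Delta_x)^{1/2}}$ over past times, should produce exactly the full Laplacian inverse on $\R^n$ — the ``square root'' operator $(-\Delta_x)^{1/2}$ in the spatial directions combines with the first-order time evolution to yield the $\R^n$ Laplacian.

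First I would set up the computation in Fourier space. Write $p = (p_1, p') \in \R \times \R^{n-1}$ for the frequency variable dual to $(t,x)$. Testing $A$ against a space-time test function $\Phi \in \Omega^0\Phi(\R^n)$ (or $\schwartz$), the formal solution $A(t) = \int_{-\infty}^t e^{-(t-s)(-\Delta_x)^{1/2}} \xi(s)\,ds$ gives $(A,\Phi) = (\xi, G_\Phi)$ where $G_\Phi(s,x) = \int_s^\infty \big(e^{-(u-s)(-\Delta_x)^{1/2}}\Phi(u,\cdot)\big)(x)\,du$. By the white noise covariance \eqref{eq:white-noise-covariance}, $\Cov((A,\Phi),(A,\Psi)) = (G_\Phi, G_\Psi)_{L^2(\R^n)}$, and I would compute this $L^2$ inner product via Plancherel. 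The spatial Fourier transform turns $e^{-(u-s)(-\Delta_x)^{1/2}}$ into multiplication by $e^{-(u-s)|p'|}$, and then the $s$-integral $\int_{-\infty}^\infty \Big|\int_s^\infty e^{-(u-s)|p'|} \widehat{\Phi}(u,p')\,du\Big|^2 ds$ becomes, after a further Fourier transform in time and the standard identity $\int_0^\infty e^{-a r} e^{-\icomplex p_1 r}\,dr = (a + \icomplex p_1)^{-1}$, something proportional to $\int |\widehat{\Phi}(p_1,p')|^2 \frac{1}{|p'|^2 + p_1^2}\,dp_1\,dp' = (\Phi, (-\Delta)^{-1}\Phi)$ (up to the normalization constant mentioned in the proposition statement, hence ``a $\gff$'' rather than exactly). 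Polarizing gives the cross-covariance, and this matches \eqref{eq:fgf-covariance} with $s=1$ for the $0$-form FGF on $\R^n$.

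The remaining steps are to make the formal manipulations rigorous: one must check that $G_\Phi \in L^2(\R^n)$ so that $(\xi, G_\Phi)$ is well-defined (this is the ``$F_t(\phi) \in L^2$'' remark already made in the excerpt, now applied to the space-time smeared version), justify the interchange of integration/Fourier transforms via Fubini and the decay of Schwartz/Lizorkin functions, and confirm that $A$ so constructed is indeed a random element of the appropriate distribution space $\Omega^0\Phi'(\R^n)$ (or $\schwartz'$), invoking the Bochner--Minlos-type considerations from Remark~\ref{remark:larger-space}. I would also spell out that stationarity of the solution corresponds precisely to integrating from $-\infty$ (as opposed to from a fixed initial time), which is what makes the time-Fourier computation clean.

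The main obstacle I anticipate is purely bookkeeping: tracking the exact normalization constant (powers of $2\pi$ from the Fourier conventions in \eqref{eq:fourier-transform}, and the factor of $2$ that typically appears in $\int_0^\infty e^{-2a r}\,dr = 1/(2a)$ versus the $|p'|^2 + p_1^2$ denominator) to decide whether $A$ is literally a $\gff$ or a deterministic multiple of one — the proposition hedges with ``is a $\gff$'' presumably up to such a constant, so I would state the covariance identity up to an explicit positive constant and note that rescaling $\xi$ (or $A$) absorbs it. A secondary subtlety is the choice of test function class: since $(-\Delta)^{-1}$ on $\R^n$ is only well-behaved on Lizorkin functions (or mean-zero-type conditions when $n=2$), I would phrase the final identification using $\Omega^0\Phi'(\R^n)$ and the density Lemma~\ref{lemma:lizorkin-space-dense-fractional-sobolev-space}, consistent with how $\gff^k(\R^n)$ is defined earlier in Section~\ref{sec:fgfconstruction}, rather than fighting with the $n=2$ borderline case separately.
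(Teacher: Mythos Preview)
Your approach is correct and yields exactly $\Var((A,\Phi)) = (\Phi,(-\Delta)^{-1}\Phi)$ with no spurious constant: writing $\widehat{G_\Phi}(s,p') = \int_s^\infty e^{-(u-s)|p'|}\widehat{\Phi}(u,p')\,du$, a time-Fourier transform gives $\widehat{G_\Phi}(p_1,p') = \widehat{\Phi}(p_1,p')/(|p'|-\icomplex p_1)$, so Plancherel produces $\int |\widehat{\Phi}(p)|^2/|p|^2\,dp$ on the nose. Your hedging about a multiplicative constant is unnecessary here (the $1/2$ you worry about is the one that shows up in the \emph{fixed-time} marginal, Lemma~\ref{lemma:stationary-fractional-she}, not in the space-time covariance).

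The paper takes a different route that avoids Fourier analysis altogether. It sets $\psi = (-\Delta)^{-1}\phi$ so that $\phi = (-\partial_{tt}-\Delta_x)\psi$, and then uses integration by parts in the time variable to collapse the inner integral $\int_u^\infty e^{-(t-u)(-\Delta_x)^{1/2}}\phi(t)\,dt$ to the explicit expression $(\partial_u + (-\Delta_x)^{1/2})\psi(u)$. The variance then becomes $\|(\partial_u + (-\Delta_x)^{1/2})\psi\|_{L^2}^2$, and expanding the square gives $(\psi,-\partial_{uu}\psi) + (\psi,-\Delta_x\psi)$ plus a cross term $2(\partial_u\psi,(-\Delta_x)^{1/2}\psi)$ that vanishes because it is the integral of a total time-derivative. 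Your Fourier computation is more mechanical and arguably the ``default'' argument; the paper's real-space calculation is a bit slicker in that it never writes down a Fourier multiplier, and it makes transparent that the operator $\partial_u + (-\Delta_x)^{1/2}$ is the relevant ``square root'' of $-\Delta$ adapted to the time-space splitting. Either way the analytic justifications (Fubini, $G_\Phi\in L^2$, choice of test-function class) are essentially the same.
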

\begin{proof}
We start with the formal representation of a stationary solution
\[ A(t) = \int_{-\infty}^t e^{-(t-u)(-\Delta_x)^{1/2}} \xi(u) du. \]
Here, we write $\Delta_x$ to denote the Laplacian in the spatial variables. Let $\phi \in \Phi(\R^n)$ be a test function on $\R^n$. We can view $\phi$ as a function $\phi : \R \ra \mc{S}(\R^{n-1})$. We want to show that
\[ \mrm{Var}((A, \phi)) = (\phi, (-\Delta)^{-1} \phi) = (\phi, (-\ptl_{tt} - \Delta_x)^{-1} \phi).\]
Towards this end, write
\[\begin{split}
(A, \phi) &= \int_\R dt (A(t), \phi(t)) = \int_\R dt \int_{-\infty}^t du (e^{-(t-u)(-\Delta_x)^{1/2}} \xi(u), \phi(t))  \\
&= \int_\R dt \int_{-\infty}^t (\xi(u), e^{-(t-u) (-\Delta_x)^{1/2}} \phi(t)) \\
&= \int_\R du \bigg(\xi(u), \int_u^\infty dt e^{-(t-u)(-\Delta_x)^{1/2}} \phi(t) \bigg).
\end{split}\] 
Now, let $\psi = (-\Delta)^{-1} \phi = (-\ptl_{tt} -\Delta_x)^{-1} \phi$, so that $\phi = (-\ptl_{tt} -\Delta_x ) \psi$. Fix $u \in \R$. By integration by parts, we have that
\begin{equs}
-\int_u^\infty dt e^{-(t-u)(-\Delta_x)^{\frac{1}{2}}} \ptl_{tt} \psi(t) &= -\int_u^\infty dt (-\Delta_x)^{\frac{1}{2}} e^{-(t-u)(-\Delta_x)^{\frac{1}{2}}} \ptl_t \psi(t) + \ptl_u \psi(u) \\
&= \int_u^\infty dt \Delta_x e^{-(t-u)(-\Delta_x)^{\frac{1}{2}}} \psi(t) + (-\Delta_x)^{\frac{1}{2}}\psi(u) + \ptl_u \psi(u) .
\end{equs}
Inserting this identity and using that $\phi = (-\ptl_{tt} - \Delta_x) \psi$, we obtain
\begin{equs}
\int_u^\infty dt e^{-(t-u)(-\Delta_x)^{\frac{1}{2}}} \phi(t) &= (-\Delta_x)^{\frac{1}{2}} \psi(u) + \ptl_u \psi(u).
\end{equs}
From this, we obtain
\[ (A, \phi) = \int_\R du \big(\xi(u), (\ptl_u + (-\Delta_x)^{1/2}) \psi(u)\big). \]
It follows that
\[\begin{split}
\mrm{Var}((A, \phi)) &= \big((\ptl_u + (-\Delta_x)^{1/2}) \psi, (\ptl_u + (-\Delta_x)^{1/2}) \psi\big) \\
&= \big(\psi, (-\ptl_{uu}) \psi\big) + \big(\psi, (-\Delta_x) \psi\big) + 2 \big(\ptl_u \psi, (-\Delta_x)^{1/2} \psi\big).
\end{split}\]
Observe that
\[ 2 \big(\ptl_u \psi, (-\Delta_x)^{1/2} \psi\big) = \int_\R du \frac{d}{du} \big(\psi(u), (-\Delta_x)^{1/2} \psi(u)\big) = 0,  \]
since $\psi$ decays at infinity. We thus have that
\[ \mrm{Var}((A, \phi)) = (\psi, (-\ptl_{uu} - \Delta_x) \psi) = (\psi, (-\Delta) \psi).\]
The desired result now follows upon recalling that $\psi = (-\Delta)^{-1} \phi$.
\end{proof}

\begin{lemma}\label{lemma:stationary-fractional-she}
Let $s > 0$. Let $A$ be a stationary solution to the $(n-1)$-dimensional fractional stochastic heat equation
\[ \ptl_t A = - (-\Delta)^s A +  \xi. \]
Then for any $t \in \R$, $\sqrt{2} A(t) \sim \fgf_s(\R^{n-1})$.
\end{lemma}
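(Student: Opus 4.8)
The plan is to compute the variance of $(A(t), \phi)$ for $\phi \in \schwartz(\R^{n-1})$ using the explicit stationary representation and to check that it equals $2 (\phi, \phi)_{\dot{H}^{-s}(\R^{n-1})}$, which by Definition~\ref{def:fgf} (applied with $\R^{n-1}$ in place of $\R^n$) identifies the law of $\sqrt{2} A(t)$ as that of an $\fgf_s(\R^{n-1})$. Since the equation and hence the stationary solution are time-translation invariant, it suffices to take $t = 0$. First I would recall the precise interpretation of the stationary solution given just before the statement: $(A(0), \phi) = (\xi, F_0(\phi))$ where $F_0(u, x) = \bigl(e^{u(-\Delta_x)^{s}} \phi\bigr)(x)\, \ind(u \leq 0)$ (note the sign: for $u \le 0$ the propagator $e^{-(0-u)(-\Delta_x)^s} = e^{u (-\Delta_x)^s}$ is contractive on Fourier modes). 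Since $\xi$ is a space-time white noise on $\R^n = \R_u \times \R^{n-1}_x$, we have $\Var((A(0),\phi)) = \|F_0(\phi)\|_{L^2(\R^n)}^2$.

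The computation is cleanest in the spatial Fourier variable. Writing $\widehat{\phi}$ for the Fourier transform of $\phi$ on $\R^{n-1}$, Plancherel in the spatial variable gives
\begin{equs}
\|F_0(\phi)\|_{L^2(\R^n)}^2 = \int_{-\infty}^0 du \int_{\R^{n-1}} dp\, \bigl| e^{u |p|^{2s}} \widehat{\phi}(p)\bigr|^2 = \int_{\R^{n-1}} dp\, |\widehat{\phi}(p)|^2 \int_{-\infty}^0 e^{2u|p|^{2s}} du.
\end{equs}
The inner integral evaluates to $\frac{1}{2|p|^{2s}}$, so that
\begin{equs}
\Var((A(0),\phi)) = \frac{1}{2} \int_{\R^{n-1}} |p|^{-2s} |\widehat{\phi}(p)|^2 \, dp = \frac{1}{2}\, \|\phi\|_{\dot{H}^{-s}(\R^{n-1})}^2,
\end{equs}
using the definition of the homogeneous Sobolev norm (Definition~\ref{def:sobolev}), i.e.\ $\|\phi\|_{\dot H^{-s}}^2 = (|p|^{-s}\widehat\phi, |p|^{-s}\widehat\phi)$. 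Hence $\Var((\sqrt 2 A(0), \phi)) = \|\phi\|_{\dot H^{-s}(\R^{n-1})}^2 = (\phi,\phi)_{\dot H^{-s}(\R^{n-1})}$, matching the covariance in Definition~\ref{def:fgf}. Since $(A(0), \cdot)$ is centered Gaussian (being a linear functional of white noise), and linearity in $\phi$ together with the variance identity determine all covariances $\Cov((A(0),\phi),(A(0),\psi)) = \frac12(\phi,\psi)_{\dot H^{-s}}$ by polarization, this is exactly the required characterization.

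A couple of technical points need care, though none is a serious obstacle. One should verify that $F_0(\phi) \in L^2(\R^n)$, which is immediate from the Fourier computation above (the integral is finite for $\phi \in \schwartz$ since $\widehat\phi$ is Schwartz and the singularity $|p|^{-2s}$ at the origin is dominated after integration against $|\widehat\phi|^2$ when $2s < n-1$; for larger $s$ one restricts, as elsewhere in the paper, to $\phi$ whose Fourier transform vanishes to sufficient order at $0$, or one works on $\Phi$). One should also note that $t$-independence of the law follows because the stationary solution is stationary by construction, so the statement ``for any $t \in \R$'' requires no extra work. The only mildly delicate point, which I expect to be the main thing to be careful about, is bookkeeping the factor of $2$ and the sign in the exponent of the heat semigroup correctly --- getting $\int_{-\infty}^0 e^{2u|p|^{2s}}du = \tfrac{1}{2}|p|^{-2s}$ rather than $|p|^{-2s}$ is exactly what forces the $\sqrt 2$ normalization, matching the analogous $\sqrt 2$ that appeared in Proposition~\ref{prop:fractional-she}.
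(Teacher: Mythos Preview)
Your proposal is correct and follows essentially the same approach as the paper: both compute $\mrm{Var}((A(t),\phi))$ via the white-noise isometry $\|F_t(\phi)\|_{L^2}^2$ and reduce to the integral $\int_{-\infty}^t e^{-2(t-u)(-\Delta)^s}\,du = \tfrac12 (-\Delta)^{-s}$. The only cosmetic differences are that the paper keeps general $t$ and writes the computation in operator form, whereas you set $t=0$ by stationarity and carry it out explicitly in Fourier variables; the paper also works directly with $\phi \in \Phi(\R^{n-1})$, which cleanly handles the integrability at $p=0$ that you flag in your final paragraph.
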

\begin{proof}
We use the explicit formula for a stationary solution given by
\[ A(t) = \int_{-\infty}^t e^{-(t - u )(-\Delta)^s} \xi(u) du. \]
Let $\phi \in \Phi(\R^{n-1})$ be a test function. We have that
\[ (\sqrt{2} A(t), \phi) = \sqrt{2} \int_{-\infty}^t (e^{-(t-u)(-\Delta)^s} \xi(u), \phi) du = \sqrt{2} \int_{-\infty}^t (\xi(u), e^{-(t-u)(-\Delta)^s} \phi) du.\]
Thus
\[\begin{split}
\mrm{Var}((\sqrt{2} A(t), \phi)) &= 2 \int_{-\infty}^t (e^{-(t-u)(-\Delta)^s} \phi, e^{-(t-u)(-\Delta)^s} \phi) du \\
&= 2\bigg(\phi, \int_{-\infty}^t du e^{-2(t-u)(-\Delta)^s} \phi\bigg).
\end{split}\]
To finish, observe that
\[ \int_{-\infty}^t du e^{-2(t-u)(-\Delta)^s} = \frac{1}{2} (-\Delta)^{-s}. \qedhere \]
\end{proof}

To connect back to our previous definition of restriction (Definition \ref{def:restriction-fgf}), note in that definition, when $s = 1$, we have that the restriction $A^{\downarrow, x_0}$ of $A \sim \fgf_1(\R^n)$ satisfies
\begin{equs}
\Var{(A^{\downarrow, x_0}, \phi)} = \frac{1}{2 \pi} \int_\R du (1 + u^2)^{-1} \big\|\phi\big\|_{\dot{H}^{-\frac{1}{2}}(\R^{n-1})}^2.
\end{equs}
An explicit computation shows that $\int_{\R} du (1+u^2)^{-1} = \pi$, and thus we see that
\begin{equs}
\Var{(\sqrt{2} A^{\downarrow, x_0}, \phi)} = \big\|\phi\big\|_{\dot{H}^{-\frac{1}{2}}(\R^{n-1})}^2 = (\phi, (-\Delta)^{-\frac{1}{2}} \phi).
\end{equs}
This is consistent with with Proposition \ref{prop:fractional-she} and Lemma \ref{lemma:stationary-fractional-she}.

\subsection{Restrictions of fractional Gaussian forms}

Similar to before, fix $x_0 \in \R$. Given $\phi \in \Omega^k \schwartz(\R^{n-1})$, we may define $\phi^{\uparrow, x_0}$ by
\begin{equs}\label{eq:form-restriction-def}
(\phi^{\uparrow, x_0}, \psi) := \sum_{1 \leq i_1 < \cdots < i_k \leq n-1} \int_{\R^{n-1}} dx' \phi_{i_1 \cdots i_k} (x') \psi_{i_1 \cdots i_k}(x', x_0).
\end{equs}
Here, the indices $i_1, \ldots, i_k \in [n-1]$, which may also be thought of adjoing zeros to $\phi$ to form a tensor $\phi_{i_1 \cdots i_k}$ with $i_1, \ldots, i_k \in [n]$. By applying Lemma \ref{lemma:restriction-sobolev-bound} coordinatewise, we obtain the following result.

\begin{lemma}\label{lemma:schwartz-forms-restriction}
Let $0 \leq k \leq n$, $x_0 \in \R$, $s > \frac{1}{2}$. For all $\phi \in \Omega^k\schwartz(\R^{n-1}) \cap \dot{H}^{\frac{1}{2}-s}(\R^{n-1})$, we have that $\phi^{\uparrow, x_0} \in \Omega^k \dot{H}^{-s}(\R^n)$, and moreover
\begin{equs}
\|\phi^{\uparrow, x_0} \|_{\Omega^k\dot{H}^{-s}(\R^n)} \lesssim \|\phi\|_{\Omega^k \dot{H}^{\frac{1}{2}-s}(\R^{n-1})}.
\end{equs}
\end{lemma}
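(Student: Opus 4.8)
The plan is to reduce the vector-valued (form-valued) statement to the scalar case already established in Lemma \ref{lemma:restriction-sobolev-bound}. Recall that a $k$-form $\phi$ on $\R^{n-1}$ is just the collection of its $\binom{n-1}{k}$ scalar components $(\phi_{i_1 \cdots i_k})_{1 \leq i_1 < \cdots < i_k \leq n-1}$, and by the definition of $\phi^{\uparrow, x_0}$ in \eqref{eq:form-restriction-def} this lift acts componentwise: the $(i_1 \cdots i_k)$-component of $\phi^{\uparrow, x_0}$ (as a $k$-form on $\R^n$, with the extra index $n$ never appearing) is exactly $(\phi_{i_1 \cdots i_k})^{\uparrow, x_0}$ in the scalar sense of the previous subsection. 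So the whole statement should decouple across components.

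First I would make this decoupling precise. Recall the componentwise definitions of the Sobolev norms on forms from the notation block just before Definition \ref{def:E-E-star-projections}: for a $k$-form $f$ on $\manifold = \R^n$ or $\R^{n-1}$,
\[
\|f\|_{\Omega^k \dot{H}^s(\manifold)}^2 = \sum_{1 \leq i_1 < \cdots < i_k \leq \dim \manifold} \|f_{i_1 \cdots i_k}\|_{\dot{H}^s(\manifold)}^2 .
\]
Given $\phi \in \Omega^k \schwartz(\R^{n-1}) \cap \Omega^k\dot{H}^{\frac12 - s}(\R^{n-1})$, each scalar component $\phi_{i_1 \cdots i_k}$ lies in $\schwartz(\R^{n-1}) \cap \dot{H}^{\frac12 - s}(\R^{n-1})$. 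For each fixed multi-index with $1 \leq i_1 < \cdots < i_k \leq n-1$, apply Lemma \ref{lemma:restriction-sobolev-bound} with $s > \frac12$ to conclude $(\phi_{i_1 \cdots i_k})^{\uparrow, x_0} \in \dot{H}^{-s}(\R^n)$ together with the exact identity
\[
\big\|(\phi_{i_1 \cdots i_k})^{\uparrow, x_0}\big\|_{\dot{H}^{-s}(\R^n)}^2 = \frac{1}{2\pi}\Big(\int_\R (1+u^2)^{-s}\,du\Big) \big\|\phi_{i_1 \cdots i_k}\big\|_{\dot{H}^{\frac12 - s}(\R^{n-1})}^2 .
\]
Since the constant $C_s := \frac{1}{2\pi}\int_\R (1+u^2)^{-s}\,du$ is finite and independent of the component, summing over all $\binom{n-1}{k}$ multi-indices and using the componentwise formula for the form-valued norms gives
\[
\|\phi^{\uparrow, x_0}\|_{\Omega^k \dot{H}^{-s}(\R^n)}^2 = C_s\, \|\phi\|_{\Omega^k \dot{H}^{\frac12 - s}(\R^{n-1})}^2 ,
\]
which is the asserted bound (in fact an identity, hence the $\lesssim$). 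Membership $\phi^{\uparrow, x_0} \in \Omega^k \dot{H}^{-s}(\R^n)$ follows since each component lies in $\dot{H}^{-s}(\R^n)$.

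There is essentially no obstacle here — the only things to check are bookkeeping: that the lift defined in \eqref{eq:form-restriction-def} genuinely acts as the scalar lift on each component (which is immediate from the formula, since the index $n$ never enters), and that the finite sum over components does not disturb finiteness or the constant. I would just remark that the proof is "by applying Lemma \ref{lemma:restriction-sobolev-bound} coordinatewise," as the excerpt itself suggests, possibly spelling out the one display above for the reader's convenience. One could optionally record the sharper identity with constant $C_s$ in a remark, paralleling the scalar Definition \ref{def:restriction-fgf}, since it will be needed to identify the restricted form-valued field as a multiple of $\fgf_{s - 1/2}^k(\R^{n-1})$.
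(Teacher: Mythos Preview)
Your proposal is correct and takes exactly the same approach as the paper, which simply states that the result follows ``by applying Lemma \ref{lemma:restriction-sobolev-bound} coordinatewise'' without further detail. Your version is in fact more thorough, making explicit the componentwise action of the lift and the summation over multi-indices, and even noting that one obtains an identity with constant $C_s$ rather than merely a bound.
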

We may then define restrictions of fractional Gaussian $k$-forms.

\begin{definition}[Restriction of fractional Gaussian $k$-forms]
Let $A$ be a $\fgf_s^k(\R^n)$. Then the restriction of $A$ to the hyperplane $\{(x', x_0) : x' \in \R^{n-1}\} \sse \R^n$, which we denote by $A^{\downarrow, x_0}$, is the stochastic process $(A^{\downarrow, x_0}, \phi) : \phi \in \Omega^k \schwartz(\R^{n-1}) \cap \dot{H}^{\frac{1}{2}-s}(\R^{n-1})\}$ defined by 
\begin{equs}
(A^{\downarrow, x_0}, \phi) := (A, \phi^{\uparrow, x_0}).
\end{equs}
\end{definition}

Note that this definition is equivalent to restricting each of the independent $(A_{i_1, \ldots, i_k}, 1 \leq i_1 < \cdots < i_k \leq n-1)$ to the hyperplane. It follows that $A^{\downarrow, x_0}$ is a (multiple of a) $\fgf_{s-\frac{1}{2}}^{k-1}(\R^{n-1})$.

Thus far the restriction of fractional Gaussian forms is essentially the same as in the scalar case. Next, we discuss a more interesting example, where we restrict $\fgf_s^1(\R^n)_{d=0}$ or $\fgf_s^1(\R^n)_{\codif = 0}$. We will need the following lemma. Recall the kernels $K^{d = 0}_s, K^{\codif = 0}_s$ from Proposition \ref{prop:1-form-gff-projection-correlation-functions}. When $s = 1$, by we default we take $K^{d = 0}_1 = K^{d = 0}, K^{\codif = 0}_1 = K^{\codif = 0}$.

\begin{lemma}\label{lemma:restriction-projection-kernel}
Let $s > \frac{1}{2}$. Let $\phi, \psi \in \Omega^1\schwartz(\R^{n-1}) \cap \dot{H}^{\frac{1}{2}-s}(\R^{n-1})$. Then 
\begin{equs}
(\phi^{\uparrow, x_0}, E^* \psi^{\uparrow, x_0})_{\Omega^1 \dot{H}^{-s}(\R^n)} = \int_{\R^{n-1}} \int_{\R^{n-1}} dx' dy' \phi_i(x') \Big(K^{\codif = 0}_s\Big)_{ij}((x', x_0), (y', x_0)) \psi_j(y').
\end{equs}
The same result holds with $E^*$ replaced by $E$ and ``$\codif = 0$" replaced by ``$d = 0$" everywhere. To be clear, here the kernels $K^{d = 0}_s, K^{\codif = 0}_s$ are defined on $\R^n$, but we only sum over indices $i, j \in [n-1]$.
\end{lemma}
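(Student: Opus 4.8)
The goal is to relate the $\Omega^1 \dot{H}^{-s}(\R^n)$ inner product of two lifted forms, with one of them having $E^*$ applied, to the known projected-white-noise-type covariance kernels $K^{\codif = 0}_s$ evaluated on the hyperplane. The plan is to reduce everything to Fourier space on $\R^n$, where both the fractional Laplacian, the projection $E^*$, and the lift operation $\phi \mapsto \phi^{\uparrow, x_0}$ all have clean descriptions, and then integrate out the ``vertical'' frequency variable $p_n$ to recover the $(n-1)$-dimensional kernel.

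\textbf{Step 1: Fourier representation of the lift.} First I would record, as in the proof of Lemma~\ref{lemma:restriction-sobolev-bound}, that $\widehat{\phi^{\uparrow, x_0}}(p', p_n) = (2\pi)^{-1/2} e^{-\icomplex p_n x_0} \widehat{\phi}(p')$ componentwise (with the convention of adjoining zeros so that $\phi^{\uparrow, x_0}_{i} = 0$ for $i \notin [n-1]$). Then by definition of the homogeneous Sobolev inner product and the Fourier-space formula for $E^*$ (namely $(\mc{F}(E^* \psi))(p) = \mc{F}\psi(p) - (\mc{F}\psi(p) \cdot p)\, p/|p|^2$ from Section~\ref{section:exterior-calculus}), I would write
\begin{equs}
(\phi^{\uparrow, x_0}, E^* \psi^{\uparrow, x_0})_{\Omega^1 \dot{H}^{-s}(\R^n)} = \frac{1}{2\pi}\int_{\R^n} dp\, |p|^{-2s}\, \ovl{\widehat{\phi}(p')} \cdot \Big(\widehat{\psi}(p') - \big(\widehat{\psi}(p') \cdot p\big)\tfrac{p}{|p|^2}\Big),
\end{equs}
where only the $[n-1]$ components of $\widehat\phi,\widehat\psi$ are nonzero but the dot products with $p \in \R^n$ still run over all $n$ coordinates. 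The $e^{-\icomplex p_n x_0}$ factors cancel since they appear with opposite signs.

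\textbf{Step 2: Integrate out $p_n$.} The integrand depends on $p_n$ only through the explicit rational expressions $|p|^{-2s} = (|p'|^2 + p_n^2)^{-s}$ and $|p|^{-2s-2} p_i p_j$ with $i, j \in [n-1]$ (the terms with an index equal to $n$ vanish because $\widehat\phi_n = \widehat\psi_n = 0$), plus one term $|p|^{-2s-2} p_n^2 \widehat\phi_i \widehat\psi_i$ coming from the $p\cdot p$ structure — wait, more carefully: expanding $\widehat\phi(p')\cdot(\widehat\psi(p')\cdot p)p/|p|^2$ gives $\sum_{i,j \in [n-1]} \widehat\phi_i \widehat\psi_j p_i p_j / |p|^2$, so no $p_n$ appears in the numerator at all. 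Hence I must evaluate the one-dimensional integrals $\int_\R dp_n (|p'|^2 + p_n^2)^{-s}$ and $\int_\R dp_n (|p'|^2 + p_n^2)^{-s-1}$ via the change of variables $p_n = |p'| u$ from Lemma~\ref{lemma:restriction-sobolev-bound}, producing constants times $|p'|^{1-2s}$ and $|p'|^{-1-2s}$ respectively. Collecting, the whole expression becomes a sum of an $(n-1)$-dimensional $\dot H^{-(s-1/2)}$-type inner product of $\phi$ and $\psi$ (the $\delta_{ij}$ piece) and an iterated-Riesz-transform piece $\int dp' |p'|^{-1-2s} p_i p_j \ovl{\widehat\phi_i(p')}\widehat\psi_j(p')$.

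\textbf{Step 3: Match with $K^{\codif=0}_s$ on the hyperplane.} The final step is bookkeeping: I need to check that the constants produced in Step 2 are exactly the ones appearing in $K^{\codif=0}_s((x',x_0),(y',x_0))$ from Proposition~\ref{prop:1-form-gff-projection-correlation-functions}. By Lemma~\ref{lemma:projection-laplace-inverse-formula} the position-space kernel $K^{\codif=0}_s$ is $(1 - \tfrac{1}{2s}) G^s(x,y) I + c_s\, |x-y|^{2(s-1)-n}(x-y)(x-y)^T$ (or the $s>1$ variant with $G^{s-1}$); restricting $x,y$ to the hyperplane replaces $|x-y|^{2s-n}$ by $|x'-y'|^{2s-n} = |x'-y'|^{2(s-1/2)-(n-1)}$, i.e. a constant times the $(n-1)$-dimensional $G^{s-1/2}$, and similarly for the dipole term. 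Comparing the Fourier multipliers $|p'|^{1-2s}$ and $|p'|^{-1-2s}$ with the Fourier transforms of these restricted position-space kernels (using the standard Riesz-potential Fourier pair, e.g. the constants $\gamma_n(\alpha)$ of \eqref{eq:gamma-n-def}) pins down that the constants agree; the case $E$ instead of $E^*$ follows identically, flipping the sign of the dipole piece and replacing $1 - \tfrac{1}{2s}$ by $\tfrac{1}{2s}$ (using $E + E^* = I$). The density argument reducing from general $\phi, \psi \in \dot H^{1/2-s}$ to Schwartz forms is routine given Lemma~\ref{lemma:schwartz-forms-restriction}.

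\textbf{Main obstacle.} The conceptual content is easy, but the real work — and the place most likely to hide a sign or normalization error — is Step 3: tracking the Gamma-function and $\pi$-power constants through the one-dimensional $p_n$-integrals and matching them against the position-space constants in Proposition~\ref{prop:1-form-gff-projection-correlation-functions}, across the three regimes $0 < s < 1$, $s = 1$, and $s > 1$ separately (since $G^s$ itself changes form at $2s = n$ and the dipole constant is written differently for $s > 1$). I would handle this by computing everything in Fourier space uniformly in $s$ first, and only at the very end translating back to position space using Corollary~\ref{cor:fgf-covariance-kernel} and the Riesz-potential Fourier identities, so that the case distinctions appear only in the final translation rather than being propagated through the whole argument.
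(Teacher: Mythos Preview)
Your Fourier-space route is genuinely different from what the paper does, and it is a valid strategy, but Step~3 as you describe it will not work cleanly. The paper's proof stays entirely in position space: it rewrites the left-hand side as the duality pairing $(\phi^{\uparrow,x_0}, E^*(-\Delta)^{-s}\psi^{\uparrow,x_0})$ between $\dot H^{-s}$ and $\dot H^{s}$, then shows by a short distribution-theory argument that $E^*(-\Delta)^{-s}\psi^{\uparrow,x_0}$ is a bounded continuous function given explicitly by $\int_{\R^{n-1}}(K^{\codif=0}_s)_{ij}(\cdot,(y',x_0))\psi_j(y')\,dy'$ (this is where Lemma~\ref{lemma:projection-laplace-inverse-formula} is invoked, tested against $\eta\in\Omega^1\Phi$). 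Once that is established, pairing with $\phi^{\uparrow,x_0}$ is literally integration over the hyperplane, and the lemma drops out with no constant chasing and no case split on $s$.

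The problem with your Step~3 is the plan to match the $\delta_{ij}|p'|^{1-2s}$ and $p'_ip'_j|p'|^{-1-2s}$ pieces of the integrated multiplier \emph{separately} against the scalar and dipole pieces of the restricted position-space kernel. These do not correspond one-to-one: the $(n-1)$-dimensional Fourier transform of the dipole piece $|z'|^{2(s-1)-n}z'_iz'_j$ is $-\partial_{p'_i}\partial_{p'_j}$ applied to a radial function, which produces \emph{both} a $\delta_{ij}$ term and a $p'_ip'_j$ term. So your ``constant matching'' will have cross-terms you did not budget for, and the three-regime casework on $s$ will compound this. The fix is to recognise that Steps~2--3 together are nothing more than the Fourier slice identity: if $m_{ij}(p)$ is the $\R^n$-multiplier of $E^*(-\Delta)^{-s}$ and $K_{ij}$ its position-space kernel, then $\mc F_{n-1}[K_{ij}(\cdot,0)](p')=(2\pi)^{-1/2}\int_{\R}m_{ij}(p',p_n)\,dp_n$ automatically, with no explicit constants to check. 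Once you invoke that, your argument closes immediately --- but at that point you have essentially reproduced the paper's duality step in Fourier disguise, and the explicit formulas from Proposition~\ref{prop:1-form-gff-projection-correlation-functions} are never actually used.
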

\begin{proof}
Observe that $(-\Delta)^{-s} : \dot{H}^{-s}(\R^n) \ra \dot{H}^s(\R^n)$ is an isometry (Lemma \ref{lemma:fractional-laplacian-isometry-fractional-sobolev-space}) and $E^*$ is bounded on $\Omega^1 \dot{H}^s(\R^n)$ (Lemma \ref{lemmma:projections-bounded-sobolev-space}). Combining this with Lemma \ref{lemma:schwartz-forms-restriction}, we have that $E^* (-\Delta)^{-s} \psi^{\uparrow, x_0} \in \Omega^1 \dot{H}^s(\R^n)$, and
\begin{equs}
(\phi^{\uparrow, x_0}, E^* \psi^{\uparrow, x_0})_{\dot{H}^{-s}(\R^n)} = (\phi^{\uparrow, x_0}, E^* (-\Delta)^{-s} \psi^{\uparrow, x_0}),
\end{equs}
where in the RHS, $(\cdot, \cdot)$ denotes the dual pairing $\Omega^1 \dot{H}^{-s}(\R^n) \times \Omega^1 \dot{H}^s(\R^n) \ra \R$. By general distribution theory arguments, if $E^* (-\Delta)^{-s} \psi^{\uparrow, x_0}$ is also a bounded continuous function, then we have that
\begin{equs}
(\phi^{\uparrow, x_0},  E^* (-\Delta)^{-s} \psi^{\uparrow, x_0}) = \int_{\R^{n-1}} dx' \phi_i (x') (E^* (-\Delta)^{-s} \psi^{\uparrow, x_0})_i(x', x_0).
\end{equs}
It remains to show that
\begin{equs}
(E^* (-\Delta)^{-s} \psi^{\uparrow, x_0})_i(x) = \int_{\R^{n-1}} dy' \Big(K^{\codif = 0}_s\Big)_{ij}(x, (y', x_0)) \psi_j(y'),
\end{equs}
and moreover this is bounded and continuous in $x \in \R^n$. First, observe that the RHS above is absolutely convergent for any $x \in \R^{n}$, which follows from the definition of $K^{\codif = 0}_s$ (Proposition \ref{prop:1-form-gff-projection-correlation-functions}). Moreover, the RHS defines a bounded continuous function of $x \in \R^{n}$, and thus it can be interpreted as a tempered distribution. To finish, we observe that for $\eta \in \Omega^1 \Phi$,
\begin{equs}
(E^* (-\Delta)^{-s} \psi^{\uparrow, x_0}, \eta) &= (\psi^{\uparrow, x_0}, E^* (-\Delta)^{-s} \eta) \\
&= \int_{\R^{n-1}} dy' \psi(y') (E^* (-\Delta)^{-s} \eta)(x_0, y').
\end{equs}
We now finish by expressing $E^* (-\Delta)^{-s} \eta$ as a convolution against $K^{\codif = 0}_s$ (by Lemma \ref{lemma:projection-laplace-inverse-formula}), and then exchanging order of integration to obtain
\begin{equs}
(E^* (-\Delta)^{-s} \psi^{\uparrow, x_0}, \eta)  = \int_{\R^n} dx \eta_i (x) \int_{\R^{n-1}} dy' \Big(K^{\codif = 0}_s\Big)_{ij}(x, (y', x_0)) \psi_j(y'),
\end{equs}
which shows the desired identity.
\end{proof}

\begin{definition}[Restriction of $\fgf_s^k(\R^n)_{d = 0}$ and $\fgf_s^k(\R^n)_{d^* = 0}$]
Let $s > \frac{1}{2}$ and $A$ be an $\fgf_s^k(\R^n)$. We define the restrictions $(EA)^{\downarrow, x_0}, (E^* A)^{\downarrow, x_0}$ as the stochastic processes $((EA)^{\downarrow, x_0}, \phi)$ and $((E^* A)^{\downarrow, x_0}, \phi)$ indexed by $\phi \in \Omega^k \schwartz(\R^{n-1}) \cap \dot{H}^{\frac{1}{2}-s}(\R^{n - 1})$, which are defined by
\begin{equs}
((EA)^{\downarrow, x_0}, \phi) := (EA, \phi^{\uparrow, x_0}), \quad ((E^* A)^{\downarrow, x_0}, \phi) := (E^* A, \phi^{\uparrow, x_0}).
\end{equs}
This is well-defined by Lemma \ref{lemma:schwartz-forms-restriction} and Remark \ref{remark:projection-fgf-gaussian-hilbert-space}.
\end{definition}

Lemma \ref{lemma:restriction-projection-kernel} gives the covariance kernel for the restrictions of $\fgf^1_s(\R^n)_{d = 0}, \fgf^1_s(\R^n)_{\codif = 0}$. We have that
\begin{equs}
\Cov\big(((E^* A)^{\downarrow, x_0}, \phi), ((E^* A)^{\downarrow, x_0}, \psi)\big) &= \Cov((E^*A, \phi^{\uparrow, x_0}), (E^* A, \psi^{\uparrow, x_0})) \\
&= (\phi^{\uparrow, x_0}, E^* \psi^{\uparrow, x_0})_{\Omega^1 \dot{H}^{-s}(\R^n)}\\
&= \int_{\R^{n-1}} \int_{\R^{n-1}} dx' dy' \phi_i(x') \Big(K^{\codif = 0}_s\Big)_{ij}((x', x_0), (y', x_0)) \psi_j(y'),
\end{equs}
and analogously for $(EA)^{\downarrow, x_0}$.

\section{Wilson loop expectations and surface expansions}\label{sec:wilsonloops}


In this section, we discuss Wilson loop expectations, which are the primary observables of interest in gauge theories. See the references at the end of Section \ref{section:differential-forms-in-gauge-theory} for more discussion of  these observables. We begin by defining these observables. Throughout this section, take $\manifold = \R^n$ or $\T^n$. 

As in Section \ref{section:non-abelian-theory}, let $\liegroup \sse \unitary(N)$ be a compact Lie group and $\frkg$ be its Lie algebra. Let $A \in \Omega^1(\manifold, \frkg^n)$ be a smooth $\frkg$-valued $1$-form. Let $\gamma \colon [0, 1] \ra \manifold$ be a piecewise smooth path in $\manifold$. Let $h \colon [0, 1] \ra \liegroup$ be the solution to the following ODE:
\begin{equs}\label{eq:holonomy-ode}
h'(t) = h(t) A(\gamma(t)) \cdot \gamma'(t), ~~ h(0) = \groupid. 
\end{equs}
We say that $h(1)$ is the holonomy of $A$ along $\gamma$. We note that $h$ has the following explicit series expansion
\begin{equs}
h(t) = \sum_{m \geq 0} \int_0^t dt_1 \int_0^{t_1} dt_2 \cdots \int_0^{t_{m-1}} dt_m  \big(A(\gamma(t_m)) \cdot \gamma'(t_m)\big) \cdots \big(A(\gamma(t_1)) \cdot \gamma'(t_1)\big) .  
\end{equs}
To be clear, the $m = 0$ term in the above sum is simply $I$, the $N \times N$ identity matrix. This can be seen by explicitly verifying that $h$ solves the ODE \eqref{eq:holonomy-ode}. 

\begin{notation}
For $t \geq 0$, $m \geq 1$, let \begin{equs}
\Delta_m(t) := \{ s = (s_1, \ldots, s_m) \in [0, t]^m : 0 < s_1 < \cdots < s_m < t\}.
\end{equs}
\end{notation}

With this notation, we may write
\begin{equs}\label{eq:holonomy-series-expansion}
h(t) = I + \sum_{m=1}^\infty \int_{\Delta_m(t)} ds \big(A(\gamma(s_m)) \cdot \gamma'(s_m)\big) \cdots \big(A(\gamma(s_1)) \cdot \gamma'(s_1)\big).
\end{equs}
In the Abelian case $G = \unitary(1)$, we have that $\frkg = \icomplex \R$, so that in fact $h$ is the exponential of the line integral:
\begin{equs}
h(t) = \exp\bigg(\int_{\gamma |_{[0, t]}} A\bigg) = \exp\bigg(\int_0^t ds A(\gamma(s)) \cdot \gamma'(s)\bigg).
\end{equs}
This follows since 
\begin{equs}
\int_{\Delta_m(t)} ds \big(A(\gamma(s_m)) \cdot \gamma'(s_m)\big) \cdots \big(A(\gamma(s_1)) \cdot \gamma'(s_1)\big)  = \frac{1}{m!} \bigg(\int_0^t ds A(\gamma(s)) \cdot \gamma'(s)\bigg)^m.
\end{equs}
In the general case, the problem is that the matrices of $\frkg$ do not necessarily commute, so that the above identity is not necessarily true.

\begin{definition}[Wilson loop observable]
Let $\gamma \colon [0, 1] \ra \manifold$ be a piecewise smooth loop. Define the function $W_\gamma \colon \Omega^1(\manifold, \frkg^n) \ra \C$ 
\begin{equs}
W_\gamma(A) := \Tr(h(1)),
\end{equs}
where $h$ is the holonomy of $A$ along $\gamma$. This function is known as a Wilson loop observable.
\end{definition}

It is important that $\gamma$ is a loop and not just a path in the definition. This is to ensure gauge invariance of $W_\gamma$, i.e. once can check that for $\gamma$ a loop, we have that
\begin{equs}
W_\gamma(A^g) = W_\gamma(A),
\end{equs}
where $g : \manifold \ra \liegroup$ and $A^g$ is as in \eqref{eq:gauge-transformation}. To see this, one may check that given a solution $h$ to \eqref{eq:holonomy-ode}, the function
\begin{equs}
h_g(t) := g(\gamma(0)) h(t) g(\gamma(t))^{-1}
\end{equs}
solves the ODE
\begin{equs}
h_g'(t) = h_g(t) A^g(\gamma(t)) \cdot \gamma'(t), ~~ h_g(0) = \groupid,
\end{equs}
and thus $h_g(1) = g(\gamma(0)) h(1) g(\gamma(1))^{-1}$ is the holonomy of $A^g$ along $\gamma$. Thus if $\gamma(1) = \gamma(0)$, then $\Tr(h_g(1)) = \Tr(h(1))$, which gives the desired gauge invariance of $W_\gamma$.

\subsection{Regularized Wilson loops}

In this subsection, we discuss regularized versions of Wilson loop observables introduced in \cite{CC23, CCHS2022}. To start the discussion, suppose that $A$ is a $\frkg$-valued 1-form $\gff$. Can we still define the holonomy of $A$ around a loop $\gamma$? In the special case $G = \unitary(1)$, this essentially boils down to whether we can define line integrals of a $\R$-valued 1-form $\gff$. As discussed in \cite[Section 3.1]{Chevyrev2019}, a variance calculation shows that this is possible when $n = 2$, but not when $n \geq 3$. Thus when $n \geq 3$, we must somehow regularize. The problem is that the loop is supported on a one-dimensional set, and thus a natural idea is to replace the loop by a ``rod", i.e. a smoothed version of the loop which is supported on a three-dimensional set. However, the resulting observable would not be gauge-invariant (in the original sense of gauge symmetry, i.e. \eqref{eq:gauge-transformation}, not in the sense of Section \ref{sec:gaugetransformations}).

In 3D, a solution to this problem proposed independently in \cite{CC24, CCHS2022} is to apply a gauge-covariant smoothing procedure to $A$. The natural smoothing procedure to take is the Yang--Mills heat flow, which is a nonlinear PDE given by:
\begin{equs}\label{eq:ymhf}\tag{YMHF}
\ptl_t A = -d_A^* F_A, ~~ A(0) = A_0.
\end{equs}
This PDE arises as the gradient flow of the Yang--Mills functional $\sym$ defined in \eqref{eq:sym}. This PDE posesses two crucial properties that make it a valid smoothing procedure:
\begin{enumerate}
    \item Letting $\Phi_t(A_0)$ denote the map which takes the initial data $A_0$ to the solution of \eqref{eq:ymhf} at time $t$, we have that $\Phi_t(A_0^g) = \big(\Phi_t(A_0)\big)^g$ for any gauge transformation $g \in C^\infty(\R^n, G)$, i.e. $\Phi_t$ is commutes with gauge transformations. Because of this, we say that \eqref{eq:ymhf} is gauge-covariant. 
    \item Solutions to \eqref{eq:ymhf} are smooth at positive times\footnote{Technically, this is only true up to gauge.}, and thus holonomies are well-defined at all positive times. Here, the time parameter should be thought of as the smoothing parameter -- the larger the time, the more the smoothing.
\end{enumerate}

Given these two properties, a gauge-invariant observable $W_{\gamma, t}$ may be defined for loops $\gamma$ and times $t > 0$ as:
\begin{equs}
W_{\gamma, t}(A) := W_\gamma(\Phi_t(A)). 
\end{equs}
It was shown in \cite{CC23, CCHS2022} that in 3D, \eqref{eq:ymhf} is well-posed if the $\frkg$-valued 1-form $\gff$ is taken as the initial data. Thus, regularized Wilson loop observables may be defined for the $\frkg$-valued 1-form $\gff$. 

\begin{remark}[Higher dimensions]
In dimension four and higher, even this approach breaks down, as it is unclear whether \eqref{eq:ymhf} is even well-posed with GFF initial data.
\end{remark}


Having defined regularized Wilson loop observables, one would like to understand their probability distributions. Unfortunately, unlike in the exactly solvable 2D case, it seems too much to hope for an exact characterization of the law of these observables in general. When $G = \unitary(1)$, we do have exact formulas -- see \cite[Section 2]{CC24}.

On the other hand, it was shown in \cite{CC23, CCHS2022} that at small times, $\Phi_t(A)$ looks like the solution to the heat equation started from $A$, which we write as $e^{t \Delta} A$. Moreover, the approximation becomes better in a precise sense as the smoothing parameter $t$ gets small. Indeed, this observation is the starting point to showing that \eqref{eq:ymhf} is well-posed with GFF initial data. 
Thus, while $e^{t \Delta} A$ is not gauge-covariant, we may still try to understand the distribution of $W_\gamma(e^{t \Delta} A)$ as a first approximation towards the actual distribution of $\Phi_t(A)$.

In the following, we derive a surface sum representation for $\E W_\gamma(e^{t\Delta} A)$, where $A$ is a $1$-form $\frkg$-valued GFF. Such expansions were recently derived in various Yang--Mills contexts in the recent papers \cite{park2023wilson, cao2023random}. However, the precise form of the following expansion is new. We work out the details for $G = \unitary(N)$, but similar considerations hold when $G$ is another classical compact Lie group, such as $\mrm{O}(N)$ (the cited papers do consider other such groups). 

\begin{remark}[Connection to matrix-map story for GUE/GOE]
The ensuing arguments bear resemblance to the classical matrix-map (or i.e. genus expansion) story for GUE or GOE matrices. There is a good reason for this: if $X$ is a GUE matrix, then $\icomplex X$ is a Gaussian random variable taking values in $\mathfrak{u}(N)$, the Lie algebra of $\unitary(N)$. Later on, we will need to compute expectations of traces of products of $\mathfrak{u}(N)$-valued Gaussian random matrices. Due to the relation to GUE, such expectations are (up to sign) the same as expectations of traces of products of GUE matrices. Classical matrix-map stories give surface sum representations of the latter. 
\end{remark}
 
We first need some facts about the $\unitary(N)$ and its Lie algebra $\mathfrak{u}(N)$. The latter is the vector space $\{X \in \End(\C^N) : X^* = -X\}$ of complex skew-Hermitian matrices. We define an inner product $\langle \cdot, \cdot \rangle$ on $\mathfrak{u}(N)$ by
\begin{equs}
\langle X, Y \rangle := -N \Tr(X Y) = N \Tr(X Y^*).
\end{equs}
By the last identity, we see that this inner product is (up to the factor $N$) the Frobenius inner product on matrices. The vector space $\mathfrak{u}(N)$ has dimension $N^2$. We fix an orthonormal basis $(E_a, a \in [N^2])$ of $\mathfrak{u}(N)$.

Before stating the surface sum representation, we discuss the types of surfaces which will appear. 

\begin{definition}[Pairings]
For $m \geq 1$ even, let $\mc{P}(m)$ be the set of pairings on $[m]$, i.e. the set of partitions of $[m]$ into two element sets.
\end{definition}

\begin{definition}[Maps built out of pairings]
To each pairing $\pi \in \mc{P}(m)$ we may associate a unicellular map $S(\pi)$ obtained by gluing the edges of an $m$-gon according to $\pi$. See Figure \ref{figure:surface_from_pairing-1} for an example. Let $V(\pi)$ be the number of vertices of the resulting map $S(\pi)$. 

More generally, let $\mu = (\mu_1, \ldots, \mu_k)$ be a partition of $m$. Then to each pairing $\pi$, we may associate the map $S(\pi, \mu)$ obtained by gluing the edges of a collection of $\mu_1, \mu_2, \ldots, \mu_k$-gons according to $\pi$. See Figure \ref{figure:surface_from_pairing-2} for an example. Let $V(\pi, \mu)$ be the number of vertices of the resulting map $S(\pi, \mu)$.
\end{definition}

\begin{notation}
We identify pairings $\pi \in \mc{P}(m)$ with elements of $\symgrp_m$ given by the corresponding product of transpositions. I.e., if $\pi = \{ \{i_1, j_1\}, \ldots, \{i_{m/2}, j_{m/2}\}\}$, then we identify $\pi$ with $(i_1 ~ j_1) \cdots (i_{m/2} ~ j_{m/2})$. We will also denote the latter by $\pi$.
\end{notation}

\begin{figure}
    \centering
    \includegraphics[width=.5\linewidth]{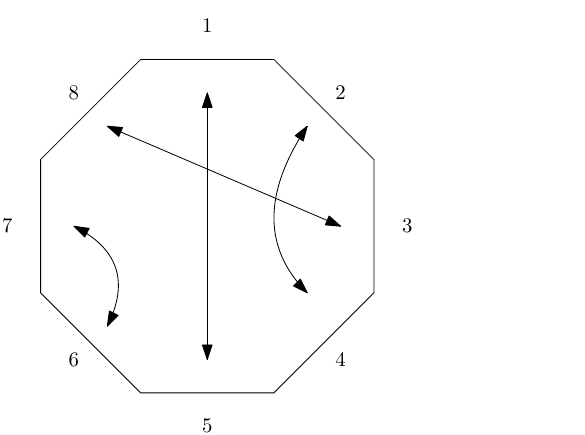}
    \caption{In this example, $m = 8$ and $\pi = \{\{1, 5\}, \{2, 4\}, \{3, 8\}, \{6, 7\}\}$. The edges of the $8$-gon are labeled 1 through 8, and the edges are glued together according to $\pi$.}
    \label{figure:surface_from_pairing-1}
\end{figure}

\begin{figure}
    \centering
    \includegraphics[width=.5\linewidth, page=2]{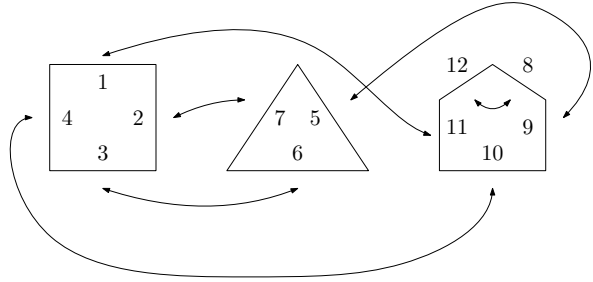}
    \vspace{-25mm}
    \caption{In this example, $m = 12$, $\mu = (4, 3, 5)$, and $\pi = \{\{1, 11\}, \{2, 7\}, \{3, 6\}, \{4, 10\}, \{5, 9\}, \{8, 12\}\}$.}
    \label{figure:surface_from_pairing-2}
\end{figure}

In the following, we will work in a slightly more general setup than previously described. Let $A = (A_1, \ldots, A_n)$ be a $\mathfrak{u}(N)^n$-valued mean zero Gaussian process with continuous sample paths. 

\begin{assumptions}\label{assumptions:smoothed-gff}
We make the following assumptions on $A$.
\begin{enumerate}
    \item $A \stackrel{d}{=} -A$.
    \item There is a function $G : \R^n \times \R^n \ra \R$ such that for $X, Y \in \mathfrak{u}(N)$, $j, k \in [n]$, we have that  $\E[\langle A_j(x), X \rangle \langle A_k(y), Y\rangle] = \delta_{jk} G(x, y) \langle X, Y \rangle$.
\end{enumerate} 
\end{assumptions}

One should think of $A$ as being obtained by smoothing a $\mrm{u}(N)$-valued $1$-form GFF (which we also previously denoted by $A$), for instance by applying the heat operator $e^{t \Delta}$ as in our previous discussion, or by convolving against a smooth bump function. The covariance function $G$ would then be a smoothed version of the Green's function.

\begin{prop}[Surface sum representation]\label{prop:approximate-regularized-wilson-loops-genus-expansion}
Let $A$ be as in Assumptions \ref{assumptions:smoothed-gff}. Let $\gamma \colon [0, 1] \ra \manifold$ be a piecewise smooth loop. We have that
\begin{equs}
\E\big[\Tr(W_\gamma(A))\big] = \sum_{m = 0}^\infty (-1)^m \sum_{\pi \in \mc{P}(2m)} N^{V(\pi) - E(\pi)} \int_{\Delta_{2m}(1)} dt \prod_{\{i, j\} \in \pi} G(\gamma(t_i), \gamma(t_j)) \gamma'(t_i) \cdot \gamma'(t_j).
\end{equs}
More generally, given piecewise smooth loops $\gamma_1, \ldots, \gamma_k$, we have that
\begin{equs}
\E&\bigg[\prod_{\ell \in [k]} \Tr(W_{\gamma_\ell}(A)) \bigg] = \\
&\sum_{m=0}^\infty (-1)^m \sum_{\substack{m_1, \ldots, m_k \geq 0 \\ m_1 + \cdots + m_k = 2m}} \sum_{\pi \in \mc{P}(2m)} N^{V(\pi, \mu) - E(\pi, \mu)} \int_{\Delta_{m_1}(1)} dt^1 \cdots \int_{\Delta_{m_k}(1)} dt^k \prod_{\{i, j\} \in \pi} G(\gamma(t_i), \gamma(t_j)) \gamma'(t_i) \cdot \gamma'(t_j).
\end{equs}
Here, the partition $\mu$ appearing in the sum is $\mu = (m_1, \ldots, m_k)$. Also, given $t^1 \in \Delta_{m_1}(1), \ldots, t^k \in \Delta_{m_k}(1)$, we concatenate $t = (t^1, \ldots, t^k) \in [0, 1]^{2m}$ to obtain a vector of length $2m$. The variables $t_i, t_j$ which appear in the above display are the coordinates of this vector.
\end{prop}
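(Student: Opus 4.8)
The plan is to expand the Wilson loop observable using the series expansion \eqref{eq:holonomy-series-expansion} for the holonomy, take expectations using Wick's theorem, and recognize the resulting combinatorial sum as a sum over surfaces (maps) built from pairings. Concretely, writing $A_j(x) = \sum_{a \in [N^2]} \langle A_j(x), E_a \rangle E_a$ in terms of the orthonormal basis $(E_a, a \in [N^2])$ of $\mathfrak{u}(N)$, we expand
\[
W_\gamma(A) = \Tr(h(1)) = \sum_{m=0}^\infty \int_{\Delta_m(1)} dt \, \Tr\big( (A(\gamma(t_m)) \cdot \gamma'(t_m)) \cdots (A(\gamma(t_1)) \cdot \gamma'(t_1)) \big),
\]
and take expectations term-by-term. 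Since each entry $A_j(x)$ is a mean-zero Gaussian and $A \stackrel{d}{=} -A$, only even $m = 2\ell$ contributes, and Wick's theorem expresses $\E$ of the $2\ell$-fold product as a sum over pairings $\pi \in \mc{P}(2\ell)$. By Assumption \ref{assumptions:smoothed-gff}(2), each pair $\{i, j\} \in \pi$ contributes a factor $G(\gamma(t_i), \gamma(t_j)) \gamma'(t_i) \cdot \gamma'(t_j)$ together with a contraction of the corresponding matrix slots.

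The second step is the bookkeeping that turns the matrix-index contractions into a power of $N$. For each pairing $\pi$, the quantity $\E\big[\Tr(E_{a_1} \cdots E_{a_{2\ell}})\big]$ with the indices paired according to $\pi$ — more precisely the sum over the $a$'s of a product of $\langle E_{a_i}, E_{a_j} \rangle$ factors times the trace of the matrix product — is exactly the classical object computed in the matrix-map/genus expansion story for $\mathfrak{u}(N)$-valued (equivalently, up to sign, GUE) matrices, as flagged in the remark about GUE/GOE. The outcome is $N^{V(\pi) - E(\pi)}$, where the $2\ell$-gon has $E(\pi) = \ell$ edges (each pair is one edge after gluing) and $V(\pi)$ vertices of the glued surface $S(\pi)$; the sign $(-1)^\ell$ arises from the factor of $-N$ (rather than $+N$) in the definition $\langle X, Y\rangle = -N\Tr(XY)$, equivalently from the skew-Hermitian $\icomplex$-factor relating $\mathfrak{u}(N)$ to GUE. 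Reindexing $\ell$ as $m$ in the final display gives the first formula. For the multi-loop case one repeats the argument with $\prod_\ell \Tr(W_{\gamma_\ell}(A))$: expanding each holonomy separately produces a product of polygons with side-lengths $m_1, \ldots, m_k$ summing to $2m$, Wick pairings now range over $\mc{P}(2m)$ across all polygons, and gluing according to $\pi$ builds the map $S(\pi, \mu)$ with $\mu = (m_1, \ldots, m_k)$, yielding $N^{V(\pi, \mu) - E(\pi, \mu)}$ after the index sum.

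The main technical obstacle — the one genuinely requiring care rather than routine calculation — is justifying the interchange of expectation with the infinite series over $m$ (and the convergence of that series). The holonomy series converges for each fixed smooth realization of $A$, but to move $\E$ inside one needs a uniform bound; this is where the hypotheses that $A$ has continuous sample paths and that $\gamma$ is piecewise smooth (hence of finite length, with $\gamma'$ bounded) are used, together with Gaussian hypercontractivity to control $\E\big[\sup_x |A(x)|^{2m}\big]$ on the compact image of $\gamma$. One shows the $m$-th term is bounded by $(C L)^{2m}/\Gamma(m+1)$-type quantities coming from the volume $|\Delta_{2m}(1)| = 1/(2m)!$ of the simplex and the moment bounds, so dominated convergence applies. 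A secondary, purely combinatorial point to verify carefully is the identification of the glued-polygon vertex count $V(\pi)$ with the exponent produced by the matrix-index sum — i.e.\ that the three indices per matrix factor (row, column, Lie-algebra label) organize into faces/edges/vertices of $S(\pi)$ in the standard fatgraph way — but this is exactly the content of the classical map expansion and can be cited or quickly reproved; I would state it as a lemma and reduce to the known GUE computation via $X \mapsto \icomplex X$.
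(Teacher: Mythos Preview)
Your proposal is correct and follows essentially the same route as the paper: expand the holonomy series, kill odd terms by parity, apply Wick's formula to the tensor product, and identify the resulting matrix-index contraction with $N^{V(\pi)-E(\pi)}$ via the genus expansion. The paper packages the combinatorial identification you call ``classical'' into a short chain of explicit lemmas (trace-of-product equals trace of tensor-product times $\rho((1\cdots m))$, the Casimir identity $E_a\otimes E_a = -N^{-1}\rho((1\,2))$, and $\cycles((1\cdots m)\pi)=V(\pi)$), and---notably---does not address the expectation/series interchange that you flag as the main technical obstacle.
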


To begin towards the proof of Proposition \ref{prop:approximate-regularized-wilson-loops-genus-expansion}, we first show the following interpretation of $V(\pi)$.

\begin{lemma}\label{lemma:vertices-equals-cycles}
For $\pi \in \mc{P}(m)$, we have that
\begin{equs}
\cycles((1 \cdots m) \pi) = V(\pi).
\end{equs}
More generally, given a partition $\mu = (\mu_1, \ldots, \mu_k)$ of $m$, let $\sigma_\mu \in \symgrp_m$ be the permutation with cycles $c_1, \ldots, c_k$, where $c_\ell := (m_1 + \cdots + m_{\ell-1} + 1  \cdots m_1 + \cdots + m_{\ell})$. We have that
\begin{equs}
\cycles(\sigma_\mu \pi) = V(\pi, \mu).
\end{equs}
\end{lemma}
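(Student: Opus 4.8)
The statement is a purely combinatorial fact about the relationship between the number of vertices of a unicellular (resp.\ multicellular) map obtained by edge-gluings of polygons and the cycle structure of a product of permutations in $\symgrp_m$. The plan is to identify the standard bijective/topological dictionary between gluing data and permutations: a gluing of an $m$-gon according to a pairing $\pi$ produces a ribbon graph (combinatorial map) whose edges are the pairs of $\pi$, whose unique face corresponds to the boundary $m$-cycle $\rho := (1\,2\,\cdots\,m)$, and whose vertices correspond to the cycles of the permutation that rotates around each vertex. The classical fact (appearing e.g.\ in the combinatorics of maps / constellations) is that for a unicellular map, the vertex-permutation, the edge-permutation $\pi$, and the face-permutation satisfy $\text{(vertices)}\cdot\pi = \rho$, equivalently the vertex permutation is $\rho\pi^{-1} = \rho\pi$ since $\pi$ is an involution. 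Hence $V(\pi) = \cycles(\rho\pi) = \cycles((1\cdots m)\pi)$.

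First I would set up the correspondence carefully: label the $m$ sides of the polygon $1,\dots,m$ in cyclic order, so that side $i$ is followed by side $i+1$ (indices mod $m$); this encodes the face as the single cycle $\rho = (1\,2\,\cdots\,m)$. Gluing side $i$ to side $j$ whenever $\{i,j\}\in\pi$ produces, at each resulting vertex of the glued surface, a cyclic sequence of polygon-corners; tracking how one moves from the corner just after side $i$ to the next corner around the same vertex shows that the ``next corner'' map is exactly $\rho\pi$ (first apply $\pi$ to cross the glued edge, then $\rho$ to advance along the boundary, or the composition in the other order depending on orientation convention — I would fix the convention so the formula comes out as stated). Therefore the vertices of $S(\pi)$ are in bijection with the cycles of $\rho\pi = (1\cdots m)\pi$, giving the first identity.

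For the general multicellular case with partition $\mu=(\mu_1,\dots,\mu_k)$, the same argument applies verbatim with $\rho$ replaced by $\sigma_\mu$, the product of the $k$ disjoint boundary cycles $c_\ell$ of the $\mu_\ell$-gons: the face-permutation of the disjoint union of polygons is $\sigma_\mu$, the edge-permutation is still $\pi$, and the vertex-permutation is $\sigma_\mu\pi$, whose cycles count the vertices $V(\pi,\mu)$ of the glued map $S(\pi,\mu)$. So $\cycles(\sigma_\mu\pi) = V(\pi,\mu)$.

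\textbf{Main obstacle.} There is no deep difficulty here; the only real care needed is bookkeeping of conventions — orientation of the polygon, whether gluings are orientation-reversing (as they must be for an oriented surface) vs.\ arbitrary, and the left-vs-right composition convention for permutations — so that the ``next corner around a vertex'' map genuinely equals $(1\cdots m)\pi$ and not $\pi(1\cdots m)$ or an inverse. I would therefore spend most of the write-up pinning down these conventions precisely (ideally with reference to the figures already included, Figures~\ref{figure:surface_from_pairing-1} and~\ref{figure:surface_from_pairing-2}), and then the verification that the corner-rotation map is the claimed permutation is a short direct check. I expect to cite a standard reference on combinatorial maps for the underlying correspondence rather than reprove it from scratch.
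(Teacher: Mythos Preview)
Your proposal is correct and essentially captures the underlying mechanism, but it takes a more systematic route than the paper. The paper's proof is deliberately informal: it works through two explicit examples (the $m=8$ unicellular case from Figure~\ref{figure:surface_from_pairing-1} and the $m=12$, $\mu=(4,3,5)$ multicellular case from Figure~\ref{figure:surface_from_pairing-2}), labels the polygon vertices, computes the permutation product $(1\cdots m)\pi$ (resp.\ $\sigma_\mu\pi$) by hand, and observes in the figure captions that the cycles of the product are exactly the groups of polygon vertices identified together under the gluing. It then asserts the general pattern from these two examples without writing down the corner-rotation argument abstractly.

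Your approach instead invokes the standard ribbon-graph / combinatorial-map dictionary explicitly: face permutation $\rho$ (or $\sigma_\mu$), edge involution $\pi$, vertex permutation $\rho\pi$, with the key step being the verification that ``advance one corner around a vertex'' is the map $\rho\pi$. This is more rigorous and self-contained, and it makes clear \emph{why} the identity holds rather than just \emph{that} it holds in examples. The tradeoff is that you must be careful with the orientation and composition conventions you flagged, whereas the paper sidesteps this by letting the reader verify conventions directly from the pictures. Either write-up is acceptable here; the paper's version is fine for a survey where this lemma is a standard auxiliary fact, while yours would be preferable in a context demanding a complete argument.
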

\begin{proof}
We begin with some concrete examples. First, consider the setting of Figure \ref{figure:surface_from_pairing-1}, where $m = 8$ and $\pi =  \{\{1, 5\}, \{2, 4\}, \{3, 8\}, \{6, 7\}\}$. We may also label the vertices of the polygon by 1 through 8, this time in blue. We work out this example in Figure \ref{figure:surface_from_pairing-3}.
\begin{figure}
    \centering
    \includegraphics[width=.5\linewidth, page=3]{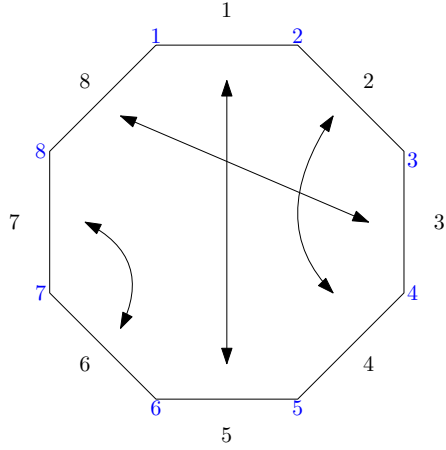}
    \caption{With $\pi =  \{\{1, 5\}, \{2, 4\}, \{3, 8\}, \{6, 7\}\}$, observe that $(1 \cdots 8) \pi = (1~ 6~ 8~ 4~ 3) (2 ~5)$, and moreover the cycles of the product are precisely given by the vertices of the polygon which are identified together into a single vertex of the resulting map.}
    \label{figure:surface_from_pairing-3}
\end{figure}
Next, consider the setting of Figure \ref{figure:surface_from_pairing-2}. We work out this example in Figure \ref{figure:surface_from_pairing-4}.
\begin{figure}
    \centering
    \includegraphics[width=.5\linewidth, page=4]{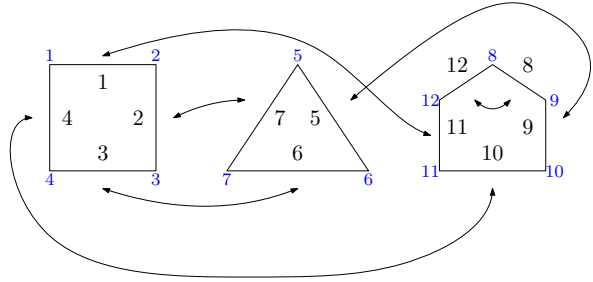}
    \vspace{-25mm}
    \caption{Recall from Figure \ref{figure:surface_from_pairing-2} that $m = 12$, $\mu = (4, 3, 5)$, and $\pi = \{\{1, 11\}, \{2, 7\}, \{3, 6\}, \{4, 10\}, \{5, 9\}, \{8, 12\}\}$. Observe that $(1 ~ 2 ~ 3 ~ 4) (5 ~ 6 ~ 7) (8 ~ 9 ~ 10 ~ 11 ~ 12) \pi$ = $(1 ~ 12 ~ 9 ~ 6 ~ 4 ~ 11 ~ 2 ~ 5 ~ 10) (3 ~ 7) (8)$. As in Figure \ref{figure:surface_from_pairing-3}, the cycles of the product correspond to the vertices of the resulting map after performing all the gluings.}
    \label{figure:surface_from_pairing-4}
\end{figure}
From these two examples, we see that more generally, the cycles of $\sigma_\mu \pi$ are precisely given by the vertices of the original polygons which are identified together into a single vertex in the resulting map. From this, it follows that $\cycles(\sigma_\mu \pi) = V(\pi, \mu)$.
\end{proof}

Next, we discuss some elements of representation theory which enter into the proof of Proposition \ref{prop:approximate-regularized-wilson-loops-genus-expansion}. Much of this discussion may be found in \cite[Section 2]{Levy2008}. Let $N, m \geq 1$. The vector space $\C^N$ has standard basis given by $(e_i, i \in [N])$, and inner product given by $\langle v, w \rangle = v_i \bar{w}_i$. We can form the $m$th tensor power $(\C^N)^{\otimes m}$, which is a vector space with basis given by $e_{i_1} \otimes \cdots \otimes e_{i_m}$, $i_1, \ldots, i_m \in [N]$. The inner product on $\C^N$ induces an inner product on $(\C^N)^{\otimes m}$ via the following definition on pure tensors:
\begin{equs}
\langle v_1 \otimes \cdots \otimes v_m, w_1 \otimes \cdots w_m \rangle := \langle v_1, w_1\rangle \cdots \langle v_m, w_m \rangle, ~~ v_i, w_i \in \C^N \text{ for $i \in [m]$}.
\end{equs}
Define the representation $\rho : \symgrp_m \ra \End((\C^N)^{\otimes m})$ by
\begin{equs}
\rho(\sigma)(v_1 \otimes \cdots \otimes v_m) = v_{\sigma(1)} \otimes \cdots \otimes  v_{\sigma(m)}. 
\end{equs}
Given operators $M_1, \ldots, M_m \in \End(\C^N)$, we may obtain $M_1 \otimes \cdots \otimes M_m \in \End((\C^N)^{\otimes m})$ which is defined on pure tensors by
\begin{equs}
(M_1 \otimes \cdots \otimes M_m)(v_1 \otimes \cdots v_m) := (M_1 v_1) \otimes \cdots \otimes (M_m v_m).
\end{equs}

\begin{lemma}\label{lemma:trace-product-and-tensor-product}
Let $M_1, \ldots, M_m \in \End(\C^N)$. We have that
\begin{equs}
\Tr(M_1 \cdots M_m) = \Tr\big((M_1 \otimes \cdots \otimes M_m) \rho((1 \cdots m))\big).
\end{equs}
Here, $(1 \cdots m) \in \symgrp_m$ is written in cycle notation. More generally, for $\sigma \in \symgrp_m$, we have that
\begin{equs}
\prod_{\substack{c = (i_1, \ldots, i_k) \\ \text{a cycle of $\sigma$}}} \Tr(M_{i_1} \cdots M_{i_k}) = \Tr\big((M_1 \otimes \cdots \otimes M_m) \rho(\sigma)\big).
\end{equs}
\end{lemma}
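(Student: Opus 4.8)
\textbf{Proof plan for Lemma \ref{lemma:trace-product-and-tensor-product}.} The plan is to prove the general statement for an arbitrary permutation $\sigma \in \symgrp_m$ directly, since the first displayed identity is just the special case $\sigma = (1 \cdots m)$. The key observation is that both sides can be computed by inserting a complete set of basis vectors into the trace and unwinding the definitions of $\rho$ and of the tensor-product operator $M_1 \otimes \cdots \otimes M_m$.

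First I would expand the right-hand side in the standard basis $\{e_{i_1} \otimes \cdots \otimes e_{i_m} : i_1, \ldots, i_m \in [N]\}$ of $(\C^N)^{\otimes m}$. Writing $(M_\ell)_{ij} = \langle M_\ell e_j, e_i \rangle$ for the matrix entries, one has
\begin{equs}
(M_1 \otimes \cdots \otimes M_m) \rho(\sigma) (e_{i_1} \otimes \cdots \otimes e_{i_m}) = (M_1 e_{i_{\sigma(1)}}) \otimes \cdots \otimes (M_m e_{i_{\sigma(m)}}),
\end{equs}
so that taking the inner product against $e_{i_1} \otimes \cdots \otimes e_{i_m}$ and summing over all indices gives
\begin{equs}
\Tr\big((M_1 \otimes \cdots \otimes M_m)\rho(\sigma)\big) = \sum_{i_1, \ldots, i_m \in [N]} \prod_{\ell=1}^m (M_\ell)_{i_\ell i_{\sigma(\ell)}}.
\end{equs}
Next I would reorganize the product over $\ell$ according to the cycle decomposition of $\sigma$: if $c = (j_1 ~ j_2 ~ \cdots ~ j_k)$ is a cycle of $\sigma$, then $\sigma(j_1) = j_2$, $\sigma(j_2) = j_3$, and so on, so the factors $(M_{j_1})_{i_{j_1} i_{j_2}} (M_{j_2})_{i_{j_2} i_{j_3}} \cdots (M_{j_k})_{i_{j_k} i_{j_1}}$ coming from that cycle sum over the free indices $i_{j_1}, \ldots, i_{j_k}$ to exactly $\Tr(M_{j_1} M_{j_2} \cdots M_{j_k})$. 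Since the index sets attached to distinct cycles are disjoint, the full sum factors over cycles, yielding $\prod_{c = (j_1, \ldots, j_k)} \Tr(M_{j_1} \cdots M_{j_k})$, which is the left-hand side. Specializing to $\sigma = (1 \cdots m)$ (a single cycle) gives the first identity.

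There is no serious obstacle here; this is a routine bookkeeping computation with tensor-power notation, and the only thing requiring care is being consistent about whether $\rho(\sigma)$ permutes the tensor slots by $\sigma$ or $\sigma^{-1}$ (the paper's convention $\rho(\sigma)(v_1 \otimes \cdots \otimes v_m) = v_{\sigma(1)} \otimes \cdots \otimes v_{\sigma(m)}$ fixes this), and making sure that the cyclic ordering within each $\Tr$ matches the cycle as written so that the trace is unambiguous. One subtlety worth noting explicitly in the writeup: $\Tr(M_{j_1} \cdots M_{j_k})$ is invariant under cyclic permutation of its arguments, which is exactly why the cycle $c$ being only defined up to a choice of starting point causes no ambiguity on the left-hand side.
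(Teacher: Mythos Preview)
Your proposal is correct and takes essentially the same approach as the paper: both expand the trace of $(M_1 \otimes \cdots \otimes M_m)\rho(\sigma)$ in the standard basis of $(\C^N)^{\otimes m}$ and identify the resulting index sum as a product of traces over the cycles of $\sigma$. The paper only writes out the special case $\sigma = (1 \cdots m)$ explicitly and says the general case ``follows similarly,'' whereas you carry out the general cycle-decomposition argument in full, which is a nice touch.
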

\begin{proof}
We have that
\begin{equs}
\Tr\big((M_1 \otimes \cdots \otimes M_m) \rho((1 \cdots m))\big) &= \big\langle e_{i_1} \otimes \cdots \otimes e_{i_m}, (M_1 \otimes \cdots \otimes M_m) \rho((1 \cdots m)) e_{i_1} \otimes \cdots \otimes e_{i_m}\big \rangle \\
&= \big\langle e_{i_1} \otimes \cdots \otimes e_{i_m}, (M_1 \otimes \cdots \otimes  M_{m-1} \otimes M_m) (e_{i_2} \otimes \cdots \otimes e_{i_m} \otimes e_{i_1}) \big \rangle \\
&= \langle e_{i_1}, M_1 e_{i_2} \rangle \cdots \langle e_{i_{m-1}}, M_{m-1} e_{i_m} \rangle \langle e_{i_m}, M_m e_{i_1} \rangle \\
&= (M_1)_{i_1 i_2} \cdots (M_{m-1})_{i_{m-1} i_m} (M_m)_{i_m i_1} \\
&= \Tr(M_1 \cdots M_m),
\end{equs}
as desired. The more general result follows similarly.
\end{proof}

By taking all the matrices $M_i = I_N$, $i \in [m]$, we obtain the following direct corollary of the previous result. 

\begin{cor}\label{cor:trace-rho-sigma}
We have that
\begin{equs}
\Tr(\rho(\sigma)) = N^{\cycles(\sigma)}, \quad \sigma \in \symgrp_m.
\end{equs}
\end{cor}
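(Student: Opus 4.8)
The statement to prove is Corollary~\ref{cor:trace-rho-sigma}: $\Tr(\rho(\sigma)) = N^{\cycles(\sigma)}$ for $\sigma \in \symgrp_m$.

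\medskip

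The plan is to deduce this directly from Lemma~\ref{lemma:trace-product-and-tensor-product} by specializing all the matrices to the identity. First I would recall that Lemma~\ref{lemma:trace-product-and-tensor-product} gives, for any $M_1, \ldots, M_m \in \End(\C^N)$ and any $\sigma \in \symgrp_m$, the identity
\[ \prod_{\substack{c = (i_1, \ldots, i_k) \\ \text{a cycle of } \sigma}} \Tr(M_{i_1} \cdots M_{i_k}) = \Tr\big((M_1 \otimes \cdots \otimes M_m) \rho(\sigma)\big). \]
Now I would set $M_i = I_N$ for every $i \in [m]$. On the right-hand side, $M_1 \otimes \cdots \otimes M_m = I_N \otimes \cdots \otimes I_N$ is the identity operator on $(\C^N)^{\otimes m}$, so the right-hand side becomes $\Tr(\rho(\sigma))$. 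On the left-hand side, for each cycle $c = (i_1, \ldots, i_k)$ of $\sigma$ the corresponding factor is $\Tr(I_N \cdots I_N) = \Tr(I_N) = N$; since there are exactly $\cycles(\sigma)$ cycles (counting fixed points as cycles of length one), the product equals $N^{\cycles(\sigma)}$. Equating the two sides yields $\Tr(\rho(\sigma)) = N^{\cycles(\sigma)}$, as desired.

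\medskip

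There is essentially no obstacle here: this is a one-line corollary, and the only thing to be slightly careful about is the bookkeeping convention that every element of $[m]$ lies in exactly one cycle of $\sigma$ (including fixed points as singleton cycles), so that the number of factors in the product on the left is precisely $\cycles(\sigma)$. Given that convention, the computation is immediate from the preceding lemma.
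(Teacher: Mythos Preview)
Your proposal is correct and is exactly the approach the paper takes: the paper states this corollary follows ``by taking all the matrices $M_i = I_N$, $i \in [m]$'' in Lemma~\ref{lemma:trace-product-and-tensor-product}, which is precisely what you do.
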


The reason that the representation $\rho$ enters is due to the following result, which computes $E_a \otimes E_a$ as a matrix in $\End((\C^N)^{\otimes 2})$. Recall $(E_a, a \in [N^2])$ is an (arbitrary) orthonormal basis of $\mathfrak{u}(N)$.

\begin{lemma}\label{lemma:casimir-result}
We have that
\begin{equs}\label{eq:casimir-identity}
E_a \otimes E_a = -\frac{1}{N} \rho((1, 2)).
\end{equs}
More generally, for $m \geq 1$ even and $\pi \in \mc{P}(m)$, we have that
\begin{equs}\label{eq:casimir-identity-higher-tensor}
E_{a_1} \otimes \cdots \otimes E_{a_n} \prod_{\{i, j\} \in \pi} \delta^{a_i a_j} = \frac{(-1)^{\frac{m}{2}}}{N^{\frac{m}{2}}} \rho(\pi).
\end{equs}
\end{lemma}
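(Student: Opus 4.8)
\textbf{Proof plan for Lemma \ref{lemma:casimir-result}.}

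The plan is to prove the base case \eqref{eq:casimir-identity} first by a direct computation, and then bootstrap to the general identity \eqref{eq:casimir-identity-higher-tensor} by a tensor-factorization argument together with the observation that a pairing $\pi$ decomposes into $m/2$ independent transpositions.

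First I would establish \eqref{eq:casimir-identity}. The right way to do this is to recall that $E_a \otimes E_a$ (summed over $a \in [N^2]$) is essentially the Casimir element of $\mathfrak{u}(N)$ acting on $(\C^N)^{\otimes 2}$, and that $\mathfrak{u}(N) \oplus \icomplex \mathfrak{u}(N) = \End(\C^N) = \mathfrak{gl}_N(\C)$, so it suffices to understand the analogous sum over an orthonormal basis of $\End(\C^N)$ with respect to the Frobenius-type inner product $\langle X, Y \rangle = N\,\Tr(XY^*)$. Concretely, the matrix units $\{e_{ij}\}$ satisfy $\langle e_{ij}, e_{k\ell}\rangle = N\delta_{ik}\delta_{j\ell}$, so $\{N^{-1/2} e_{ij}\}$ is orthonormal, and one computes directly on pure tensors that $\sum_{i,j} (N^{-1/2}e_{ij}) \otimes (N^{-1/2} e_{ij}) = \tfrac1N \sum_{i,j} e_{ij}\otimes e_{ij}$, whose action sends $v\otimes w \mapsto \tfrac1N (w\otimes v)$, i.e. $\tfrac1N \rho((1\,2))$. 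Then I would check that the skew-Hermitian and Hermitian parts each contribute half (with opposite signs) to this sum — this is where the $\mrm{Ad}$-invariance of the inner product and the relation between a self-adjoint and skew-adjoint orthonormal basis enter — yielding $\sum_a E_a \otimes E_a = -\tfrac1N \rho((1\,2))$. One does have to be slightly careful about factors of $\icomplex$ and the sign convention $\langle X, Y\rangle = -N\Tr(XY)$ on $\mathfrak{u}(N)$; reconciling these is the main place an error could creep in, and it is the step I expect to be the main obstacle.

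For the general identity, I would argue as follows. Fix $\pi \in \mc{P}(m)$, write $\pi = \{\{i_1,j_1\},\ldots,\{i_{m/2}, j_{m/2}\}\}$, and contract the tensor $E_{a_1}\otimes\cdots\otimes E_{a_m}$ against $\prod_{\ell} \delta^{a_{i_\ell} a_{j_\ell}}$. Summing over the $a$'s factorizes over the blocks of $\pi$: for each pair $\{i_\ell, j_\ell\}$ we get a factor $\sum_{a} E_a \otimes E_a$ sitting in the tensor slots $i_\ell$ and $j_\ell$ (and the identity in all other slots). By \eqref{eq:casimir-identity} each such factor equals $-\tfrac1N$ times the operator that transposes slots $i_\ell$ and $j_\ell$, i.e. $-\tfrac1N \rho((i_\ell\ j_\ell))$. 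Since the transpositions $(i_\ell\ j_\ell)$ act on disjoint pairs of tensor slots, they commute, and their product of operators is $\rho$ of the product of transpositions, which under the identification of pairings with permutations is exactly $\rho(\pi)$. Collecting the $m/2$ factors of $-1/N$ gives $(-1)^{m/2} N^{-m/2} \rho(\pi)$, which is \eqref{eq:casimir-identity-higher-tensor}. The only thing to spell out carefully here is that "apply $\sum_a E_a\otimes E_a$ in slots $i,j$ and identity elsewhere" genuinely multiplies as claimed inside $\End((\C^N)^{\otimes m})$, which is immediate from the definition of tensor products of operators on pure tensors.
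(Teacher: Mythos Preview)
Your bootstrap argument for \eqref{eq:casimir-identity-higher-tensor} from \eqref{eq:casimir-identity} is correct and is exactly what the paper does (the paper just says ``follows by applying \eqref{eq:casimir-identity} a total of $m/2$ times,'' which you have spelled out in detail).

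However, your computation of the base case contains a concrete error. You claim that $\sum_{i,j} e_{ij}\otimes e_{ij}$ sends $v\otimes w\mapsto w\otimes v$. It does not: since $e_{ij}v = v_j e_i$, one has
\[
\Big(\sum_{i,j} e_{ij}\otimes e_{ij}\Big)(v\otimes w)=\sum_{i,j} v_j w_j\, e_i\otimes e_i = \Big(\sum_j v_j w_j\Big)\Big(\sum_i e_i\otimes e_i\Big),
\]
a rank-one operator, not the swap. The swap is produced by $\sum_{i,j} e_{ij}\otimes e_{ji}$. The underlying issue is that you are using the \emph{Hermitian} inner product $\langle X,Y\rangle=N\Tr(XY^*)$ on $\End(\C^N)$; the element $\sum_b F_b\otimes F_b$ is \emph{not} basis independent for a Hermitian form (indeed, with $F_b$ running over $\{E_a\}\cup\{\icomplex E_a\}$ it gives zero). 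Basis independence holds for the symmetric \emph{bilinear} form $B(X,Y)=-N\Tr(XY)$, and with respect to $B$ the dual basis to $e_{ij}$ is $-\tfrac1N e_{ji}$, which does give $-\tfrac1N\sum_{i,j}e_{ij}\otimes e_{ji}=-\tfrac1N\rho((1\,2))$. So your complexification route can be made to work, but not as written.

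The paper sidesteps this subtlety entirely: it first shows $\sum_a E_a\otimes E_a$ is basis independent (via the characterization $\langle \sum_a E_a\otimes E_a,\,X\otimes Y\rangle=\langle X,Y\rangle$ for $X,Y\in\mathfrak u(N)$), and then computes directly with the explicit real orthonormal basis $\{\tfrac{1}{\sqrt{2N}}(E_{ij}-E_{ji}),\ \tfrac{\icomplex}{\sqrt{2N}}(E_{ij}+E_{ji}),\ \tfrac{\icomplex}{\sqrt{N}}E_{ii}\}$ of $\mathfrak u(N)$, obtaining $-\tfrac1N\sum_{i,j}E_{ij}\otimes E_{ji}$ after the cross terms cancel.
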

\begin{proof}
We first observe that $E_a \otimes E_a$ is independent of the choice of orthonormal basis $(E_a, a \in [N^2])$. Towards this end, observe that for $X, Y \in \mathfrak{u}(N)$, we have that
\begin{equs}
\langle E_a \otimes E_a, X \otimes Y \rangle = \langle E_a, X \rangle \langle E_a, Y \rangle = \langle X, Y \rangle.
\end{equs}
Thus $E_a \otimes E_a$ is uniquely characterized as the element of $\mathfrak{u}(N)^{\otimes 2}$ which defines the linear functional that sends $X \otimes Y \mapsto \langle X, Y \rangle$. This shows that $E_a \otimes E_a$ is independent of the choice of orthonormal basis.

Next, we exhibit an explicit orthonormal basis with which we may perform explicit computations. For $i, j \in [N]$, let $E_{ij}$ be the elementary matrix with $1$ in its $(i, j)$ entry, and $0$ everywhere else. Note that we may write $E_{ij} = e_i e_j^T$, where $e_1, \ldots, e_n$ are the standard basis vectors of $\R^n$. One may check that the following collection forms an orthonormal basis of $\mathfrak{u}(N)$:
\begin{equs}
\Big( \frac{1}{\sqrt{2N}} (E_{ij} - E_{ji}), ~ 1 \leq i < j \leq N\Big), ~~ \Big(\frac{\icomplex}{\sqrt{2N}} (E_{ij} + E_{ji}), ~ 1 \leq i < j \leq N\Big), ~~ \Big(\frac{\icomplex}{\sqrt{N}} E_{ii}, ~i \in [N]\Big).
\end{equs}
We explicitly compute
\begin{equs}
\frac{1}{2N} \sum_{1 \leq i < j \leq N} (E_{ij} - E_{ji}) \otimes (E_{ij} - E_{ji}) &= \frac{1}{2N} \sum_{i \neq j} E_{ij} \otimes E_{ij} - \frac{1}{2N} \sum_{i \neq j} E_{ij} \otimes E_{ji}, \\
-\frac{1}{2N} \sum_{1 \leq i < j \leq N} (E_{ij} + E_{ji}) \otimes (E_{ij} + E_{ji}) &= -\frac{1}{2N} \sum_{i \neq j} E_{ij} \otimes E_{ij} - \frac{1}{2N} \sum_{i \neq j} E_{ij} \otimes E_{ji} .
\end{equs}
Summing the two displays and adding in $-\frac{1}{N} \sum_{i \in [N]} E_{ii} \otimes E_{ii}$, we obtain
\begin{equs}
E_a \otimes E_a = -\frac{1}{N} E_{ij} \otimes E_{ji}.
\end{equs}
To finish the proof of \eqref{eq:casimir-identity}, we claim that $E_{ij} \otimes E_{ji} = \rho((1 ~ 2))$. To see this, let $v, w \in \C^N$, and compute
\begin{equs}
(E_{ij} \otimes E_{ji})(v \otimes w) = (e_i v_j) \otimes (e_j w_i) = w_i v_j (e_i \otimes e_j) = w_i e_i \otimes v_j e_j = w \otimes v = \rho((1 ~ 2))(v \otimes w),
\end{equs}
as desired.

The identity \eqref{eq:casimir-identity-higher-tensor} follows by applying \eqref{eq:casimir-identity} a total of $\frac{m}{2}$ times.
\end{proof}

\begin{lemma}\label{lemma:product-of-A-varep}
Let $A$ be as in Assumptions \ref{assumptions:smoothed-gff}. Given a piecewise smooth loop $\gamma \colon [0, 1] \ra \R^n$ and times $t_1, \ldots, t_m \in (0, 1)$, with $m \geq 1$ even, we have that
\begin{equs}
\E\big[(A(\gamma(t_1)) \cdot \gamma'(t_1)) \otimes \cdots \otimes (A(\gamma(t_m)) \cdot \gamma'(t_m))\big] = \frac{(-1)^{\frac{m}{2}}}{N^{\frac{m}{2}}} \sum_{\pi \in \mc{P}(m)} \rho(\pi) \prod_{\{i, j\} \in \pi} G(\gamma(t_i), \gamma(t_j)) \gamma'(t_i) \cdot \gamma'(t_j).
\end{equs}
\end{lemma}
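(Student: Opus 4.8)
The plan is to reduce the Gaussian expectation of a tensor product to a sum over pairings via Wick's theorem (Isserlis), and then rewrite each Wick-paired contribution using the Casimir identity of Lemma \ref{lemma:casimir-result}. First I would expand each factor $A(\gamma(t_\ell)) \cdot \gamma'(t_\ell)$ in the fixed orthonormal basis $(E_a)_{a \in [N^2]}$ of $\mathfrak{u}(N)$, writing
\[
A(\gamma(t_\ell)) \cdot \gamma'(t_\ell) = \langle A(\gamma(t_\ell)) \cdot \gamma'(t_\ell), E_{a_\ell} \rangle E_{a_\ell},
\]
so that the left-hand side becomes
\[
\E\Big[ \prod_{\ell=1}^m \langle A(\gamma(t_\ell)) \cdot \gamma'(t_\ell), E_{a_\ell} \rangle \Big] \, E_{a_1} \otimes \cdots \otimes E_{a_m},
\]
with an implicit sum over $a_1, \ldots, a_m \in [N^2]$. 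The scalar coefficients $\langle A(\gamma(t_\ell)) \cdot \gamma'(t_\ell), E_{a_\ell}\rangle$ form a centered jointly Gaussian family (each is a linear functional of the Gaussian process $A$), so Wick's theorem applies: the expectation of their product equals $\sum_{\pi \in \mc{P}(m)}$ of the product over pairs $\{i,j\} \in \pi$ of the pairwise covariances. By Assumption \ref{assumptions:smoothed-gff}(2), the pairwise covariance is
\[
\E\big[ \langle A(\gamma(t_i)) \cdot \gamma'(t_i), E_{a_i}\rangle \langle A(\gamma(t_j)) \cdot \gamma'(t_j), E_{a_j}\rangle \big] = G(\gamma(t_i), \gamma(t_j)) \big(\gamma'(t_i) \cdot \gamma'(t_j)\big) \, \delta^{a_i a_j},
\]
where I use that $A_j$ and $A_k$ are independent for $j \neq k$ to collapse the dot product of velocities, and $\langle E_{a_i}, E_{a_j}\rangle = \delta^{a_i a_j}$ by orthonormality. (One should double-check the contraction of the index structure here — summing $\E[\langle A_p(\gamma(t_i)), E_{a_i}\rangle \langle A_q(\gamma(t_j)), E_{a_j}\rangle] \gamma'_p(t_i) \gamma'_q(t_j)$ against $\delta_{pq} G \langle E_{a_i}, E_{a_j}\rangle$ gives exactly $G(\gamma(t_i),\gamma(t_j)) (\gamma'(t_i) \cdot \gamma'(t_j)) \delta^{a_i a_j}$ — this is the one routine but error-prone step.)

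Substituting back, the expectation equals
\[
\sum_{\pi \in \mc{P}(m)} \Big(\prod_{\{i,j\}\in\pi} G(\gamma(t_i),\gamma(t_j)) \, \gamma'(t_i)\cdot\gamma'(t_j)\Big) \sum_{a_1,\ldots,a_m} E_{a_1}\otimes\cdots\otimes E_{a_m} \prod_{\{i,j\}\in\pi}\delta^{a_i a_j},
\]
and the inner tensor sum is precisely $\frac{(-1)^{m/2}}{N^{m/2}}\rho(\pi)$ by identity \eqref{eq:casimir-identity-higher-tensor} of Lemma \ref{lemma:casimir-result}. Pulling the constant out of the sum over $\pi$ yields the claimed formula.

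I do not expect a genuine obstacle here; the statement is essentially Wick's theorem plus the already-established Casimir identity, so the ``hard part'' is purely bookkeeping — keeping the implicit-summation conventions straight between the basis indices $a_\ell$, the spatial indices being contracted in $\gamma'(t_i)\cdot\gamma'(t_j)$, and the pairing $\pi$, and making sure the factor $(-1)^{m/2}/N^{m/2}$ is accounted for exactly once. One subtlety worth a sentence in the writeup: Wick's theorem as usually stated requires the Gaussian vector to be real-valued, but $\langle A_\ell(\cdot), E_a\rangle$ is real since $A$ is $\mathfrak{u}(N)$-valued and the inner product $\langle X, Y\rangle = -N\Tr(XY)$ restricted to $\mathfrak{u}(N)$ is real-valued, so this causes no issue. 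Assumption \ref{assumptions:smoothed-gff}(1) ($A \stackrel{d}{=} -A$) is automatically implied by $A$ being centered Gaussian and is not separately needed for this lemma, though it guarantees the odd-$m$ terms vanish, which is why the proposition only records even $m$.
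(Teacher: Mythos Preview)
Your proposal is correct and follows essentially the same route as the paper: expand each $A(\gamma(t_\ell))\cdot\gamma'(t_\ell)$ in the orthonormal basis $(E_a)$, apply Wick's formula to the resulting real Gaussian scalars, compute the pairwise covariance using Assumption~\ref{assumptions:smoothed-gff}(2) to obtain $\delta^{a_i a_j} G(\gamma(t_i),\gamma(t_j))\,\gamma'(t_i)\cdot\gamma'(t_j)$, and then invoke \eqref{eq:casimir-identity-higher-tensor}. Your bookkeeping matches the paper's, and your side remarks (reality of the inner product, Assumption~\ref{assumptions:smoothed-gff}(1) not being needed here) are accurate extras.
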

\begin{proof}
For brevity, let $X_j = A(\gamma(t_j)) \cdot \gamma'(t_j)$. Further, for an orthonormal basis $(E_a, a \in [N^2])$ of $\mrm{u}(N)$, decompose $X_j = X_j^a E_a$. We have that
\begin{equs}
X_1 \otimes \cdots \otimes X_m = E_{a_1} \otimes \cdots \otimes E_{a_m} X_1^{a_1} \cdots X_m^{a_m}, 
\end{equs}
and that $(X_j^a, j \in [m], a \in [N^2])$ is a jointly Gaussian collection of $\R$-valued random variables. Thus by Wick's formula, we have that
\begin{equs}
\E[X_1 \otimes \cdots \otimes X_m] = \sum_{\pi \in \mc{P}(m)} E_{a_1} \otimes \cdots \otimes E_{a_m} \prod_{\{i, j\} \in \pi} \E[X_i^{a_i} X_j^{a_j}].
\end{equs}
By Lemma \ref{lemma:casimir-result}, to finish, it suffices to show that for any $a, b \in [N^2]$ and $i, j \in [m]$, we have that
\begin{equs}
\E[X_i^{a} X_j^{b}] = \delta^{a b} G(\gamma(t_i), \gamma(t_j)) \gamma'(t_i) \cdot \gamma'(t_j).
\end{equs}
Towards this end, we may write $A_{k} = A_{k}^a E_a$ (where $A_k^a = \langle A_k, E_a \rangle$) so that
\begin{equs}
X^a_i = A_{k}^a(\gamma(t_i)) \gamma_k'(t_i).
\end{equs}
By the second item in Assumptions \ref{assumptions:smoothed-gff}, we have that
\begin{equs}
\E[X^a_i X^b_j] = \E[\langle A_k (\gamma(t_i)), E_a \rangle \langle A_\ell(\gamma(t_j)), E_b \rangle] \gamma_k'(t_i) \gamma_\ell'(t_j) &= \delta^{ab} \delta_{k\ell} G(\gamma(t_i), \gamma(t_j)) \gamma_k'(t_i) \gamma_\ell'(t_j) \\
&= \delta^{ab} G(\gamma(t_i), \gamma(t_j)) \gamma'(t_i) \cdot \gamma'(t_j),
\end{equs}
as desired.
\end{proof}

\begin{proof}[Proof of Proposition \ref{prop:approximate-regularized-wilson-loops-genus-expansion}]
We show the first claim, as the second follows by a straightforward generalization of the argument. For $m \geq 1$, define
\begin{equs}
I_m := \Tr \E\bigg[\int_{\Delta_{m} (1)} dt \big(A(\gamma(t_m)) &\cdot \gamma'(t_m)\big) \cdots \big(A(\gamma(t_{1})) \cdot \gamma'(t_{1})\big)\bigg] .
\end{equs}
Recalling the series expansion of the holonomy \eqref{eq:holonomy-series-expansion}, it suffices to show that for $m \geq 1$,
\begin{equs}
I_{2m-1} &= 0, \\
I_{2m} &= (-1)^m \sum_{\pi \in \mc{P}(2m)} N^{V(\pi) - E(\pi)} \int_{\Delta_{2m}(1)} dt \prod_{\{i, j\} \in \pi} G(\gamma(t_i), \gamma(t_j)) \gamma'(t_i) \cdot \gamma'(t_j).
\end{equs}
The first identity follows by parity, since $A \stackrel{d}{=} -A$. For the second, fix $t \in \Delta_{2m}(1)$. By combining Lemmas \ref{lemma:trace-product-and-tensor-product} and \ref{lemma:product-of-A-varep}, and then combining Corollary \ref{cor:trace-rho-sigma} and Lemma \ref{lemma:vertices-equals-cycles}, we have that
\begin{equs}
\E \Tr&\Big[\big(A(\gamma(t_{2m})) \cdot \gamma'(t_{2m})\big) \cdots \big(A(\gamma(t_{1})) \cdot \gamma'(t_{1})\big) \Big] \\
&= (-1)^{m} N^{-m} \sum_{\pi \in \mc{P}(2m)} \Tr(\rho(\pi) \rho(1 \cdots 2m)) \prod_{\{i, j\} \in \pi} G(\gamma(t_i), \gamma(t_j)) \gamma'(t_i) \cdot \gamma'(t_j) \\
&= (-1)^m \sum_{\pi \in \mc{P}(2m)} N^{V(\pi) - E(\pi)} \prod_{\{i, j\} \in \pi} G(\gamma(t_i), \gamma(t_j)) \gamma'(t_i) \cdot \gamma'(t_j),
\end{equs}
as desired.
\end{proof}

\subsection{Big loops and surfaces}

Instead of taking regularized Wilson loops, one may consider alternative observables which are well-defined for a $\frkg$-valued $1$-form GFF. We discuss such observables in this subsection. These observables will be gauge-invariant in the sense discussed in Section \ref{sec:gaugetransformations}. Throughout this section, we take $\manifold = \R^n$ or $\T^n$. 

To begin the discussion, note that we may reinterpret the holonomy \eqref{eq:holonomy-ode} in the following way. A smooth loop $\gamma \colon [0, 1] \ra \manifold$ defines a map $\Gamma \colon [0, 1] \ra \Omega^1 \schwartz'$ via the formula $\Gamma(t) := \delta_{\gamma(t)} \gamma'(t)$. I.e., $\Gamma(t)$ is the distribution on $1$-forms such that
\begin{equs}
(A, \Gamma(t)) = A(\gamma(t)) \cdot \gamma'(t), ~~ A \in \Omega^1 \schwartz.
\end{equs}
Given $A \in \Omega^1 \schwartz$, the holonomy ODE \eqref{eq:holonomy-ode} may then be written
\begin{equs}\label{eq:general-holonomy-ode}
h'(t) = h(t)(A, \Gamma(t)), ~~ h(0) = \groupid.
\end{equs}
More generally, we make the following definition.

\begin{definition}
Let $\ms{G}$ be the set of functions $\Gamma \colon [0, 1] \ra \Omega^1 \schwartz'$ satisfying
\begin{equs}
t \mapsto (A, \Gamma(t)) \in L^\infty([0, 1]) \text{ for all $A \in \Omega^1 \schwartz$},
\end{equs}
and $d^* \Gamma(t) = 0$ for a.e. $t \in [0, 1]$. We will refer to elements of $\ms{G}$ as ``big loops".
\end{definition}


\begin{definition}[Generalized holonomy and big loop observable]
For any $\Gamma \in \ms{G}$, and for any $A \in \Omega^1 \schwartz(\manifold; \frkg)$, define the ``holonomy of $A$ around $\Gamma$" by \eqref{eq:general-holonomy-ode}. Define the Wilson big loop observable $W_\Gamma$ by
\begin{equs}
W_\Gamma(A) := \Tr(h(1)),
\end{equs}
where $h$ is the holonomy of $A$ along $\Gamma$.
\end{definition}

Similar to the series expansion \eqref{eq:holonomy-series-expansion} for the holonomy, we also have the following series expansion for the holonomy of $A$ around a big loop:
\begin{equs}\label{eq:big-loop-holonomy-series-expansion}
h(t) = \sum_{m \geq 0} \int_{\Delta_m(t)} ds (A, \Gamma(s_m)) \cdots (A, \Gamma(s_1)).
\end{equs}
Note also that the gauge-invariance of $W_\Gamma$ follows immediately from the assumption that $d^* \Gamma(t) = 0$ for a.e. $t \in [0, 1]$, since then given a Schwartz gauge transformation $\lambda \in \schwartz(\R^n; \frkg)$, we have that
\begin{equs}
(A + d\lambda, \Gamma(t)) = (A, \Gamma(t)) \text{ for a.e. $t \in [0, 1]$.}
\end{equs}

\begin{remark}
We envision big loops and their holonomies as arising from limits of smooth loops (along with suitable rescalings of the $1$-forms) in the following manner. As a simple example, take $\manifold = \T^n$ and $\liegroup = \unitary(1)$, so that $A$ is a $\icomplex \R$-valued $1$-form. Suppose further that $A$ is smooth. Fix some vector $v \in \T^n$ such that the flow $t \mapsto tv$ equidistributes over $\T^n$. We can define a sequence of loops $\{\gamma_m\}_{m \geq 1}$ such that for each $m$, $\gamma_m(t) = m t v$ on the interval $[0, 1-\varep_m]$ for some $\varep_m$ very small, say $2^{-2^m}$. On the remaining part of the interval $[1-\varep_m, 1]$, $\gamma_m$ goes in a straight line from $\gamma_m(1 - \varep_m)$ to its starting point. Since $A$ is $\icomplex \R$-valued, its holonomy along any given loop is the exponential of its line integral along the loop. Since $v$ was chosen so that $t \mapsto tv$ equidistributes over $\T^n$, we approximately have that its line integral
\begin{equs}
\frac{1}{m} \int_0^1 A(\gamma_m(t)) \cdot \gamma_m'(t) \approx \int_{\T^n} A(x) dx,
\end{equs}
and thus the holonomy of $A$ along $\gamma_m$ is approximately given by
\begin{equs}
\exp\bigg(m \int_{\T^n} A(x) dx \bigg).
\end{equs}
In particular, if we rescale and define $A_m = \frac{1}{m} A$, then as $m \toinf$, we would expect that the holonomy of $A_m$ along $\gamma_m$ converges to
\begin{equs}
\exp\bigg(\int_{\T^n} A(x) dx \bigg).
\end{equs}
This is precisely the holonomy of $A$ along the big loop $\Gamma$ such that $\Gamma(t) \equiv 1$ for all $t \in [0, 1]$.

More generally, we may consider a sequence of $\{\gamma_m\}_{m \geq 1}$ such that each $\gamma_m$ has length $m$. Then, if $\gamma_m$ is suitably chosen, and $A$ is again rescaled to $A_m$, we would expect that the holonomy of $A_m$ along $\gamma_m$ converges to the holonomy of $A$ along some big loop $\Gamma$.
\end{remark}

The nice thing about big loop observables is that their expectations are well-defined, even if $A$ is a $1$-form GFF. Moreover, we have the following surface-sum representation for Wilson big loop expectations, analogous to our previous surface-sum representation for regularized Wilson loops (Proposition \ref{prop:approximate-regularized-wilson-loops-genus-expansion}). Again, we take $\liegroup = \unitary(N)$, and so $\frkg = \mathfrak{u}(N)$.

\begin{prop}\label{prop:big-loop-surface-sum-representation}
Let $\Gamma \in \ms{G}$ be a big loop such that $\sup_{t \in [0, 1]} \|\Gamma(t)\|_{\dot{H}^{-1}(\manifold)} < \infty$. Let $A$ be a $\mrm{u}(N)$-valued $\gff^1(\manifold)$. We have that
\begin{equs}
\E[W_\Gamma(A)] = \sum_{m=0}^\infty (-1)^m \sum_{\pi \in \mc{P}(2m)} N^{V(\pi) - E(\pi)} \int_{\Delta_{2m}(1)} dt \prod_{\{i, j\} \in \pi} (\Gamma(t_i), \Gamma(t_j))_{\Omega^1 \dot{H}^{-1}(\manifold)}.
\end{equs}
More generally, given big loops $\Gamma_1, \ldots, \Gamma_k$ such that $\sup_{t \in [0, 1]} \|\Gamma_j(t)\|_{\dot{H}^{-1}(\manifold)} < \infty$ for each $j \in [k]$, we have that
\begin{equs}
\E&\bigg[\prod_{\ell \in [k]} W_\Gamma(A)\bigg] \\
&= \sum_{m=0}^\infty (-1)^m \sum_{\substack{m_1, \ldots, m_k \geq 0 \\ m_1 + \cdots + m_k = 2m}} \sum_{\pi \in \mc{P}(2m)} N^{V(\pi, \mu) - E(\pi, \mu)} \int_{\Delta_{m_1}(1)} dt^1 \cdots \int_{\Delta_{m_k}(1)} dt^k \prod_{\{i, j\} \in \pi} (\Gamma(t_i), \Gamma(t_j))_{\Omega^1 \dot{H}^{-1}(\manifold)}.
\end{equs}
Here, the partition $\mu$ appearing in the sum is $\mu = (m_1, \ldots, m_k)$. Also, given $t^1 \in \Delta_{m_1}(1), \ldots, t^k \in \Delta_{m_k}(1)$, we concatenate $t = (t^1, \ldots, t^k) \in [0, 1]^{2m}$ to obtain a vector of length $2m$. The variables $t_i, t_j$ which appear in the above display are the coordinates of this vector.
\end{prop}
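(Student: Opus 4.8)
The plan is to mirror the proof of Proposition~\ref{prop:approximate-regularized-wilson-loops-genus-expansion}, replacing the smoothed Gaussian field by the genuine $1$-form GFF and the test distributions $\delta_{\gamma(t)}\gamma'(t)$ by the big loop values $\Gamma(t)\in\Omega^1\dot{H}^{-1}(\manifold)$; the combinatorial core carries over essentially verbatim, so the real work is analytic. First I would record the Gaussian structure. Since $A$ is a $\mathfrak{u}(N)$-valued $\gff^1(\manifold)$, its associated Gaussian Hilbert space is indexed by $\Omega^1\dot{H}^{-1}(\manifold;\mathfrak{u}(N))$ (Remark~\ref{remark:gaussian-hilbert-space}), and since $\sup_{t}\|\Gamma(t)\|_{\dot{H}^{-1}(\manifold)}<\infty$ by hypothesis, each pairing $(A,\Gamma(t))$ is a well-defined centered $\mathfrak{u}(N)$-valued Gaussian with
\begin{equs}
\E\big[\langle(A,\Gamma(s)),E_a\rangle\,\langle(A,\Gamma(t)),E_b\rangle\big]=\delta_{ab}\,(\Gamma(s),\Gamma(t))_{\Omega^1\dot{H}^{-1}(\manifold)}
\end{equs}
for any orthonormal basis $(E_a)$ of $\mathfrak{u}(N)$ (using that the components of a $\mathfrak{u}(N)$-valued GFF relative to an orthonormal basis are i.i.d.\ scalar GFFs). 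This is exactly the second item of Assumptions~\ref{assumptions:smoothed-gff} with the kernel pairing $G(\gamma(s),\gamma(t))\gamma'(s)\cdot\gamma'(t)$ replaced by $(\Gamma(s),\Gamma(t))_{\Omega^1\dot{H}^{-1}(\manifold)}$, and $A\stackrel{d}{=}-A$ holds trivially.

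Next I would establish that $W_\Gamma(A)$, defined via the holonomy series \eqref{eq:big-loop-holonomy-series-expansion}, is a.s.\ and $L^1$ well-defined. Writing $C:=\sup_{t\in[0,1]}\|\Gamma(t)\|_{\dot{H}^{-1}(\manifold)}$, each $(A,\Gamma(t))$ is a Gaussian vector in $\mathfrak{u}(N)$ with second moment $\lesssim_N C^2$, so Gaussian moment estimates give $\E\|(A,\Gamma(s_1))\cdots(A,\Gamma(s_m))\|\lesssim_N (cC)^m m^{m/2}$ uniformly in $s\in\Delta_m(1)$; since $|\Delta_m(1)|=1/m!$, summing over $m$ gives $\E\|h(1)\|\le\sum_m (cC)^m m^{m/2}/m!<\infty$ by Stirling. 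The same ingredients, together with $|N^{V(\pi)-E(\pi)}|\le N$ (a unicellular map has $V-E=1-2g\le1$), $|\mc{P}(2m)|=(2m-1)!!$, and $|(\Gamma(t_i),\Gamma(t_j))_{\Omega^1\dot{H}^{-1}(\manifold)}|\le C^2$, show that the triple sum on the right-hand side converges absolutely (its $m$-th term is $O_N(C^{2m}/(2^m m!))$), which licenses interchanging $\E$ with the series and applying Fubini.

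With these preliminaries in place, for each fixed $t\in\Delta_m(1)$ I would compute $\Tr\,\E\big[(A,\Gamma(t_m))\cdots(A,\Gamma(t_1))\big]$ exactly as in the smoothed case: expand $(A,\Gamma(t_j))=X_j^a E_a$, apply Wick's formula to obtain a sum over pairings $\pi\in\mc{P}(m)$ (so $m$ must be even, odd $m$ vanishing by parity), use Lemma~\ref{lemma:casimir-result} to identify $E_{a_1}\otimes\cdots\otimes E_{a_m}\prod_{\{i,j\}\in\pi}\delta^{a_ia_j}=(-1)^{m/2}N^{-m/2}\rho(\pi)$, and then apply Lemma~\ref{lemma:trace-product-and-tensor-product}, Corollary~\ref{cor:trace-rho-sigma} and Lemma~\ref{lemma:vertices-equals-cycles} to rewrite $\Tr(\rho(\pi)\rho((1\cdots 2m)))=N^{V(\pi)-E(\pi)}$. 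Integrating over $\Delta_{2m}(1)$ and summing over $m$ (justified above) yields the first formula. For the $k$-loop case, expanding $\prod_{\ell\in[k]}W_{\Gamma_\ell}(A)$ produces a product of holonomy series; Wick's formula now pairs across the disjoint polygons of sizes $m_1,\dots,m_k$, and Lemma~\ref{lemma:trace-product-and-tensor-product} applied with the permutation $\sigma_\mu$ from Lemma~\ref{lemma:vertices-equals-cycles}, together with the second half of that lemma, gives $N^{V(\pi,\mu)-E(\pi,\mu)}$, which is the stated identity.

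I expect the main obstacle to be the convergence bookkeeping: checking that the holonomy series converges in $L^1$ for the honest (distributional) GFF and that the exchange of expectation with the infinite sum is legitimate, which uses the uniform $\dot{H}^{-1}$ bound on $\Gamma$ in an essential way (this bound is also what makes $(A,\Gamma(t))$ a bona fide Gaussian random vector). The algebraic and combinatorial identities are already packaged in the lemmas of the smoothed setting; the one point requiring care is that Assumptions~\ref{assumptions:smoothed-gff}, and hence Lemma~\ref{lemma:product-of-A-varep}, are phrased for Gaussian processes with continuous sample paths, which the GFF is not, so one re-proves the (immediate) distributional analog of Lemma~\ref{lemma:product-of-A-varep} directly from Wick's formula in the Gaussian Hilbert space rather than citing it.
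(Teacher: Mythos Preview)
Your proposal is correct and follows essentially the same route as the paper's own proof: expand the holonomy series, kill odd terms by parity, and for even $m$ apply Wick's formula together with Lemma~\ref{lemma:casimir-result}, Lemma~\ref{lemma:trace-product-and-tensor-product}, Corollary~\ref{cor:trace-rho-sigma}, and Lemma~\ref{lemma:vertices-equals-cycles}. The paper packages the distributional analog of Lemma~\ref{lemma:product-of-A-varep} that you anticipate as a separate Lemma~\ref{lemma:matrix-wick-GFF}, and is considerably terser than you on the $L^1$ and absolute-convergence bookkeeping (it simply asserts that the $\dot H^{-1}$ bound on $\Gamma$ ensures absolute convergence), so your treatment of those analytic points is in fact more complete than the paper's.
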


The proof of Proposition \ref{prop:approximate-regularized-wilson-loops-genus-expansion} is very similar to the proof of Proposition \ref{prop:approximate-regularized-wilson-loops-genus-expansion}. We first show the following preliminary lemma.

\begin{lemma}\label{lemma:matrix-wick-GFF}
Let $A$ be a $\mrm{u}(N)$-valued 1-form $\gff(\R^n)$. For $m \geq 1$ even and $\phi_1, \ldots, \phi_m \in \Omega^1 \dot{H}^{-1}(\manifold)$, we have that
\begin{equs}
\E\Big[(A, \phi_1) \otimes \cdots \otimes (A, \phi_m) \Big] = \frac{(-1)^{\frac{m}{2}}}{N^{\frac{m}{2}}}\sum_{\pi \in \mc{P}(m)} \rho(\pi) \prod_{\{i, j\} \in \pi} (\phi_i, \phi_j)_{\Omega^1 \dot{H}^{-1}(\manifold)}.
\end{equs}
\end{lemma}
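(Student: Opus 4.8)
The plan is to reduce this to an application of Wick's theorem combined with the Casimir-type identity from Lemma~\ref{lemma:casimir-result}. First I would decompose each $(A, \phi_\ell)$ into its Lie-algebra components. Fix an orthonormal basis $(E_a, a \in [N^2])$ of $\mathfrak{u}(N)$, and write $A = A^a E_a$ where $A^a$ is the $\R$-valued $1$-form $\gff$ component obtained by projecting onto $E_a$ (so $(A^a, \phi) = \langle (A,\phi), E_a\rangle$ in the appropriate sense). Then $(A, \phi_\ell) = (A^a, \phi_\ell) E_a$, and hence
\begin{equs}
(A, \phi_1) \otimes \cdots \otimes (A, \phi_m) = E_{a_1} \otimes \cdots \otimes E_{a_m} \, (A^{a_1}, \phi_1) \cdots (A^{a_m}, \phi_m).
\end{equs}
The collection $\big( (A^a, \phi_\ell) : a \in [N^2], \ell \in [m] \big)$ is a jointly centered Gaussian family of real random variables, so I would apply Wick's formula to the scalar expectation, yielding
\begin{equs}
\E\big[(A, \phi_1) \otimes \cdots \otimes (A, \phi_m)\big] = \sum_{\pi \in \mc{P}(m)} E_{a_1} \otimes \cdots \otimes E_{a_m} \prod_{\{i,j\} \in \pi} \E\big[(A^{a_i}, \phi_i)(A^{a_j}, \phi_j)\big].
\end{equs}

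The next step is to identify the pairwise covariance. Since $A$ is a $\mathfrak{u}(N)$-valued $\gff^1(\manifold) = \fgf^1_1(\manifold)$, the vector-valued FGF covariance formula (Definition~\ref{def:fgf} in the $V$-valued case, with $V = \mathfrak{u}(N)$) together with the fact that an orthonormal basis decomposition produces i.i.d.\ scalar FGFs gives
\begin{equs}
\E\big[(A^{a}, \phi_i)(A^{b}, \phi_j)\big] = \delta^{ab} \, (\phi_i, \phi_j)_{\Omega^1 \dot{H}^{-1}(\manifold)}.
\end{equs}
Substituting this in, each pairing $\pi$ contributes $\big(\prod_{\{i,j\}\in\pi} \delta^{a_i a_j}\big)$ times $\big(\prod_{\{i,j\}\in\pi} (\phi_i,\phi_j)_{\Omega^1\dot H^{-1}}\big)$, and I would then invoke the higher-tensor Casimir identity \eqref{eq:casimir-identity-higher-tensor} from Lemma~\ref{lemma:casimir-result}, namely $E_{a_1}\otimes\cdots\otimes E_{a_m}\prod_{\{i,j\}\in\pi}\delta^{a_i a_j} = (-1)^{m/2} N^{-m/2}\rho(\pi)$, to collapse the basis sum. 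This produces exactly
\begin{equs}
\E\big[(A,\phi_1)\otimes\cdots\otimes(A,\phi_m)\big] = \frac{(-1)^{m/2}}{N^{m/2}}\sum_{\pi\in\mc{P}(m)}\rho(\pi)\prod_{\{i,j\}\in\pi}(\phi_i,\phi_j)_{\Omega^1\dot H^{-1}(\manifold)},
\end{equs}
as desired.

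The only genuine subtlety — the step I would treat most carefully — is justifying that the tensor-valued expectation is well-defined and that Wick's theorem applies in this distributional setting: the $\phi_\ell$ live only in $\Omega^1 \dot{H}^{-1}(\manifold)$, so $(A, \phi_\ell)$ is a Gaussian random variable via the Gaussian Hilbert space extension (Remark~\ref{remark:gaussian-hilbert-space} / Remark~\ref{remark:white-noise-induces-gaussian-hilbert-space}), not a pointwise-evaluated object. Since $(\C^N)^{\otimes m}$ is finite-dimensional, the tensor product $(A,\phi_1)\otimes\cdots\otimes(A,\phi_m)$ is just a finite linear combination of products of jointly Gaussian scalars with deterministic tensor coefficients, so all moments exist and Wick's formula is the ordinary multivariate Gaussian moment formula; I would state this cleanly but not belabor it. A secondary bookkeeping point is making sure the identification of $\mathfrak{u}(N)$-component covariances with the scalar $\dot H^{-1}$ inner product is consistent with the inner product conventions on $\mathfrak{u}(N)$ (the factor of $N$ in $\langle X, Y\rangle = -N\Tr(XY)$) — but this is absorbed into the normalization of the orthonormal basis $(E_a)$ and causes no difficulty. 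Everything else is the same algebra as in the proof of Lemma~\ref{lemma:product-of-A-varep}, with $(\phi_i,\phi_j)_{\Omega^1\dot H^{-1}(\manifold)}$ playing the role that $G(\gamma(t_i),\gamma(t_j))\gamma'(t_i)\cdot\gamma'(t_j)$ played there.
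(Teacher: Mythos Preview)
Your proposal is correct and follows essentially the same approach as the paper's proof: decompose $(A,\phi_\ell)$ into Lie-algebra components via an orthonormal basis of $\mathfrak{u}(N)$, apply Wick's formula to the resulting scalar Gaussian family, identify the pairwise covariance as $\delta^{ab}(\phi_i,\phi_j)_{\Omega^1\dot H^{-1}}$ using independence of the scalar components, and then invoke Lemma~\ref{lemma:casimir-result} to collapse the basis sum into $\rho(\pi)$. The paper's proof is slightly terser and omits the discussion of the Gaussian Hilbert space extension and normalization conventions that you flag as subtleties, but the argument is otherwise identical.
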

\begin{proof}
By definition of the GFF, $(X_1, \ldots, X_m)$ is a jointly Gaussian collection of $\frkg$-valued random variables. We may expand $X_j = X_j^a E_a$, so that $(X_j^a, j \in [m], a \in [N^2])$ is a jointly Gaussian collection of $\R$-valued random variables. Thus by Wick's formula, we have that
\begin{equs}
\E\Big[X_1 \otimes \cdots \otimes X_m\Big] &= \E\Big[X_1^{a_1} \cdots X_m^{a_m}\Big] E_{a_1} \otimes \cdots \otimes E_{a_m} \\
&= \sum_{\pi \in \mc{P}(m)} \prod_{\{i, j\} \in \pi} \E[X_i^{a_i} X_j^{a_j}] E_{a_1} \otimes \cdots \otimes E_{a_m}.
\end{equs}
Note that $X^a_j = (A^a, \phi_j)$, where $A^a$ arises as $A = A^a E_a$, so that $(A^a, a \in [N^2])$ is a collection of independent $\R$-valued 1-form GFFs. As a consequence, we have that
\begin{equs}
\E[X_i^{a_i} X_j^{a_j}] &= \delta^{a_i a_j} \E\Big[(A^{a_i}, \phi_i) (A^{a_j}, \phi_j)\Big] \\
&= \delta^{a_i a_j} (\phi_i, \phi_j)_{\Omega^1 \dot{H}^{-1}(\manifold)}.
\end{equs}
Substituting this into our previous formula, we obtain
\begin{equs}
\E\Big[X_1 \otimes \cdots \otimes X_m\Big] &= \sum_{\pi \in \mc{P}(m)} \prod_{\{i, j\} \in \pi} \delta^{a_i a_j} (\phi_i, \phi_j)_{\Omega^1 \dot{H}^{-1}(\manifold)}  E_{a_1} \otimes \cdots \otimes E_{a_m} \\
&= \frac{(-1)^{\frac{m}{2}}}{N^{\frac{m}{2}}} \sum_{\pi \in \mc{P}(m)} \rho(\pi) \prod_{\{i, j\} \in \pi} (\phi_i, \phi_j)_{\Omega^1 \dot{H}^{-1}(\manifold)} , 
\end{equs}
where we applied Lemma \ref{lemma:casimir-result} in the second identity. 
\end{proof}

Using Lemma \ref{lemma:matrix-wick-GFF}, we may now prove Proposition \ref{prop:big-loop-surface-sum-representation}.

\begin{proof}[Proof of Proposition \ref{prop:big-loop-surface-sum-representation}]
We show the result for a single loop, as the general result follows by a straightforward generalization of the argument. The condition on the norm of $\Gamma$ ensures that the infinite series that arises is absolutely convergent. Recalling the series expansion of the holonomy \eqref{eq:big-loop-holonomy-series-expansion}, we have that
\begin{equs}
\E[W_\Gamma(A)] = \sum_{m=0}^\infty \int_{\Delta_m(1)} dt \Tr \E\Big[ (A, \Gamma(t_m)) \cdots (A, \Gamma(t_1))\Big]  
\end{equs}
When $m$ is odd, the expectation is zero by parity, since $A \stackrel{d}{=} - A$. When $m$ is even, we have by Lemmas \ref{lemma:trace-product-and-tensor-product}, \ref{lemma:matrix-wick-GFF}, \ref{cor:trace-rho-sigma}, and \ref{lemma:vertices-equals-cycles} that
\begin{equs}
\Tr \E\Big[ (A, \Gamma(t_m)) \cdots (A, \Gamma(t_1))\Big] &= \E\Big[ \Tr \Big((A, \Gamma(t_m)) \otimes \cdots \otimes (A, \Gamma(t_1)) \rho((1 \cdots m))\Big)\Big] \\
&=  \frac{(-1)^{\frac{m}{2}}}{N^{\frac{m}{2}}}\sum_{\pi \in \mc{P}(m)} \Tr \big(\rho(\pi) (1 \cdots m)\big)  \prod_{\{i, j\} \in \pi} (\Gamma(t_i), \Gamma(t_j))_{\Omega^1 \dot{H}^{-1}(\R^n)} \\
&= (-1)^{\frac{m}{2}} \sum_{\pi \in \mc{P}(m)} N^{V(\pi) - E(\pi)} \prod_{\{i, j\} \in \pi} (\Gamma(t_i), \Gamma(t_j))_{\Omega^1 \dot{H}^{-1}(\R^n)}.
\end{equs}
The desired result now follows.
\end{proof}

Note that if the big loop $\Gamma$ is of the form $\Gamma = \codif S$, where $S \colon [0, 1] \ra \Omega^2 L^2(\manifold)$ is what we call a ``big surface", then for any $t, s \in [0, 1]$, we have that (recall the projection $E$ from Definition \ref{def:E-E-star-projections}, as well as Lemma \ref{lemmma:projections-bounded-sobolev-space})
\begin{equs}
(\Gamma(t), \Gamma(s))_{\dot{H}^{-1(\manifold)}} = (\codif S(t), \codif S(s))_{\dot{H}^{-1(\manifold)}} = (S(t), d\codif (-\Delta)^{-1} S(s))_{L^2(\manifold)} = (S(t), E S(s))_{L^2(\manifold)}.
\end{equs}
Note that $\codif S = \codif E S$, and thus we may replace $S$ by $ES$, in which case we have
\begin{equs}
(\Gamma(t), \Gamma(s))_{\dot{H}^{-1}(M)} = (S(t), S(s))_{L^2(M)}.
\end{equs}
Thus if a big loop $\Gamma \in \ms{G}$ is of the form $\Gamma = \codif S$ where $S$ is such that $dS =0$, we then have the alternative formula for the Wilson big loop expectation:
\begin{equs}
\E[W_\Gamma(A)] = \sum_{m=0}^\infty (-1)^m \sum_{\pi \in \mc{P}(2m)} N^{V(\pi) - E(\pi)} \int_{\Delta_{2m}(1)} dt \prod_{\{i, j\} \in \pi} (S(t_i), S(t_j))_{L^2(\manifold)},
\end{equs}
and similarly for the case of multiple big loops. We may think of the above as a ``big surface expectation".






\section{Lattice Gaussian forms} \label{sec:latticeforms}


In this section, we discuss lattice versions of fractional Gaussian forms. We first introduce/review the needed lattice exterior calculus, and then define lattice Gaussian forms. Then, we discuss aspects of the scaling limit of lattice Gaussian forms to continuum Gaussian forms.

\subsection{Review of exterior calculus on the lattice}

In this subsection, we review the lattice analog of the continuum exterior calculus which was detailed in Section \ref{section:exterior-calculus}. At certain points, we will follow the discussion from \cite[Section 2.2]{GS2023}. See also \cite[Section 2]{chatterjee2020wilson} for another reference on lattice exterior calculus. In the following, let $\Lambda \sse \Z^n$ either be a finite box, or all of $\Z^n$. For $0 \leq k \leq n$, we consider the $k$-cells of $\Lambda$, which can be visualized as the $k$-dimensional hypercubes whose vertices are all contained in $\Lambda$. Each such hypercube gives rise to two $k$-cells, one positively oriented and one negatively oriented. See Figure \ref{figure:oriented_cell_examples} for some examples of oriented cells.

\begin{figure}
    \centering
    \includegraphics[width=.5\linewidth, page=1]{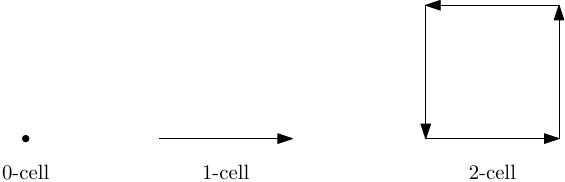}
    \caption{Examples of $0$, $1$, and $2$-cells (i.e. vertices, edges, and plaquettes).}
    \label{figure:oriented_cell_examples}
\end{figure}

For instance, the $0$-cells are the vertices of $\Lambda$. In an abuse of notation, given a vertex $x \in \Lambda$, we will denote the positively oriented cell corresponding to $x$ just by $x$, while if needed, we will denote the negatively oriented cell corresponding to $x$ by $-x$ (not to be confused with the lattice point whose coordinates are the negatives of the coordinates of $x$). The $1$-cells are oriented edges of $\Lambda$. The positively oriented edges of $\Lambda$ are of the form $(x, x + e_i)$ for some standard basis vector $e_i$, where both $x, x + e_i \in \Lambda$. The negatively oriented edges are given by $(x + e_i, x)$, which we will also write as $-(x, x + e_i)$. 

To discuss general oriented cells of arbitrary dimensions as well as their containment properties, we will need to set up an algebraic framework. We stress that it may help to have a geometric picture in mind, and in particular to think about how our ensuing discussion looks on concrete low-dimensional examples. Of course, once we go to high dimensions, it may be hard to visualize various things, and that is why it will help to have the algebraic framework. We note that the following discussion will be reminiscent of the discussion on differential forms from Section \ref{section:exterior-calculus}.

First, given a $k$-dimensional hypercube $c$, there exists a ``lower left corner" $x$, which is the vertex of $c$ which is smallest in the lexicographic order on the vertices of $\Z^n$, or equivalently is such that there exists $i_1, \ldots, i_k \in [n]$ such that the $2^k$ vertices of $c$ are given by
\begin{equs}
x + b_1 e_{i_1} + \cdots + b_k e_{i_k}, ~~ b_1, \ldots, b_k \in \{0, 1\}.
\end{equs}
Thus, each hypercube $c$ (which we think of as an unoriented $k$-cell) may be represented by $(x, i_1, \ldots, i_k)$, where $1 \leq i_1 <\cdots < i_k \leq n$. We also write the positively-oriented $k$-cell arising from $c$ by $(x, i_1, \ldots, i_k)$, and its negatively-oriented counterpart by $-(x, i_1, \ldots, i_k)$. More generally, given a permutation $\sigma \in \symgrp_k$, we identify
\begin{equs}
(x, i_{\sigma(1)}, \ldots, i_{\sigma(k)}) = (-1)^{\mrm{sgn}(\sigma)} (x, i_1, \ldots, i_k).
\end{equs}
In particular, the same oriented $k$-cell may be represented in multiple ways, e.g. $(x, i_1, i_2) = -(x, i_2, i_1)$. 

\begin{definition}[Oriented and unoriented cells]
Let $\overrightarrow{C}^k(\Lambda)$ (resp. $C^k(\Lambda)$) denote the oriented (resp. unoriented) $k$-cells of $\Lambda$. 
\end{definition}

When discussing cell containment (i.e. specifying which oriented $(k-1)$-cells are on the boundary of a given $k$-cell), it will help to have a more general representation of oriented cells, where the vertex $x$ is not necessarily the ``lower left corner" of the corresponding hypercube. With this in mind, given a vertex $x$, distinct indices $i_1, \ldots, i_k \in [n]$ as before, and additionally signs $s_1, \ldots, s_k \in \{\pm 1\}$, we denote by $(x, s_1 i_1, \ldots, s_k i_k)$ the oriented cell whose vertices are
\begin{equs}
x + b_1 s_1 e_{i_1} + \cdots + b_k s_k e_{i_k}, ~~ b_1, \ldots, b_k \in \{0, 1\}.
\end{equs}
To have a visual example in mind, it may help to look at Figure \ref{figure:oriented_cell_algebraic_framework} while reading the following discussion. There exists some vertex $y$ such that $(x, s_1 {i_1}, \ldots, s_k {i_k}) = s (y, {i_1}, \ldots, {i_k})$ for some $s \in \{\pm 1\}$, which are given by:
\[ y = x - \sum_{j=1}^k \ind(s_j = -1) e_{i_j}, ~~ s = \prod_{j=1}^k s_j. \]
Geometrically, each $s_j e_{i_j}$ for which $s_j = -1$ corresponds to a negatively oriented basis vector emanating out of $x$. To obtain the lexicographically smallest vertex $y$ that the cell $(x, s_1 {i_1}, \ldots, s_k {i_k})$ contains, we simply need to subtract from $x$ each basis vector which has a negative orientation. The orientation of $(x, s_1 {i_1}, \ldots, s_k {i_k})$ is $s$ times the orientation of $(y, {i_1}, \ldots, {i_k})$.

\begin{figure}
    \centering
    \includegraphics[width=.5\linewidth, page=1]{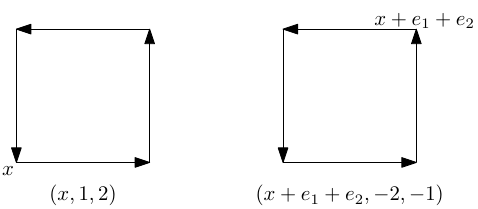}
    \caption{Different representations for the same oriented $2$-cell.}
    \label{figure:oriented_cell_algebraic_framework}
\end{figure}

We now discuss cell containment. While reading this definition, it may help to also look at Figure \ref{figure:oriented_cell_algebraic_framework_2}.

\begin{definition}[Cell containment]
Let $k \in [n]$. Let $c = s(x, s_{i_1} e_{i_1}, \ldots, s_{i_k} e_{i_k})$ be an oriented $k$-dimensional cell. The oriented $(k-1)$-dimensional cells which are contained in $c$ are:
\[ (-1)^j s (x, s_{i_1} {i_1}, \ldots, \widehat{s_{i_j} {i_j}}, \ldots, s_{i_k} {i_k}) , ~~ j \in [k], \]
where here the hat denotes omission. If $c'$ is contained in $c$, we write $c' \in c$ or $c \ni c'$.
\end{definition}

\begin{remark}
In this definition, we have to consider the general representation of an oriented cell $c = s (x, s_1 i_1, \ldots, s_k i_k)$, because not all $(k-1)$-dimensional cells on the boundary of contain the vertex $x$. As previously recommended, see Figure \ref{figure:oriented_cell_algebraic_framework_2}.
\end{remark}

For another example of cell containment, see Figure \ref{figure:oriented_cell_algebraic_framework_3}.

\begin{figure}
    \centering
    \hspace{10mm}\includegraphics[width=.5\linewidth, page=2]{images/oriented_cells_algebraic_framework.pdf}
    \caption{The same plaquette as in Figure \ref{figure:oriented_cell_algebraic_framework}, but now we label the oriented edges which are on the boundary of the plaquette. Two of the edges are obtained by starting with the representation $(x, 1, 2)$ and removing one of the two coordinate directions, while the other two edges are obtained by doing the same, but with the representation $(x + e_1 + e_2, -2, -1)$.}\label{figure:oriented_cell_algebraic_framework_2}

    \hspace{10mm}\includegraphics[width=.5\linewidth, page=3]{images/oriented_cells_algebraic_framework.pdf}
    \vspace{-15mm}
    \caption{An oriented edge $(x, 1) = -(x+e_1, -1)$, with the two oriented $0$-cells that it contains: $x + e_1$ and $-x$. In particular, note the convention that the incoming vertex is positively oriented, while the outgoing vertex is negatively oriented.}\label{figure:oriented_cell_algebraic_framework_3}
    
\end{figure}


Next, we define the boundary map, which plays a fundamental role in lattice exterior calculus. For example, we will later define the lattice exterior derivative using this map.

\begin{definition}[Boundary map]
For $1 \leq k \leq n$, we define the boundary map $\ptl$ which maps oriented $k$-cells to formal sums of oriented $(k-1)$-cells. Algebraically, $\ptl$ is defined as
\begin{equs}\label{eq:boundary-operator}
\ptl c' := \sum_{c \in c'} c.
\end{equs}
In words, $\ptl$ associates a given $k$-cell $c$ to the formal sum of the oriented $(k-1)$-cells on the boundary of $c$. See Figures \ref{figure:boundary_examples_1} and \ref{figure:boundary_examples_2} for visual examples. We also define $\ptl$ on oriented $0$-cells to just map everything to zero. By linearity, we may also define $\ptl$ on the space of formal sums of oriented cells.
\end{definition}

\begin{figure}
    \centering
    \includegraphics[width=.65\linewidth, page=1]{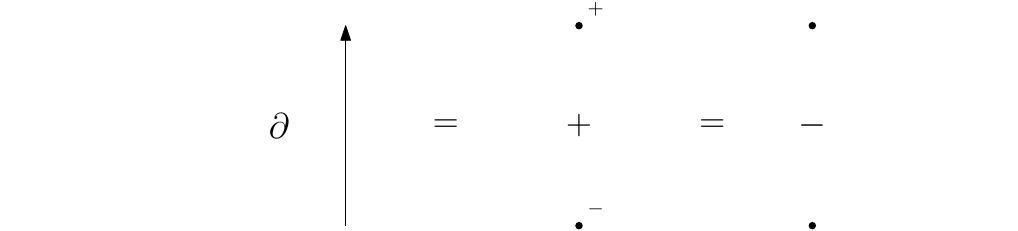}
    \caption{The boundary of an edge (i.e. $1$-cell) is the formal sum of its two vertices. By convention, the incoming vertex appears in its positive orientation, while the outgoing vertex appears in its negative orientation. In the middle, the orientations are denoted by the small $\pm$ symbols to the right and above each vertex. In the right, both vertices are positively oriented, and so we instead formally subtract the bottom vertex.}
    \label{figure:boundary_examples_1}
    \includegraphics[width=.65\linewidth, page=2]{images/boundary_examples.pdf}
    \caption{The boundary of a plaquette (i.e. $2$-cell) is the formal sum of its four edges.}
    \label{figure:boundary_examples_2}
\end{figure}


Having set up enough of the algebraic framework, we can now state the following key property of the boundary map $\ptl$. See Figure \ref{figure:oriented_cube} for a visual example to follow along.

\begin{lemma}\label{lemma:boundary-boundary-zero}
We have that $\ptl^2 = 0$.
\end{lemma}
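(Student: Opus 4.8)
The plan is to prove $\ptl^2 = 0$ by a direct computation on an arbitrary oriented $k$-cell, reducing immediately to the case $k = 2$ in spirit but carrying out the general index bookkeeping. First I would fix an oriented $k$-cell $c = s(x, s_{i_1} i_1, \ldots, s_{i_k} i_k)$ and, using the definition of cell containment, write
\[
\ptl c = \sum_{j=1}^k (-1)^j s\,(x, s_{i_1} i_1, \ldots, \widehat{s_{i_j} i_j}, \ldots, s_{i_k} i_k).
\]
Then I would apply $\ptl$ to each summand. Each term $(x, s_{i_1} i_1, \ldots, \widehat{s_{i_j} i_j}, \ldots, s_{i_k} i_k)$ is a $(k-1)$-cell, so its boundary is again a signed sum over the omission of one of its $k-1$ remaining direction-slots. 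The upshot is that $\ptl^2 c$ is a signed sum over ordered pairs of distinct indices $(i_j, i_\ell)$ from $\{i_1, \ldots, i_k\}$, where each pair indexes the $(k-2)$-cell obtained by deleting both the $i_j$- and $i_\ell$-slots from $c$.

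The key step is the sign cancellation: for a fixed pair $\{i_j, i_\ell\}$ with $j < \ell$, there are exactly two ways it arises in $\ptl^2 c$ — either by deleting $i_j$ first (at position $j$ among the $k$ slots, contributing $(-1)^j$) and then deleting $i_\ell$ (now at position $\ell - 1$ among the remaining $k-1$ slots, since removing slot $j < \ell$ shifts $i_\ell$ down by one, contributing $(-1)^{\ell-1}$), or by deleting $i_\ell$ first (at position $\ell$, contributing $(-1)^\ell$) and then deleting $i_j$ (still at position $j$, since $j < \ell$, contributing $(-1)^j$). The two contributions to the same $(k-2)$-cell carry signs $(-1)^{j + \ell - 1}$ and $(-1)^{j + \ell}$, which are opposite, so they cancel. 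Summing over all pairs gives $\ptl^2 c = 0$, and by linearity $\ptl^2 = 0$ on all formal sums of cells. I would point the reader to Figure~\ref{figure:oriented_cube} as the concrete $k = 3$ illustration of this cancellation.

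The main obstacle — really the only subtle point — is keeping the position-shift bookkeeping honest when the vertex $x$ is not the lexicographic corner, i.e. when some $s_{i_m} = -1$. One must check that the ambient overall sign $s$ and base vertex $x$ play no role in the cancellation argument: they are common to both of the two contributions indexing a given $(k-2)$-cell, so they factor out cleanly, and the cancellation is purely a statement about the $(-1)^j$ position signs. I would remark that, alternatively, since $\ptl$ is defined intrinsically on oriented cells independent of the choice of representative $(x, s_1 i_1, \ldots, s_k i_k)$ (as established in the preceding discussion around Figure~\ref{figure:oriented_cell_algebraic_framework}), one may without loss of generality take $x$ to be the lower-left corner and all $s_{i_m} = +1$, which streamlines the computation to the clean index form above. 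Either way the argument is short; the write-up should simply make the two-term cancellation explicit and cite the relevant figure.
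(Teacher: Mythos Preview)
Your displayed formula for $\partial c$ is incomplete: it lists only the $k$ faces of $c$ that contain the chosen base vertex $x$, whereas a $k$-cube has $2k$ boundary $(k-1)$-cells. This is exactly the point of the Remark immediately following the definition of cell containment (and of Figure~\ref{figure:oriented_cell_algebraic_framework_2}): ``not all $(k-1)$-dimensional cells on the boundary of $c$ contain the vertex $x$.'' A single representation $s(x, s_{i_1} i_1, \ldots, s_{i_k} i_k)$ only enumerates the faces through $x$; to get the remaining $k$ faces one must use representations based at other vertices of the cube. So your identity $\partial c = \sum_{j=1}^k (-1)^j s\,(x, \ldots, \widehat{s_{i_j} i_j}, \ldots)$ is false as written, and the ``without loss of generality take $x$ to be the lower-left corner'' move does not help --- any single choice of base vertex misses half the faces.

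Consequently your double sum only accounts for the chains $c'' \in c' \in c$ in which $c''$ (and hence $c'$) contains $x$; these are $k(k-1)$ of the $4k(k-1)$ terms in $\partial^2 c$. Your sign argument is correct for that subset, but the other three quarters of the sum are untouched. The paper's proof sidesteps this by working the other way around: it fixes an arbitrary $(k-2)$-cell $c''$ appearing in $\partial^2 c$, chooses a vertex of $c''$ as the base, and writes $c$ in a representation adapted to that vertex --- then exhibits the two intermediate $(k-1)$-cells $c_1', c_2'$ through $c''$ and checks they contribute $c''$ and $-c''$ respectively. Your approach can be repaired by running the same index computation at \emph{every} vertex of $c$ and observing that each pair of cancelling chains is captured at any vertex of their common $(k-2)$-cell, but as written the argument has a genuine gap.
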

\begin{proof}
It suffices to assume $2 \leq k \leq n$. Let $c$ be an oriented $k$-cell. By definition, we have that
\begin{equs}
\ptl^2 c = \ptl \bigg(\sum_{c' \in c} c' \bigg) = \sum_{c' \in c} \sum_{c'' \in c'} c''. 
\end{equs}
Fix an oriented $(k-2)$-cell $c''$ which appears in the above sum, i.e. such that there exists an oriented $(k-1)$-cell $c'$ such that $c'' \in c' \in c$. Suppose that $c''$ has the representation
\begin{equs}
c'' = (x,  {i_1}, \ldots, {i_{k-2}}).
\end{equs}
Then the assumption implies that there exists $i_{k-1}, i_k \in [n] \backslash \{i_1, \ldots, i_{k-2}\}$, $s_{k-1}, s_k, s \in \{\pm 1\}$ such that
\begin{equs}
c = s (x, i_1, \ldots, i_{k-2}, s_{k-1} i_{k-1}, s_k i_k).
\end{equs}
(Geometrically, this says that $c$ can be obtained from $c''$ by adding two more basis edges.) Define
\begin{equs}
c_1' := (-1)^{k-1} s(x, i_1, \ldots, i_{k-2}, s_k i_k), ~~ c_2' := (-1)^{k} s(x, i_1, \ldots, i_{k-2}, s_{k-1} i_{k-1}).
\end{equs}
By definition, $c_1', c_2' \in c$. Moreover, observe that 
\begin{equs}
c_1' \text{ contains } s (x, i_1, \ldots, i_{k-2}) \quad \text{ and } \quad  c_2' \text{ contains } -s (x, i_1, \ldots, i_{k-2}).
\end{equs}
Thus exactly one of $c_1', c_2'$ contains $c''$, while the other contains $-c''$. Moreover, any given oriented $(k-2)$-cell can only appear at most once in the sum for $\ptl^2 c$. Thus, any $c''$ which appears in the sum for $\ptl^2 c$ is cancelled out by a corresponding $-c''$ term. The desired result follows.
\end{proof}

\begin{figure}
    \centering
    \includegraphics[width=.35\linewidth, page=1]{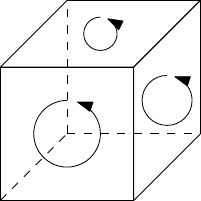}
    \caption{An oriented $3$-cell (i.e. cube) with orientations drawn on three of the plaquettes (the other three orientations are omitted for practical reasons). Lemma \ref{lemma:boundary-boundary-zero} is a general form of the classic fact that each edge of the cube appears exactly twice, once in each orientation. Thus each edge is ``cancelled" out, which is the visual way to say that $\ptl^2 = 0$.}\label{figure:oriented_cube}
\end{figure}

Having set up enough preliminaries about oriented cells, we now define lattice differential forms. 

\begin{definition}[Lattice differential forms]
Let $0 \leq k \leq n$. A $k$-form on $\Lambda$ is a function $f : \overrightarrow{C}^k(\Lambda) \ra \R$ such that $f(-c) = -f(c)$ for all $c \in \overrightarrow{C}^k(\Lambda)$. We denote the space of finitely-supported $k$-forms by $\Omega^k(\Lambda)$.
\end{definition}

Given a $k$-form $f \in \Omega^k (\Lambda)$, we may extend $f$ to formal sums of oriented $k$-cells just by linearity, i.e.:
\begin{equs}
f\bigg(\sum_{c \in \overrightarrow{C}^k(\Lambda)} \alpha(c) c \bigg) := \sum_{c \in \overrightarrow{C}^k(\Lambda)} \alpha(c) f(c).
\end{equs}
The boundary operator $\ptl$ directly leads to a definition of the lattice exterior derivative, as follows.

\begin{definition}[Lattice exterior derivative]
For $0 \leq k \leq n$, we define the lattice exterior derivative $d : \Omega^k(\Lambda) \ra \Omega^{k+1}(\Lambda)$ as follows. If $k = n$, then $d \equiv 0$. Otherwise, if $k \leq n-1$, we define
\begin{equs}\label{eq:exterior-derivative}
(df)(c) := f(\ptl c) = \sum_{c' \in c} f(c'), ~~ c \in \overrightarrow{C}^{k+1}(\Lambda).
\end{equs}
To be precise, if $c' \in c$ is such that $c' \not \sse \Lambda$, we set $f(c') := 0$ (equivalently, we extend $f$ to $\Z^n$ by zero).
One easily checks that $(df)(-c) = -df(c)$, so that we have indeed defined a lattice differential form.
\end{definition}

From the definition of $d$ and Lemma \ref{lemma:boundary-boundary-zero}, we directly obtain the following result.

\begin{lemma}
We have that $d^2 = 0$.
\end{lemma}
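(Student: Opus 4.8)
The plan is to derive $d^2 = 0$ as an immediate consequence of the lattice chain-complex identity $\partial^2 = 0$ established in Lemma \ref{lemma:boundary-boundary-zero}, exactly paralleling how, in the continuum, $dd = 0$ followed from the symmetry of mixed partials. First I would fix $0 \leq k \leq n-2$ (the cases $k = n-1, n$ being trivial since $d \equiv 0$ on $\Omega^{n-1}$ lands in $\Omega^n$ on which $d \equiv 0$, or $d \equiv 0$ on $\Omega^n$ directly, so $d^2 \equiv 0$ automatically there). Then for $f \in \Omega^k(\Lambda)$ and an oriented $(k+2)$-cell $c \in \overrightarrow{C}^{k+2}(\Lambda)$, I would unwind the definition \eqref{eq:exterior-derivative} twice.

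The key computation is: $(d(df))(c) = (df)(\partial c) = \sum_{c' \in c} (df)(c')$, and then each $(df)(c') = f(\partial c') = \sum_{c'' \in c'} f(c'')$, so that $(d^2 f)(c) = \sum_{c' \in c} \sum_{c'' \in c'} f(c'') = f\bigl(\sum_{c' \in c} \sum_{c'' \in c'} c''\bigr) = f(\partial^2 c)$, using the linear extension of $f$ to formal sums of oriented cells. By Lemma \ref{lemma:boundary-boundary-zero}, $\partial^2 c = 0$ as a formal sum, hence $f(\partial^2 c) = 0$. Since $c$ was an arbitrary oriented $(k+2)$-cell and $(d^2 f)(-c) = -(d^2 f)(c)$ holds automatically, we conclude $d^2 f = 0$, i.e. $d^2 = 0$.

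I expect there to be essentially no obstacle here: the only mild subtlety is bookkeeping about the convention that $f(c') := 0$ when $c' \not\subseteq \Lambda$ (equivalently, extending $f$ by zero to all of $\Z^n$), so that the manipulations with $\partial$ are legitimate even near the boundary of a finite box $\Lambda$; once one adopts that convention uniformly, the formal-sum identity $\partial^2 = 0$ transfers directly. The proof is therefore a two-line deduction, and I would present it as such, citing Lemma \ref{lemma:boundary-boundary-zero} for $\partial^2 = 0$ and the linearity of the extension of $f$ to formal sums of oriented cells.
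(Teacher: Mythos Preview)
Your proposal is correct and takes exactly the same approach as the paper: the paper does not even write out a proof, stating only that the result follows directly from the definition of $d$ and Lemma~\ref{lemma:boundary-boundary-zero}. Your expansion $(d^2 f)(c) = f(\partial^2 c) = 0$ is precisely the two-line deduction the paper has in mind.
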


\begin{definition}
For $0 \leq k \leq n$, define an inner product on $\Omega^k(\Lambda)$ by
\begin{equs}
(f, g) := \sum_{c \in C^k(\Lambda)} f(c) g(c).
\end{equs}
\end{definition}

\begin{definition}[Lattice codifferential]
For $1 \leq k \leq n$, we define $d^* : \Omega^k(\Lambda) \ra \Omega^{k-1}(\Lambda)$ as the adjoint of $d$. I.e., it is the unique linear map which satisfies
\begin{equs}
(f, d^* g) = (df, g), ~~ f \in \Omega^{k-1}(\Lambda), g \in \Omega^k(\Lambda).
\end{equs}
For $k = 0$, we define $d^* g \equiv 0$ for $g \in \Omega^0(\Lambda)$.
\end{definition}

\begin{remark}
Our definition of $d^*$ is off by a sign compared to that of \cite{GS2023}. Thus our later definition of the lattice Laplacian will also be off by a sign compared to that of \cite{GS2023}. However, in both our definition of the Laplacian and that of \cite{GS2023}, the Laplacian will be negative definite.
\end{remark}

Note that $d^*$ has the following explicit form. Recall \eqref{eq:boundary-operator}. For $f \in \Omega^{k-1}(\Lambda), g \in \Omega^k(\Lambda)$, we have that
\begin{equs}
(df, g) = \sum_{c \in C^k(\Lambda)} (df)(c) g(c) = \sum_{c \in C^k(\Lambda)} \sum_{c' \in c} f(c') g(c) = \sum_{c' \in C^{k-1}(\Lambda)} f(c') \sum_{C^k(\Lambda) \ni c \ni c'} g(c).
\end{equs}
We thus see that
\begin{equs}\label{eq:codifferential-formula}
(d^* g)(c') = \sum_{C^k(\Lambda) \ni c \ni c'} g(c), ~~ c' \in \overrightarrow{C}^{k-1}(\Lambda).
\end{equs}
In particular, note that $d^*$ depends on $\Lambda$. We will usually omit this dependence in our notation.

\begin{definition}[Lattice Laplacian]
For $0 \leq k \leq n$, define $\Delta : \Omega^k(\Lambda) \ra \Omega^k(\Lambda)$ by $\Delta := -(dd^* + d^* d)$.
\end{definition}

As in the continuum, we have that
\begin{equs}\label{eq:lattice-laplacian-integration-by-parts}
(f, \Delta g) = (\Delta f, g) = - (df, dg) - (d^* f, d^* g),
\end{equs}
so that $\Delta$ is symmetric. If $\Lambda$ is finite, then the spaces $\Omega^k(\Lambda), 0 \leq k \leq n$ are finite dimensional. Thus, by the spectral theorem in finite dimensions, for each $0 \leq k \leq n$, there is an orthonormal basis of $\Omega^k(\Lambda)$ consisting of eigenvectors of $\Delta$. Moreover, by \eqref{eq:lattice-laplacian-integration-by-parts}, $\Delta$ is negative semidefinite, and thus every eigenvalue of $\Delta$ is non-positive. Another basic property of $\Delta$ is the following result.

\begin{prop}[Proposition 2.7 and Remark 2.8 of \cite{GS2023}]
Let $\Lambda$ be a finite box. For $0 < k \leq n$, $\Delta$ is strictly negative definite on $\Omega^k(\Lambda)$. In particular, $\Delta$ is invertible.
\end{prop}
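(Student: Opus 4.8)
The statement we wish to establish is that for a finite box $\Lambda$ and $0 < k \leq n$, the Hodge Laplacian $\Delta = -(dd^* + d^*d)$ is strictly negative definite on $\Omega^k(\Lambda)$, hence invertible. The plan is as follows. Since $\Delta$ is negative semidefinite by \eqref{eq:lattice-laplacian-integration-by-parts}, it suffices to show that $\Delta f = 0$ implies $f = 0$. First I would observe that the integration-by-parts identity gives $(f, \Delta f) = -(df, df) - (d^*f, d^*f)$, so $\Delta f = 0$ forces $df = 0$ and $d^* f = 0$ simultaneously; thus the kernel of $\Delta$ on $\Omega^k(\Lambda)$ equals the space of lattice harmonic $k$-forms, i.e.\ $k$-forms that are both closed and co-closed. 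The real content is therefore to prove that there are no nonzero harmonic $k$-forms on a finite box when $k \geq 1$.

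The key step is to exploit the combinatorial/topological triviality of a box. There are two natural routes. The first is a direct ``integrate along a path'' argument: if $d^* f = 0$ and $df = 0$, then using the explicit formula \eqref{eq:codifferential-formula} for $d^*$ together with the boundary structure of cells near the boundary of $\Lambda$, one shows that $f$ must vanish on all cells touching the boundary, and then propagates this inward cell by cell. Concretely, for $k=1$ one would take an edge $e$ on the ``outer layer'' of $\Lambda$; the co-closedness condition at a corner vertex of $\Lambda$ (which is contained in only one or few edges) forces $f(e) = 0$, and then one sweeps through $\Lambda$ using closedness around plaquettes (which says the signed sum of $f$ over the four edges of any plaquette is zero) to conclude $f \equiv 0$. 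The second, cleaner route is to invoke the lattice Hodge decomposition: $\Omega^k(\Lambda) = \mathcal{H}^k \oplus d\Omega^{k-1}(\Lambda) \oplus d^*\Omega^{k+1}(\Lambda)$ where $\mathcal{H}^k$ is the space of harmonic forms, and to identify $\dim \mathcal{H}^k$ with the $k$-th cellular cohomology of $\Lambda$ (over $\mathbb{R}$). Since $\Lambda$ is a box, it is contractible as a CW-complex, so $H^k(\Lambda;\mathbb{R}) = 0$ for $k \geq 1$, giving $\mathcal{H}^k = 0$.

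I expect the main obstacle to be making the ``sweep'' argument fully rigorous with the sign bookkeeping for oriented cells near $\partial \Lambda$ — the algebraic framework set up in the excerpt (cell containment, the $(-1)^j$ signs) must be carefully tracked, and boundary cells of $\Lambda$ break the clean translation-invariant picture. The cohomological route sidesteps this but requires me to first develop (or cite) the lattice Hodge decomposition and the identification of harmonic forms with cochain cohomology, which the excerpt has not stated; the cited reference \cite{GS2023} (Proposition 2.7 and Remark 2.8) presumably does exactly this, so in practice I would follow their argument. Given that the paper simply attributes the result to \cite{GS2023}, the cleanest writeup is: note $\ker \Delta = \mathcal{H}^k$ via the integration-by-parts identity, recall that $\mathcal{H}^k \cong H^k_{\mathrm{cell}}(\Lambda;\mathbb{R})$ by the standard finite-dimensional Hodge argument, and observe $H^k_{\mathrm{cell}}(\Lambda;\mathbb{R}) = 0$ for $k \geq 1$ since a box is contractible; conclude strict negative definiteness, hence invertibility. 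The case $k = 0$ is excluded precisely because constants are harmonic $0$-forms.
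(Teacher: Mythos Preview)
The paper does not give its own proof of this proposition: it simply quotes the result as Proposition~2.7 and Remark~2.8 of \cite{GS2023} and moves on. So there is nothing to compare your argument against in this paper.

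That said, your proposal is sound. The reduction via \eqref{eq:lattice-laplacian-integration-by-parts} to showing that there are no nonzero harmonic $k$-forms is exactly right, and both routes you describe are standard and would succeed. The cohomological route (identify $\mathcal{H}^k$ with cellular cohomology, then use contractibility of the box) is cleaner and is essentially what one finds in the cited reference; the direct ``sweep from the boundary'' argument also works and is closer in spirit to how \cite{GS2023} actually handles the specific boundary conditions that arise here (their Remark~2.8 addresses the free-boundary case). Your caveat about sign bookkeeping near $\partial\Lambda$ is apt but not a serious obstacle: for $k=n$ one even has $d^*d = -\Delta$ directly, and for intermediate $k$ the argument is a finite induction. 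You also correctly flagged why $k=0$ is excluded.
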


Let $\Lambda$ be a finite box as in the proposition. Since $\Delta : \Omega^k(\Lambda) \ra \Omega^k(\Lambda)$ is invertible for $1 \leq k \leq n$, all of its eigenvalues are strictly less than zero, and thus combining this with the aforementioned spectral decomposition, we can make the following definition.

\begin{definition}[Fractional Laplacian]\label{def:lattice-fractional-laplacian}
Let $s \in \R$. For $1 \leq k \leq n$, we define the operator $(-\Delta)^s : \Omega^k(\Lambda) \ra \Omega^k(\Lambda)$ via the spectral decomposition of $-\Delta$. For $k = 0$, we define the operator $(-\Delta)^s$ via spectral decomposition on the subspace $\bigdot{\Omega}^0(\Lambda)$ spanned by those eigenvectors of $\Delta$ which correspond to a nonzero eigenvalue.
\end{definition} 

\begin{remark}
Note that $\bigdot{\Omega}^0(\Lambda)$ is also the subspace of mean-zero functions, since the kernel of $-\Delta : \Omega^0(\Lambda) \ra \Omega^0(\Lambda)$ is given by the constant functions (see \cite[Remark 2.4]{GS2023}).
\end{remark}

The space $\Omega^k(\Lambda)$ admits a Hodge decomposition, which is the analog of Theorem \ref{thm:hodge-decomposition}. For simplicity, we state the result for finite boxes $\Lambda$, but as mentioned in \cite[Remark 4.5]{GS2023}, with a little more effort one can obtain a decomposition in infinite volume. We first need some preliminary definitions and results.

\begin{definition}
For $1 \leq k \leq n$, define $\Omega^{k-1 \ra k} := d \Omega^{k-1} \sse \Omega^k$, and for $0 \leq k \leq n-1$, define $\Omega^{k +1 \ra k} := d^* \Omega^{k+1} \sse \Omega^k$.
\end{definition}

Next, we define the lattice analogs of the projections $E, E^*$ from Definition \ref{def:E-E-star-projections}. Here, we use slightly different notation, to align with \cite{GS2023}.

\begin{definition}
For $1 \leq k \leq n -1$, define $\pi_{k-1 \ra k} := dd^* (-\Delta)^{-1}$ and $\pi_{k+1 \ra k} := d^* d (-\Delta)^{-1}$.
\end{definition}

\begin{lemma}[Lemma 4.3 of \cite{GS2023}]\label{lemma:lattice-orthogonal-projections}
Let $\Lambda$ be a finite box. For $1 \leq k \leq n-1$, $\pi_{k-1 \ra k}$ and $\pi_{k+1 \ra k}$ are respectively the orthogonal projections of $\Omega^k$ onto $\Omega^{k-1 \ra k}$ and $\Omega^{k+1 \ra k}$.
\end{lemma}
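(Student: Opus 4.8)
The statement to establish is Lemma~\ref{lemma:lattice-orthogonal-projections}: for a finite box $\Lambda$ and $1 \leq k \leq n-1$, the operators $\pi_{k-1 \ra k} = dd^*(-\Delta)^{-1}$ and $\pi_{k+1 \ra k} = d^*d(-\Delta)^{-1}$ are the orthogonal projections of $\Omega^k(\Lambda)$ onto $\Omega^{k-1 \ra k} = d\Omega^{k-1}(\Lambda)$ and $\Omega^{k+1 \ra k} = d^*\Omega^{k+1}(\Lambda)$ respectively. Since this is a finite-dimensional statement, the plan is to verify three things for each operator: that it is idempotent, that it is self-adjoint, and that its image (respectively, the complementary image) is the claimed subspace. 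I would handle $\pi_{k-1 \ra k}$ in detail and note that $\pi_{k+1 \ra k}$ follows by an identical argument with $d$ and $d^*$ swapped.

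\textbf{Key steps.} First I would record the commutation facts: $d$ commutes with $(-\Delta)^{-1}$ and $d^*$ commutes with $(-\Delta)^{-1}$, which follow because $-\Delta = -(dd^* + d^*d)$ commutes with both $d$ and $d^*$ (using $d^2 = 0 = (d^*)^2$), hence so does any spectral function of $-\Delta$ on the relevant subspace. Second, idempotence: using $d^2 = 0$ one computes, on a $k$-form $f$,
\[
\pi_{k-1 \ra k}^2 f = dd^*(-\Delta)^{-1} dd^*(-\Delta)^{-1} f = d\big(d^* d\big) (-\Delta)^{-1} d^* (-\Delta)^{-1} f,
\]
and since $dd^*(d^* \cdot) = 0$ one gets $dd^*(-\Delta)^{-1} = d d^* d d^* (-\Delta)^{-2}$ reduces correctly; concretely, write $-\Delta = dd^* + d^*d$, so $dd^*(-\Delta)^{-1} + d^*d(-\Delta)^{-1} = I$ on $\bigdot\Omega^k$, and then $\pi_{k-1\ra k}(I - \pi_{k-1\ra k}) = dd^*(-\Delta)^{-1} d^* d (-\Delta)^{-1}$, which vanishes because $d^* d^* = 0$ forces $d^*(d^* \cdot) = 0$ — more precisely one uses $dd^*(-\Delta)^{-1}d^*d = d (d^* d) (-\Delta)^{-1} d \cdot$ wait; the cleanest route is: $d^*d(-\Delta)^{-1}$ applied to anything lands in $d^*\Omega^{k+1}$, call it $g = d^* h$, and $dd^*(-\Delta)^{-1} d^* h$: since $(-\Delta)^{-1}$ commutes with $d^*$, this is $dd^* d^* (-\Delta)^{-1} h = 0$. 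So $\pi_{k-1\ra k}(I - \pi_{k-1\ra k}) = 0$, giving idempotence, and symmetrically $(I-\pi_{k-1\ra k})\pi_{k-1\ra k}=0$. Third, self-adjointness: $(dd^*(-\Delta)^{-1})^* = (-\Delta)^{-1} d d^*$ (since $d^* = d^*$, $(d)^* = d^*$, and $(-\Delta)^{-1}$ is self-adjoint), and this equals $dd^*(-\Delta)^{-1}$ by the commutation relation. Fourth, the image: clearly $\mathrm{image}(\pi_{k-1\ra k}) \subseteq d\Omega^{k-1} = \Omega^{k-1\ra k}$; conversely, for $f = d\eta$ with $\eta \in \Omega^{k-1}$, one checks $\pi_{k-1\ra k}(d\eta) = dd^*(-\Delta)^{-1}d\eta = d\, d^* d\, (-\Delta)^{-1}\eta$; writing $-\Delta \eta = -(dd^* + d^*d)\eta$ and noting the $dd^*\eta$ piece is killed upon applying $d$ from the left (since $d\,dd^* = 0$), we get $\pi_{k-1\ra k}(d\eta) = d(-\Delta)^{-1}(-\Delta)\eta$... one must be a bit careful since $\eta$ may have harmonic component when $k-1 = 0$, but $d$ kills constants so $d\eta$ only sees the non-harmonic part; this shows $\pi_{k-1\ra k}$ fixes $\Omega^{k-1\ra k}$ pointwise, completing the identification. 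An idempotent self-adjoint operator whose image is $\Omega^{k-1\ra k}$ is exactly the orthogonal projection onto that subspace.

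\textbf{Main obstacle.} The one genuinely delicate point is the bookkeeping around the zeroth cohomology (constant $0$-forms) when $k=1$: $(-\Delta)^{-1}$ is only defined on $\bigdot\Omega^0(\Lambda)$, not all of $\Omega^0(\Lambda)$, so the identity $(-\Delta)^{-1}(-\Delta) = I$ must be used on the complement of the constants. Since $d$ annihilates constants, one has $d\Omega^0 = d\bigdot\Omega^0$ and everything goes through, but this needs to be stated carefully; for $2 \le k \le n-1$ there is no such subtlety because $-\Delta$ is strictly negative definite (hence invertible) on $\Omega^k(\Lambda)$ by the cited Proposition~2.7/Remark~2.8 of \cite{GS2023}. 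I would also remark that this lemma can be viewed as the lattice shadow of Proposition~\ref{prop:hodge-decomp}, and indeed the proof is the exact finite-dimensional analogue of the argument there, so at the level of rigor required in a survey one could reasonably compress it to ``the proof is identical to the continuum case, now valid because all operators are honest linear maps on finite-dimensional spaces.''
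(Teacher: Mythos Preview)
The paper does not supply its own proof of this lemma; it simply cites Lemma~4.3 of \cite{GS2023}. Your proposal therefore cannot be ``compared'' to the paper's proof in the usual sense.

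That said, your argument is correct in substance: verifying idempotence, self-adjointness, and image identification is exactly the right finite-dimensional strategy, and your handling of the $k=1$ harmonic subtlety is appropriate. The presentation is rough---the internal ``wait'' and the aborted computation $d\,d^*d\,(-\Delta)^{-1}\eta$ followed by ``\ldots'' should be cleaned up---but the core steps all go through. The cleanest version of the idempotence step is the one you eventually land on: since $(-\Delta)^{-1}$ commutes with $d^*$, one has $dd^*(-\Delta)^{-1}d^* = dd^*d^*(-\Delta)^{-1} = 0$, so $\pi_{k-1\ra k}\pi_{k+1\ra k}=0$; combined with $\pi_{k-1\ra k}+\pi_{k+1\ra k}=I$ this gives both idempotence and mutual orthogonality at once. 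Your closing remark that this is the lattice shadow of Proposition~\ref{prop:hodge-decomp} is exactly the spirit in which the paper treats it.
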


We now state the lattice Hodge decomposition.

\begin{prop}[Lattice Hodge decomposition]\label{prop:lattice-hodge-decomposition}
Let $\Lambda$ be a finite box.
For $1 \leq k \leq n -1$, we have the orthogonal decomposition
\begin{equs}
\Omega^k(\Lambda) = \Omega^{k-1 \ra k}(\Lambda) \oplus \Omega^{k+1 \ra k}(\Lambda).
\end{equs}
When $k = n$, we have that
\begin{equs}
\Omega^n(\Lambda) = \Omega^{n-1 \ra n}(\Lambda).
\end{equs}
\end{prop}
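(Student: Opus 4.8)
The plan is to derive the decomposition directly from the lattice analog of integration by parts \eqref{eq:lattice-laplacian-integration-by-parts}, the invertibility of $\Delta$ on $\Omega^k(\Lambda)$ for $k \geq 1$, and the projection identities established in Lemma \ref{lemma:lattice-orthogonal-projections}. The key point is that for $1 \leq k \leq n-1$, we already know from Lemma \ref{lemma:lattice-orthogonal-projections} that $\pi_{k-1 \ra k}$ and $\pi_{k+1 \ra k}$ are orthogonal projections onto $\Omega^{k-1 \ra k}$ and $\Omega^{k+1 \ra k}$ respectively. So the bulk of the work is to show that $\pi_{k-1 \ra k} + \pi_{k+1 \ra k} = I$ on $\Omega^k(\Lambda)$; the orthogonality of the two summands then follows because $\Omega^{k-1 \ra k} \perp \Omega^{k+1 \ra k}$ (any $du$ and $d^* v$ satisfy $(du, d^* v) = (d d u, v) = 0$ using $d^2 = 0$), and together these two facts give the claimed orthogonal direct sum decomposition.

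First I would observe that $\pi_{k-1 \ra k} + \pi_{k+1 \ra k} = dd^*(-\Delta)^{-1} + d^* d (-\Delta)^{-1} = (dd^* + d^*d)(-\Delta)^{-1} = (-\Delta)(-\Delta)^{-1} = I$, where the third equality is just the definition $-\Delta = dd^* + d^* d$ and the last uses that $\Delta$ is invertible on $\Omega^k(\Lambda)$ for $1 \leq k \leq n$ (the cited Proposition from \cite{GS2023}, valid since $k \geq 1$). This computation makes sense precisely because $(-\Delta)^{-1}$ is well-defined, and it uses that $(-\Delta)^{-1}$ commutes with both $d$ and $d^*$ (which holds since $(-\Delta)^{-1}$ is a polynomial in $(-\Delta)$ restricted to each eigenspace, or more directly since $d$ and $d^*$ commute with $\Delta$). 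Hence every $f \in \Omega^k(\Lambda)$ decomposes as $f = \pi_{k-1 \ra k} f + \pi_{k+1 \ra k} f$ with the first term in $\Omega^{k-1 \ra k}$ and the second in $\Omega^{k+1 \ra k}$. Combined with the orthogonality noted above, this gives $\Omega^k(\Lambda) = \Omega^{k-1 \ra k}(\Lambda) \oplus \Omega^{k+1 \ra k}(\Lambda)$ as an orthogonal direct sum, completing the case $1 \leq k \leq n-1$.

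For the boundary case $k = n$, the argument is even simpler: since $d : \Omega^n(\Lambda) \ra \Omega^{n+1}(\Lambda) = 0$ is the zero map, we have $d^* d = 0$ on $\Omega^n(\Lambda)$, so $-\Delta = dd^*$ there, and invertibility of $\Delta$ gives $dd^*(-\Delta)^{-1} = I$ on $\Omega^n(\Lambda)$. In particular $\mathrm{id} = dd^*(-\Delta)^{-1}$ factors through $d\Omega^{n-1}(\Lambda) = \Omega^{n-1 \ra n}(\Lambda)$ — explicitly, for any $f \in \Omega^n(\Lambda)$ we have $f = d\big(d^*(-\Delta)^{-1} f\big) \in \Omega^{n-1 \ra n}(\Lambda)$, and the reverse inclusion $\Omega^{n-1 \ra n}(\Lambda) \subseteq \Omega^n(\Lambda)$ is immediate — so $\Omega^n(\Lambda) = \Omega^{n-1 \ra n}(\Lambda)$. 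I do not anticipate a genuine obstacle here; the one point requiring minor care is making sure all operator identities are invoked on the correct spaces where $\Delta$ is actually invertible (i.e. $k \geq 1$, so that we never need $(-\Delta)^{-1}$ on the constants in $\Omega^0$), and noting that Proposition \ref{prop:lattice-hodge-decomposition} as stated does not even mention $k = 0$, so no harmonic-form bookkeeping is needed. Everything reduces to the algebraic identity $dd^* + d^*d = -\Delta$ together with invertibility, which is why the statement follows essentially immediately from the preceding lemmas.
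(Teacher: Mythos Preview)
Your proposal is correct and follows essentially the same route as the paper: for $1 \leq k \leq n-1$ the paper also computes $\pi_{k-1 \ra k} + \pi_{k+1 \ra k} = (dd^* + d^*d)(-\Delta)^{-1} = I$ and then invokes Lemma~\ref{lemma:lattice-orthogonal-projections}. The only cosmetic difference is that for $k = n$ the paper simply cites \cite[Proposition 2.3]{GS2023}, whereas you give the (equally short) direct argument via $-\Delta = dd^*$ on $\Omega^n(\Lambda)$ and invertibility; your added remark that $\Omega^{k-1 \ra k} \perp \Omega^{k+1 \ra k}$ via $d^2 = 0$ is also correct and just makes explicit what the paper leaves implicit in its appeal to the projection lemma.
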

\begin{proof}
For $k = n$, this is a direct consequence of \cite[Proposition 2.3]{GS2023}. Thus, we focus on the case $1 \leq k \leq n-1$. We have that
\begin{equs}
\pi_{k-1 \ra k} + \pi_{k+1 \ra k} = (dd^* + d^* d)(-\Delta)^{-1} = I,
\end{equs}
and thus for any $f \in \Omega^k(\Lambda)$, we have that
\begin{equs}
f = \pi_{k-1 \ra k} f + \pi_{k+1 \ra k} f \in \Omega^{k-1 \ra k}(\Lambda) \oplus \Omega^{k+1 \ra k},
\end{equs}
where the inclusion follows by Lemma \ref{lemma:lattice-orthogonal-projections}.
\end{proof}

Next, we discuss the lattice analog of equation \eqref{eq:hodge-laplacian-coordinatewise-formula}, which recall says that the Laplacian on $k$-forms reduces to the Laplacian applied to each of the coordinate functions. For simplicity, we will first focus on the case $\Lambda = \Z^n$. Afterwards, we briefly discuss the case of finite $\Lambda$, where the Laplacian looks slightly different on boundary terms (see \cite[Section 3.3]{GS2023}).

To begin, we give formulas for the operators $dd^*$ and $d^* d$. By \eqref{eq:boundary-operator} and \eqref{eq:exterior-derivative}, we have that for $f \in \Omega^k(\Z^n)$ with $1 \leq k \leq n$,
\begin{equs}\label{eq:d-codif-formula}
(dd^* f)(c) = \sum_{c' \in c} (d^* f)(c') = \sum_{c' \in c} \sum_{c'' \ni c'} f(c''), ~~ c \in \overrightarrow{C}^k(\Z^n).
\end{equs}
In words, the second sum above is over those oriented $k$-cells $c''$ such that there is an oriented $(k-1)$-cell $c'$ contained in both $c$ and $c''$. Note that $c$ itself appears a number of times, while each $c'' \neq c$ will appear at most once, because if such a $c'$ exists for $c, c''$, it is unique. Similarly, for $0 \leq k \leq n-1$ and $f \in \Omega^k(\Z^n)$, we have that
\begin{equs}\label{eq:codif-d-formula}
(d^* d f)(c) = \sum_{c' \ni c} (df)(c') = \sum_{c' \ni c} \sum_{c'' \in c'} f(c''), ~~ c \in \overrightarrow{C}^k(\Z^n).
\end{equs}
In words, the second sum above is over those oriented $k$-cells $c''$ such that there exists an oriented $(k+1)$-cell $c'$ which contains both $c$ and $c''$. See Figures \ref{figure:laplacian_examples_1}-\ref{figure:laplacian_examples_2} for visualizations of these two sums in the case $n = 2, k = 1$.

\begin{figure}
    \centering
    \includegraphics[width=.3\linewidth, page=1]{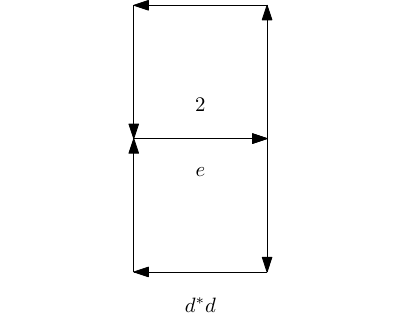}
    \caption{Here, we display the edges appearing in the sum for $(d^* d f)(e)$ for $n = 2, k = 1$. The label $2$ on the edge $e$ means that $e$ appears twice in the sum.}
    \label{figure:laplacian_examples_1}
    \hspace{5mm}
    \includegraphics[width=.3\linewidth, page=2]{images/laplacian_examples.pdf}
    \caption{Here, we display the edges appearing in the sum for $(d d^* f)(e)$ for $n = 2, k = 1$. The label $2$ on the edge $e$ means that $e$ appears twice in the sum.}
    \label{figure:laplacian_examples_2}
    \hspace{5mm}

    \includegraphics[width=.3\linewidth, page=3]{images/laplacian_examples.pdf}
    \caption{When computing $((dd^* + d^*d)f)(e)$, we observe that the contribution from any edge which is not parallel to $e$ gets cancelled out. What remains is a sum over all edges parallel to $e$.}
    \label{figure:laplacian_examples_3}
\end{figure}

Now, when we add the two sums in \eqref{eq:d-codif-formula} and \eqref{eq:codif-d-formula}, it turns out that there is a sizable cancellation and many of the terms are cancelled out. To express this cancellation, we first make the following definition.

\begin{definition}
Let $0 \leq k \leq n$ and let $c, c' \in C^k(\Z^n)$. We say that $c \sim c'$ if $c'$ can be obtained from $c$ by a shift of $\pm e_i$ for some $i \in [n]$. This defines a graph structure on $C^k(\Z^n)$, which is isomorphic to $\Z^n$.
\end{definition}

\begin{remark}
One should think of $c \sim c'$ as saying that $c, c'$ are neighboring $k$-cells which are ``parallel".
\end{remark}

In the following, we identify $k$-forms $f \in \Omega^k(\Z^n)$ with functions $f \colon C^k(\Z^n) \ra \R$ on the set unoriented $k$-cells. In particular, the Laplacian $\Delta$ is defined on such functions.

\begin{prop}\label{prop:lattice-laplacian-coordinatewise}
For $0 \leq k \leq n$ and $f \colon C^k(\Z^n) \ra \R$, we have that
\begin{equs}
(\Delta f)(c) = \sum_{c' \sim c} (f(c') - f(c)), ~~ c \in C^k(\Z^n).
\end{equs}
\end{prop}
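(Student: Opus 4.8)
The plan is to combine the two formulas \eqref{eq:d-codif-formula} and \eqref{eq:codif-d-formula} for $dd^*f$ and $d^*df$, keeping careful track of the coefficient with which each $k$-cell $c''$ appears in the sum for $((dd^*+d^*d)f)(c)$. It suffices to treat $\Lambda = \Z^n$ and to work with the representation of a cell $c = (x,i_1,\dots,i_k)$ in terms of its lower-left corner $x$ and increasing index set $I = \{i_1 < \cdots < i_k\}$; by antisymmetry of forms, it is enough to verify the identity on such representatives.

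First I would classify the cells $c''$ that can appear in either sum. A term $c''$ appears in $(dd^*f)(c)$ precisely when $c$ and $c''$ share a common $(k-1)$-face $c'$; this forces $c''$ to be either $c$ itself (which occurs with multiplicity $k$, once for each of the $k$ faces $c' \in \partial c$ obtained by deleting one index from $I$) or a cell of the form obtained from $c$ by deleting one index $i_\ell \in I$ and adding a different index $j \notin I$, possibly with a sign shift of the base point — there are $k(n-k)$ such ``perpendicular'' neighbours, each appearing once, and I would check that the orientation bookkeeping from the alternating signs in the definitions of $d$ and $d^*$ makes each contribute $+f(c'') $ (this is where the hat-notation signs $(-1)^{\ell}$ from cell containment must be tracked). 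Similarly, $c''$ appears in $(d^*df)(c)$ when $c$ and $c''$ lie in a common $(k+1)$-cell $c'$: this again produces $c$ itself with multiplicity $n-k$ (once for each index $j \notin I$ that can be appended), plus the same $k(n-k)$ perpendicular neighbours, each appearing once but now with the opposite sign $-f(c'')$.

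The key cancellation is then: the $k(n-k)$ perpendicular neighbours cancel exactly between the two sums (this is the content of Figures \ref{figure:laplacian_examples_1}--\ref{figure:laplacian_examples_3}), leaving only the diagonal term $-(dd^*+d^*d)f(c) = -(k + (n-k))f(c) + \text{(parallel terms)}$. The parallel neighbours $c' \sim c$ — those obtained from $c$ by shifting the base point by $\pm e_i$ for $i \notin I$, of which there are exactly $2(n-k)$ — arise from the ``$c'' \ne c$ but same face/coface'' possibility when the added and deleted directions coincide appropriately; I would check each such $c'$ enters $(dd^*f)(c) + (d^*df)(c)$ with coefficient $+1$. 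Reorganizing gives $(\Delta f)(c) = -\big(2(n-k)\big)f(c) + \sum_{c' \sim c} f(c') = \sum_{c'\sim c}(f(c') - f(c))$, since $c$ has exactly $2(n-k)$ parallel neighbours in $\Z^n$. The main obstacle is the sign/orientation accounting: one must be scrupulous that when $c''$ is reached via a base-point shift, the product of signs coming from $d$ and from $d^*$ is $+1$ in the $dd^*$ and $d^*d$ diagonal/parallel contributions and that the perpendicular contributions genuinely carry opposite signs in the two operators, so that they cancel rather than reinforce. A clean way to organize this is to fix an orthonormal-basis indexing of cells analogous to Remark \ref{remark:k-form-alternating-multilinear-map} and to mirror the continuum computation of $\Delta f = \sum \Delta f_{i_1\cdots i_k}\, dx^{i_1}\wedge\cdots$ (the lemma following \eqref{eq:hodge-laplacian-coordinatewise-formula}), of which this is the exact lattice transcription; the combinatorial identity $dd^* + d^*d$ acting on a ``pure'' cell reducing to the scalar graph Laplacian is then the discrete shadow of that identity.
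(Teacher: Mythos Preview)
Your overall strategy matches the paper's: the paper does not give a full proof either, but sketches exactly the cancellation argument you describe (expand $(dd^*+d^*d)f(c)$ via \eqref{eq:d-codif-formula}--\eqref{eq:codif-d-formula}, observe that perpendicular cells cancel pairwise, and identify the survivors with the graph Laplacian on parallel translates).

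However, your bookkeeping has several errors that make the final identity fail as written. First, a $k$-cell has $2k$ oriented $(k-1)$-faces, not $k$: deleting an index $i_\ell\in I$ gives two faces (the ``bottom'' and ``top'' in direction $i_\ell$). Hence $c$ appears in $(dd^*f)(c)$ with multiplicity $2k$, and symmetrically it appears in $(d^*df)(c)$ with multiplicity $2(n-k)$, so the diagonal coefficient is $2n$, not $n$ (and certainly not $2(n-k)$ as in your final line). Second, your description of the parallel neighbours is incomplete: by the paper's definition $c'\sim c$ means a shift by $\pm e_i$ for \emph{any} $i\in[n]$, so there are $2n$ of them, not $2(n-k)$. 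The shifts with $i\in I$ arise in $(dd^*f)(c)$ (since $c$ and the shifted cell share a common $(k-1)$-face in direction $i_\ell$), while the shifts with $i\notin I$ arise in $(d^*df)(c)$ (since they share a common $(k+1)$-coface). You have placed all parallel contributions in $d^*d$ and omitted the $2k$ parallel shifts coming from $dd^*$. Third, each parallel neighbour enters $(dd^*+d^*d)f(c)$ with coefficient $-1$, not $+1$; the sign flips only after applying $\Delta=-(dd^*+d^*d)$. With these three corrections your argument goes through: $(dd^*+d^*d)f(c)=2n\,f(c)-\sum_{c'\sim c}f(c')$, whence $\Delta f(c)=\sum_{c'\sim c}(f(c')-f(c))$.
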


We do not prove this proposition here; we just remark that the result is due to the sizable cancellation we previously mentioned. For a visual example when $n = 2, k = 1$, see Figure \ref{figure:laplacian_examples_3}.  Ultimately, the cancellation is due to the fact that any $c''$ appearing in the sums \eqref{eq:d-codif-formula} and \eqref{eq:codif-d-formula} which is not ``parallel" to $c$ in fact appears once in a positive orientation and once in a negative orientation, while any $c''$ which is parallel (so that $c'' \sim c$) appears exactly once across the two sums. At an algebraic level, the argument for this would be very similar to showing \eqref{eq:hodge-laplacian-coordinatewise-formula}, i.e. why the Laplacian on $k$-forms in $\R^n$ reduces to the Laplacian on each coordinate function. See \cite[Proposition 3.3]{GS2023} for a proof for general $n$ and $k = 1$ (in finite volume).

Next, we briefly discuss the lattice $k$-form Laplacian on a finite box $\Lambda$. For any $c \in C^k(\Z^n)$ which is sufficiently far away from the boundary (the needed condition is that every $c''$ appearing in the sums \eqref{eq:d-codif-formula} and \eqref{eq:codif-d-formula} is also in $\Lambda$), the formula for $(\Delta f)(c)$ is exactly as in Proposition \ref{prop:lattice-laplacian-coordinatewise}. The only difference appears when $c$ is close to the boundary. This is best explained with a pictorial example -- See Figure \ref{figure:laplacian_boundary_example}. 

\begin{figure}
    \centering
    \includegraphics[width=.5\linewidth, page=1]{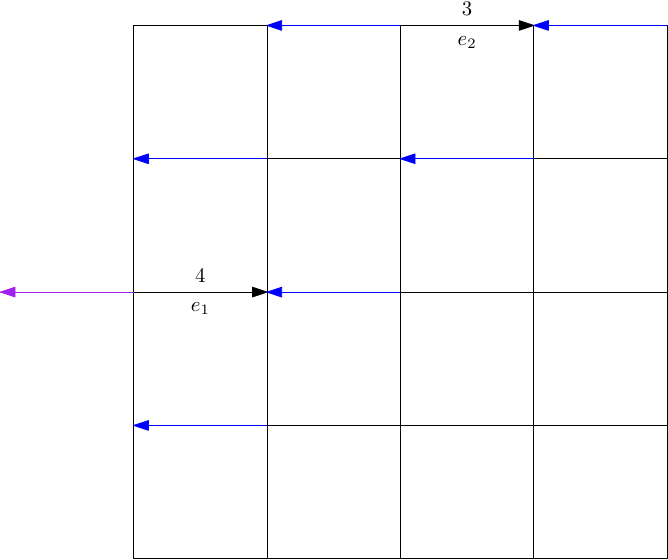}
    \caption{A finite box $\Lambda$ and two edges $e_1, e_2$ which touch the boundary of $\Lambda$. The edges $e_1, e_2$ are also labeled with the total number of times each edge appears in the sums \eqref{eq:d-codif-formula} and \eqref{eq:codif-d-formula}, where now the sums would be restricted to $c', c''$ lying in $\Lambda$. The blue edges are the cells $c'' \neq e_1, e_2$ which appear in the sums (each appears exactly once). Note that while $e_1$ appears four times, it only has three ``neighbors" (i.e. $c''$ such that $c'' \sim e_1$) in $\Lambda$. Thus we should really imagine that there is a fourth neighbor of $e_1$, which is the purple edge, and that $f$ is set to zero on this edge.}\label{figure:laplacian_boundary_example}
\end{figure}

As explained in Figure \ref{figure:laplacian_boundary_example}, for $k \geq 1$, the $k$-form Laplacian should be interpreted as having mixed free and Dirichlet boundary conditions. Essentially, one would define the $k$-cell ``boundary" of $\Lambda$ to be the set of $k$-cells $c \notin \Lambda$, for which there exists a $k$-cell $c' \in \Lambda$ with $c \sim c'$, and one would imagine that any given $k$-form is set to zero on this boundary. This is because the existence of such a cell $c \notin \Lambda$ for a given $c' \in \Lambda$ is precisely what causes the mismatch between the number of copies of $c'$ and its number of neighbors in $\Lambda$, as illustrated in Figure \ref{figure:laplacian_boundary_example} for the edge $e_1$. In the special case of $1$-forms, the boundary would be the set of edges not in $\Lambda$ but which have a vertex in $\Lambda$. This is explained further in \cite[Section 3]{GS2023}.

\begin{remark}[Boundary conditions]
In this subsection, we always worked with free boundary conditions for our $k$-forms. We could have also worked with zero boundary conditions, as also done in \cite{GS2023}. The discussion would have then been almost exactly the same -- see \cite[Section 2.2]{GS2023} for more details.
\end{remark}

In later discussion, we will need the following observation.

\begin{remark}\label{remark:horizontal 2-form}
Let $f \in \Omega^2 \Lambda$ be a $2$-form which is supported on plaquettes in the 1--2 plane. I.e., for all oriented $2$-cells $c = (x, i, j)$ where $\{i, j\} \neq \{1, 2\}$, we have that $f(c) = 0$. Then
\begin{equs}
\codif d f (c) = \sum_{\substack{c' \sim c \\ c', c \text{ not coplanar}}} (f(c') - f(c)).
\end{equs}
Here, by ``coplanar", we mean that the plaquettes $c, c'$ are not in the same plane, so that $c'$ is obtained from $c$ by a shift in the $i$th coordinate direction for some $i \neq 1, 2$. In the case of 3D, the two $c'$ appearing in the sum would be the plaquettes which are above and below $c$ (assuming that both plaquettes lie in $\Lambda$).
\end{remark}

\begin{remark}[General mesh size]\label{remark:general-mesh-size}
Given $\Lambda \sse \varep \Z^n$ for $\varep > 0$, we may also define oriented and unoriented $k$-cells $\overrightarrow{C}^k(\Lambda), C^k(\Lambda)$, $k$-forms $\Omega^k(\Lambda)$, exterior derivative, Laplacian, etc. Our convention will be that discrete differential operators are defined without any additional $\varep$ factors, e.g. to define the exterior derivative $df$, we use the exact same formula as in \eqref{eq:exterior-derivative}.
\end{remark}

\subsection{Lattice Gaussian forms}\label{section:lattice-gaussian-forms}

In this subsection, we define lattice fractional Gaussian forms. In the following, as before, we will identify $k$-forms with functions $f \colon C^k(\Z^n) \ra \R$ on the set of unoriented $k$-cells. Also, recall our definition of the lattice fractional Laplacian (Definition \ref{def:lattice-fractional-laplacian}).

\begin{definition}[Lattice fractional Gaussian $k$-form]\label{def:lattice-gaussian-k-form}
Let $\Lambda$ be a finite box, $\beta \geq 0$, and $s \in \R$. Let $1 \leq k \leq n$. We say that $A$ is a $k$-form $\fgf_s(\Lambda; \beta)$, also denoted $\fgf_s^k(\Lambda; \beta)$, if it is a random variable on $\Omega^k(\Lambda)$ with law
\begin{equs}
Z_{\Lambda, \beta}^{-1} \exp\Big(-\frac{\beta}{2}(A, (-\Delta)^s A)\Big) \prod_{c \in C^k(\Lambda)} dA_c.
\end{equs}
Here, $\prod_{c \in C^k(\Lambda)} dA_c$ is the product Lebesgue measure on $\R^{C^k(\Lambda)}$. For $k = 0$, we say that $A$ is a $0$-form $\fgf_s(\Lambda; \beta)$, also denoted $\fgf_s^0(\Lambda; \beta)$, if it is a random variable on $\bigdot{\Omega}^0(\Lambda)$ with law
\begin{equs}
Z_{\Lambda, \beta}^{-1} \exp\Big(-\frac{\beta}{2}(A, (-\Delta)^s A)\Big) \prod_{c \in C^k(\Lambda)} dA_c.
\end{equs}
In the special case $s = 0$, we will refer to $\fgf_0^k(\Lambda; \beta)$ as a $k$-form white noise, and when $s = 1$, we will refer to the $\fgf_1^k(\Lambda;\beta)$ as a $k$-form GFF. As usual in statistical mechanics, we will refer to $\beta$ as the inverse temperature.
\end{definition}

By considering the spectral decomposition of $-\Delta$, we may explicitly construct instances of $\fgf_s^k(\Lambda; \beta)$, which are analogous to that of the Fourier series representations of $\fgf_s^k(\T^n)$ from Remark \ref{remark:fgf-fourier-series}. Abstractly, for $0 \leq k \leq n$, let $0 < \lambda_1 \leq \lambda_2 \leq \cdots \leq \lambda_{M_k}$ be the eigenvalues of $-\Delta$ with corresponding eigenvectors $f_1, \ldots, f_{M_k} \in \Omega^k(\Lambda)$ (when $k = 0$, we restrict to the space $\bigdot{\Omega}^0(\Lambda)$ as usual). Here, $M_k$ is the dimension of $\Omega^k(\Lambda)$ ($\bigdot{\Omega}^0(\Lambda)$ if $k = 0$). Let $(Z_j, j \in [M_k])$ be a collection of i.i.d. $\N(0, 1)$ random variables. Then
\begin{equs}
A = \sqrt{\frac{1}{\beta}} \sum_{j \in [M_k]} \lambda_j^{-\frac{s}{2}} Z_j f_j 
\end{equs}
is an $\fgf_s^k(\Lambda; \beta)$. From this explicit representation, the following analog of Lemma \ref{lemma:laplacian-of-fgf} directly follows.

\begin{lemma}
Let $\Lambda$ be a finite box, $0 \leq k \leq n$, $s, s' \in \R$, $\beta \geq 0$. If $A$ is an $\fgf_s^k(\Lambda; \beta)$, then $(-\Delta)^{s'} A$ is an $\fgf_{s - 2s'}^k(\Lambda; \beta)$.
\end{lemma}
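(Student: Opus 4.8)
The statement to prove is: if $A$ is an $\fgf_s^k(\Lambda; \beta)$, then $(-\Delta)^{s'} A$ is an $\fgf_{s-2s'}^k(\Lambda; \beta)$. The cleanest route is to use the explicit spectral representation just recorded above rather than working directly with densities (which would require tracking Jacobian factors). First I would recall that, by that representation, we may write $A = \beta^{-1/2} \sum_{j \in [M_k]} \lambda_j^{-s/2} Z_j f_j$, where $0 < \lambda_1 \leq \cdots \leq \lambda_{M_k}$ are the (strictly positive) eigenvalues of $-\Delta$ on $\Omega^k(\Lambda)$ (on $\bigdot{\Omega}^0(\Lambda)$ if $k = 0$), with $\{f_j\}$ an orthonormal basis of eigenforms, and $(Z_j)_{j \in [M_k]}$ i.i.d.\ $\N(0,1)$.

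The key step is then simply to apply $(-\Delta)^{s'}$ termwise. Since $(-\Delta)^{s'}$ is defined via the spectral decomposition (Definition \ref{def:lattice-fractional-laplacian}), it acts on each eigenform by $(-\Delta)^{s'} f_j = \lambda_j^{s'} f_j$. Therefore
\begin{equs}
(-\Delta)^{s'} A = \sqrt{\tfrac{1}{\beta}} \sum_{j \in [M_k]} \lambda_j^{s' - \frac{s}{2}} Z_j f_j = \sqrt{\tfrac{1}{\beta}} \sum_{j \in [M_k]} \lambda_j^{-\frac{s - 2s'}{2}} Z_j f_j.
\end{equs}
Comparing with the explicit representation of $\fgf_{s-2s'}^k(\Lambda; \beta)$ (same formula with $s$ replaced by $s - 2s'$), we see that $(-\Delta)^{s'} A$ has exactly that form, with the same i.i.d.\ standard Gaussian coefficients and the same inverse temperature $\beta$. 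Since a Gaussian vector is determined in law by its representation in an orthonormal basis with i.i.d.\ $\N(0,1)$ weights scaled by the given factors, this identifies the law and completes the argument. One should note for $k = 0$ that both sides live on the same subspace $\bigdot{\Omega}^0(\Lambda)$, since $(-\Delta)^{s'}$ preserves it, so there is no subtlety there.

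There is essentially no main obstacle here — the lemma is a direct consequence of the explicit spectral construction, exactly parallel to the proof of Lemma \ref{lemma:laplacian-of-fgf} in the continuum setting. If one instead wanted to argue at the level of densities, the mild technical point would be that $(-\Delta)^{s'}$ is a linear isomorphism of $\Omega^k(\Lambda)$ (resp.\ $\bigdot{\Omega}^0(\Lambda)$), so the pushforward of the measure $Z^{-1} \exp(-\frac{\beta}{2}(A, (-\Delta)^s A)) \prod dA_c$ under $(-\Delta)^{s'}$ has density proportional to $\exp(-\frac{\beta}{2}((-\Delta)^{-s'} A, (-\Delta)^{s - s'} A)) = \exp(-\frac{\beta}{2}(A, (-\Delta)^{s - 2s'} A))$ times the constant Jacobian $|\det (-\Delta)^{-s'}|$, which gets absorbed into the normalizing constant. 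But the spectral argument avoids even this bookkeeping, so I would present that one.
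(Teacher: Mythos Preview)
Your proposal is correct and matches the paper's approach exactly: the paper does not even write out a proof, stating only that the lemma ``directly follows'' from the explicit spectral representation $A = \beta^{-1/2} \sum_j \lambda_j^{-s/2} Z_j f_j$, which is precisely the argument you give.
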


We make the following definition, which is the analog of Definition \ref{def:projection-fgf}.

\begin{definition}[Projections of lattice fractional Gaussian forms]
Let $\Lambda$ be a finite box, $0 \leq k \leq n$, $s \in \R$, $\beta \geq 0$. We say that $B$ is an $\big(\fgf^k_s(\Lambda; \beta)\big)_{d = 0}$ if $B$ has the same law as $\pi_{k-1 \ra k} A$, where $A$ is an $\fgf_s^k(\Lambda; \beta)$. Similarly, we say that $B$ is an $\big(\fgf_s^k(\Lambda; \beta)\big)_{\codif = 0}$ if it has the same law as $\pi_{k+1 \ra k} A$.
\end{definition}

Next we discuss what happens when we take $d$ or $\codif$ of a $\fgf_1^k(\Lambda; \beta)$, which will be analogous to Lemma \ref{lemma:projected-white-noise}.

\begin{lemma}\label{lemma:lattice-projected-white-noise}
Let $\Lambda$ be a finite box, $s \in \R, \beta \geq 0$. For $1 \leq k \leq n$, let $A$ be an $\fgf_s^{k-1}(\Lambda; \beta)$. Then $dA$ is an $\big(\fgf_{s-1}^k(\Lambda; \beta)\big)_{d=0}$. Similarly, for $0 \leq k \leq n-1$, let $A$ be $\fgf_s^{k+1}(\Lambda; \beta)$. Then $\codif A$ is an $\big(\fgf_{s-1}^k(\Lambda; \beta))_{\codif = 0}$.
\end{lemma}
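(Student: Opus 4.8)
The plan is to mirror the proof of Lemma~\ref{lemma:projected-white-noise} in the lattice setting, using the explicit spectral representation of lattice fractional Gaussian forms together with the lattice Hodge decomposition (Proposition~\ref{prop:lattice-hodge-decomposition}) and the characterization of $\pi_{k-1\ra k}, \pi_{k+1\ra k}$ as orthogonal projections (Lemma~\ref{lemma:lattice-orthogonal-projections}). Since $\Lambda$ is a finite box, everything lives in finite-dimensional inner product spaces $\Omega^k(\Lambda)$, so there are no distributional subtleties and the argument reduces to a covariance computation.

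First I would fix the direction to prove $dA$ is an $\big(\fgf_{s-1}^k(\Lambda;\beta)\big)_{d=0}$ for $A$ an $\fgf_s^{k-1}(\Lambda;\beta)$; the statement for $\codif A$ follows by the completely symmetric argument (swapping the roles of $d$ and $\codif$ and of $\pi_{k-1\ra k}$ and $\pi_{k+1\ra k}$). Writing $A = \beta^{-1/2}\sum_j \lambda_j^{-s/2} Z_j f_j$ in the spectral representation on $\Omega^{k-1}(\Lambda)$, the random form $dA$ is Gaussian and mean zero, so it suffices to match covariances: for $f, g \in \Omega^k(\Lambda)$ I would compute $\Cov\big((dA, f), (dA, g)\big) = (d^* f, d^* g)_{\text{cov}}$ where the covariance form of $A$ is $\beta^{-1}(-\Delta)^{-s}$, i.e.\ $\Cov\big((dA,f),(dA,g)\big) = \beta^{-1}\big(d^* f, (-\Delta)^{-s} d^* g\big)$. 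The key algebraic step is then to rewrite $\big(d^* f, (-\Delta)^{-s} d^* g\big) = \big(f, d d^* (-\Delta)^{-s} g\big) = \big(f, (-\Delta)^{-(s-1)} \pi_{k-1\ra k} g\big)$, using $d d^* = \pi_{k-1\ra k}(-\Delta)$ on the relevant subspace (equivalently $dd^*(-\Delta)^{-1} = \pi_{k-1\ra k}$ by definition), and that $\pi_{k-1\ra k}$ commutes with $(-\Delta)^{-1}$ and with powers of $(-\Delta)$. Finally, since $\pi_{k-1\ra k}$ is a self-adjoint projection (Lemma~\ref{lemma:lattice-orthogonal-projections}), $\big(f, (-\Delta)^{-(s-1)} \pi_{k-1\ra k} g\big) = \big(\pi_{k-1\ra k} f, (-\Delta)^{-(s-1)} \pi_{k-1\ra k} g\big)$, which is exactly the covariance of $\pi_{k-1\ra k} B$ for $B$ an $\fgf_{s-1}^k(\Lambda;\beta)$, i.e.\ of an $\big(\fgf_{s-1}^k(\Lambda;\beta)\big)_{d=0}$.

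I do not anticipate a serious obstacle here; the main point requiring a little care is the identity $dd^*(-\Delta)^{-1} = \pi_{k-1\ra k}$ and the compatibility of the projections with negative/fractional powers of $-\Delta$. This is where one must be careful that $(-\Delta)$ is invertible on the image of $d$ (guaranteed by \cite[Proposition 2.7]{GS2023} and the positivity discussion preceding Definition~\ref{def:lattice-fractional-laplacian}) so that $(-\Delta)^{-s}$ makes sense and commutes with $\pi_{k-1\ra k}$ (both being diagonal in the eigenbasis of $-\Delta$ restricted to the appropriate Hodge summand). Once these bookkeeping facts are in place, the covariance identity is a two-line computation, and the whole proof is a direct transcription of the continuum argument of Lemma~\ref{lemma:projected-white-noise}. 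Given how routine this is, I would likely just state it with the remark that the proof is identical to the continuum case with $E$ replaced by $\pi_{k-1\ra k}$ and $E^*$ by $\pi_{k+1\ra k}$, or include the short covariance computation for completeness.
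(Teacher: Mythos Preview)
Your proposal is correct and matches the paper's approach exactly: the paper omits the proof entirely, stating ``We omit the proof of this result as it is essentially the same as the proof of Lemma~\ref{lemma:projected-white-noise},'' and your covariance computation is precisely that translation, with $E$ replaced by $\pi_{k-1\ra k}$ and $E^*$ by $\pi_{k+1\ra k}$.
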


We omit the proof of this result as it is essentially the same as the proof of Lemma \ref{lemma:projected-white-noise}. In words, this says that the exterior derivative or codifferential of a $k$-form GFF is in fact a projected version of $(k-1)$ or $(k+1)$-form white noise.

\begin{remark}
We remark that the exterior derivative $dA$ for a $1$-form $\gff(\Lambda)$ plays an important role in the work of \cite{GS2023}. There, they give $dA$ the name of ``gradient spin-wave", see \cite[Definition 2.13]{GS2023}. \cite[Proposition 4.4]{GS2023} is a special case of the previous lemma to the case $s = 1, k = 1$, which says that the exterior derivative of a $1$-form GFF is a projected version of $2$-form white noise.
\end{remark}

\begin{remark}
Similar to Remark \ref{remark:general-mesh-size}, given $\Lambda \sse \varep \Z^n$ for $\varep > 0$, we may also define lattice fractional Gaussian forms on $\Lambda$, in exactly the same way as was done on the unit lattice. 
\end{remark}

In Section \ref{sec:confinement}, we will further discuss various properties of the lattice $1$-form GFF.

\subsection{Scaling limit of lattice Gaussian forms}

In this subsection, we describe how to scale lattice Gaussian forms to obtain the continuum $\fgf_s^k(\R^n)$ in the limit as the mesh size of the lattice converges to zero. The discussion will be heuristic and will focus mostly on how to arrive at the correct scaling factor.

Let $\varep > 0$ and consider the lattice $\varep \Z^n$. First, we discuss discretizations of differential forms. Fix $0 \leq k \leq n$.  Let $\phi \in \Omega^k C^\infty_c(\R^n)$, and define the discretization $\phi_\varep : C^k(\varep \Z^n) \ra \R$ by
\begin{equs}\label{eq:discretization-k-form}
\phi_\varep(c) := \varep^k \phi_{i_1 \cdots i_k}(x), ~~ c = (x, i_1, \ldots, i_k) \in C^k(\varep \Z^n).
\end{equs}
The reason for the $\varep^k$ factor is because $c$ is a hypercube of volume $\varep^k$; in particular, we should really think of $\phi_\varep(c)$ for $c = (x, i_1, \ldots, i_k)$ as the value of the multilinear map associated to $\phi(x)$ applied to the tangent vectors $\varep e_{i_1}, \ldots, \varep e_{i_k}$ (recall Remark \ref{remark:k-form-alternating-multilinear-map}). Given $\phi, \psi \in \Omega^k C^\infty_c(\R^n)$, note by Riemann integral approximations we have that
\begin{equs}
\varep^{n-2k} (\phi_\varep, \psi_\varep) = \varep^n \sum_{\substack{x \in \varep \Z^n \\ 1 \leq i_1 < \cdots < i_k \leq n}} \phi_{i_1 \cdots i_k}(x) \psi_{i_1 \cdots i_k}(x) \ra (\phi, \psi) \text{  as $\varep \ra 0$.}
\end{equs}
The precise $\varep^{n-2k}$ factor will be important -- later on, when we discuss scaling limits of fractional Gaussian forms, we will take this $\varep^{n-2k}$ factor into account, and this will affect the way we are supposed to scale our fractional Gaussian form to expect convergence to a scaling limit. 

We first discuss the case of $k$-form white noise on the lattice. Fix $0 \leq k \leq n$. Let $\varep > 0$ and consider the lattice $\varep \Z^n$. Let $F_\varep$ be a $k$-form white noise on $\varep \Z^n$ with inverse temperature $\beta_\varep$, that is it assigns each positively oriented $k$-cell of $\varep \Z^n$ an independent $\N\big(0, \beta_\varep^{-1}\big)$ random variable. Given $\phi \in \Omega^k C^\infty_c(\R^n)$, we consider the following random variable:
\begin{equs}
\varep^{n-2k} (F_\varep, \phi_\varep) = \varep^{n-2k} \sum_{c \in C^k(\varep \Z^n)} F_\varep(c) \phi_\varep(c).
\end{equs}
Our objective is to find $\beta_\varep$ such that
$F_\varep$ converges in law to $k$-form white noise on $\R^n$, in the sense that
\begin{equs}
\varep^{n-2k} (F_\varep, \phi_\varep) \stackrel{d}{\ra} \N\big(0, \|\phi\|_{\Omega^k L^2}^2\big).
\end{equs}
Since each $(F_\varep, \phi_\varep)$ is a Gaussian random variable, it suffices to show that
the variances converge. Towards this end, we calculate
\begin{equs}
\mrm{Var}\big( \varep^{n-2k} (F_\varep, \phi_\varep)\big) = \varep^{2(n-2k)} \beta_\varep^{-1} \sum_{c \in C^k(\varep \Z^n)} \phi_\varep(c)^2.
\end{equs}
Since
\begin{equs}
\lim_{\varep \downarrow 0} \varep^{n} \sum_{c \in C^k(\varep \Z^n)} (\varep^{-k} \phi_\varep(c))^2 = \int_{\R^n} |\phi(x)|^2 dx = \|\phi\|_{\Omega^k L^2}^2,
\end{equs}
we see that we must take 
\begin{equs}
\beta_\varep = \varep^{n - 2k}.
\end{equs}
As a sanity check, in the case $k = 0$, $n = 1$, we see that $\beta_\varep = \varep$, which is consistent with the classical central limit theorem (recall that each $F_\varep(c)$ is distributed like $\N(0, \beta_\varep^{-1})$).

Next, we use these considerations to decide how one should scale the lattice $k$-form GFF to obtain the continuum $k$-form GFF in the scaling limit. Given $\Lambda_\varep \sse \varep \R^n$, let $A_\varep$ be a $k$-form $\gff(\Lambda_\varep)$ at inverse temperature $\beta_\varep$. As before, let $\phi \in \Omega^k C^\infty_c(\R^n)$, and let $\phi_\varep : C^k(\varep \Z^n) \ra \R$ be its discretization. As $\varep \downarrow 0$, we want that
\begin{equs}
\varep^{n-2k} (A_\varep, \phi_\varep) = \varep^{n-2k} \sum_{c \in C^k(\Lambda)} A_\varep(c) \phi_\varep(c) \stackrel{d}{\ra} \N\big(0, \|\phi\|_{\Omega^k \dot{H}^{-1}(\R^n)}^2\big).
\end{equs}
We want to use our previous discussion about white noise, at least heuristically. Thus, suppose that $\phi$ is of the form $\phi = d^* \psi$, for $\psi \in \Omega^{k+1} C^\infty_c(\R^n)$. Then 
\begin{equs}\label{eq:codifferential-approximation}
\phi_\varep = (\codif \psi)_\varep \approx \frac{1}{\varep^2} \codif \psi_\varep.
\end{equs}
Here, to be clear, in the middle term, the codifferential is applied in the continuum, while in the last term, the codifferential is applied to the lattice $(k+1)$-form $\psi_\varep$. The extra $\varep^{-2}$ factor arises from two sources. First, our convention is that lattice differential operations do not involve the mesh size -- recall Remark \ref{remark:general-mesh-size}. This accounts for one factor of $\varep^{-1}$. The other factor arises due to our definition of the discretization of continuum differential forms \eqref{eq:discretization-k-form}. Now, heuristically replacing $\phi_\varep$ by $\varep^{-2} \codif \psi_\varep$ as suggested by \eqref{eq:codifferential-approximation}, we then want to show that
\begin{equs}
\varep^{n-2(k+1)} (A_\varep, \codif \psi_\varep) = \varep^{n-2(k+1)} (d A_\varep, \psi_\varep) \stackrel{d}{\ra}  \N\big(0, \|\phi\|_{\Omega^k \dot{H}^{-1}(\R^n)}^2\big).
\end{equs}
By Lemma \ref{lemma:lattice-orthogonal-projections}, $dA_\varep$ is precisely a projected form of $(k+1)$-form white noise. If we assume that the same scaling rule should apply to projected white noise as for white noise itself, then this will lead us to take
\begin{equs}\label{eq:scaling-general-k-form}
\beta_\varep = \varep^{n - 2(k+1)} = \varep^{n-2k-2}.
\end{equs}
With this value of $\beta_\varep$, we would then hope to have that
\begin{equs}
\varep^{n-2(k+1)} (d A_\varep, \psi_\varep) \ra \N\big(0, \|E \psi\|_{\Omega^{k+1} L^2(\R^n)}^2\big).
\end{equs}
Next, note that
\begin{equs}
\|E \psi\|_{\Omega^{k+1} L^2(\R^n)}^2 &= (E \psi, E\psi) = (\psi, E \psi) = (\psi, d^* d(-\Delta)^{-1} \psi) \\
&= (d^* \psi, (-\Delta)^{-1} d^* \psi) = (\phi, (-\Delta)^{-1} \phi) = \|\phi\|_{\Omega^k \dot{H}^{-1}(\R^n)}^2.
\end{equs}
Thus, at least heuristically, we would have exactly what we wanted.

While this discussion only applies to $\phi$ of the form $\codif \psi$ (not to mention the various other heuristic steps), for us the main thing is that it already imposes the form of $\beta_\varep$. Note that the choice of $\beta_\varep$ is consistent with what we already know about the ($0$-form) GFF. First, note that the setting an inverse temperature of $\beta_\varep$ is equivalent to scaling a given Gaussian field of unit inverse temperature by $\beta_\varep^{-\frac{1}{2}}$. In the following discussion, we will assume that all fields are defined with a unit inverse temperature, and we will speak of scaling these fields by $\beta_\varep^{-\frac{1}{2}}$. In 1D, we must scale the GFF by $\varep^{\frac{1}{2}}$ (since in this case the GFF is a Brownian motion), in 2D, we shouldn't scale the GFF at all (and this is related to conformal invariance of the GFF in this dimension), and in 3D or higher dimensions, we must scale the GFF up by $\varep^{1-\frac{n}{2}}$, which is consistent with the fact that the infinite volume GFF in these dimensions is localized. 

Note that the general form of the scaling in terms of the dimension $n$ is $\varep^{1 - \frac{n}{2}} = \varep^H$, where $H = 1 - \frac{n}{2}$ is the Hurst parameter corresponding to $\fgf_{1}(\R^n)$. Since the Hurst parameter governs the regularity of the continuum GFF, one may think of this scaling as follows. By going to the $\varep$ lattice, we effectively impose a cutoff at distance scale $\sim \varep$. Another natural way to impose a cutoff at this scale is to average the continuum GFF on balls of radius $\sim \varep$. If we believe that these two ways of imposing a cutoff should give the same results, then the typical size of the lattice GFF $h_\varep$ at a given point $x$ should be of the same order as the average value of the continuum GFF $h$ on a ball $B_\varep(x)$ of radius $\varep$ around $x$. The latter quantity is precisely of order $\varep^H$ when $n \geq 3$ and $\sqrt{\log \varep^{-1}}$ when $n = 2$. Since the lattice GFF is localized when $n \geq 3$, any given point $h_\varep(x)$ is of size $\sim 1$, and thus we need to scale up by $\varep^H$ to match with the continuum. When $n = 2$, we don't need to scale at all, since the lattice GFF is delocalized.

In the case of the $1$-form $\gff$, \eqref{eq:scaling-general-k-form} says that we need to take $\beta_\varep = \varep^{n-4}$, or equivalently we need to scale by $\beta_\varep^{-\frac{1}{2}} = \varep^{2-\frac{n}{2}}$. This scaling coincides with the scaling needed to make $\unitary(1)$ lattice gauge theory converge to its continuum limit (which can either be viewed as a (projected) $1$-form GFF or a projected $2$-form white noise -- recall the discussion in Section \ref{section:U(1)-theory}) \cite{Gross1983, Driver1987}. This is also the same scaling used for the general non-Abelian Yang--Mills theory on the 2D torus -- see \cite[Example 5.2]{Chevyrev2019}. More generally, one might expect that $\beta_\varep = \varep^{n-4}$ should also be the right scaling for taking a scaling limit of 3D lattice gauge theory, however in 4D one might expect logarithmic corrections -- see the discussion in \cite[Section 3]{chatterjee2019yang}.

\begin{remark}
The fact that $\beta_\varep$ in \eqref{eq:scaling-general-k-form} depends on $k$ is somewhat unintuitive, because after all a $k$-form GFF is just a collection of independent $0$-form GFFs, and thus one might expect that the scaling for $k$-forms should just be the same as the scaling for $0$-forms. Ultimately, the $k$-dependence of $\beta_\varep$ arises because of our choice to always include the $\varep^{n-2k}$ prefactor when considering lattice observables. Ultimately, this choice is somewhat arbitrary, but if we had not done this, then the $\beta_\varep$ we obtain would not match with that from previous scaling limit results in lattice gauge theory (i.e. \cite{Gross1983, Driver1987, Chevyrev2019, CS23}).
\end{remark}

\section{Confinement and mass gap} \label{sec:confinement}

We begin by discussing confinement for lattice Gaussian $1$-forms. Let $\Lambda \sse \Z^n$ be a finite box. Let $\gamma = e_1 \cdots e_n$ be a loop in $\Lambda$. We will identity $\gamma$ with the $1$-form which maps each oriented edge $e \in \overrightarrow{C}^1(\Lambda)$ to the number of times $\gamma$ traverses $e$, counting orientation (thus if $\gamma$ traverses a given edge $e$ once, then it assigns $e$ to $1$ and $-e$ to $-1$). Since $\gamma$ is a loop, the associated $1$-form satisfies $\codif \gamma = 0$. By \cite[Proposition 2.3]{GS2023}, there exists a $2$-form $S \in \Omega^2(\Lambda)$ such that $\gamma = \codif S$. We think of $S$ as a surface. In the following discussion, we always take $\gamma$ to be a rectangular loop, in which case the surface $S$ can be taken as the rectangle itself.

Let $A$ be a $\fgf_1^1(\Lambda; \beta)$ as in Definition \ref{def:lattice-gaussian-k-form}. In this setting, the Wilson loop observable becomes
\begin{equs}
W_\gamma(A) = \exp(\icomplex (A, \gamma)) = \exp(\icomplex (dA, S)).
\end{equs}
Since $A$ is Gaussian, we have the exact calculation 
\begin{equs}
\E W_\gamma(A) = \exp\bigg(-\frac{1}{2\beta} (\gamma, (-\Delta)^{-1} \gamma)  \bigg) = \exp\bigg(-\frac{1}{2\beta} (S, E S)\bigg).
\end{equs}
The second equality follows because $dA \stackrel{d}{=} \pi_{1 \ra 2}\noise$ where $\noise$ is a $2$-form white noise (Lemma \ref{lemma:lattice-projected-white-noise}), or alternatively by the identity
\begin{equs}\label{eq:gamma-S-l2-relation}
(\gamma, (-\Delta)^{-1} \gamma) = (\codif S, (-\Delta)^{-1} \codif S) = (S, d \codif (-\Delta)^{-1} S) = (S, \pi_{1 \ra 2} S).
\end{equs}
Thus, for lattice $1$-form GFFs, the question of area vs. perimeter law reduces to the question of whether 
\begin{equs}\label{eq:area-vs-perim}
(\gamma, (-\Delta)^{-1} \gamma) = (S, \pi_{1 \ra 2} S) \sim \begin{cases} \mrm{area}(\gamma) \\ \mrm{perim}(\gamma) \end{cases}.
\end{equs}
To be precise, we would start on a finite lattice $\Lambda$ containing $\gamma$, and the implicit constants in the above ``$\sim$" should be uniform in all $\Lambda$ large enough. We do this to avoid discussing the existence of infinite-volume limits.

The question presented in \eqref{eq:area-vs-perim} depends on the dimension $n$ in the following manner:
\begin{equs}
(\gamma, (-\Delta)^{-1} \gamma) = (S, \pi_{1 \ra 2} S) \sim \begin{cases} \mrm{area}(\gamma) & n = 2 \\
\mrm{perim}(\gamma) \log \mrm{perim} (\gamma) & n = 3 \\
\mrm{perim}(\gamma) & n = 4
\end{cases}.
\end{equs}
In particular, note that in dimension $3$, the asymptotics falls strictly in between the two listed in \eqref{eq:area-vs-perim}. When $n = 2$, the asymptotics follows immediately from the fact that (since $d = 0$ on $2$-forms when $n = 2$)
\begin{equs}
\pi_{1 \ra 2} = d \codif (-\Delta)^{-1} = (d\codif  + \codif d) (-\Delta)^{-1} = I,
\end{equs}
and thus $(S, \pi_{1 \ra 2} S) = (S, S) = \mrm{area}(\gamma)$. The case $n = 4$ was shown in \cite[Proposition 6.1]{GS2023} by using properties of the lattice Green's function. In the same paper, the authors remarked (see \cite[Section 7.3]{GS2023}) that their same argument would work in $n = 3$, and would give the $\mrm{perim}(\gamma) \log \mrm{perim}(\gamma)$ asymptotics in this dimension. We briefly sketch their argument. 

We may decompose the $1$-form $\gamma = \gamma^h + \gamma^v$, i.e. into the horizontal and vertical parts of the loop. Since the $1$-form Laplacian acts coordinatewise (i.e. Proposition \ref{prop:lattice-laplacian-coordinatewise}, or more precisely a version of the lemma for a finite box), we may split
\begin{equs}
(\gamma, (-\Delta)^{-1} \gamma) = (\gamma^h, (-\Delta)^{-1} \gamma^h) + (\gamma^v, (-\Delta)^{-1} \gamma^v).
\end{equs}
Let the dimensions of $\gamma$ be $L \times H$, where $L$ is the length of one of the horizontal sides. In principle, there should be a condition on the relative sizes of $L$ and $H$ (see \cite[Proposition 6.1]{GS2023}), but we will ignore that here and keep the discussion heuristic. We focus on showing that
\begin{equs}
(\gamma^h, (-\Delta)^{-1} \gamma^h) \sim \begin{cases} L \log L & n = 3 \\
L & n = 4 \end{cases}.
\end{equs}
We may further decompose $\gamma^h = \gamma^{h, -} + \gamma^{h,+}$, where $\gamma^{h, -}$ is the lower horizontal part and $\gamma^{h, +}$ is the upper horizontal part. One may show that the cross term
\begin{equs}
(\gamma^{h, -}, (-\Delta)^{-1} \gamma^{h, +}) 
\end{equs}
is negligible, and so the main contribution comes from the terms
\begin{equs}
(\gamma^{h, -}, (-\Delta)^{-1} \gamma^{h, -}), ~~ (\gamma^{h, +} (-\Delta)^{-1}, \gamma^{h, +}).
\end{equs}
For simplicity, let us take $\Lambda = \Z^n$, in which case the two terms are equal by translation invariance. Then due to the explicit form of the lattice Laplacian, we have that
\begin{equs}
(\gamma^{h, -}, (-\Delta)^{-1} \gamma^{h, -}) = \sum_{1 \leq x, y \leq L} G(x e_1, y e_1),
\end{equs}
where $G$ is the Green's function for the (function) Laplacian on $\Z^n$. Since $G(x, y) \sim |x-y|^{2-n}$, we see that for any $1 \leq x \leq L$,
\begin{equs}
\sum_{1 \leq y \leq L} G(x e_1, y e_1) \sim \begin{cases} \log L & n = 3, \\ 1 & n = 4 \end{cases},
\end{equs}
and thus summing over $1 \leq x \leq L$, we obtain the stated asymptotics in the cases $n = 3, 4$ (in fact, also the case $n \geq 5$).

Combing back to the discussion, we observe that by the lattice Hodge decomposition (Proposition \ref{prop:lattice-hodge-decomposition}) the value of $(S, \pi_{1 \ra 2} S)$ can be cast as the following $L^2$ minimization problem:
\begin{equs}\label{eq:L2-minimization}
(S, \pi_{1 \ra 2} S) = \min_{\substack{ S' \in \Omega^2(\Lambda) \\ \codif S' = \gamma}} (S', S').
\end{equs}
This follows since for any $S' \in \Omega^2(\Lambda)$ with $\codif S' = \gamma$, we have that
\begin{equs}
(S', S') = (\pi_{1 \ra 2} S', \pi_{1 \ra 2} S') + (\pi_{3 \ra 2} S', \pi_{3 \ra 2} S') \geq (\gamma, (-\Delta)^{-1} \gamma),
\end{equs}
where the last inequality follows since $(\pi_{1 \ra 2} S', \pi_{1 \ra 2} S') = (\gamma, (-\Delta)^{-1} \gamma)$ by the same calculation as in \eqref{eq:gamma-S-l2-relation}.

Note that the surface which minimizes \eqref{eq:L2-minimization} is explicitly given by $\pi_{1 \ra 2} S = d \codif (-\Delta)^{-1} S$, where recall $S$ is the rectangular surface spanning $\gamma$. Geometrically, when $n > 2$, this surface is ``spread out", in the sense that it assigns nonzero value even to plaquettes which are far away from the rectangular surface $S$. By doing so, the squared $L^2$ norm can be made much smaller than $\mrm{area}(\gamma)$. When $n =2$, there is no room to spread out, which leads to area law in this dimension.

Based on this discussion, we would also expect area law to hold when $\Lambda \sse \Z^2 \times [-M, M]^{(n-2)}$ is a ``slab". Here, $M$ is fixed independently of $\Lambda$ (as before, we take $\Lambda$ finite but want estimates which are uniform for all $\Lambda$ large enough). This is because in such a slab, the $L^2$--minimizing surface has a limited amount of room to spread out, which makes this case qualitatively like the two-dimensional case. Following our sketch for the proof of \cite[Proposition 6.1]{GS2023}, we see that this claim essentially\footnote{As before we go to infinite volume for simplicity of discussion, whereas the needed estimates would be in finite volume. This is why we say ``essentially".} reduces to showing that for the Green's function $G_{\mrm{slab}}$ on $\Z^2 \times [-M, M]^{(n-2)}$, we have that
\begin{equs}
\sum_{1 \leq x, y \leq L} \big( G_{\mrm{slab}} (x e_1, ye_1) - G_{\mrm{slab}} (x e_1, ye_1 + H e_2) \big) = (\gamma^{h, -}, (-\Delta)^{-1} \gamma^h) \sim \mrm{area}(\gamma). 
\end{equs}
Here, we do not separate $\gamma^h = \gamma^{h, +} + \gamma^{h,-}$ as before, because in fact the cross term $(\gamma^{h, -}, (-\Delta)^{-1} \gamma^{h, +})$ is no longer negligible. Indeed, this is due to the fact that the Green's function on $\Z^2 \times [-M, M]^{(n-2)}$ is only defined up to additive constant (since random walk on the slab is recurrent), so we need to be careful to only ever deal with differences of Green's functions. As in the 2D case, one would expect that when $|x-y| \lesssim L$ and $H \gtrsim L$, we have that
\begin{equs}
G_{\mrm{slab}} (x e_1, ye_1) - G_{\mrm{slab}} (x e_1, ye_1 + H e_2) \sim \log \Big(\frac{\sqrt{|x-y|^2 + H^2}}{|x-y|}\Big) \sim 1.
\end{equs}
Since we sum over $1 \leq x, y \leq L$, this would mean that (assuming $H \sim L$)
\begin{equs}
(\gamma^h, (-\Delta)^{-1} \gamma^h) \sim L^2 \sim LH \sim \mrm{area}(\gamma),
\end{equs}
and thus we would have area law.

Next, we briefly discuss mass gap for lattice Gaussian $1$-forms. In this setting, the existence or not of a mass gap reduces to whether covariances of Wilson loop observables decay exponentially or not. In the following, fix two rectangular loops $\gamma_1, \gamma_2$ whose associated rectangles are disjoint. We want to understand how quickly 
\begin{equs}
\Cov(W_{\gamma_1}(A), W_{\gamma_2}(A))
\end{equs}
decays in the distance between $\gamma_1, \gamma_2$. By an explicit Gaussian calculation, we have that
\begin{equs}
\Cov(W_{\gamma_1}(A), W_{\gamma_2}(A)) = \E W_{\gamma_1}(A) \E W_{\gamma_2}(A) \bigg( \exp\bigg(-\frac{1}{\beta}(\gamma_1, (-\Delta)^{-1} \gamma_2)\bigg) - 1 \bigg).
\end{equs}
Thus, we are essentially asking for the decay of
\begin{equs}
(\gamma_1, (-\Delta)^{-1} \gamma_2) = (S_1, \pi_{1 \ra 2} S_2),
\end{equs}
where $S_1, S_2$ are the $2$-forms which correspond to the rectangles spanned by $\gamma_1, \gamma_2$. When $n = 2$, we have that $\pi_{1 \ra 2} S_2 = S_2$, and thus since the rectangles spanned by $\gamma_1, \gamma_2$ are assumed to be disjoint, we have that
\begin{equs}
(S_1, \pi_{1 \ra 2} S_2) = (S_1, S_2) = 0. 
\end{equs}
Thus, when $n = 2$, the covariance between $W_{\gamma_1}(A), W_{\gamma_2}(A)$ is just zero, and there is a mass gap. When $n > 2$, by explicit Green's function calculations similar to those from our sketch of the proof of \cite[Proposition 6.1]{GS2023}, one can see that $(\gamma_1, (-\Delta)^{-1} \gamma_2)$ decays only polynomially in the distance between $\gamma_1, \gamma_2$. Thus there is no mass gap when $n > 2$. 

To finish the discussion, we consider the slab $\Z^2 \times [-M, M]^{(n-2)}$. Before, we saw that the slab behaved qualitatively like 2D rather than higher dimensions, and based on this we might expect mass gap for this case. We discuss why this should be true, at least heuristically. Fix two plaquettes $p, p'$ which are in the 1--2 plane. We identify $p, p'$ with the $2$-forms which are respectively $1$ on $p$, $p'$, and zero everywhere else. Note that these plaquettes bound the loops $\ptl p, \ptl p'$.
The case of more general loops in the 12 plane can be reduced to this case by taking linear combinations of the appropriate $p, p'$ (using Stokes' theorem). We want to see why
\begin{equs}\label{eq:p-p-prime-exp-decay}
(p, d \codif (-\Delta)^{-1} p') \leq C \exp(-c \mrm{dist}(p, p')).
\end{equs}
This reduces to a concrete question about $\mrm{G}_{\mrm{slab}}$, the Green's function on the $\Z^2 \times [-M, M]^{(n-2)}$ which we previously encountered, as follows. We have that
\begin{equs}
d \codif (-\Delta)^{-1} p' = (-\Delta) (-\Delta)^{-1} p' - \codif d (-\Delta)^{-1} p',
\end{equs}
and thus
\begin{equs}
(p, d \codif (-\Delta)^{-1} p') = (p, p') - (p, \codif d (-\Delta)^{-1} p').
\end{equs}
Assuming that $p \neq p'$ (which is fine since we are primarily thinking of $p, p'$ as being very far away), and recalling from Remark \ref{remark:horizontal 2-form}, we further have that
\begin{equs}
(p, d \codif (-\Delta)^{-1} p') &= - \big(\codif d (-\Delta)^{-1} p'\big)(p) \\
&= - \sum_{\substack{\bar{p} \sim p' \\ \bar{p}, p' \text{ not coplanar}}} \Big( \big((-\Delta)^{-1}(p')\big)(\bar{p}) - \big((-\Delta)^{-1}(p')\big)(p)\Big)\\
&= -\sum_{\substack{\bar{p} \sim p \\ \bar{p}, p \text{ not coplanar}}} \big( G_{\mrm{slab}}(p', \bar{p}) - G_{\mrm{slab}}(p', p)\big) .
\end{equs}
One should think of the sum above as the Laplacian of $G_{\mrm{slab}}$ in the ``vertical directions", i.e. the last $n-2$ coordinate directions. In summary, to show \eqref{eq:p-p-prime-exp-decay}, it suffices to show that for plaquettes $p, \bar{p}$ which are a vertical translate of each other, we have that 
\begin{equs}\label{eq:slab-greens-fn-exp-decay}
\big| G_{\mrm{slab}}(p', \bar{p}) - G_{\mrm{slab}}(p', p)\big| \lesssim \exp(-c \mrm{dist}(p, p')).
\end{equs}
This is a classical question about simple random walk on the slab $\Z^2 \times [-M, M]^{(n-2)}$. In words, one wants to show that a change in the vertical direction of the starting point $x$ of a simple random walk barely affects the expected number of visits to a point $y$ which is very far away. More quantitatively, one wants to show that the effect is exponentially small in the distance between $x$ and $y$. Denoting $(S_n)_{n \geq 0}$ the simple random walk on the slab started at the origin, this would follow if for all $n$ large enough, and for all $x, y \in \Z^2 \times [-M, M]^{(n-2)}$ such that $x - y = \pm e_i$ for some $i \neq 1, 2$, we have that
\begin{equs}\label{eq:Sn-loses-memory}
d_{\mrm{TV}}(S_n + x, S_n + y) \leq C \exp(- c n).
\end{equs}
The point is that if we project $S_n$ to the vertical directions, we obtain a lazy random walk on $[-M, M]^{(n-2)}$, which loses memory of its starting point exponentially quickly in $n$. From this, one might expect that the original random walk on the slab itself loses memory of its vertical starting point exponentially quickly in $n$, which is \eqref{eq:Sn-loses-memory}. However, \eqref{eq:Sn-loses-memory} may not literally be true due to lattice effects, e.g. the fact that $\Z^2 \times [-M, M]^{(n-2)}$ is bipartite. On the other hand, some weaker version of \eqref{eq:Sn-loses-memory} should be true and sufficiently strong to imply \eqref{eq:slab-greens-fn-exp-decay}. Note that in the continuum, this becomes quite clear, since Brownian motion on $\R^2 \times [-M, M]^{(n-2)}$ is the Cartesian product of an independent Brownian motion on $\R^2$ and an independent Brownian motion on $[-M, M]^{(n-2)}$.

\section{Open Problems} \label{sec:openproblems}
It is easy to describe a discrete model that converges in law to $\textrm{FGF}^k_0(M)$, which is equivalent to the white noise $W_k$ on $k$-forms. Namely, one may construct a discrete $k$-form $A_\varep$ (on an $\varep$-width lattice) that assigns an i.i.d.\ random variable to each $k$-cell of the lattice. That raises the following question:

\begin{problem} \label{prob:heightmodels} For what examples (with $k \geq 1$) can one show that the above-mentioned discrete model {\em conditioned to} satisfy $d A_\epsilon=0$ converges (or does not converge) in law to  $\textrm{FGF}^k_0(M)_{d=0}$ w.r.t.\ some topology?  Equivalently, can one show that the $d^*$ pre-image of $A_\epsilon$ converges (or does not converge) in law to $\textrm{FGF}^{k-1}_1(M)_{d^*=0}$ in some topology?
\end{problem}

As a simple example, imagine assigning $\pm 1$ (each with probability $1/2$) to each directed edge of a large induced subgraph of $\mathbb Z^2$ and conditioning on the event that the directed sum around every unit square is zero: this is equivalent to a special case of the so-called {\em six vertex model} (a.k.a.\ {\em square ice}) and it remains open to show even in this simple case that the height function converges to the Gaussian free field in some topology---or equivalently that the one-form converges to the gradient of the Gaussian free field in some topology. See e.g.\ \cite{chandgotia2021delocalization,duminil2022logarithmic,wu2022central} for a sampling of recent work in this area and links to further references.
The upshot is that Problem~\ref{prob:heightmodels} is highly non-trivial even when $n=2$ and $k=1$. Nonetheless, there are closely related height function models (dimer height functions, discrete Gaussian height models, solid-on-solid models, Ginsburg-Landau $\nabla \phi$ models) that can be shown to converge to the Gaussian free field \cite{spencer1997scaling,naddaf1997homogenization, miller2011fluctuations, andres2021local, kenyon2000conformal, kenyon2001dominos}. Problem~\ref{prob:heightmodels} asks which of these successes can be reproduced for other $n$ and $k$ values. (Other cases with $k=n/2$ might be especially interesting.) We can formulate the problem about dimer models separately:

\begin{problem}\label{prob:dimer} If we interpret $\textrm{FGF}_1^1(M)_{d^*=0}$ as a random divergence-free vector field, can we show that this is the fine-mesh scaling limit of the $n$-dimensional dimer model when $n>2$?
\end{problem}
See the further discussion in \cite{chandgotia2023large}, where this question has also appeared, and note that Kenyon has already proved the analogous result for $n=2$ \cite{kenyon2000conformal,kenyon2001dominos}.

\begin{problem} \label{prob:twist}
If the answer to Problem~\ref{prob:dimer} is yes, can one prove further that if the law of the discrete model is appropriately {\em weighted} by the exponential of the twist parameter (see the discussion in \cite{chandgotia2023large} and the references therein) then one has convergence to a Chern-Simons-weighted model of the type discussed in Section~\ref{sec:introvariants}? Alternatively, can one check that a similar statement holds for some simple variant of the dimer model, perhaps a discrete Gaussian field with a twist weighting of some kind?
\end{problem}

Obviously, if $X$ is any finite-dimensional centered Gaussian random variable, weighting the law of $X$ by the exponential of a linear function of $X$ has the effect of ``shifting the mean'' of $X$, while weighting by the exponential of a centered quadratic function may ``change the covariance matrix'' of $X$. In the 2D dimer model, the ``overall average'' of the height function is a {\em linear} function that changes by fixed increments under local moves. 
What is interesting in 3D is that there is also a family of local moves that increment or decrement a {\em quadratic function} and that this happens to be somehow related to linking numbers and the celebrated Chern-Simons action.

For the next problem, recall that in two dimensions, there
is a so-called {\em Kosterlitz-Thousless roughening-transition} for models that may or may not converge in law to the gradient of the Gaussian free field, depending on some parameter. As above, we interpret the gradient of the GFF as ``white noise on $\Omega^1_-$'' or as ``a white-noise random $1$-form conditioned to be in the kernel of $d$'' and consider discrete models (like square ice or the dimer model) that somehow mimic that story on a discrete level. More generally, one can also consider random white noise forms in $\Omega^k_-$ conditioned to be in the kernel of $d$.

\begin{problem}
In what cases can one extend the Kosterlitz-Thouless phase transition results to models of random $k$-forms in dimension $n=2k$? See \cite{frohlich1982massless, GS2023} for a particular example involving a discrete Gaussian and the Kosterlitz-Thouless phase transition in dimension $n=4$ with $k=2$.
\end{problem}

\begin{problem}
Can the Coulomb gas ``defect'' behavior shown by Ciucu in  \cite{ciucu2009scaling} be extended to any models with other $k$ and $n$ values? For example, if we take $k=2$ and $n=4$, a loop (the boundary of a disk) can play the role of a pair of points (the boundary of a path).
\end{problem}

\begin{problem}
The sine-Gordon model is in some sense an off-critical limit of models at the smooth-rough interface in dimension $2$. See e.g.\ \cite{ bauerschmidt2021log,bauerschmidt2022maximum,mason2022two,gubinelli2024fbsde}.  Can this story be extended to any larger values of $k$ and $n=2k$?
\end{problem}

In two dimensions, there is a well known correspondence between uniform spanning trees and dimer model instances, and the sine-Gordon model is somehow related to a spanning tree on graph with some underlying ``drift'' that makes edges pointing one direction (say to the right) slightly more likely to occur than those pointing in the opposite direction (left). It is natural to wonder about similar problems in higher dimensions. Kozma proved that loop-erased random walk has a rotationally invariant scaling limit in 3D \cite{kozma2007scaling} but there is no obvious way to interpret the winding in terms of the Gaussian free field, as there is in two dimensions. We can instead ask a somewhat open-ended question.

\begin{problem}
Are uniform spanning trees in dimension $3$ or higher related to fractional Gaussian fields in {\em some} way? (See \cite{sun2013uniform} for a related relationship involving uniform spanning forests.)
\end{problem}

\begin{problem}
The results in \cite{cao2023random} provide a way to convert a $\unitary(N)$ Yang-Mills problem into a problem about sums over spanning surfaces, which we can interpret as sums over all discretized $2$-forms that have a given $1$-form as their $d^*$ boundary. What can be proved about the scaling limits of this ``dual'' random $2$-form (which is now an integer-valued $2$-form, rather than a Lie-algebra valued $2$-form)? The terms in the sums are signed (i.e.\ some are positive and some negative) and understanding the extent to which positive and negative terms cancel one another is part of the problem. Does {\em this} model have a type of Kosterlitz-Thouless transition and a non-Gaussian limit akin to the Sine-Gordon model?
\end{problem}

\begin{problem}
Can we find some use for the Lie-group version of Gaussian multiplicative chaos involving the logarithmically correlated ``$2$-form height function'' associated to a Gaussian random connection?
\end{problem}


\begin{appendix}

\section{Covariance kernel derivations}\label{section:covariance-kernel-proof}

In this section, we prove Proposition \ref{prop:fractional-laplacian-formula}. We closely follow \cite[Chapter 1]{landkof1972foundations}. The starting point is to note that by definition, 
\beq\label{eq:fractional-laplacian-def-formula} ((-\Delta)^{-\frac{\alpha}{2}} \phi)(x) = \big(\mc{F}^{-1}(|p|^{-\alpha} \mc{F}\phi)\big)(x) = \int_{\R^n} |p|^{-\alpha} \widehat{\phi}(p) e^{\icomplex p \cdot x} dp. \eeq
Fix $\phi \in \Phi$ and $x \in \R^n$. Letting $\alpha$ now be a complex variable, note that the right hand side above is entire as a function of $\alpha$ (since $\phi \in \Phi$ implies that $\widehat{\phi}$ has arbitrarily fast polynomial decay at $p = 0$; recall Lemma \ref{lemma:phi-psi-characterizations}). As previously noted, in the simplest case $\alpha \in (0, n)$, we also have that
\beq\label{eq:fractional-laplacian-given-by-convolution} ((-\Delta)^{-\frac{\alpha}{2}} \phi)(x) = \frac{1}{\gamma_n(\alpha)}\int_{\R^n} \phi(y) |x-y|^{\alpha - n} dy. \eeq
Thus to extend $((-\Delta)^{-\frac{\alpha}{2}} \phi)$ outside the region $\alpha \in (0, n)$, it suffices to find alternative expressions for the right hand side above that are analytic functions on various regions of $\C$. 

We split the proof of Proposition \ref{prop:fractional-laplacian-formula} by case. We first handle the easy cases.

\begin{proof}[Proof of Case (1) of Proposition \ref{prop:fractional-laplacian-formula}]
Recall that 
\[ \gamma_n(\alpha) = 2^\alpha \pi^{n/2} \frac{\Gamma(\alpha / 2)}{\Gamma((n-\alpha)/2)}. \]
Thus the function 
\[ \alpha \mapsto \frac{1}{\gamma_n(\alpha)} \int_{\R^n} \phi(y)|x-y|^{\alpha - n} dy\]
is analytic on $\{\alpha \in \C : \mrm{Re}(\alpha) > 0, \alpha \notin n + 2\N\}$ (and the singularities at $\alpha \in n +  2\N$ arise from $\gamma_n(\alpha)$, more specifically the term $\Gamma((n-\alpha)/2)$. This case now immediately follows upon recalling that the right hand side of equation \eqref{eq:fractional-laplacian-def-formula} is entire in $\alpha$.
\end{proof}

\begin{proof}[Proof of Case (4) of Proposition \ref{prop:fractional-laplacian-formula}]
This case follows simply because for $m \geq 0$, we have that
\[ \begin{split}
((-\Delta)^m \phi)(x) &= (\delta_x, (-\Delta)^m \phi) = ((-\Delta)^m \delta_x, \phi) \\
&= \int_{\R^n} \phi(y) ((-\Delta)^m \delta_x)(y) dy. \qedhere
\end{split}\]
\end{proof}

\begin{proof}[Proof of Case (2) of Proposition \ref{prop:fractional-laplacian-formula}]
Recalling that the right hand side of equation \ref{eq:fractional-laplacian-def-formula} is analytic in $\alpha$, and recalling the formula given by Case (1), we simply need to show that for integer $m \geq 0$,
\beq\label{eq:case-2-sufficient-claim}\begin{split}
\lim_{\alpha \ra n + 2m} &\frac{1}{\gamma_n(\alpha)} \int_{\R^n} \phi(y) |x-y|^{\alpha - n} dy = \\
&2\frac{(-1)^{m+1} 2^{-(n+2m)}\pi^{-n/2}}{ m! \Gamma((n+2m)/2)} \int_{\R^n} \phi(y) |x-y|^{2m} \log|x-y| dy.
\end{split}\eeq
To start, we write
\[\begin{split}
\frac{1}{\gamma_n(\alpha)} &= 2^{-\alpha} \pi^{-n/2} \frac{\Gamma((n-\alpha)/2)}{\Gamma(\frac{\alpha}{2})} \\
&= 2^{-\alpha} \pi^{-n/2} \frac{1}{\Gamma(\frac{\alpha}{2})} \big(\Gamma((n-\alpha)/2) (n - \alpha - 2m) \big)\frac{1}{n-\alpha - 2m}.
\end{split}\]
We have that
\[ \lim_{\alpha \ra n + 2m} \Gamma((n-\alpha)/2) (n - \alpha - 2m) = \frac{2 (-1)^m}{m!}. \]
Thus to show \eqref{eq:case-2-sufficient-claim}, it suffices to show that
\[ \lim_{\alpha \ra n + 2m} \frac{1}{n-\alpha - 2m} \int_{\R^n} \phi(y)|x-y|^{\alpha - n} dy = -\int_{\R^n} \phi(y)|x-y|^{2m} \log|x-y| dy. \]
To see this, note that $y \mapsto |x-y|^{2m}$ is a polynomial in $y$, and thus since $\phi$ integrates to zero against any polynomial (by the definition of the Lizorkin space $\Phi$), we have that
\[\begin{split}
\frac{1}{n-\alpha - 2m}  \int_{\R^n} \phi(y)|x-y|^{\alpha - n} dy &= \int_{\R^n} \phi(y) \frac{|x-y|^{\alpha - n} - |x-y|^{2m}}{n-\alpha - 2m}dy \\
&= \int_{\R^n} \phi(y) |x-y|^{2m} \frac{|x-y|^{\alpha - n - 2m} - 1}{n - \alpha - 2m} dy.
\end{split}\]
To finish, observe that for $y \neq x$, we have that 
\[ \lim_{\alpha \ra n + 2m} \frac{|x-y|^{\alpha - n - 2m} - 1}{n - \alpha - 2m} = -\log |x-y|, \]
and thus
\[\begin{split}
\lim_{\alpha \ra n + 2m} \int_{\R^n} \phi(y) |x-y|^{2m}& \frac{|x-y|^{\alpha - n - 2m} - 1}{n - \alpha - 2m} dy = \\
&-\int_{\R^n} \phi(y) |x-y|^{2m} \log|x-y| dy, 
\end{split}\]
as desired.
\end{proof}

The only case that remains is Case (3). For this case, the difficulty is that since $\alpha < 0$, the function $y \mapsto |x-y|^{\alpha - n}$ is no longer integrable. To fix this, we will need to subtract off Taylor approximations of $\phi$ to gain additional decay at $y = x$ to offset the singularity $|x-y|^{\alpha - n}$. Towards this end, we will need the Pizzetti formula, stated as follows. First, for $r \geq 0$, let $\bar{\phi}(r)$ be the average value of $\phi$ on the hypersphere of radius $r$ in $\R^n$, i.e.
\[ \bar{\phi}(r) := \frac{1}{\omega_n r^{n-1}} \int_{\ptl B(0, r)} \phi(y) dy, \text{ where } \omega_n := \frac{2\pi^{n/2}}{\Gamma(n/2)}\]
is the surface area of the unit $n$-dimensional hypersphere. Recall also the definition of $H_j$ in \eqref{eq:H-j-def}. The following lemma is proven in \cite[Chapter 1]{landkof1972foundations}.

\begin{lemma}[Pizzetti formula]
Let $m \geq 0$. We have that as $r \ra 0$,
\[ \bar{\phi}(r) = \sum_{j=0}^m H_j (\Delta^j \phi)(0) r^{2j} + O(r^{2m+2}).\]
\end{lemma}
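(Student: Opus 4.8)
The plan is to reduce the statement to a one-variable Taylor expansion of the spherical average combined with a clean evaluation of the iterated Laplacian at the origin via spherical-average data. First I would recall the basic fact that, for a smooth function $\phi$, the spherical average $\bar\phi(r)$ is itself a smooth even function of $r$ in a neighborhood of $r=0$ (evenness because $\bar\phi$ only depends on $r$ through the sphere $\partial B(0,r)$, which is unchanged under $r\mapsto -r$ in the natural extension). Hence by Taylor's theorem $\bar\phi(r) = \sum_{j=0}^m c_j r^{2j} + O(r^{2m+2})$ for some constants $c_j$ depending on $\phi$, and the entire content of the formula is the identification $c_j = H_j (\Delta^j\phi)(0)$.

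To pin down the $c_j$, I would use the classical mean-value identity for the Laplacian: differentiating under the integral sign and using the divergence theorem on $B(0,r)$ gives the radial ODE
\begin{equs}
\bar\phi'(r) = \frac{1}{\omega_n r^{n-1}} \int_{B(0,r)} (\Delta\phi)(y)\, dy,
\end{equs}
so that $\bar\phi$ satisfies $\bar\phi''(r) + \frac{n-1}{r}\bar\phi'(r) = \overline{\Delta\phi}(r)$, i.e.\ the radial part of the Laplacian applied to $\bar\phi$ equals the spherical average of $\Delta\phi$. Iterating, the radial operator $L := \partial_{rr} + \frac{n-1}{r}\partial_r$ satisfies $L^j \bar\phi = \overline{\Delta^j\phi}$, and in particular $(L^j\bar\phi)(0) = (\Delta^j\phi)(0)$ since the spherical average of a continuous function tends to its value at the center. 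Plugging the Taylor expansion $\bar\phi(r) = \sum c_j r^{2j} + O(r^{2m+2})$ into this relation, one computes $L(r^{2j}) = 2j(2j-1)r^{2j-2} + (n-1)(2j) r^{2j-2} = 2j(2j+n-2) r^{2j-2}$; applying $L^j$ to the leading term $c_j r^{2j}$ and evaluating at $r=0$ isolates $c_j$ in terms of $(\Delta^j\phi)(0)$. Carrying out the telescoping product $\prod_{i=1}^{j} 2i(2i+n-2) = 2^j j! \prod_{i=0}^{j-1}(n+2i) \cdot$ (after reindexing) gives exactly $c_j = \big(2^j j! \prod_{i=0}^{j-1}(n+2i)\big)^{-1} (\Delta^j\phi)(0) = H_j (\Delta^j\phi)(0)$, matching \eqref{eq:H-j-def} (with $H_0 = 1$ from the $j=0$ case $\bar\phi(0) = \phi(0)$).

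The main obstacle I anticipate is purely bookkeeping: getting the combinatorial constant $\prod_{i=1}^j 2i(2i+n-2)$ to collapse to the stated product $2^j j! \prod_{i=0}^{j-1}(n+2i)$ without an off-by-one slip, especially since the excerpt explicitly flags a longstanding typo in this very constant (the spurious $\omega_p$ in \cite{landkof1972foundations}). I would therefore double-check the $j=1$ and $j=2$ cases by hand against the ODE, since those determine whether the reindexing $2i+n-2 \leftrightarrow n+2(i-1)$ is done correctly. The analytic justification (smoothness of $\bar\phi$, legitimacy of differentiating under the integral, and the error term $O(r^{2m+2})$ being uniform) is routine given $\phi \in \mc{S}$ or even $\phi \in C^{2m+2}$, and can be dispatched by standard estimates on the remainder in Taylor's theorem applied to the smooth even function $\bar\phi$.
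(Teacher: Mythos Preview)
The paper does not supply its own proof of this lemma; it simply cites \cite[Chapter 1]{landkof1972foundations}. Your argument is correct and is in fact the standard derivation (the Darboux/Pizzetti approach via the radial ODE $L\bar\phi = \overline{\Delta\phi}$), so there is nothing substantive to compare. Your reindexing check $\prod_{i=1}^{j} 2i(2i+n-2) = 2^j j!\prod_{i=0}^{j-1}(n+2i)$ is right, and the isolation of $c_j$ works because $L$ lowers the even degree by exactly $2$, killing lower-order terms after $j$ applications and leaving higher-order terms as $O(r^2)$ at the origin.
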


\begin{proof}[Proof of Case (3) of Proposition \ref{prop:fractional-laplacian-formula}]
Let $m \geq 1$. Suppose first that $\alpha \in (0, n)$. Without loss of generality, we may assume $x = 0$, because we can always define the shift $\phi_x(y) = \phi(x + y)$ and apply the result to $\phi_x$. We will rewrite the right hand side of \eqref{eq:fractional-laplacian-given-by-convolution} in a way that is analytic on $\{\alpha \in \C : \mrm{Re}(\alpha) \in (-2m, n), \alpha \neq n + 2\Z\}$. To start, we split the integral into
\[\begin{split}
\int_{\R^n} \phi(y) |y|^{\alpha - n} dy = \int_{|y| < 1}& \bigg(\phi(y) - \sum_{j=0}^{m-1} H_j (\Delta^j \phi)(0) |y|^{2j}\bigg) |y|^{\alpha - n} dy  ~+ \\
&\omega_n \sum_{j=0}^{m-1} H_j \frac{(\Delta^j \phi)(0)}{\alpha + 2j} + \int_{|y| > 1} \phi(y) |y|^{\alpha - n} dy.
\end{split}\]
(To get the above, we added and subtracted the term $\sum_{j=0}^{m-1} H_j (\Delta^j \phi)(0) |y|^{2j}$ on the region $|y| < 1$, and computed one of the integrals by polar coordinates.) Converting the first integral on the right hand side above to polar coordinates, we have that
\[\begin{split}
\int_{|y| < 1}\bigg(\phi(y) - \sum_{j=0}^{m-1} H_j (&\Delta^j \phi)(0) |y|^{2j}\bigg) |y|^{\alpha - n} dy  = \omega_n \int_0^1 \bigg(\bar{\phi}(r) - \sum_{j=0}^{m-1} H_j (\Delta^j \phi)(0) r^{2j}\bigg) r^{\alpha - 1} dr.
\end{split}\]
By the Pizzetti formula, the right hand side above is analytic on $\{\alpha \in \C : \mrm{Re}(\alpha) > -2m\}$. Recalling that the right hand side of \eqref{eq:fractional-laplacian-def-formula} is entire in $\alpha$, we thus obtain that for $\alpha \in (-2m, 0)$, $\alpha \notin 2\Z$,
\[\begin{split}
((-\Delta)^{-\frac{\alpha}{2}} \phi)(0) = \frac{1}{\gamma_n(\alpha)} \bigg( &\int_{|y| < 1} \bigg(\phi(y) - \sum_{j=0}^{m-1} H_j (\Delta^j \phi)(0) |y|^{2j}\bigg) |y|^{\alpha - n} dy  ~+\\
&\omega_n \sum_{j=0}^{m-1} H_j \frac{(\Delta^j \phi)(0)}{\alpha + 2j} + \int_{|y| > 1} \phi(y) |y|^{\alpha - n} dy\bigg) .
\end{split}\]
(The additional restriction $\alpha \notin 2\Z$ is due to the fact that $\gamma_n(\alpha)$ has singularities at negative even integers coming from its $\Gamma(\alpha / 2)$ term.) Next, suppose further that $\alpha \in (-2m, -2(m-1))$. In this case, for each $0 \leq j \leq m-1$, we may write
\[ \frac{\omega_n}{\alpha + 2j} = - \omega_n\int_1^\infty r^{\alpha + 2j - 1} dr  = -\int_{|y| > 1} |y|^{\alpha + 2j - n} dy.\]
Combining this with the previous display, we thus obtain that when $\alpha \in (-2m, -2(m-1))$, 
\[ ((-\Delta)^{-\frac{\alpha}{2}} \phi)(0) = \frac{1}{\gamma_n(\alpha)} \int_{\R^n} \bigg(\phi(y) - \sum_{j=0}^{m-1} H_j (\Delta^j \phi)(0) |y|^{2j}\bigg) |y|^{\alpha - n} dy , \]
as desired.
\end{proof}

Next, we begin towards the proof of Lemma \ref{lemma:projection-laplace-inverse-formula}. This will follow directly from the next result.

\begin{lemma}\label{lemma:two-derivative-ineverse-laplacian-kernel}
Let $\phi \in \Phi$, $x \in \R^n$, $i, j \in [n]$, $\alpha > 0$. We have that $\big(\ptl_i \ptl_j (-\Delta)^{-(1+\frac{\alpha}{2})}\phi\big)(x)$ is equal to:
\begin{enumerate}
    \item $0 < \alpha < 2 $
    \begin{equs}
    \frac{\Gamma((n-\alpha+2)/2)}{2^\alpha \pi^{n/2} \Gamma((2+\alpha)/2)} \int_{\R^n} dy  |x-y|^{\alpha - 2 - n} (x_i - y_i)(x_j - y_j) \phi(y) - \frac{\delta_{ij}}{\alpha} \int_{\R^n} dy G^{\frac{\alpha}{2}}(x, y) \phi(y) .
    \end{equs}
    \item $\alpha = 2$
    \begin{equs}\label{eq:alpha-equals-two}
    \frac{1}{2\omega_n} \int_{\R^n} dy |x-y|^{-n} (x_i - y_i)(x_j - y_j) \phi(y) - \frac{\delta_{ij}}{2} \int_{\R^n} dy G^1(x, y) \phi(y).
    \end{equs}
    \item $\alpha > 2$
    \begin{equs}
    \frac{\Gamma((\alpha-2)/2)}{4 \Gamma((\alpha+2)/2)} \int_{\R^n} dy  G^{\frac{\alpha}{2}-1}(x, y) (x_i - y_i)(x_j - y_j) \phi(y) - \frac{\delta_{ij}}{\alpha} \int_{\R^n} dy G^{\frac{\alpha}{2}}(x, y) \phi(y) .
    \end{equs}
\end{enumerate}

\end{lemma}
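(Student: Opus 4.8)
\textbf{Proof plan for Lemma \ref{lemma:two-derivative-ineverse-laplacian-kernel}.}

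The plan is to reduce the statement to a careful differentiation of the Riesz-potential formula for $(-\Delta)^{-(1+\alpha/2)}\phi$ provided by Proposition \ref{prop:fractional-laplacian-formula}, writing the second derivative $\ptl_i\ptl_j$ in a way that separates the ``singular kernel'' piece $|x-y|^{\alpha-2-n}(x_i-y_i)(x_j-y_j)$ from a ``trace'' piece proportional to $\delta_{ij}$ times a lower-order Green's function. First I would fix $\beta := 1+\alpha/2$, so that $2\beta = 2+\alpha > 2 > 0$, and apply the appropriate case of Proposition \ref{prop:fractional-laplacian-formula} to $(-\Delta)^{-\beta}\phi = (-\Delta)^{-\frac{2\beta}{2}}\phi$. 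For $0 < \alpha < n-2$ this is case (1): $((-\Delta)^{-\beta}\phi)(x) = \frac{1}{\gamma_n(2\beta)}\int |x-y|^{2\beta-n}\phi(y)\,dy$ with $2\beta - n = \alpha+2-n$; for other ranges of $\alpha$ (when $2\beta = n+2\N$, i.e.\ the logarithmic cases, or when $2\beta$ exceeds $n$) I would use cases (2) and (3) and the key fact that $\phi \in \Phi$ kills all polynomial moments, so the subtracted Taylor/Pizzetti terms integrate to zero against $\phi$ and the same convolution formula persists by analytic continuation. The upshot of this first step is that in all cases $((-\Delta)^{-\beta}\phi)(x)$ can be treated as $c_n(\alpha)\int |x-y|^{\alpha+2-n}\phi(y)\,dy$ for the purpose of differentiating under the integral, where $c_n(\alpha) = 1/\gamma_n(\alpha+2)$ is tracked explicitly via \eqref{eq:gamma-n-def}.

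Next I would compute $\ptl_i\ptl_j |x-y|^{\alpha+2-n}$ pointwise for $x \neq y$. Writing $r = |x-y|$ and $\mu = \alpha+2-n$, one has $\ptl_i r^{\mu} = \mu r^{\mu-2}(x_i-y_i)$ and
\[
\ptl_i\ptl_j r^{\mu} = \mu(\mu-2) r^{\mu-4}(x_i-y_i)(x_j-y_j) + \mu \delta_{ij} r^{\mu-2}.
\]
Here $\mu - 4 = \alpha - 2 - n$, which matches the exponent $|x-y|^{\alpha-2-n}$ in the first term of each case of the lemma, and $\mu - 2 = \alpha - n$, which matches the exponent in $G^{\alpha/2}(x,y) \sim |x-y|^{\alpha - n}$ (in the range $0<\alpha<n$; in other ranges $G^{\alpha/2}$ picks up logarithmic or distributional form but the same algebra goes through by the continuation argument). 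Multiplying by $c_n(\alpha)$ and simplifying the resulting Gamma-function prefactors via the duplication/recursion identities for $\Gamma$ and the definition of $\gamma_n$ should produce exactly the constants $\frac{\Gamma((n-\alpha+2)/2)}{2^\alpha\pi^{n/2}\Gamma((2+\alpha)/2)}$ in front of the tensor term and $-\frac{1}{\alpha}$ (times the normalization hidden in $G^{\alpha/2}$) in front of the $\delta_{ij}$ term; the three cases $0<\alpha<2$, $\alpha = 2$, $\alpha > 2$ correspond to whether the lower-order exponent $\alpha/2 - 1$ lands in $(-1,0)$, equals $0$ (the explicit $\omega_n$ formula in \eqref{eq:alpha-equals-two}, using $\gamma_n(2)$ and $\omega_n$), or lies in $(0,\infty)$, which only changes how one packages the kernel $|x-y|^{\alpha-2-n}$ as a multiple of $G^{\alpha/2-1}$.

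The main obstacle I expect is the \emph{justification of differentiating under the integral sign}, since the kernel $|x-y|^{\alpha+2-n}$ and its derivatives are singular at $y=x$, and for $\alpha > n-2$ the undifferentiated kernel is not even locally integrable in the naive sense — it is only the $\Phi$-moment cancellation that rescues integrability. To handle this rigorously I would work in Fourier space as a cross-check: $\ptl_i\ptl_j(-\Delta)^{-\beta}\phi = \mc{F}^{-1}\big(p_i p_j |p|^{-2\beta}\mc{F}\phi\big)$, and since $\mc{F}\phi$ vanishes to infinite order at $p=0$ (Lemma \ref{lemma:phi-psi-characterizations}), the multiplier $p_ip_j|p|^{-2\beta}$ acts harmlessly on $\Psi = \mc{F}(\Phi)$; then I would identify the position-space kernel of this operator by the standard computation of the Fourier transform of a homogeneous distribution, splitting $p_ip_j|p|^{-2\beta} = |p|^{-2\beta+2}\cdot(p_ip_j|p|^{-2}) $ and recognizing the second factor's kernel as the one appearing in the iterated Riesz transform computation (cf.\ the structure of Lemma \ref{lemma:iterated-riesz-kernel}), while the first factor contributes $G^{\alpha/2}$ up to constants. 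Reconciling the distributional Fourier computation with the naive pointwise differentiation — in particular pinning down that the ``$-\frac{\delta_{ij}}{\alpha}G^{\alpha/2}$'' term carries no extra delta-function contact term (which would be the analogue of the $-\frac{\delta_{ij}}{n}(\phi_1,\phi_2)$ term in Lemma \ref{lemma:iterated-riesz-kernel}, but here is absent because we have gained two full derivatives rather than zero) — is the delicate point, and I would resolve it by testing against $\phi \in \Phi$ and using that $p_ip_j|p|^{-2\beta}$ is locally integrable near $p=0$ precisely when $2\beta - 2 < n$, i.e.\ $\alpha < n$, with the $\Phi$ decay covering the remaining range. Once the operator identity is established on $\Phi$, Lemma \ref{lemma:projection-laplace-inverse-formula} follows immediately by writing $E(-\Delta)^{-s} = d\codif(-\Delta)^{-1}(-\Delta)^{-s} = -\ptl_i\ptl_j(-\Delta)^{-(s+1)}$ in coordinates (and similarly $E^* = I - E$), matching $s = \alpha/2$.
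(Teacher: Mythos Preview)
Your proposal is correct and follows essentially the same approach as the paper: differentiate the Riesz-potential representation of $(-\Delta)^{-(1+\alpha/2)}\phi$ from Proposition~\ref{prop:fractional-laplacian-formula}, compute $\partial_i\partial_j|x-y|^{\alpha+2-n}$ pointwise, and simplify the resulting Gamma prefactors into the stated form. The only stylistic difference is in how the exceptional values are handled: rather than your Fourier-multiplier cross-check, the paper simply establishes the formula first for non-integer $\alpha$ (where differentiation under the integral is unproblematic), observes from the Fourier side that $\partial_i\partial_j(-\Delta)^{-(1+\alpha/2)}\phi(x)$ depends continuously on $\alpha$, and then passes to the limit $\delta\downarrow 0$ from $\alpha+\delta$ to cover the special points $\alpha\in\{2\}\cup\{n+2m:m\geq 0\}$, tracking the Gamma-function singularities explicitly in each case.
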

\begin{remark}
As a sanity check, one may verify that starting from \eqref{eq:alpha-equals-two}, we have that
\begin{equs}
- \big((-\Delta)^{-1} \phi\big)(x) = \delta^{ij}\big(\ptl_i \ptl_j (-\Delta)^{-2} \phi\big)(x) = - \int_{\R^n} dy  G^1(x, y) \phi(y),
\end{equs}
which is consistent with Proposition \ref{prop:fractional-laplacian-formula}.
\end{remark}
\begin{proof}
First, for $\alpha > 0$ non-integer, by Proposition \ref{prop:fractional-laplacian-formula}\eqref{item:fractional-laplacian-positive-non-integer} we have that
\begin{equation}
\begin{aligned}
\big( \ptl_i \ptl_j (-\Delta)^{-(1+\frac{\alpha}{2})} \phi\big)(x) &= \frac{1}{\gamma_n(2+\alpha)} \ptl_i \ptl_j \int_{\R^n} dy  |x-y|^{2 + \alpha - n} \phi(y) \\
&= \frac{(2+\alpha - n)(\alpha - n)}{\gamma_n(2+\alpha)} \int_{\R^n} dy  |x-y|^{\alpha -2 - n} (x_i - y_i)(x_j - y_j)\phi(y) \\
&\quad \quad \quad + \delta_{ij} \frac{2+\alpha-n}{\gamma_n(2+\alpha)} \int_{\R^n} dy |x-y|^{\alpha - n} \phi(y) .
\end{aligned}
\end{equation}
Recalling the formula \eqref{eq:gamma-n-def} for $\gamma_n$, we have that
\begin{equs}
\frac{(2+\alpha - n)(\alpha - n)}{\gamma_n(2+\alpha)} = \frac{(2+\alpha - n)(\alpha -n)\Gamma((n-2-\alpha)/2)}{2^{2+\alpha} \pi^{n/2} \Gamma((2+\alpha)/2)} = \frac{\Gamma((n-\alpha+2)/2)}{2^\alpha \pi^{n/2} \Gamma((2+\alpha)/2)}.
\end{equs}
Similarly, we have that
\begin{equs}
\frac{2+\alpha-n}{\gamma_n(2+\alpha)} = -\frac{\Gamma((n-\alpha)/2)}{2^{1+\alpha} \pi^{n/2} \Gamma((2+\alpha)/2)} = -\frac{\Gamma((n-\alpha)/2)}{\alpha 2^\alpha \pi^{n/2} \Gamma(\alpha/2)} = -\frac{1}{\alpha \gamma_n(\alpha)}.
\end{equs}
From this, we obtain that for non-integer $\alpha > 0$,
\begin{equation}\label{eq:non-integer-alpha-formula}
\begin{aligned}
\big( \ptl_i \ptl_j (-\Delta)^{-(1+\frac{\alpha}{2})} \phi\big)(x) = &\frac{\Gamma((n-\alpha+2)/2)}{2^\alpha \pi^{n/2} \Gamma((2+\alpha)/2)} \int_{\R^n} dy  |x-y|^{\alpha - 2 - n} (x_i - y_i)(x_j - y_j) \phi(y) \\
&- \frac{\delta_{ij}}{\alpha \gamma_n(\alpha)} \int_{\R^n} dy |x-y|^{\alpha - n} \phi(y) .
\end{aligned}
\end{equation}
For general $\alpha > 0$, it is clear from the Fourier side that
\begin{equs}
\big(\ptl_i \ptl_j (-\Delta)^{-(1 + \frac{\alpha}{2})} \phi\big)(x) = \lim_{\delta \downarrow 0} \big(\ptl_i \ptl_j (-\Delta)^{-(1 + \frac{\alpha+\delta}{2})} \phi\big)(x).
\end{equs}
Note that when $0 < \alpha < 2$, we have that
\begin{equs}
\lim_{\delta \downarrow 0} \Gamma((n - \alpha - \delta + 2)/2) = \Gamma((n - \alpha +2)/2), \quad \text{and} \quad \lim_{\delta \downarrow 0} \gamma_n(\alpha + \delta) = \gamma_n(\alpha).
\end{equs}
The main point here is that this case of $\alpha$ avoids the singularities of the Gamma function. From these considerations and \eqref{eq:non-integer-alpha-formula}, we obtain the result when $0 < \alpha < 2$ (note from Corollary \ref{cor:fgf-covariance-kernel} that $\frac{1}{\gamma_n(\alpha)} |x-y|^{\alpha - n} = G^{\frac{\alpha}{2}}(x, y)$).

In the following, we focus on the case $\alpha \geq 2$. First, when $\alpha > 2$ is not of the form $n + 2m$ for some $m \geq 0$, the same considerations apply, since in this case $\alpha$ also avoids the singularity in the $\Gamma$ function appearing from $\Gamma((n-\alpha+2)/2)$ and $\Gamma((n-\alpha)/2)$ (note that the latter appears in the definition \eqref{eq:gamma-n-def} of $\gamma_n(\alpha)$). We thus have in this case that $(\ptl_i \ptl_j (-\Delta)^{-(1+\frac{\alpha}{2})} \phi)(x)$ is still given by \eqref{eq:non-integer-alpha-formula}. To place this in the desired form, note by Corollary \ref{cor:fgf-covariance-kernel} that when $\alpha > 2$ and $\alpha \neq n + 2m$, we have that
\begin{equs}
\frac{1}{\gamma_n(\alpha)} |x-y|^{\alpha - n} = G^{\frac{\alpha}{2}}(x, y), \quad \text{ and } \quad \frac{\Gamma((n-\alpha+2)/2)}{2^\alpha \pi^{n/2} \Gamma((2+\alpha)/2)} |x-y|^{\alpha - 2 - n} = \frac{\Gamma((\alpha - 2)/2)}{4\Gamma((\alpha+2)/2)} G^{\frac{\alpha}{2}-1}(x, y).
\end{equs}
From this, we obtain the desired result in the case $\alpha > 2, \alpha \neq n + 2m$ for some $m \geq 0$. If instead $2 = \alpha \neq n + 2m$, then we write
\begin{equs}
\frac{\Gamma((n-\alpha+2)/2)}{2^\alpha \pi^{n/2} \Gamma((2+\alpha)/2)} |x-y|^{\alpha - 2 - n}  = \frac{\Gamma(n/2)}{2^2 \pi^{n/2}} |x-y|^{-n} = \frac{1}{2\omega_n} |x-y|^{-n},
\end{equs}
where we used that $\omega_n = 2\pi^{n/2}/\Gamma(n/2)$ (recall \eqref{eq:omega-n}). This gives the desired result when $2 = \alpha \neq n + 2m$.

Next, suppose that $\alpha \geq 2$ and $\alpha = n$. In this case,
\begin{equs}
\lim_{\delta \downarrow 0} \Gamma((n-\alpha + \delta + 2)/2)  = \Gamma(1) = 1.
\end{equs}
However, $\gamma_n(n + \delta)$ exhibits a singularity as $\delta \downarrow 0$. To fix this singularity, we note that (recall $\phi \in \Phi$ so that it is mean zero)
\begin{equs}
\frac{1}{(n+\delta) \gamma_n(n+\delta)} \int_{\R^n} dy \phi(y) |x-y|^{\delta} &= \frac{\Gamma(-\delta/2)}{(n+\delta) 2^{n+\delta} \pi^{n/2} \Gamma((n+\delta)/2)} \int_{\R^n} dy \phi(y) \big( |x-y|^{\delta} - 1\big) \\
&= \frac{\delta \Gamma(-\delta/2)}{(n+\delta) 2^{n+\delta} \pi^{n/2} \Gamma((n+\delta)/2)} \int_{\R^n} dy \phi(y) \frac{1}{\delta}\big( |x-y|^{\delta} - 1\big).
\end{equs}
As $\delta \downarrow 0$, the above converges to
\begin{equs}
-\frac{2}{n 2^n \pi^{n/2} \Gamma(n/2)} \int_{\R^n} dy \phi(y) \log|x-y| = \frac{1}{n} \int_{\R^n} dy G^{\frac{n}{2}}(x, y) \phi(y),
\end{equs}
where we used Corollary \ref{cor:fgf-covariance-kernel} for the identity. This shows that when $\alpha = n \geq 2$, we have that
\begin{equation}\label{eq:alpha-equals-n}
\begin{aligned}
\big( \ptl_i \ptl_j (-\Delta)^{-(1+\frac{\alpha}{2})} \phi\big)(x) = &\frac{1}{2^n \pi^{n/2} \Gamma((n+2)/2)} \int_{\R^n} dy  |x-y|^{-2} (x_i - y_i)(x_j - y_j) \phi(y) \\
&- \frac{\delta_{ij}}{n} \int_{\R^n} dy G^{\frac{n}{2}}(x, y) \phi(y).
\end{aligned}
\end{equation}
If further $\alpha = n > 2$, then we may write
\begin{equs}
\frac{1}{2^n \pi^{n/2} \Gamma((n+2)/2)} |x-y|^{-2} = \frac{\Gamma((n-2)/2)}{4\Gamma((n+2)/2)} \frac{1}{\gamma_n(n-2)} |x-y|^{-2} = \frac{\Gamma((\alpha-2)/2)}{4\Gamma((\alpha+2)/2)} G^{\frac{\alpha}{2}-1}(x, y).
\end{equs}
Combining this with \eqref{eq:alpha-equals-n}, we obtain the desired result when $\alpha = n > 2$. If $\alpha = n = 2$, then we have that
\begin{equs}
\frac{1}{2^n \pi^{n/2} \Gamma((n+2)/2)} = \frac{1}{4 \pi} = \frac{1}{2\omega_n},
\end{equs}
and thus combining this with \eqref{eq:alpha-equals-n}, we also obtain the desired result when $\alpha = n = 2$.

The remaining case to handle is $\alpha = n + 2m \geq 2$ for some $m \geq 1$. First, note that since $m \geq 1$, this implies that $\alpha > 2$. We write
\begin{equs}
&~\frac{\Gamma((n-\alpha - \delta+2)/2)}{2^{\alpha+\delta} \pi^{n/2} \Gamma((2+\alpha+\delta)/2)} \int_{\R^n} dy \phi(y) |x-y|^{\alpha+\delta  - 2 - n}(x_i - y_i)(x_j - y_j) \\
&= \frac{\Gamma(-(m-1) - \delta/2)}{2^{n+2m+\delta} \pi^{n/2} \Gamma((n + 2(m+1) + \delta)/2)}\int_{\R^n} dy \phi(y) |x-y|^{2(m-1)+\delta} (x_i -y_i)(x_j - y_j) \\
&= \frac{\Gamma(-(m-1) - \delta/2)}{2^{n+2m+\delta} \pi^{n/2} \Gamma((n + 2(m+1) + \delta)/2)} \int_{\R^n} dy \phi(y) (x_i - y_i)(x_j - y_j)|x-y|^{2(m-1)} \big(|x-y|^\delta - 1),
\end{equs}
where in the second identity we used the fact that $\phi \in \Phi$ so that it integrates all polynomials to zero. Taking $\delta \downarrow 0$, and using that the residue of $\Gamma$ at $-(m-1)$ is $(-1)^{m-1}/(m-1)!$, we obtain that the above converges to
\begin{equs}
~&-2 \frac{(-1)^{m-1}}{(m-1)!} \frac{1}{2^{n+2m} \pi^{n/2} \Gamma((n+2(m+1))/2)} \int_{\R^n} dy \phi(y) (x_i - y_i)(x_j - y_j) |x-y|^{2(m-1)} \log |x-y| \\
&= \frac{\Gamma((\alpha-2)/2)}{4\Gamma((\alpha+2)/2)} \int_{\R^n} G^{\frac{\alpha}{2}-1}(x, y) (x_i - y_i)(x_j - y_j) \phi(y),
\end{equs}
where we applied Corollary \ref{cor:fgf-covariance-kernel} in the identity. Similarly, we may also show that
\begin{equs}
~&\lim_{\delta \downarrow 0} \frac{1}{(\alpha+\delta)\gamma_n(\alpha+\delta)} \int_{\R^n} dy \phi(y) |x-y|^{\alpha + \delta - n} \\
&= \frac{2(-1)^{m+1}}{(n+2m) m! 2^{n+2m} \pi^{n/2} \Gamma((n+2m)/2)} \int_{\R^n} dy \phi(y) |x-y|^{2m} \log|x-y| \\
&= \frac{1}{\alpha} \int_{\R^n} dy G^{\frac{\alpha}{2}}(x, y) \phi(y),
\end{equs}
where in the last identity we used Corollary \ref{cor:fgf-covariance-kernel}. The desired result in the case $\alpha = n + 2m$ for some $m \geq 1$ now follows by putting the previous few identities together.
\end{proof}

\begin{proof}[Proof of Lemma \ref{lemma:projection-laplace-inverse-formula}]
We have that $E = d \codif(-\Delta)^{-1}$, and so $E (-\Delta)^{-s} \phi = d\codif (-\Delta)^{-(1+s)} \phi$, or in coordinates,
\begin{equs}
\big(E (-\Delta)^{-s} \phi\big)_i = -\ptl_i \ptl_j (-\Delta)^{-(1+s)} \phi_j.
\end{equs}
The formula for $E$ now follows by Lemma \ref{lemma:two-derivative-ineverse-laplacian-kernel}. The formula for $E^*$ follows because $E^* = I - E$.
\end{proof}

Next, we begin towards the proof of Lemma \ref{lemma:iterated-riesz-kernel}. As a warmup, we first show the following result, which expresses the Riesz operators $\mc{R}_i$ as singular integral operators. The result is classical, but we still show it because the argument is easily adapted to show Lemma \ref{lemma:iterated-riesz-kernel}.

\begin{lemma}
For $\phi_1, \phi_2 \in \Phi$, $i \in [n]$, we have that
\[ (\phi_1, \mc{R}_i \phi_2) = \frac{\Gamma(\frac{n+1}{2})}{\pi^{\frac{n+1}{2}}} \lim_{\varep \downarrow 0} \int \int_{|x-y| > \varep} \phi_1(x) \phi_2(y) \frac{x_i - y_i}{|x-y|^{n+1}} dx dy. \]
\end{lemma}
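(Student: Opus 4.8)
The plan is to compute $(\phi_1, \mc{R}_i \phi_2)$ by writing $\mc{R}_i = -\ptl_i (-\Delta)^{-1/2}$ and then using the explicit convolution kernel for $(-\Delta)^{-1/2}$ that we already have available. Concretely, since $\mc{R}_i$ is self-adjoint-up-to-sign and maps $\Phi$ to $\Phi$, we may move one factor over: $(\phi_1, \mc{R}_i \phi_2) = (\mc{R}_i \phi_1, \phi_2)$, but the cleaner route is to observe $(\phi_1, \mc{R}_i \phi_2) = (\phi_1, -\ptl_i (-\Delta)^{-1/2} \phi_2) = (\ptl_i \phi_1, (-\Delta)^{-1/2} \phi_2)$ by integration by parts, and then apply Proposition~\ref{prop:fractional-laplacian-formula}, Case~\eqref{item:fractional-laplacian-positive-non-integer}, with $\alpha = 1$ (valid as long as $1 < n$; the case $n = 1$ can be treated separately or absorbed into the singular-integral formulation). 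This gives
\[
(-\Delta)^{-1/2}\phi_2 = \frac{1}{\gamma_n(1)} \int_{\R^n} \phi_2(y)|{\cdot}-y|^{1-n}\, dy, \qquad \gamma_n(1) = 2\pi^{n/2}\frac{\Gamma(1/2)}{\Gamma((n-1)/2)} = \frac{2\pi^{(n+1)/2}}{\Gamma((n-1)/2)}.
\]
Substituting and integrating by parts back onto $\phi_1$ (legitimate since $\phi_1, \phi_2 \in \Phi$ decay rapidly and the kernel $|x-y|^{1-n}$ is locally integrable), one would get $(\phi_1, \mc{R}_i\phi_2) = \frac{1}{\gamma_n(1)}\int\int \phi_1(x)\phi_2(y)\, \ptl_{x_i}|x-y|^{1-n}\,dx\,dy$, and $\ptl_{x_i}|x-y|^{1-n} = (1-n)\frac{x_i-y_i}{|x-y|^{n+1}}$.

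The subtlety — and the step I expect to be the main obstacle — is that $\frac{x_i - y_i}{|x-y|^{n+1}}$ is \emph{not} locally integrable near the diagonal (it is homogeneous of degree $-n$), so the naive integration by parts that produced it is not actually justified as written: one cannot differentiate under the integral sign through the singularity without care. The correct way to handle this is to introduce the $\varep$-cutoff from the start: write $(-\Delta)^{-1/2}\phi_2$ exactly, do the integration by parts on the region $\{|x-y| > \varep\}$, and track the boundary term on the sphere $\{|x-y| = \varep\}$. That boundary term is $\int_{|x-y| = \varep} \phi_1(x)\phi_2(y)\, \nu_i \,|x-y|^{1-n}\, dS$ where $\nu$ is the unit normal; since $\int_{S^{n-1}}\nu_i\, dS = 0$, this boundary term vanishes as $\varep \downarrow 0$ (using continuity of $\phi_1, \phi_2$ and a Taylor expansion to control the $O(\varep)$ correction). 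This is exactly the mechanism that converts a formal identity into a principal-value statement, and it is the reason the final formula is stated with $\lim_{\varep \downarrow 0}$.

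Carrying this out, the coefficient works out to $\frac{n-1}{\gamma_n(1)} = \frac{(n-1)\Gamma((n-1)/2)}{2\pi^{(n+1)/2}} = \frac{\Gamma((n+1)/2)}{\pi^{(n+1)/2}}$, using $(n-1)\Gamma((n-1)/2) = 2\Gamma((n+1)/2)$ — wait, more precisely $\frac{n-1}{2}\Gamma(\frac{n-1}{2}) = \Gamma(\frac{n+1}{2})$, so $(n-1)\Gamma(\frac{n-1}{2}) = 2\Gamma(\frac{n+1}{2})$, giving the stated constant $\frac{\Gamma((n+1)/2)}{\pi^{(n+1)/2}}$. (One must also double-check the overall sign: the two minus signs from $\mc{R}_i = -\ptl_i(-\Delta)^{-1/2}$ and from $\ptl_{x_i}$ versus $\ptl_{y_i}$ should combine with $(1-n)$ to yield the positive constant $(n-1)/\gamma_n(1)$.) The remaining verification is that the principal value actually exists, which follows from the standard Calderón–Zygmund theory of singular integral operators — or, more self-containedly, from the fact that $\mc{R}_i$ is bounded on $L^2$ (Plancherel, since its Fourier multiplier $\icomplex p_i/|p|$ is bounded) combined with the cancellation of the odd kernel, so that the truncated integrals form a Cauchy family. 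I would cite \cite{Stein1970} for this last point, paralleling the treatment of Lemma~\ref{lemma:iterated-riesz-kernel}.
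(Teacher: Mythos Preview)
Your approach is correct and gives the right constant, but it differs from the paper's proof in its choice of regularization. The paper does \emph{not} work directly with the locally integrable kernel $|x-y|^{1-n}$ and a truncated integration by parts. Instead it regularizes the exponent: it writes $(\mc{R}_i\phi_2)(x)=-\lim_{\delta\downarrow 0}\partial_i(-\Delta)^{-(1+\delta)/2}\phi_2(x)$, differentiates the convolution kernel $|x-y|^{1+\delta-n}$ (which for $\delta>0$ yields an integrable expression), then for fixed $\varep>0$ splits the resulting integral into $|x-y|<\varep$ and $|x-y|>\varep$, uses the odd symmetry $\int_{|x-y|<\varep}|x-y|^{\delta-n-1}(x_i-y_i)\,dy=0$ to replace $\phi_2(y)$ by $\phi_2(y)-\phi_2(x)$ in the inner piece, sends $\delta\downarrow 0$, and finally shows the inner piece is $O(\varep\|\phi_2\|_{C^1})$ uniformly in $x$, allowing the $\varep$-limit to pass through the pairing with $\phi_1$.

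Your route---move the derivative onto $\phi_1$ in the smooth pairing, invoke the $\alpha=1$ kernel directly, and then integrate by parts on $\{|x-y|>\varep\}$ with a boundary term controlled by odd symmetry---is equally valid and arguably more elementary here, since it avoids the auxiliary $\delta$-limit entirely. (Your appeal to \cite{Stein1970} at the end is unnecessary: your own uniform $O(\varep)$ bound on the boundary term already establishes existence of the principal value.) The paper's $\delta$-regularization, however, is the method that generalizes cleanly to the iterated transform $\mc{R}_{ij}$ in Lemma~\ref{lemma:iterated-riesz-kernel}, where a genuine pole in $\gamma_n(2+\delta)$ must be resolved and a nontrivial residue $-\tfrac{\delta_{ij}}{n}(\phi_1,\phi_2)$ survives; reproducing that term via your boundary-integral approach would require tracking a second integration by parts whose spherical term does \emph{not} vanish.
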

\begin{proof}
To start, note it is clear from the Fourier side that 
\[ \lim_{\delta \downarrow 0}~ (\ptl_i (-\Delta)^{-\frac{1+\delta}{2}} \phi)(x) = (\ptl_i (-\Delta)^{-\frac{1}{2}} \phi)(x). \]
On the other hand, for $\delta > 0$ we have that
\[\begin{split}
(\ptl_i(-\Delta)^{-\frac{1+\delta}{2}}  \phi)(x) &= \ptl_i \frac{1}{\gamma_n(1+\delta)} \int_{\R^n} \phi(y) |x-y|^{1+\delta - n} dy \\
&= \frac{1+\delta - n}{\gamma_n(1+\delta)} \int_{\R^n} \phi(y) |x-y|^{\delta - n - 1} (x_i - y_i) dy. 
\end{split}\]
We thus want to take a limit as $\delta \downarrow 0$ in the right hand side above. The problem is that $|x-y|^{-n -1} (x_i - y_i)$ is singular near $x$, and thus we need to regularize. Towards this end, split
\[ \int_{\R^n} = \int_{|x-y| < \varep} + \int_{|x-y| > \varep}, \]
and write (using that $\int_{|x-y| < \varep} |x-y|^{\delta - n - 1} (x_i - y_i) dy = 0$ by parity)
\[ \int_{|x-y| < \varep} \phi(y) |x-y|^{\delta - n - 1} (x_i - y_i) dy = \int_{|x-y| < \varep} (\phi(y) - \phi(x)) |x-y|^{\delta - n - 1} (x_i - y_i) . \]
For any $\varep > 0$, the limit as $\delta \downarrow 0$ of the right hand side above exists. From this, we obtain that for any $\varep > 0$,
\[\begin{split}
(\ptl_i (-\Delta)^{-1/2} \phi)(x) = \frac{1-n}{\gamma_n(1)} \bigg(\int_{|x-y| < \varep} &(\phi(y) - \phi(x)) \frac{x_i - y_i}{|x-y|^{n+1}} dy ~+ \\
&\int_{|x-y| > \varep} \phi(y) \frac{x_i - y_i}{|x-y|^{n+1}} dy \bigg). 
\end{split}\]
Next, observe that
\[\begin{split}
\bigg|\int_{|x-y| < \varep} (\phi(y) - \phi(x)) \frac{x_i - y_i}{|x-y|^{n+1}} dy \bigg| &\lesssim \|\phi\|_{C^1} \int_{|x-y| < \varep} |x-y|^{-n + 1} dy \\
&\lesssim \varep \|\phi\|_{C^1} .
\end{split}\]
From this, we obtain (noting that $-(1-n)/\gamma_n(1) = \frac{\Gamma((n+1)/2)}{\pi^{(n+1)/2}}$)
\[ \lim_{\varep \downarrow 0} \sup_{x \in \R^n} \bigg|(\mc{R}_i \phi)(x)- \frac{\Gamma(\frac{n+1}{2})}{\pi^{\frac{n+1}{2}}} \int_{|x-y| > \varep} \phi(y) \frac{x_i - y_i}{|x-y|^{n+1}} dy\bigg| = 0.  \]
Due to this uniform convergence, we have that
\[\begin{split}
(\phi_1, \mc{R}_i \phi_2) &= \frac{\Gamma(\frac{n+1}{2})}{\pi^{\frac{n+1}{2}}} \int_{\R^n} dx \phi_1(x)  \lim_{\varep \downarrow 0} \int_{|x-y| > \varep} dy \phi(y) \frac{x_i - y_i}{|x-y|^{n+1}}  \\
&= \lim_{\varep \downarrow 0} \frac{\Gamma(\frac{n+1}{2})}{\pi^{\frac{n+1}{2}}} \int \int_{|x-y| > \varep} dx dy \phi_1(x) \phi_2(y) \frac{x_i - y_i}{|x-y|^{n+1}},
\end{split}\]
as desired.
\end{proof}

\begin{proof}[Proof of Lemma \ref{lemma:iterated-riesz-kernel}]
We have that
\[ (\mc{R}_{ij} \phi)(x) = \lim_{\delta \downarrow 0} ~(\ptl_i \ptl_j (-\Delta)^{-(1 + \frac{\delta}{2})} \phi)(x). \]
For fixed $\delta > 0$, we have that
\[\begin{split}
(\ptl_i \ptl_j (-\Delta)^{-(1 + \frac{\delta}{2})} \phi)(x) &= \ptl_i \ptl_j \frac{1}{\gamma_n(2 + \delta)} \int_{\R^n} \phi(y) |x-y|^{2 + \delta - n} dy \\
&= \frac{1}{\gamma_n(2+\delta)}\ptl_i \int_{\R^n} \phi(y) (2+\delta - n) |x-y|^{\delta - n} (x_j - y_j) dy, \end{split}\]
and this is further equal to
\[\begin{split}
\frac{1}{\gamma_n(2+\delta)} \bigg((2+\delta - n)(\delta - n) \int_{\R^n} \phi(y)& \frac{(x_i - y_i)(x_j - y_j)}{|x-y|^{n+2-\delta}} dy ~+ \\
&\delta_{ij} (2+\delta - n) \int_{\R^n} \phi(y) |x-y|^{\delta - n} dy \bigg). 
\end{split}\]
As usual, we need to first regularize before taking the limit $\delta \downarrow 0$. Let $\varep > 0$. We may write
\[\begin{split}
\int_{|y-x| < \varep} \phi(y) \frac{(x_i - y_i)(x_j - y_j)}{|x-y|^{n+2-\delta}} &dy = \int_{|x-y| < \varep} (\phi(y) - \phi(x)) \frac{(x_i - y_i)(x_j - y_j)}{|x-y|^{n+2-\delta}} dy ~+ \\
& \phi(x) \int_{|y-x| < \varep} \frac{(x_i - y_i)(x_j - y_j)}{|x-y|^{n+2-\delta}} dy.
\end{split} \]
We may compute the integral
\[ \int_{|y-x| < \varep} \frac{(x_i - y_i)(x_j - y_j)}{|x-y|^{n+2-\delta}} dy = \delta_{ij}  \frac{\omega_n}{n} \int_0^\varep r^2 r^{\delta - n - 2} r^{n-1} dr = \delta_{ij}  \frac{\omega_n \varep^\delta}{n\delta}.\]
Similarly, we have that
\[ \int_{|x-y| < \varep} \phi(y)|x-y|^{\delta - n} dy = \int_{|x-y| < \varep} (\phi(y) - \phi(x))|x-y|^{\delta - n} dy + \phi(x) \frac{\omega_n \varep^\delta}{\delta}.  \]
Combining, we thus have that
\[ (\ptl_i \ptl_j (-\Delta)^{-1} \phi)(x) = I_{< \varep}(\delta) + I_{> \varep}(\delta) + C(\varep, \delta), \]
where
\[\begin{split}
I_{< \varep}(\delta) := \frac{(2+\delta - n)(\delta - n)}{\gamma_n(2+\delta)}& \int_{|x-y| < \varep} (\phi(y) - \phi(x)) \frac{(x_i - y_i)(x_j - y_j)}{|x-y|^{n+2-\delta}} dy ~+ \\
&\delta_{ij}\frac{2+\delta - n}{\gamma_n(2+\delta)} \int_{|x-y| < \varep} (\phi(y) - \phi(x))|x-y|^{\delta - n} dy,
\end{split} \]
\[\begin{split}
I_{> \varep}(\delta) := \frac{(2+\delta - n)(\delta - n)}{\gamma_n(2+\delta)}& \int_{|x-y| > \varep} \phi(y) \frac{(x_i - y_i)(x_j - y_j)}{|x-y|^{n+2-\delta}}dy ~+ \\
&\delta_{ij}\frac{2+\delta - n}{\gamma_n(2+\delta)} \int_{|x-y| > \varep} \phi(y) |x-y|^{\delta - n} dy,
\end{split} \]
\[\begin{split}
C(\varep, \delta) &:= \delta_{ij} \phi(x) \frac{2+\delta - n}{\gamma_n(2+\delta)} \frac{\omega_n \varep^\delta}{\delta} \bigg( \frac{\delta - n}{n} + 1\bigg) \\
&= \delta_{ij} \phi(x) \frac{2+\delta - n}{\gamma_n(2+\delta)} \frac{\omega_n \varep^\delta}{n}.
\end{split}\]
We have that
\begin{equation}\label{eq:limit-volume-ball}
\lim_{\delta \downarrow 0} \frac{2+\delta-n}{\gamma_n(2+\delta)} = \frac{2-n}{\gamma_n(2)} = - \frac{1}{\omega_n},
\end{equation}
and thus for any $\varep > 0$, we have that
\[ \lim_{\delta \downarrow 0} C(\varep, \delta) = \delta_{ij} \phi(x) \frac{2-n}{\gamma_n(2)} \frac{\omega_n}{n} =- \frac{1}{n} \delta_{ij} \phi(x).\]
We thus have that for any $\varep > 0$,
\[ (\mc{R}_{ij} \phi)(x) = I_{< \varep}(0) + I_{> \varep}(0) - \frac{1}{n} \delta_{ij} \phi(x). \]
Next, observe that 
\[ |I_{< \varep}(0)| \lesssim \|\phi\|_{C^1} \int_0^\varep r r^{-n} r^{n-1} dr \lesssim \varep \|\phi\|_{C^1}. \]
It thus follows that
\[ \lim_{\varep \downarrow 0} \Big\|\mc{R}_{ij} \phi - I_{> \varep}(0) + \frac{1}{n} \delta_{ij} \phi \Big\|_{L^\infty} = 0. \]
From this, we may obtain
\[\begin{split}
(\phi_1, \mc{R}_{ij} \phi_2)=~ &-\frac{\delta_{ij}}{n} (\phi_1, \phi_2) ~- \\
&\frac{(2-n)n}{\gamma_n(2)} \lim_{\varep \downarrow 0} \int \int_{|x-y| > \varep} \phi(x) \phi(y) \frac{(x_i - y_i)(x_j - y_j)}{|x-y|^{n+2}} ~+ \\
&\delta_{ij} \frac{2-n}{\gamma_n(2)} \lim_{\varep \downarrow 0} \int \int_{|x-y| > \varep} \phi(x) \phi(y) |x-y|^{-n} dx dy.  
\end{split}\]
Using \eqref{eq:limit-volume-ball}, we have that
\[\begin{split}
(\phi_1, \mc{R}_{ij} \phi_2)=~ &-\frac{\delta_{ij}}{n} (\phi_1, \phi_2) ~+ \\
&\frac{n}{\omega_n} \lim_{\varep \downarrow 0} \int \int_{|x-y| > \varep} \phi_1(x) \phi_2(y) \frac{(x_i - y_i)(x_j - y_j)}{|x-y|^{n+2}} dx dy ~- \\
&\frac{\delta_{ij}}{\omega_n}\lim_{\varep \downarrow 0} \int \int_{|x-y| > \varep} \phi_1(x) \phi_2(y) |x-y|^{-n} dx dy,
\end{split}\]
as desired.
\end{proof}

In the following, for $\phi \in \schwartz$ recall $\psi_\phi$ defined in \eqref{eq:psi-phi}.

\begin{lemma}\label{lemma:psi-phi-l2-norm}
For $\phi, \eta \in \schwartz(\R^n)$, we have that
\begin{equs}
(\psi_\phi, \psi_\eta) = \int_{\R^{n-1}} dx' \int_\R dx_n \int_\R dy_n \phi(x', x_n) \eta(x', y_n) \big(\ind(x_n, y_n > 0) + \ind(x_n, y_n < 0)\big) \min(|x_n|, |y_n|).
\end{equs}
\end{lemma}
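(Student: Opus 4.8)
The plan is to compute $(\psi_\phi, \psi_\eta)$ directly from the defining formula \eqref{eq:psi-phi} by expanding the $L^2$ inner product on $\R^n$ as an iterated integral: first integrate over the last coordinate $u$, then over the transverse variable $x' \in \R^{n-1}$. Recall from \eqref{eq:psi-phi} that
\[
\psi_\phi(x', u) = -\Big(\ind(u \geq 0)\int_u^\infty dx_n\, \phi(x', x_n) - \ind(u < 0)\int_{-\infty}^u dx_n\, \phi(x', x_n)\Big),
\]
and similarly for $\psi_\eta$ with $y_n$ in place of $x_n$. Since both $\psi_\phi$ and $\psi_\eta$ factor cleanly through the transverse variable $x'$ (the $x'$ just rides along), we have
\[
(\psi_\phi, \psi_\eta) = \int_{\R^{n-1}} dx' \int_\R du\; \psi_\phi(x', u)\,\psi_\eta(x', u).
\]
The key point is that the product $\psi_\phi(x',u)\psi_\eta(x',u)$ splits according to the sign of $u$: for $u \geq 0$ it equals $\big(\int_u^\infty dx_n\,\phi\big)\big(\int_u^\infty dy_n\,\eta\big)$, and for $u < 0$ it equals $\big(\int_{-\infty}^u dx_n\,\phi\big)\big(\int_{-\infty}^u dy_n\,\eta\big)$ (the cross terms with opposite signs of $u$ never appear since $u$ is a single variable). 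The two signs contribute the $\ind(x_n, y_n > 0)$ and $\ind(x_n, y_n < 0)$ pieces respectively.

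The main computational step is then a Fubini/Tonelli interchange followed by evaluating a one-dimensional integral. Concentrating on the $u \geq 0$ contribution, write it as
\[
\int_0^\infty du \int_0^\infty dx_n \int_0^\infty dy_n\; \ind(x_n \geq u)\,\ind(y_n \geq u)\,\phi(x', x_n)\,\eta(x', y_n),
\]
where I have used that $\int_u^\infty dx_n\,\phi(x',x_n) = \int_0^\infty dx_n\,\ind(x_n\geq u)\phi(x',x_n)$ for $u \geq 0$ — here it is convenient to note that the integrand of the outer $\phi$-integral is supported on $x_n \geq u \geq 0$. Swapping the order of integration to do the $u$-integral first gives $\int_0^\infty du\,\ind(u \leq x_n)\ind(u \leq y_n) = \min(x_n, y_n) = \min(|x_n|, |y_n|)$ on the region $x_n, y_n > 0$. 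This produces exactly the $\ind(x_n, y_n > 0)\min(|x_n|,|y_n|)$ term. An entirely symmetric computation on $u < 0$, with the substitution $u \mapsto -u$, gives $\int_0^\infty du\,\ind(u \leq |x_n|)\ind(u \leq |y_n|) = \min(|x_n|, |y_n|)$ on $x_n, y_n < 0$, yielding the $\ind(x_n, y_n < 0)$ term. Adding the two and restoring the outer $\int_{\R^{n-1}} dx'$ gives the claimed formula.

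The only real obstacle is justifying the Fubini interchanges, i.e. absolute integrability of the iterated integrands. This is where the Schwartz hypothesis on $\phi, \eta$ is used: since $\phi$ and $\eta$ decay rapidly, the inner integrals $\int_u^\infty |\phi(x', x_n)|\,dx_n$ decay rapidly in both $|u|$ and $|x'|$, so the triple integral of absolute values over $\{u \geq 0\} \times \{x_n \geq 0\}^2 \times \R^{n-1}$ (and likewise for $u < 0$) is finite; hence all reorderings are legitimate and one does not even need the vertical-mean-zero condition \eqref{eq:phi-vertical-integral-zero} here (that condition was only needed earlier to ensure $\psi_\phi \in \schwartz$, but the $L^2$ pairing makes sense and the computation goes through for general Schwartz $\phi, \eta$, which is why the lemma is stated in that generality). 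I would dispatch this with a one-line rapid-decay bound rather than spelling out constants.
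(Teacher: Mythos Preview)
Your proof is correct and follows essentially the same approach as the paper: expand the product $\psi_\phi(x',u)\psi_\eta(x',u)$ using the defining formula \eqref{eq:psi-phi}, interchange the $u$-integration with the $x_n,y_n$-integrations via Fubini, and evaluate the resulting one-dimensional $u$-integral to obtain $\min(|x_n|,|y_n|)$ on the regions $\{x_n,y_n>0\}$ and $\{x_n,y_n<0\}$. The paper's version is terser (it simply says ``expanding out the product and exchanging integration''), while you spell out the Fubini justification and the sign-splitting more explicitly, but the underlying computation is identical.
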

\begin{proof}
We write
\begin{equs}
(\psi_\phi, \psi_\eta) = \int_{\R^{n-1}} dx' \int_\R du &\bigg(\ind(u > 0) \int_u^\infty dx_n \phi(x', x_n) - \ind(u < 0) \int_{-\infty}^u dx_n \phi(x', x_n)\bigg) ~\times \\
&\bigg(\ind(u > 0) \int_u^\infty dy_n \eta(x', y_n) - \ind(u < 0) \int_{-\infty}^u dy_n \eta(x', y_n)\bigg)
\end{equs}
Expanding out the product and exchanging integration, we obtain that the above is further equal to 
\begin{equs}
~&\int_{\R^{n-1}} dx' \int_\R dx_n \int_\R dy_n \phi(x', x_n) \eta(x', y_n) \int_\R du \big(\ind(0 < u < x_n, y_n) + \ind(x_n, y_n < u < 0)\big) \\
&= \int_{\R^{n-1}} dx' \int_\R dx_n \int_\R dy_n \phi(x', x_n) \eta(x', y_n) \big(\ind(x_n, y_n > 0) + \ind(x_n, y_n < 0)\big) \min(|x_n|, |y_n|),
\end{equs}
as desired.
\end{proof}

Given a kernel $K : \R^n \times \R^n \ra \R$, define a new kernel $\Psi_K : \R^n \times \R^n \ra \R$ by
\begin{equs}
\Psi_K(x, y) := \int_\R du \int_\R dv K((x', u), (y', v)) \big(\ind(0 < u < x_n) - \ind(x_n < u < 0)\big) \big(\ind(0 < v < y_n) - \ind(y_n < v < 0)\big).
\end{equs}
In the above, $x = (x', x_n)$, $y = (y', y_n)$.

\begin{lemma}\label{lemma:psi-phi-kernel}
Let $K : \R^n \times \R^n \ra \R$ be bounded. For $\phi, \eta \in \schwartz(\R^n)$, we have that
\begin{equs}
\int_{\R^n} \int_{\R^n} dx dy \psi_\phi (x) K(x, y) \psi_\eta(y) = \int_{\R^n} \int_{\R^n} dx dy \phi(x) \Psi_K(x, y) \psi(y).
\end{equs}
\end{lemma}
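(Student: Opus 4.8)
The plan is to unfold the definition of $\psi_\phi$ from \eqref{eq:psi-phi}, expand the integral $\int\int dx\,dy\,\psi_\phi(x)K(x,y)\psi_\eta(y)$, and exchange the order of integration so that the inner integrals against $\phi$ and $\eta$ reconstruct the kernel $\Psi_K$. This is essentially the same computation as in Lemma \ref{lemma:psi-phi-l2-norm}, but carrying the kernel $K$ along. First I would write out
\[
\int_{\R^n}\int_{\R^n} dx\,dy\,\psi_\phi(x)K(x,y)\psi_\eta(y)
= \int_{\R^{n-1}}\int_{\R^{n-1}} dx'\,dy'\,\int_\R\int_\R dx_n\,dy_n\,\psi_\phi(x',x_n)K((x',x_n),(y',y_n))\psi_\eta(y',y_n),
\]
where I have kept $x=(x',x_n)$, $y=(y',y_n)$, and then substitute the definition of $\psi_\phi$:
\[
\psi_\phi(x',x_n) = -\Big(\ind(x_n\ge 0)\int_{x_n}^\infty \phi(x',t)\,dt - \ind(x_n<0)\int_{-\infty}^{x_n}\phi(x',t)\,dt\Big),
\]
and similarly for $\psi_\eta$. (Here I rename the dummy integration variable in $\psi_\phi$ — say $t$ — to distinguish it from the ``outer'' variable $x_n$; eventually $t$ will play the role of the first argument of $\phi$.)

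Next I would exchange the order of integration so that the integrals over $x_n$ and $y_n$ (the arguments of $\psi_\phi,\psi_\eta$) become inner integrals, and the integrals over the reconstructed first arguments of $\phi$ and $\eta$ become the outer ones. Concretely, after expanding the product of the two bracketed expressions, each term is of the form $\ind(\text{condition on }x_n)\,\ind(\text{condition on }y_n)\int\int \phi(x',t)\eta(y',s)$; rewriting the sign-weighted indicator combinations exactly as in Lemma \ref{lemma:psi-phi-l2-norm}, one checks that
\[
\psi_\phi(x',x_n)\psi_\eta(y',y_n) \text{ integrated appropriately}
\]
collapses to the statement that $\int_\R\int_\R dx_n\,dy_n\,\psi_\phi(x',x_n)K(\cdot)\psi_\eta(y',y_n)$ equals $\int_\R\int_\R dt\,ds\,\phi(x',t)\eta(y',s)$ times the factor
\[
\int_\R\int_\R du\,dv\,K((x',u),(y',v))\big(\ind(0<u<t)-\ind(t<u<0)\big)\big(\ind(0<v<s)-\ind(s<v<0)\big),
\]
which is precisely $\Psi_K((x',t),(y',s))$. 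Renaming $(t,s)$ back to $(x_n,y_n)$ and recombining the $x',y'$ integrals with the $x_n,y_n$ integrals into integrals over $\R^n$ gives the claimed identity.

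The only real technical point — and hence the main obstacle — is justifying the interchanges of order of integration (Fubini/Tonelli). Since $\phi,\eta\in\schwartz(\R^n)$ and $K$ is bounded, $\psi_\phi$ and $\psi_\eta$ are bounded with rapid decay in the $x'$ (resp.\ $y'$) variables and are moreover integrable in the $x_n$ variable (this uses only that $\phi$ is Schwartz, no mean-zero hypothesis is needed here since the discontinuity of $\psi_\phi$ at $x_n=0$ is harmless for integrability), so the triple/quadruple integrals converge absolutely and all exchanges are legitimate. I would state this absolute-convergence bound explicitly — e.g.\ $|\psi_\phi(x',x_n)|\lesssim (1+|x'|)^{-N}(1+|x_n|)^{-N}$ for every $N$, uniformly — and then the rearrangement is routine bookkeeping of indicator functions identical to the one already carried out in the proof of Lemma \ref{lemma:psi-phi-l2-norm}.
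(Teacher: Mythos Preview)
Your proposal is correct and follows essentially the same route as the paper: substitute the definition of $\psi_\phi$ and $\psi_\eta$, expand the product of indicator-weighted tail integrals, and exchange the order of integration with the inner integrals defining $\psi_\phi,\psi_\eta$ to recover $\Psi_K$. The only difference is cosmetic (you call the outer last-coordinate variables $x_n,y_n$ and the inner ones $t,s$, while the paper does the reverse, using $u,v$ outside and $x_n,y_n$ inside), and you supply an explicit Fubini justification via the Schwartz decay bound on $\psi_\phi$, which the paper leaves implicit.
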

\begin{proof}
We have that
\begin{equs}
~&\int_{\R^{n-1}} dx' \int_{\R^{n-1}} dy' \int_\R du \int_\R dv \psi_\phi(x', u) K((x', u), (y', v)) \psi_\eta(y', v) = \\ &\int_{\R^{n-1}} dx' \int_{\R^{n-1}} dy' \int_\R du \int_\R dv K((x', u), (y', v))\bigg(\ind(u > 0) \int_u^\infty dx_n \phi(x', x_n) - \ind(u < 0) \int_{-\infty}^u dx_n \phi(x', x_n)\bigg) ~\times \\
&\quad \quad \quad \quad \quad\quad \quad \quad \quad\quad \quad \quad \quad\quad \quad \quad \quad\quad \quad\quad \quad  \bigg(\ind(v > 0) \int_v^\infty dy_n \eta(y', y_n) - \ind(v < 0) \int_{-\infty}^v dy_n \eta(y', y_n)\bigg).
\end{equs}
To finish, expand out the product and exchange the integrations in $x_n, y_n$ with the integrations in $u, v$.
\end{proof}

\end{appendix}

\bibliographystyle{alpha}
\bibliography{formsurvey}

\end{document}